\def\arXiv#1{arXiv:\href{http://arXiv.org/abs/#1}{#1}}
\newtheorem{theorem}{Theorem}
\newtheorem{corollary}[theorem]{Corollary}
\newtheorem{lemma}[theorem]{Lemma}
\newtheorem{proposition}[theorem]{Proposition}
\theoremstyle{definition}
\newtheorem{remark}[theorem]{Remark}
\newcommand{\R}{{\mathbb R}}
\newcommand{\C}{{\mathbb C}}
\newcommand{\Z}{{\mathbb Z}}
\newcommand{\Q}{{\mathbb Q}}
\newcommand{\F}{{\mathbb F}}
\newcommand{\Ha}{{\mathcal H}}
\newcommand{\A}{{\mathcal A}}
\newcommand{\Proj}{{\mathbb P}}
\newcommand{\sO}{{\mathcal O}}
\newcommand{\Km}{{\mathrm{Km}}}
\newcommand{\NS}{\mathrm{NS}}
\newcommand{\MW}{\mathrm{MW}}
\newcommand{\End}{\mathrm{End}}
\newcommand{\M}{\mathcal M}
\newcommand{\sF}{{\mathcal F}}
\newcommand{\tr}{\mathrm{tr}}
\newcommand{\sH}{\mathcal{H}}
\newcommand{\sS}{\mathcal{S}}
\newcommand{\hor}{\textrm{hor}}
\newcommand{\ver}{\textrm{ver}}
\newcommand{\I}{\mathrm{I}}
\newcommand{\II}{\mathrm{II}}
\newcommand{\IV}{\mathrm{IV}}
\newcommand{\III}{\mathrm{III}}
\newcommand{\MoW}{Mordell-\mbox{\kern-.12em}Weil}
\newcommand{\PSL}{\mathop{\rm PSL}\nolimits}
\newcommand{\kbar}{{\kern.06ex\overline{\kern-.06ex k}}}
\newcommand{\Qbar}{{\kern.1ex\overline{\kern-.1ex\Q\kern-.1ex}\kern.1ex}}
\newcommand{\disc}{\mathop{\rm disc}\nolimits}
\newcommand{\Br}{\mathop{\rm Br}\nolimits}
\newcommand{\SL}{\mathop{\rm SL}\nolimits}
\newcommand{\Sp}{\mathop{\rm Sp}\nolimits}
\newcommand{\0}{^{\phantom0}}
\DeclareMathOperator{\Tr}{Tr}
\DeclareMathOperator{\Imag}{Im}
\subjclass[2010]{Primary 11F41; Secondary 14G35, 14J28, 14J27}
\keywords{Elliptic K3 surfaces, moduli spaces, Hilbert modular
  surfaces, abelian surfaces, real multiplication, genus-$2$ curves}
\title{K3 surfaces and equations for Hilbert modular surfaces}
\author{Noam Elkies}
\address{Department of Mathematics\\
Harvard University\\
Cambridge, MA 02138}
\email{elkies@math.harvard.edu}
\author{Abhinav Kumar}
\address{Department of Mathematics\\
Massachusetts Institute of Technology\\
Cambridge, MA 02139}
\email{abhinav@math.mit.edu}
\thanks{Elkies was supported in part by NSF grants DMS-0501029 and DMS-1100511.
  Kumar was supported in part by NSF grants DMS-0757765 and
  DMS-0952486, and by a grant from the Solomon Buchsbaum Research
  Fund. The research was started when Kumar was a postdoctoral
  researcher at Microsoft Research. He also thanks Princeton
  University for its hospitality during Fall 2009.}
\date{January 26, 2015}
\begin{document}

\begin{abstract}
We outline a method to compute rational models for the Hilbert modular
surfaces $Y_{-}(D)$, which are coarse moduli spaces for principally
polarized abelian surfaces with real multiplication by the ring of
integers in $\Q(\sqrt{D})$, via moduli spaces of elliptic K3 surfaces
with a Shioda-Inose structure. In particular, we compute equations for
all thirty fundamental discriminants $D$\/ with $1 < D < 100$,
and analyze rational points and curves on these Hilbert modular surfaces,
producing examples of genus-$2$ curves over~$\Q$ whose Jacobians have
real multiplication over~$\Q$.
\end{abstract}

\maketitle

\section{Introduction}

Hilbert modular surfaces have been objects of extensive investigation
in complex and algebraic geometry, and more recently in number
theory. Since Hilbert modular varieties are moduli spaces for abelian
varieties with real multiplication by an order in a totally real
field, they have intrinsic arithmetic content. Their geometry is
enriched by the presence of modular subvarieties.

In the 1970s, Hirzebruch, van de Ven, and Zagier \cite{Hi, HVdV, HZ}
computed the geometric invariants of many of these surfaces, and
placed them within the Enriques-Kodaira classification.  A chief aim
of the present work is to compute equations for birational models of some
of these surfaces over the field of rational numbers.

More precisely, let $D$\/ be a positive fundamental discriminant,
i.e.\ the discriminant of the ring of integers $\sO_D$ of the real
quadratic field $\Q(\sqrt{D})$. The quotient $\PSL_2(O_D)\backslash
(\mathcal{H}^+ \times \mathcal{H}^-)$ (where $\mathcal{H}^+$ and
$\mathcal{H}^{-}$ are the complex upper and lower half planes)
parametrizes abelian surfaces with an action of $\sO_D$.
It has a natural compactification $Y_{-}(D)$, obtained by
adding finitely many points and desingularizing these cusps.

These surfaces $Y_{-}(D)$ have models defined over $\Q$, and the main
goal of this paper is to describe a method to compute explicit
equations for these models, as well as to carry out this method for
all fundamental discriminants $D$\/ with $1 < D < 100$.
 This felt like a good place to stop for now,
though  these calculations may be extended to some higher~$D$,
as well as to non-fundamental discriminants.

We briefly summarize the method, which we describe in more
detail in later sections. The method relies on being able to
explicitly parametrize K3 surfaces that are related by
Shioda-Inose structure to abelian surfaces with real multiplication by
some $\sO_D$.  The K3 surface corresponding to such an abelian surface has
N\'{e}ron-Severi lattice containing $L_D$, a specific indefinite lattice of
signature $(1,17)$ and discriminant $-D$.  In all our examples, we obtain
the moduli space $\mathcal{M}_D$ of $L_D$-polarized K3 surfaces
as a family of elliptic surfaces with a specific configuration of
reducible fibers and sections.

We then use the $2$- and $3$-neighbor method
to transform to another elliptic fibration, with two reducible fibers
of types $\mathrm{II}^*$ and $\mathrm{III}^*$ respectively.
This lets us read off the map (generically one-to-one) of moduli
spaces from $\mathcal{M}_D$ into the $3$-dimensional moduli space
$\mathcal{A}_2$ of principally polarized abelian surfaces, using the
formulae from \cite{Kum1}. The image of $\mathcal{M}_D$ is the
Humbert surface corresponding to discriminant~$D$. The Hilbert modular
surface $Y_{-}(D)$ itself is a double cover of the Humbert surface,
branched along a union of modular curves.  We use simple lattice
arguments to obtain the branch locus, and pin down the exact twist for
the double cover by counting points on reductions of the related
abelian surfaces modulo several primes. In all our examples, the
Humbert surface happens to be a rational surface (i.e.\ birational to
$\Proj^2$ over~$\Q$), and we display the equation of $Y_{-}(D)$ as a
double cover of $\Proj^2$ branched over a curve of small degree. We
analyze these equations in some detail, attempting to produce rational
or elliptic curves on them, with the intent of producing several
(possibly infinitely many) examples of genus $2$ curves whose
Jacobians have real multiplication. When $Y_{-}(D)$ is a K3 surface, it
often has very high Picard number ($19$ or $20$), and we attempt to
compute generators for the Picard group. When $Y_{-}(D)$ is an honestly
elliptic surface, we analyze the singular fibers and the \MoW\ group,
and attempt to compute a basis for the sections.

To our knowledge, this is the first algebraic description of most of
these surfaces by explicit equations. We outline some related work in
the literature. Wilson \cite{Wi1, Wi2} obtained equations for the
Hilbert modular surface $Y_{-}(5)$ corresponding to the smallest
fundamental discriminant $D>1$.  In~\cite{vdG}, van der Geer gives a
few examples of algebraic equations for Hilbert modular surfaces
corresponding to a congruence subgroup of the full modular group (in
other words, abelian surfaces with some level structure).  Humbert
surfaces have also been well-studied in the literature, and \cite{Ru,
  Gr} have obtained equations for some of these.  However, these
equations are quite complicated, and do not shed as much light on the
geometry of Hilbert modular surfaces. While the methods are simpler,
involving theta functions and $q$-expansions, the result is analogous
to exhibiting the modular polynomial whose zero locus in $\mathbb{A}^1
\times \mathbb{A}^1$ is a singular model of the complement of the
cusps in the modular curve $X_0(N)$.  The coefficients of these
polynomials can quickly become enormous.  We believe that our
approach, giving simpler equations for these surfaces together with
their maps to $\A_2$, is more conducive to an investigation of
arithmetic properties.

It is our hope that these equations will be of much help in
subsequent arithmetic investigation of these surfaces. For instance,
they should provide a testing ground for many conjectures in
Diophantine geometry, because of the abundance of rational curves and
points. Another direction of future investigation is to use these
equations to investigate modularity of the corresponding abelian surfaces.
Modularity of abelian varieties with real multiplication over~$\Q$
is now proven, by combining results of Ribet~\cite{Ri} with the recent
proof of Serre's conjecture by Khare and Wintenberger \cite{KW}.
However, unlike the case of dimension~$1$, where one has modular
parametrizations and very good control of the moduli spaces,
the situation in dimensions $2$ and above is much less clear.
For instance, it is not at all clear how to find a modular form
corresponding to a given abelian surface with real multiplication.\footnote{
  Suppose $A/\Q$ is an abelian surface with \hbox{$\Q$-endomorphisms} by $\sO_D$,
  and let $\varphi = \sum_n a_n q^n$ be an eigenform with every $a_n \in O_D$.
  If $\varphi$ corresponds to~$A$ then counting points over
  $\F_p$ and $\F_{p^2}$ determines each $a_p$ up to Galois conjugation.
  But conceivably there might be some eigenform $\varphi' = \sum_n a'_n q^n$,
  different from both $\varphi$ and its Galois conjugate,
  such that each $a'_p$ equals either $a_p$ or the Galois conjugate of $a_p$;
  if that happens, we do not know how to decide which eigenform corresponds
  to~$A$.  Likewise for abelian varieties of dimension $3$ and higher.
  }
We hope that the abundance of examples provided by
these equations will help pave the path for a better understanding of
the $2$-dimensional case.  For example,
in \cite{DK}, our formulas are combined with efficient computation of
Hilbert modular forms to find examples of simple abelian surfaces
over real quadratic fields, with everywhere good reduction.
An example of such an abelian surface is the Jacobian of the genus $2$ curve
\begin{eqnarray*}
2y^2 & \!\! = \!\! &
   x^6 - \tau x^5 + 74 x^4 - 14 \tau x^3 + 267 x^2 - 13 \tau x + 46
\cr
   & \!\! = \!\! &
     \bigl(x^3 - \frac{1+\tau}{2} x^2 + 13 x + \frac{3-\tau}{2}\bigr)
     \bigl(x^3 + \frac{1-\tau}{2} x^2 + 13 x - \frac{3+\tau}{2}\bigr)
\end{eqnarray*}
where $\tau = \sqrt{193}$ (the curve and its Jacobian can in fact be
defined over~$\Q$, but the Jacobian attains everywhere good reduction
and real multiplication by $\sO_{17}$ only over $\Q(\tau)$).

The outline for the rest of the paper is as follows. In section
\ref{K3}, we describe the relevant background on K3 surfaces and their
moduli spaces, and their connection to moduli spaces of abelian
surfaces via Shioda-Inose structures. In section \ref{realmult}, we
describe the Hilbert modular surfaces and the corresponding moduli
spaces of K3 surfaces. In section \ref{method}, we precisely describe
our methods to compute their equations. Section \ref{neighbors}
describes the $2$- and $3$-neighbor method in more detail. The rest of
the paper consists of detailed examples of Hilbert modular
surfaces for the discriminants less than $100$, as well as an
arithmetic investigation of these surfaces. The accompanying auxiliary
files contain formulas for the Igusa-Clebsch invariants, as well as a
description of the parametrizations exhibited for the moduli spaces of
K3 surfaces in the paper, and the details of the neighbor steps to
transform to a fibration with $\II^*$ and $\III^*$ fibers.

The auxiliary computer files containing the equations for these
Hilbert modular surfaces, as well as formulas for the Igusa-Clebsch
invariants, are available from
\url{http://arxiv.org/abs/1209.3527}. To access these, download the
source file for the paper. This will produce not only the \LaTeX{}
file for this paper, but also the computer code. The file
\texttt{README.txt} gives an overview of the various computer files.

\section{Moduli spaces of abelian surfaces and lattice polarized K3 surfaces}
\label{K3}

Throughout this section, we work with K3 surfaces over a field $k$ of
characteristic $0$. When convenient, we will suppose $k \subseteq \C$,
and use transcendental methods.

\subsection{K3 surfaces}

A K3 surface $X$\/ over $k$ is a projective algebraic nonsingular
surface with $h^1(X,\sO_X) = 0$ and $K_X \cong \sO_X$. For such a
surface, $H^2(X,\Z)$ is torsion-free, and when endowed with the
cup-product form becomes a $22$-dimensional lattice, abstractly
isomorphic with $\Lambda := E_8(-1)^2 \oplus U^3$.  Here $E_8$ is the
even unimodular lattice in eight dimensions, $U$ is the hyperbolic
plane with Gram matrix $\left(\begin{array}{cc} 0 & 1 \\ 1 &
  0 \end{array}\right)$, and for any lattice $\Lambda$ and real number
$\alpha$, the lattice $\Lambda(\alpha)$ consists of the same
underlying abelian group with the form multiplied by $\alpha$. The
N\'{e}ron-Severi group $\NS(X)$ of algebraic divisors defined
over~$\kbar$ modulo algebraic equivalence, which for a K3 surface is
the same as linear or numerical equivalence, is a primitive sublattice
of~$\Lambda$ of signature $(1,\rho -1)$, where $\rho \in \{1,\dots,
20\}$ is the Picard number of $X$. The orthogonal complement of
$\NS(X)$ is the transcendental lattice $T_X$. There is a Torelli
theorem for K3 surfaces, due to Piatetski-Shapiro and Shafarevich
\cite{PS} and Friedman~\cite{Fr}, which describes the moduli space of
K3 surfaces with a fixed polarization. More generally, let $L$ be an
even non-degenerate lattice of signature $(1,r-1)$, with $r \in
\{1,\dots, 20\}$. Assume that $L$ has a unique primitive embedding in
$\Lambda$, up to isometries of $\Lambda$. Then there is a coarse
moduli space $\sF_L$ of $L$-polarized K3 surfaces $(X, j)$, where $j:
L \rightarrow \NS(X)$ is a primitive lattice embedding such that
$j(L)$ contains a pseudo-ample class on $X$.  The space $\sF_L$ is
isomorphic to the quotient of an appropriate fundamental domain
$$
\Omega_L = \Proj\big(\{\omega \in L^{\perp} \otimes \C \, | \, \langle
\omega , \omega \rangle = 0, \langle \omega , \bar{\omega} \rangle > 0
\}\big)
$$ by an arithmetic subgroup $\Gamma_L$, which is the image of
\[
\Gamma(L) = \{\sigma \in O(\Lambda) \, | \,
 \forall \, x \in L, \, \sigma(x) = x \, \}
\]
in $O(\Lambda^{\perp}$). (Here, we have fixed an embedding
$i: L \rightarrow \Lambda$, so we may use its orthogonal complement
$L^{\perp}$.) Therefore $\sF_L$ is a quasiprojective variety.

In fact, there is a fine moduli space $\mathcal{K}_L$ of marked
pseudo-ample $L$-polarized K3 surfaces, i.e.\ $(X, \phi)$ where
$\phi:H^2(X, \Z) \rightarrow \Lambda$ is an isomorphism (a {\em marking})
such that $\phi^{-1}(L) \subseteq \NS(X)$. There is a
period map which associates to such a marked K3 surface the
class of the global algebraic $2$-form up to scaling, giving a point
$[\omega] \in \Proj(L^{\perp} \otimes \C)$. Furthermore, $\omega \cup
\omega = 0$ and $\omega \cup {\overline{\omega}} > 0$. This domain
$\Omega_L$ consists of two copies of a bounded Hermitian domain of
type $\mathrm{IV}_{20-r}$. The period map $(X, \phi) \rightarrow
[\omega_X]$ sets up an isomorphism between the moduli space $\mathcal{K}_L$
and the period domain $\Omega_L$, using the Torelli theorem and the
surjectivity of the period map \cite{Kul, PP}. The quotient $\Gamma_L
\backslash \mathcal{K}_L \cong \Gamma_L \backslash \Omega_L$ forgets the
marking, and describes a coarse moduli space of $L$-polarized K3
surfaces. For details, the reader may consult Nikulin~\cite{Ni1}
and Dolgachev~\cite{Do}.

\subsection{Elliptic K3 surfaces}

We shall be especially interested in moduli spaces of elliptic K3 surfaces.
In this paper, an elliptic K3 surface will be a K3 surface $X$
with a relatively minimal genus-$1$ fibration $\pi : X \rightarrow \Proj^1$,
together with a section. In other words, we may write a
Weierstrass equation of $X$\/ over $\Proj^1_t$ as
\begin{equation}
y^2 + a_1(t) x y + a_3(t)y = x^3 + a_2(t) x^2 + a_4(t)x + a_6(t)
\label{eq:Weier}
\end{equation}
with $a_i(t)$ a polynomial in $t$ of degree at most $2i$.
(More canonically, each $a_i$ is a homogeneous polynomial of degree~$2i$
in the two homogeneous coordinates of $\Proj^1_t$.)
Of course, this Weierstrass equation describes the
generic fiber of $X$; to understand the reducible special fibers, one
can use Tate's algorithm \cite{Ta1} to blow up the singular points
and describe the minimal proper model.  (This will also detect when
a Weierstrass equation~(\ref{eq:Weier}) is equivalent to one with
each $a_i$ vanishing to order at least $i$ at some $t_0$, that is,
when the equation gives not a K3 elliptic surface but a rational or
constant one.)  The singular fibers are classified by Kodaira and N\'{e}ron,
and the non-identity components of any reducible fiber $\pi^{-1}(v)$
contribute an irreducible root lattice (scaled by $-1$), say $L_v$,
to the N\'{e}ron-Severi lattice.  The {\em trivial lattice} is defined to be
$$
T = \Z O \oplus \Z F \oplus \big( \bigoplus_v L_v \big)
$$
(note that $O$ and $F$\/ span a copy of the hyperbolic plane~$U$\/).

A theorem of Shioda and Tate \cite{Sh1, Sh2, Ta2} shows that as long
as the elliptic fibration is non-trivial (equivalently, it has at
least one singular fiber), the \MoW\ group of $X$\/ over
$\Proj^1$ is isomorphic to $\NS(X)/T$. In particular, we have the
Shioda-Tate formula
$$
\rho(X) = 2 + \textrm{rank } \MW(X/\Proj^1) + \sum_v \textrm{rank } L_v.
$$
One may also compute the discriminant of the N\'{e}ron-Severi lattice:
$$
|\mathrm{disc}(\NS(X))| = \frac{\mathrm{det}(H(X/\Proj^1)) \cdot
  \prod_v \mathrm{disc}(L_v) }{|\MW(X/\Proj^1)_{\mathrm{tors}}|^2}
$$
where $H(X/\Proj^1)$ is the height pairing matrix for a basis of
the torsion-free part of the \MoW\ group of $X$\/ over $\Proj^1$.

If $F$\/ is the class of the fiber for an elliptic K3 surface $X$, then
$F$\/ is primitive and nef, with $F^2 = 0$. Conversely, suppose that $F \in
\NS(X)$ is a nonzero divisor class which is primitive and nef with
$F^2 = 0$. Then a simple application of the Riemann-Roch
theorem shows that $F$\/ or $-F$\/ must be effective, and since $F$\/ is
nef, it must be represented by an effective divisor. Then a theorem of
Piatetski-Shapiro and Shafarevich \cite[page 559]{PS} shows that $F$\/
defines a genus $1$ fibration.

\begin{lemma}
\label{lem:section_criterion}
Let $F = \sum a_i E_i$ be a positive linear combination of smooth
rational curves on a K3 surface $X$\/ such that $F \cdot E_i = 0$ for
all $i$, and such that $F$\/ is a primitive class in $\NS(X)$. Then $F$
defines a genus $1$ fibration.
\end{lemma}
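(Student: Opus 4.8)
The plan is to reduce the statement to the criterion recalled in the paragraph immediately before the lemma: a nonzero, primitive, nef class $F$ with $F^2 = 0$ is effective and defines a genus~$1$ fibration by the theorem of Piatetski-Shapiro and Shafarevich. So I would simply check that the given $F$ has these four properties, of which primitivity is assumed outright and nonzero-ness (indeed effectivity) is clear, since $F$ is a positive combination of the actual curves~$E_i$.

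Computing the self-intersection is immediate from the orthogonality hypothesis: by bilinearity
$$
F^2 = \sum_i a_i (F \cdot E_i) = 0.
$$
The one step that requires an idea---and hence the main (if modest) obstacle---is verifying that $F$ is nef, i.e.\ that $F \cdot C \geq 0$ for every irreducible curve $C$ on~$X$; checking this against irreducible curves suffices because every effective divisor is a non-negative combination of them. I would split into two cases using the standard fact that two \emph{distinct} irreducible curves on a smooth surface meet non-negatively. If $C$ equals one of the $E_i$, then $F \cdot C = 0$ by hypothesis. Otherwise $C$ is distinct from each $E_i$, so $E_i \cdot C \geq 0$ for all $i$, and since every $a_i$ is positive,
$$
F \cdot C = \sum_i a_i (E_i \cdot C) \geq 0.
$$
Hence $F$ is nef.

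With $F$ shown to be nonzero, primitive, nef, and of square zero, the criterion quoted before the lemma applies verbatim and yields the genus~$1$ fibration. The only point to keep honest is that the $E_i$ are genuinely distinct irreducible curves, so that the non-negativity $E_i \cdot C \geq 0$ is legitimate in the second case; this is guaranteed by the hypothesis that $F$ is a linear combination of smooth rational curves on~$X$.
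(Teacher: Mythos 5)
Your proposal is correct and follows essentially the same route as the paper's own proof: compute $F^2 = \sum_i a_i (F\cdot E_i) = 0$, then verify nefness by splitting into the cases $C = E_i$ (where $F\cdot C = 0$ by hypothesis) and $C$ distinct from all $E_i$ (where $E_i \cdot C \geq 0$ and positivity of the $a_i$ give $F \cdot C \geq 0$), and finally invoke the Piatetski-Shapiro--Shafarevich criterion quoted before the lemma. The only difference is cosmetic: you make explicit the nonzero/effectivity check that the paper leaves implicit.
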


\begin{proof}
We have $F^2 = \sum a_i (F \cdot E_i) = 0$. By the above discussion,
it is enough to show that $F$\/ is nef. Let $E'$ be an irreducible curve
on $X$. If $E'$ is distinct from the $E_i$, then $E_i \cdot E' \geq 0$
for every $i$, and so $F \cdot E' \geq 0$. On the other hand, if $E' =
E_i$ say, then $F \cdot E' = F \cdot E_i = 0$. Therefore $F$\/ is nef.
\end{proof}

Let us define an {\em elliptic divisor} to be a divisor satisfying the
conditions of Lemma \ref{lem:section_criterion}. We will frequently use
this lemma, displaying an elliptic divisor by finding a subdiagram of
the set of roots of $\NS(X)$, represented by smooth rational curves,
which is an extended Dynkin diagram for a root lattice. Then the class
of the appropriate linear combination of roots $F$\/ will define a
genus $1$ fibration. We need to know when such a fibration has a section.

\begin{lemma}
  Suppose $F$\/ is an elliptic divisor, defining a genus $1$ fibration
  $\pi: X \rightarrow \Proj^1$. Suppose $D \in \NS(X)$ satisfies $D
  \cdot F = 1$. Then $\pi$ has a section.
\end{lemma}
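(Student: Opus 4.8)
The plan is to reduce the statement to the classical fact that a smooth genus-$1$ curve carrying a rational divisor class of degree~$1$ has a rational point. First I would pass to the generic fiber: since $F$\/ defines a genus $1$ fibration $\pi: X \rightarrow \Proj^1$, the generic fiber $C = X_\eta$ is a smooth projective curve of genus~$1$ over the function field $K = k(\Proj^1)$, and a section of $\pi$ is the same thing as a $K$-rational point of $C$ (the valuative criterion of properness, applied to the proper morphism $\pi$ over the smooth, hence Dedekind, base $\Proj^1$, extends any such point to a morphism $\Proj^1 \to X$ splitting $\pi$). Restricting divisor classes from $X$\/ to $C$ kills the fibral classes, so $F$\/ maps to~$0$, and it sends $D$ to a class $D_C \in \mathrm{Pic}(C)$ whose degree equals the intersection number $D \cdot F = 1$. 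Thus it suffices to produce a $K$-rational point of $C$ from $D_C$.

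Second, I would apply Riemann-Roch on $C$ to the class $D_C$. Since $C$ has genus~$1$ its canonical class $K_C$ has degree~$0$, so $\deg(K_C - D_C) = -1 < 0$ and hence $\ell(K_C - D_C) = 0$. Riemann-Roch then gives $\ell(D_C) = \deg D_C - g + 1 = 1 - 1 + 1 = 1$. Therefore the complete linear system $|D_C|$ is nonempty of dimension~$0$: it consists of a single effective divisor $P$\/ of degree~$1$, that is, a single point of $C$ of degree~$1$ over~$K$.

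Finally, I would record the descent step that promotes $P$\/ to a genuine section. Because $D_C$ is a $K$-rational divisor class, the absolute Galois group of $K$ permutes the effective divisors in $|D_C|$; as there is exactly one, it is fixed, so $P$\/ is a $K$-rational point of $C$. Spreading $P$\/ out via properness as above yields the desired section, completing the argument.

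The step I expect to be the main obstacle is precisely this rationality (Galois-descent) bookkeeping: one must ensure that $D_C$ is a class over the ground field $K$, and not merely over $\kbar(\Proj^1)$, so that the unique effective representative descends and gives an honest rational point rather than a Galois orbit. This is automatic once $D$\/ is taken over the field over which the section is wanted (in particular for $D \in \NS(X)$ that is Galois-stable), and the uniqueness forced by $\ell(D_C) = 1$ is exactly what guarantees Galois-invariance of $P$. Everything else is a routine application of Riemann-Roch and the valuative criterion of properness.
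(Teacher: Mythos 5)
Your proof is correct, but it takes a genuinely different route from the paper's. You pass to the generic fiber $C = X_\eta$ over $K = k(\Proj^1)$, observe that $D$\/ restricts to a degree-$1$ class $D_C$, and invoke Riemann--Roch \emph{on the curve} $C$\/ to get $\ell(D_C) = 1$, so the unique effective representative is a $K$-rational point, which spreads out to a section by properness. The paper instead stays entirely on the surface: it applies Riemann--Roch \emph{on $X$} to $D' = D + mF$\/ for $m \gg 0$, using the K3-specific facts $K_X \equiv 0$ and $\chi(\sO_X) = 2$ together with Serre duality to show $D'$ is effective, then decomposes $D' = D'_{\mathrm{vert}} + D'_{\mathrm{hor}}$ and notes that $D'_{\mathrm{hor}} \cdot F = 1$ forces the horizontal part to be a reduced irreducible multisection of degree $1$, i.e.\ a section. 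What each buys: the paper's argument is self-contained surface geometry in the same style as the neighboring lemmas and never mentions fields of definition of points; yours is shorter once the generic-fiber dictionary is set up, and is actually more general --- it nowhere uses $K_X \equiv 0$ or $\chi(\sO_X) = 2$, so it applies verbatim to any genus-$1$ fibration with a divisor class of fiber degree $1$. Two small remarks on your last paragraph: the Galois bookkeeping you flag is automatic if you work scheme-theoretically, since $H^0(C, \sO_C(D_C))$ is a $K$-vector space and $\ell(D_C) = 1$ already over $K$\/ (on a K3 one has $\NS(X) = \mathrm{Pic}(X)$, so $D$\/ is an honest line bundle and $D_C$ lives in $\mathrm{Pic}(C)$, not merely in the Galois-invariants of $\mathrm{Pic}(C_{\overline{K}})$); and whatever caveat remains about $\NS(X)$ being defined only over $\kbar$ is shared equally by the paper's proof, so it is not a defect of your approach.
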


\begin{proof}
Consider the divisor $D' = D + mF$, for some large integer $m$. Then
$(D')^2 = D^2 + 2m$, while $K \equiv 0$, so the Riemann-Roch theorem implies
$$
h^0(D') - h^1(D') + h^2(D') = (D')^2/2 + \chi(\sO_X) = D^2 + m + 2.
$$ Also, $h^2(D') = h^0(K-D') = h^0(-D') = 0$ by Serre duality, and
since $(-D')\cdot H = -D \cdot H - m F \cdot H < 0$ for any ample
divisor $H$, as long as $m$ is large enough. Therefore we see that for
large $m$, the divisor class $D'$ can be represented by an effective
divisor, which we may call $D'$, by abuse of notation. Note that we
still have $D' \cdot F = 1$. Decompose $D'$ as
$D'_{\mathrm{vert}} + D'_{\mathrm{hor}}$,
where the first term contains all the components which
lie along fibers of the genus $1$ fibration defined by $F$,
and the second contains the other components.
 Then $D'_{\mathrm{hor}} \cdot F = 1$.
Therefore, $D'_{\mathrm{hor}}$ must be reduced and irreducible,
and thus defines a section of the genus $1$ fibration.
\end{proof}

\begin{corollary}
  Let $F$\/ be an elliptic divisor, and let $D_1$ and $D_2$ be two
  divisor classes such that $D_1 \cdot F$\/ and $D_2 \cdot F$\/ are
  coprime. Then the fibration has a section.
\end{corollary}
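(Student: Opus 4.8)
The corollary follows by reducing to the previous lemma. Since $D_1\cdot F$ and $D_2\cdot F$ are coprime integers, Bézout's identity provides integers $m_1,m_2$ with $m_1(D_1\cdot F)+m_2(D_2\cdot F)=1$. The plan is to set $D=m_1 D_1+m_2 D_2$ and observe that $D\in\NS(X)$ (as $\NS(X)$ is closed under integer linear combinations) and that $D\cdot F=m_1(D_1\cdot F)+m_2(D_2\cdot F)=1$ by linearity of the intersection pairing. Then the hypotheses of the preceding lemma are satisfied verbatim, so $\pi$ has a section.

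I expect no serious obstacle here: the only content is the elementary number theory of Bézout combined with the bilinearity of the intersection form, after which the previous lemma is applied as a black box. The one point to be slightly careful about is that the previous lemma requires only the existence of \emph{some} class $D$ with $D\cdot F=1$, and does not impose any positivity or effectivity condition on $D$ itself (such conditions are recovered inside that proof by passing to $D+mF$ for large $m$); hence the fact that $m_1,m_2$ may be negative causes no difficulty.
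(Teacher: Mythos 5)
Your proposal is correct and is essentially identical to the paper's own proof: the paper likewise invokes B\'ezout to find integers $a_1, a_2$ with $(a_1 D_1 + a_2 D_2)\cdot F = 1$ and then applies the preceding lemma to $D = a_1 D_1 + a_2 D_2$. Your added remark that the lemma imposes no effectivity requirement on $D$ (effectivity being recovered inside its proof via $D + mF$) is a valid observation that the paper leaves implicit.
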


\begin{proof}
There exist integers $a_1, a_2$ such that $(a_1 D_1 + a_2 D_2) \cdot F = 1$.
Now take $D = a_1 D_1 + a_2 D_2$ in Lemma~\ref{lem:section_criterion}.
\end{proof}

Finally, we note a lattice-theoretic result which allows us to deduce
that in all of the cases studied in this paper, the genus $1$
fibration defined by an elliptic divisor $F$ has a section.

\begin{proposition}\label{automaticsections}
  Let $D$ be a fundamental discriminant, and let $L = U \oplus N(-1)$,
  where $U$ is the hyperbolic plane, and $N$ a positive definite
  lattice of rank $16$ and discriminant $D$. Suppose in addition that
  $N$ contains a sublattice isomorphic to $E_8 \oplus E_7$. If $v \in
  L$ is a primitive vector with $v \cdot v = 0$, then there exists $w
  \in L$ such that $v \cdot w = 1$.
\end{proposition}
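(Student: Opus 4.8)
The plan is to rephrase the conclusion in terms of the \emph{divisibility} of $v$ and then bound it using the discriminant form. Since $v$ is primitive, the set $\{v\cdot w : w\in L\}$ is an ideal $d\Z\subseteq\Z$ for a unique integer $d\geq 1$ (the divisibility of $v$), and the assertion ``there exists $w$ with $v\cdot w=1$'' is exactly the statement that $d=1$. So the whole proposition reduces to proving $d=1$.

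First I would record the standard fact that $v/d$ lies in the dual lattice $L^{*}$ and that its class $\overline{v/d}$ in the discriminant group $A_L:=L^{*}/L$ has order exactly $d$ (here primitivity of $v$ is what forces the order not to be smaller). Because $v\cdot v=0$, the associated finite bilinear form satisfies $b(\overline{v/d},\overline{v/d})=(v/d)\cdot(v/d)=0$ in $\Q/\Z$, so the cyclic subgroup $H=\langle\overline{v/d}\rangle\cong\Z/d$ is isotropic. For a nondegenerate finite bilinear form one has $|H|\cdot|H^{\perp}|=|A_L|$ together with $H\subseteq H^{\perp}$, so $H^{\perp}/H$ is a genuine group of order $|A_L|/d^{2}$; hence $d^{2}\mid |A_L|$. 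Since $U$ is unimodular, $|A_L|=|\disc L|=D$, giving the key bound $d^{2}\mid D$.

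Next I would invoke that $D$ is a fundamental discriminant. If $D\equiv 1\pmod 4$ then $D$ is squarefree, so $d^{2}\mid D$ forces $d=1$ and we are done. The only remaining possibility is $4\mid D$, say $D=4m$ with $m$ squarefree and $m\equiv 2,3\pmod 4$; then $d^{2}\mid D$ leaves only $d\in\{1,2\}$. Thus everything comes down to excluding $d=2$, and this is the step where the hypothesis $E_8\oplus E_7\subseteq N$ must enter: tracing through the argument above, $d=2$ would produce a nonzero element of $A_L$ of order $2$ that is isotropic for the discriminant \emph{quadratic} form, since $q_L(\overline{v/2})=(v/2)\cdot(v/2)\equiv 0\bmod 2\Z$. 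I expect this exclusion to be the main obstacle.

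To carry it out I would compute the $2$-part of $A_L$ explicitly. As $E_8$ is unimodular, any embedding $E_8\hookrightarrow N$ splits off an orthogonal summand, so $N=E_8\oplus P$ with $P$ positive definite of rank $8$, $\disc P=D$, and $E_7\subseteq P$; moreover $A_L\cong A_P$ with forms matching up to sign. The orthogonal complement $Q:=E_7^{\perp}\cap P$ is a rank-one lattice $\langle 2s\rangle$, and both $E_7$ and $Q$ are primitive in $P$ (the only even overlattice of $E_7$ is itself, since the nonzero class in $A_{E_7}\cong\Z/2$ has $q=3/2\neq 0$). Hence the glue group $G$ with $A_P=G^{\perp}/G$ injects into $A_{E_7}\cong\Z/2$; a short check shows the nontrivial gluing forces $s\equiv 1\pmod 4$, whence $D=s$ is odd, so when $4\mid D$ we must have $G=0$ and $P=E_7\oplus\langle 2s\rangle$ with $s=m$. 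It then remains to write $A_P^{(2)}=\langle 3/2\rangle\oplus A_{\langle 2s\rangle}^{(2)}$ and verify, in the two cases $m\equiv 3$ and $m\equiv 2\pmod 4$, that every element of order $2$ has $q\in\{3/2,1,1/2\}$ and never $0$. This rules out $d=2$ and finishes the proof. The delicate points are the orthogonal splitting of $E_8$, the injectivity of the glue projection, and the bookkeeping of the $2$-adic quadratic form; once these are in place the final step is a finite verification.
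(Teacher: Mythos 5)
Your proposal is correct --- I checked the steps you compressed (the glue computation forcing $s \equiv 1 \pmod 4$ when the gluing is nontrivial, and the final list of $q$-values on the $2$-torsion, namely $3/2$, $m/2$, $3/2+m/2 \bmod 2\Z$, none of which vanish for $m \equiv 2,3 \pmod 4$) and they all go through --- but it runs along a genuinely different route from the paper's. The paper argues on the overlattice side: assuming the divisibility $c>1$, it adjoins $v/c$ to get an even integral overlattice $L' \supset L$ of index $c$, splits off $U$ to obtain $N' \supset N$ of index $c$ and discriminant $D/c^2$, and then uses the single fact that vectors in the dual lattice of $E_8 \oplus E_7$ have norms $\equiv 0$ or $3/2 \pmod 2$ to conclude that $D/c^2$ is an integer $\equiv 0$ or $1 \pmod 4$ --- contradicting fundamentality for every $c>1$ at once, with no case distinctions and no discriminant-form bookkeeping. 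Your proof is the dual of this under the standard correspondence between overlattices and isotropic subgroups of the discriminant group: your bound $d^2 \mid D$ is what the paper obtains as integrality of $D/c^2$, and your $2$-torsion check plays the role of the dual-norm congruence. What your organization buys is a clean separation of hypotheses --- the reduction to $d \le 2$ (with $d=2$ only if $4 \mid D$) uses nothing but nondegeneracy and fundamentality, and $E_8 \oplus E_7$ enters only to exclude $d=2$ --- together with the structural by-product that $N \cong E_8 \oplus E_7 \oplus \langle D/2 \rangle$ whenever $4 \mid D$. What it costs is length: the paper's uniform congruence avoids both the case split and the glue analysis. Note also that the paper's unproved assertion that $N'$ is generated by $E_8 \oplus E_7$ and a single vector rests on exactly the two facts you prove explicitly (unimodularity splits off $E_8$; $E_7$ is saturated since its discriminant form value $3/2$ is nonzero), so your write-up in effect supplies details the paper leaves implicit.
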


\begin{proof}
  Suppose not. Then $\{v \cdot w : w \in L\} = c\Z$ for some integer
  $c > 1$. Since $v$ is primitive, we can take a basis $v_1 = v,
  \dots, v_{16}$ of $L$. Then $L' = \Z (v/c) + \Z v_2 + \dots + \Z
  v_{16}$ is an integral lattice containing $L$ with index $c >
  1$. Since $L' \supset L \supset U$, we have $L' = U \oplus N'(-1)$
  (since $U$ is unimodular), with $N'$ a positive definite lattice
  containing $N$ with index $c$. Then $N'$ must be generated by $E_8
  \oplus E_7$ and a vector $x$ whose projection $x^{\perp}$ to the
  orthogonal complement of $E_8 \oplus E_7$ has norm $D/(2c^2)$. The
  dual lattice of $E_8 \oplus E_7$ has norms congruent to $0$ or $3/2$
  modulo $2$, so $x^{\perp}$ has norm $0$ or $1/2$ mod $2$. Therefore
  $D/c^2$ must be an integer congruent to $0$ or $1$ modulo $4$. Since
  $D$ is a fundamental discriminant, this is impossible.
\end{proof}

For the examples in this paper, we will often draw a Dynkin-type
diagram indicating some of the roots of $\NS(X)$ for an elliptic K3
surface $X$, which will always be nodal classes (i.e.\ classes of
smooth rational curves on $X$). We will outline an elliptic divisor
$F$\/ by drawing a subdiagram in bold which cuts out an extended
Dynkin diagram of an irreducible root lattice (the multiplicities will
be omitted). Where convenient, we will also indicate the class of a
divisor $D$\/ such that $D \cdot F = 1$ (in some cases, there is an
obvious node in the Dynkin diagram satisfying this property), or the
classes of two divisors $D_1$ and $D_2$ such that $D_1 \cdot F$\/ and
$D_2 \cdot F$\/ are coprime. This is not strictly necessary, because
Proposition \ref{automaticsections} guarantees the existence of such a
divisor $D$, but having an explicit divisor might be useful for
further calculations. Then $F$\/ defines another elliptic fibration
with section on $X$, and we may proceed as in Section \ref{neighbors}
to write down its Weierstrass equation.

\subsection{Kummer surfaces, Nikulin involutions and Shioda-Inose structures}

Let $A$ be an abelian surface, and consider the involution $\iota$ on~$A$
defined by multiplication by $-1$ in the group law. The
quotient of $A$ by the group $\{1, \iota\}$ is a surface $Y'$ with
sixteen singularities, the images of the $2$-torsion points of $A$.
In fact, $Y'$ may be realized as a quartic surface in $\Proj^3$ with
sixteen ordinary double points (which is the maximum number of
singularities possible for a quartic surface in $\Proj^3$ \cite[p.15]{Hu}),
by considering the linear system on $A$ corresponding to twice the theta
divisor. The corresponding map is $2$-to-$1$ from $A$ to $Y'$.

Taking the minimal desingularization of $Y'$ gives a K3 surface $Y$,
the Kummer surface of $A$, which contains sixteen disjoint nodal
classes coming from the blowups of the singular points. Note that if
$A$ is defined over some number field $k$, then so is $Y =
\Km(A)$. The N\'{e}ron-Severi lattice of the surface $Y$ contains the
saturation of the lattice spanned by the sixteen special nodal
classes; this is a lattice $\Lambda_{\Km}$ of signature $(0,16)$ and
discriminant $2^6$. Conversely, Nikulin showed that a K3 surface whose
N\'{e}ron-Severi lattice contains $\Lambda_{\Km}$ must be the Kummer
surface of some complex torus. Of course, since $A$ is an abelian
variety, $Y$ is a projective surface, so $\NS(Y)$ contains an ample
divisor as well.

We will be especially concerned with the case when $A = J(C)$ is the
Jacobian of a curve of genus $2$. Let $x_0, \dots, x_5$ be the
Weierstrass points of $C$. The embedding $\eta_0: C \rightarrow A$
given by $x \mapsto [x] - [x_0]$ gives a particular theta divisor
on~$A$, and the translates $\eta_0(C) + [x_i] - [x_j]$ with $0 \leq i
< j \leq 5$ give fifteen more special divisors. The images of these
sixteen divisors (tropes) on the Kummer surface of $A$ are disjoint
rational curves, and each intersects six rational curves coming from
the blowups of the singular points (i.e.\ the nodes). This classical
configuration of tropes and nodes on the Kummer surface is called the
$(16,6)$ configuration, and the intersection pairing describes a
vertex- and edge-transitive bipartite graph of degree~$6$ on $32$
vertices, isomorphic with the quotient of the \hbox{$6$-cube} by
central reflection.

Next, consider a K3 surface $X$\/ with a symplectic involution
$\iota$, i.e.\ an involution $\iota$ that multiplies the algebraic
$2$-forms on~$X$\/ by~$+1$ (such an involution of~$X$\/ is also known
as a Nikulin involution).  Then $\iota$ has eight fixed points on~$X$,
and the minimal desingularization $Y$\/ of the quotient $X/\{1,
\iota\}$ is again a K3 surface. If in addition $Y$\/ is a Kummer
surface $\Km(A)$ and the quotient map $\pi: X \rightarrow Y$ induces a
Hodge isometry $\pi_*: T_X(2) \cong T_Y$, we say that $X$\/ and $A$ are
related by a Shioda-Inose structure.  We have a diagram
$$
\xymatrix{
X \ar@{-->}[dr] & & A \ar@{-->}[dl] \\
& Y &
}
$$ of rational maps of degree $2$, and Hodge isometries $T_X(2) \cong
T_Y \cong T_A(2)$, thus inducing a Hodge isometry $T_X \cong T_A$.
(Note: A Hodge isometry is an isometry of cohomology lattices
compatible with the Hodge decomposition.)

Conversely, a theorem of Morrison \cite{Mo} shows that any Hodge isometry
between $T_X$ and $T_A$ for a K3 surface $X$\/ and an abelian surface
$A$ is induced by a Shioda-Inose structure.

\subsection{Elliptic K3 surfaces with $\mathrm{II}^*$ and
$\mathrm{III}^*$ fibers, and curves of genus $2$} \label{thesis}

We shall exploit such a Shioda-Inose correspondence between Jacobians
of genus $2$ curves and elliptic K3 surfaces with singular fibers of
type $\mathrm{II}^*$ and $\mathrm{III}^*$; equivalently, whose root lattices
$L_v$ are $E_8$ and $E_7$ respectively.
Let $C$ be a curve of genus $2$ over~$k$, and let
$$
y^2 = f(x) = f_6 x^6 + \dots + f_0 = f_6 \prod (x - \alpha_i)
$$
be a Weierstrass equation for $C$, with $f_i \in k$ and $\alpha_i \in \kbar$
(though in general $\alpha_i \notin k$).  There exist polynomial functions
$I_2(f), I_4(f), I_6(f)$ and $I_{10}(f)=\disc(f)$ of degrees $2,4,6,10$
in the coefficients of $f$ (the Igusa-Clebsch invariants of~$f$\/),
 giving a well-defined point $(I_2: I_4: I_6: I_{10})$ in
weighted projective space $\Proj^3_{1,2,3,5}$ which does not depend
on the choice of Weierstrass equation. The complement of the
hyperplane $z_4 = 0$ (where $z_4$ is the last coordinate on
$\Proj^3_{1,2,3,5}$) yields a coarse moduli space $\M_2$ of
curves of genus $2$ \cite{Ig}.  Note that $I_{10}$ is the discriminant of the
sextic polynomial $f$, and therefore it cannot vanish.
Also, the space $\M_2$ has a singular point at $(0:0:0:1)$,
corresponding to the curve $y^2 = x^5 + 1$. Given $\alpha_1, \alpha_2,
\alpha_3, \alpha_5 \in k$, with $\alpha_5 \neq 0$, it is not
necessarily the case that one can construct a genus $2$ curve over $k$
with invariants $I_d = \alpha_{2d}$.  There is an obstruction in $\Br_2(k)$:
when it vanishes, the construction of $C$ is made explicit by work of
Mestre~\cite{Me}. In any case, $C$ may always be defined over a
quadratic extension of $k$. When $k$ is a finite field, the Brauer
obstruction vanishes, and we may define $C$ over $k$. Also note that
such a curve $C$ is unique only up to $\kbar$-isomorphism,
since $\M_2$ is only a coarse moduli space.

The main result of \cite{Kum1} is the following.

\begin{theorem}\label{thesisthm}
 The elliptic K3 surface given by the Weierstrass equation
$$
y^2 = x^3 - t^3 \left(\frac{I_4}{12} t + 1 \right) x + t^5 \left(
 \frac{I_{10}}{4} t^2 + \frac{I_2 I_4 - 3 I_6}{108} t + \frac{I_2}{24}
 \right)
$$
which has elliptic fibers of type $E_8$ and $E_7$ respectively at
$t = \infty$ and $t = 0$, is related by a Shioda-Inose structure to
the Jacobian of the genus $2$ curve $C$ whose Igusa-Clebsch invariants
are $(I_2: I_4: I_6: I_{10})$.
\end{theorem}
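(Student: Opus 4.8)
The plan is to verify Theorem~\ref{thesisthm} by directly analyzing the elliptic fibration defined by the given Weierstrass equation, confirming its fiber types, and then establishing the Shioda-Inose correspondence through the transcendental lattice. First I would check that the displayed equation genuinely defines an elliptic K3 surface, by confirming that the coefficients $a_4(t) = -t^3(\frac{I_4}{12}t + 1)$ and $a_6(t) = t^5(\frac{I_{10}}{4}t^2 + \frac{I_2 I_4 - 3I_6}{108}t + \frac{I_2}{24})$ have the appropriate degrees (at most $8$ and $12$), so that the surface is K3 rather than rational. Next I would run Tate's algorithm at $t = 0$ and $t = \infty$ to read off the Kodaira fiber types. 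At $t = \infty$, after the change of coordinates homogenizing $\Proj^1_t$, the vanishing orders of $a_4$ and $a_6$ should force a type $\II^*$ fiber (root lattice $E_8$); at $t = 0$, the vanishing orders $a_4 \sim t^3$, $a_6 \sim t^5$ should produce a type $\III^*$ fiber (root lattice $E_7$). This part is a routine but essential computation.

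With the two fibers of type $\II^*$ and $\III^*$ in hand, the Shioda-Tate formula gives $\rho(X) \geq 2 + 8 + 7 = 17$, and the trivial lattice $T$ contains $U \oplus E_8 \oplus E_7$. The heart of the matter is to show that $X$ is Shioda-Inose related to $J(C)$, which by Morrison's theorem (quoted just before the statement) reduces to exhibiting a Hodge isometry $T_X \cong T_{J(C)}$ between the transcendental lattices. The strategy I would pursue is to compute the transcendental lattice of the generic such K3 surface: since $\NS(X)$ contains $E_8 \oplus E_7 \oplus U$ (of rank $17$), the orthogonal complement $T_X$ has rank $5$ and, for a generic member of the family, should be isometric to $U^2 \oplus \langle 2 \rangle$ or a closely related rank-$5$ lattice. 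On the abelian side, for a generic principally polarized abelian surface $A = J(C)$ one has $\NS(A) \cong \langle 2 \rangle$ (the class of the theta divisor) and $T_A$ of rank $5$; the Hodge isometry $T_X \cong T_A$ is exactly what a Shioda-Inose structure provides, and it specializes correctly under the four-parameter family parametrized by $(I_2 : I_4 : I_6 : I_{10})$.

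The main obstacle I anticipate is not the lattice bookkeeping but rather the explicit identification of the periods: one must show that the specific Weierstrass equation displayed, with its precise coefficients in terms of the Igusa-Clebsch invariants, realizes the correct point in the moduli space $\sF_{L}$ for $L = U \oplus E_8(-1) \oplus E_7(-1)$, and that the period of $X$ matches the period of $J(C)$ under the Shioda-Inose recipe. The cleanest route is probably to produce the Nikulin involution on $X$ and the rational map $X \dashrightarrow Y$ to the Kummer surface $\Km(J(C))$ explicitly, then verify $\pi_* : T_X(2) \cong T_Y$ directly; alternatively, one checks that the map of moduli spaces $\M_2 \to \sF_L$ so defined is dominant (both being $3$-dimensional) and generically injective, so that matching on a dense set of fibers suffices. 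Since this is \emph{the} main result of \cite{Kum1}, I would expect the rigorous argument to invoke the detailed parametrization and period computations carried out there, and I would lean on that reference for the explicit verification that the coefficients are correct, rather than rederiving the $(16,6)$ configuration and the trope-node incidences from scratch. The one step where genuine care is required is ensuring primitivity of the embedding $U \oplus E_8(-1) \oplus E_7(-1) \hookrightarrow \Lambda$ and the compatibility of the Hodge structures under the degree-$2$ isogenies, since a sign or scaling error in $T_X(2) \cong T_A(2)$ would break the correspondence.
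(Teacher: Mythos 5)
The paper does not actually prove this theorem: it is stated as the main result of \cite{Kum1} and simply imported, so the ``proof'' in the paper is the citation itself. Your proposal --- verifying the K3 condition and the $\mathrm{II}^*/\mathrm{III}^*$ fiber types by Tate's algorithm, reducing the Shioda--Inose claim to a Hodge isometry of transcendental lattices via Morrison's theorem, and then deferring the essential verification that these particular Weierstrass coefficients encode the Igusa--Clebsch invariants to the explicit computations of \cite{Kum1} --- amounts to the same treatment, and the parts you carry out are sound, with two small caveats: the generic transcendental lattice here is $U^2 \oplus \langle -2 \rangle$ rather than $U^2 \oplus \langle 2 \rangle$, and your alternative moduli-theoretic route (dominance plus generic injectivity, ``so that matching on a dense set of fibers suffices'') is circular as stated, since establishing the match on a dense set is precisely the content one is trying to prove.
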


Let $L$ be $U \oplus E_8(-1) \oplus E_7(-1)$. Then the above theorem
gives an isomorphism
$$
\psi: \M_2 \rightarrow \mathcal{E}_{E_8, E_7}
$$
between the coarse moduli space $\M_2$ of genus $2$ curves and the
moduli space of elliptic K3 surfaces with an $E_8$ fiber at $\infty$
and an $E_7$ fiber at $0$. Furthermore, this correspondence is
Galois-invariant: the Igusa-Clebsch invariants of $C$ and the Weierstrass
coefficients of the K3 surface are defined over the same field. This
is the key fact which leads to number-theoretic applications, such as
computation of models of Shimura curves over $\Q$ in~\cite{E1} or of
Hilbert modular surfaces in this paper. However, note that $C$ may
not be itself defined over the ground field, even though its
Igusa-Clebsch invariants are.

Now, let $\A_2$ be the moduli space of principally polarized abelian surfaces.
Note that the space $\M_2$ is the complement of the divisor in~$\A_2$
consisting of points corresponding to the product of two elliptic curves.
On the other hand, the moduli space $\mathcal{E}_{E_8, E_7}$
is an open subset of the moduli space $\sF_L$
of K3 surfaces polarized by $L = U \oplus E_8(-1) \oplus E_7(-1)$.
We may write such a K3 surface in Weierstrass form
$$
y^2 = x^3 + t^3 (at + a') x + t^5 (b'' t^2 + bt + b').
$$
It has an $E_8$ fiber at $t = \infty$ and at least an $E_7$ fiber
at $t = 0$. The discriminant of the cubic polynomial is
\begin{align*}
d &= -t^9 \Bigl(27{b''}^2 t^5 + 54 b b'' t^4 +(4a^3 + 27 b^2 + 54 b' b'') t^3 \\
  & \qquad +(12 a^2 a' + 54 b b') t^2 + (12a{a'}^2 + 27{b'}^2) t + 4{a'}^3
    \Bigr).
\end{align*}

By Tate's algorithm, $b'' \neq 0$ (otherwise we would have a rational elliptic
surface), and the fiber at $t = \infty$ must be of type $E_8$, while
the fiber at $t=0$ is of type $E_7$ if and only if $a' \neq 0$,
in which case one may set $a' = -1$ by scaling $(x,y,t)$ appropriately.
If on the other hand $a' = 0$ then the $E_7$ fiber gets promoted to
an $E_8$ fiber (and no further, since $b'$ cannot vanish, else we would have
a rational elliptic surface). Therefore, the moduli space
$\mathcal{E}_{E_8, E_7}$ is the complement in $\sF_L$ of the hypersurface
$a' = 0$ that corresponds to polarization by $U \oplus E_8(-1) \oplus E_8(-1)$.

When we do have $a' = 0$, it was shown by Inose that the K3 surface
$$
y^2 = x^3 + a t^4 x + t^5 (b'' t^2 + bt + b')
$$
has a Shioda-Inose structure, making it $2$-isogenous with a product
of two elliptic curves \cite{In, Sh3, KS, CD, E2}. Recall that $b'b'' \neq 0$,
and it follows from the formulas in \cite{In, Sh3, Kum1} that the
$j$-invariants $j_1,j_2$ of the two elliptic curves are determined by
$$
\frac{j_1}{1728}\cdot \frac{j_2}{1728} = \frac{-a^3}{27 b' b''}, \quad\
\left(1-\frac{j_1}{1728}\right)\left(1-\frac{j_2}{1728}\right) = \frac{b^2}{4b'b''}.
$$
Note that again the map is Galois invariant and invertible, since $a$
is only defined up to a cube root of unity (one may scale $x$ by a
cube root of unity), and $b$ is only defined up to sign (one may scale
$t$ by $-1$).

Putting everything together, we have the following proposition.

\begin{proposition}
  There is a Galois invariant isomorphism $\phi: \sF_L \rightarrow \A_2$,
  which on the open subset $\mathcal{E}_{E_8, E_7}$ restricts to the
  inverse of the explicit isomorphism $\psi: \M_2 \rightarrow
  \mathcal{E}_{E_8, E_7}$ given by Theorem \ref{thesisthm}.
\end{proposition}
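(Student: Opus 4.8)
The plan is to construct the isomorphism $\phi$ by combining the two explicit pieces we already have — the isomorphism $\psi$ of Theorem~\ref{thesisthm} on the complement of $a'=0$, and Inose's degeneration correspondence on the locus $a'=0$ — and then to check that these glue into a single morphism on all of $\sF_L$. First I would recall the decomposition already established in the text: $\sF_L$ is the disjoint union of the open subset $\mathcal{E}_{E_8,E_7}$ (where $a'\neq 0$) and the hypersurface $a'=0$ corresponding to polarization by $U\oplus E_8(-1)\oplus E_8(-1)$. On $\mathcal{E}_{E_8,E_7}$ I simply set $\phi = \psi^{-1}$, composed with the inclusion $\M_2\hookrightarrow\A_2$ as the complement of the product locus. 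On the closed stratum $a'=0$ I use Inose's result: such a K3 surface is $2$-isogenous to a product $E_1\times E_2$, whose $j$-invariants $j_1,j_2$ are recovered from the symmetric functions $j_1j_2 = -a^3/(27b'b'')$ and $(1-j_1)(1-j_2) = b^2/(4b'b'')$; this assigns to each such surface the point of $\A_2$ corresponding to $E_1\times E_2$, which is exactly the product locus complementary to $\M_2$.

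Next I would verify that $\phi$ is well-defined and bijective on each stratum. On $\mathcal{E}_{E_8,E_7}$ this is immediate from Theorem~\ref{thesisthm}, which already gives $\psi$ as an \emph{isomorphism} of moduli spaces. On the $a'=0$ stratum, the key point is that the assignment $(X,j)\mapsto E_1\times E_2$ is well-defined: the data $(a,b,b'')$ are only determined up to the scalings noted in the text ($a$ up to a cube root of unity via $x\mapsto \zeta x$, and $b$ up to sign via $t\mapsto -t$), and the two displayed symmetric functions are visibly invariant under exactly these substitutions, so the unordered pair $\{j_1,j_2\}$ — hence the product $E_1\times E_2$ up to isomorphism — is an invariant of the polarized surface. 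Conversely, given a product of elliptic curves one solves the two equations for the (unordered) pair $\{j_1,j_2\}$ and reconstructs the Weierstrass coefficients up to the same scalings, yielding the inverse; this shows $\phi$ restricts to a bijection of the $a'=0$ stratum onto the product locus.

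The step I expect to be the main obstacle is showing that the two pieces assemble into a single \emph{morphism} (indeed isomorphism) of varieties across the boundary $a'=0$, rather than merely a bijection of sets that happens to agree stratum-by-stratum. The cleanest route is the transcendental one: both $\sF_L$ and $\A_2$ are period domains (quotients of type $\mathrm{IV}_2$ Hermitian domains by arithmetic groups), and by Morrison's theorem any Hodge isometry $T_X\cong T_A$ is induced by a Shioda-Inose structure. Since $L = U\oplus E_8(-1)\oplus E_7(-1)$ has the same orthogonal complement in $\Lambda$ as the transcendental lattice attached to a principally polarized abelian surface, the period identification $\Omega_L\cong\Omega_{\A_2}$ is an isomorphism of the ambient domains compatible with the arithmetic groups; this gives an analytic isomorphism $\sF_L\cong\A_2$ whose restriction to $\mathcal{E}_{E_8,E_7}$ must coincide with $\psi^{-1}$ by construction and whose restriction to $a'=0$ then automatically agrees with the Inose assignment by continuity. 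Finally, Galois invariance follows because, as already emphasized for $\psi$ in the text and for the Inose formulas above, the defining data on each side are tied by rational (in fact polynomial) relations with coefficients in the prime field, so the correspondence commutes with the Galois action; I would record this last point by noting that $\phi$ is cut out by equations over $\Q$ on both strata.
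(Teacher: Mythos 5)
Your proposal is correct and follows essentially the same route as the paper: the paper's "proof" of this proposition is precisely the discussion preceding it — the decomposition of $\sF_L$ into $\mathcal{E}_{E_8,E_7}$ and the hypersurface $a'=0$, the identification of the latter with the product-of-elliptic-curves divisor via Inose's formulas for $j_1 j_2$ and $(1-j_1)(1-j_2)$, and the observation that the scaling ambiguities (cube root of unity on $a$, sign on $b$) leave these symmetric functions invariant, so the correspondence is invertible and Galois equivariant on each stratum. The one point where you go beyond the paper is the gluing step: the paper does not address whether the stratumwise bijections assemble into a morphism across $a'=0$, and instead simply cites the Hodge-theoretic treatment in \cite[pp.~186--188]{GN}; your transcendental argument (period domains for $L^\perp \cong U^2 \oplus \langle -2 \rangle$, the exceptional isomorphism with the Siegel modular threefold, and Morrison's theorem that Hodge isometries $T_X \cong T_A$ come from Shioda--Inose structures) is essentially that cited approach spelled out, and it is the right way to make the statement airtight. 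One small correction: the relevant Hermitian domains are of type $\mathrm{IV}_3$, not $\mathrm{IV}_2$ — since $L$ has rank $17$, its orthogonal complement has signature $(2,3)$ and the domain is $3$-dimensional, matching $\dim \A_2 = 3$.
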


For a Hodge-theoretic approach to this isomorphism of moduli spaces,
see \cite[pp. 186--188]{GN}.

\section{Humbert surfaces and Hilbert modular surfaces}\label{realmult}

We next discuss moduli spaces of abelian surfaces with real
multiplication.  As above, let $D > 0$ be a fundamental discriminant,
i.e.\ $D = d$ for $d \equiv 1 \pmod 4$ or $D = 4d$ for $d \equiv 2,3
\pmod 4$, where $d > 1$ is squarefree in both cases. Then $D$\/ is the
discriminant of the ring of integers $\sO_D = \Z + \Z(D+\sqrt{D})/2$
of the real quadratic field $K = \Q(\sqrt{D})$.
Let $\sigma_1, \sigma_2$ be the two embeddings of $K$\/ into $\C$.
Then $\SL_2(\sO_D)/\{\pm 1\}$ acts on $\Ha^+ \times \Ha^-$ by

$$ \left(\begin{array}{cc} a & b \\ c & d
  \end{array} \right) :
(z_1, z_2) \mapsto \left( \frac{\sigma_1(a) z +
  \sigma_1(b)}{\sigma_1(c) z + \sigma_1(d)} , \frac{\sigma_2(a) z +
  \sigma_2(b)}{\sigma_2(c) z + \sigma_2(d)} \right),
$$
where $\Ha^+ = \{z \in \C \, | \, \Imag z > 0 \}$ is the complex
upper half plane and $\Ha^- = - \Ha^+$ is the lower half plane.

Let
\[
\SL_2(\sO_D\0, \sO_D^*) = \left\{
\left(\begin{array}{cc} a & b \\ c &
  d \end{array}\right) \in \SL_2(K)
 \mid
  a, d \in \sO_D\0,
  c \in \sO_D^*,
  b \in (\sO_D^*)^{-1}
\right\}.
\]
We claim that this action is equivalent to the action of
$\SL_2(\sO_D\0, \sO_D^*)$ on $\Ha^+ \times \Ha^+$.
Here $\sO_D^*$ is the dual of $\sO_D$ with respect to the trace form on~$K$\/
(that is, $\sO_D^*$ is the inverse different of~$K$\/).
It is an invertible $\sO_D$-module of rank $1$;
in fact, it is easily checked to be $\frac{1}{\sqrt{D}} \sO_D$.
Assume, without loss of generality, that $\sigma_1(\sqrt{D}) > 0$
and $\sigma_2(\sqrt{D}) < 0$. Then if we let
$$
\psi: \Ha^+ \times \Ha^- \rightarrow \Ha^+ \times \Ha^+
$$ be the biholomorphic map $(z_1, z_2) \mapsto \big(z_1
\sigma_1(\sqrt{D}), z_2 \sigma_2(\sqrt{D})\big)$, and
$$
\phi: \SL_2(\sO_D) \rightarrow \SL_2(\sO_D\0, \sO_D^*)
$$
be the group isomorphism given by
$$ \left(\begin{array}{cc} a & b \\ c & d \end{array}\right) \mapsto
\left(\begin{array}{cc} a & b\sqrt{D} \\ c/\sqrt{D} &
  d \end{array}\right),
$$
an easy check shows that the following diagram commutes.
$$
\begin{CD}
\SL_2(\sO_D) \times \Ha^+ \times \Ha^- @>>> \Ha^+ \times \Ha^- \\
@VV{\phi \times \psi}V @VV{\psi}V \\
\SL_2(\sO_D\0, \sO_D^*) \times \Ha^+ \times \Ha^+ @>>> \Ha^+ \times \Ha^+
\end{CD}
$$
Therefore, $\psi$ induces an isomorphism on the quotients, as desired.

Next, we outline the proof that $\SL_2(\sO_D\0, \sO_D^*)$ is the coarse
moduli space of principally polarized abelian surfaces with real
multiplication by $\sO_D$, closely following \cite{HvdG}.
Let $M = \sO_D\0 \oplus \sO_D^*$,
and define an alternating $\Z$-valued form on~$M$\/ by
$$
E_M ((\alpha_1, \beta_1), (\alpha_2, \beta_2))
= \Tr_{K/\Q} (\alpha_1 \beta_2 - \alpha_2 \beta_1).
$$
Now, for $z = (z_1, z_2) \in \Ha^2$, consider the embedding
\begin{align*}
L_z& : K \oplus K \rightarrow V = \C^2 \\ & (\alpha, \beta) \mapsto
\alpha z + \beta = (\sigma_1(\alpha) z + \sigma_1(\beta),
\sigma_2(\alpha) z + \sigma_2(\beta)),
\end{align*}
which gives us a lattice $L_z(M)$ in $V$. We use this to transfer
$E_M$ to $L_z(M)$ and extend the form $\R$-linearly. This gives an
alternating form
$$
E_{M,z}: V \times V \rightarrow \R,
$$
which can be described in coordinates on $\C^2$ as
$$ E_{M,z} ((\zeta_1, \zeta_2), (\eta_1, \eta_2)) = \frac{\Imag
  \zeta_1 \overline{\eta_1}}{\Imag z_1} + \frac{\Imag \zeta_2
  \overline{\eta_2}}{\Imag z_2}.
$$

This gives a Riemann form on the resulting complex torus $V/L_z(M)$,
and since the form $E_M$ on the lattice $L_z(M)$ is unimodular, we
obtain an abelian variety with principal polarization. The action of
$\sO_D$ is as follows:
$$ \iota(\alpha) (\zeta_1, \zeta_2) \mapsto (\sigma_1(\alpha) \zeta_1,
\sigma_2(\alpha) \zeta_2).
$$

Conversely, it is not hard to show that any principally polarized
abelian surface with real multiplication by $\sO_D$ can be identified
with some $V/L_z(M)$. Finally, we note that the abelian surfaces
corresponding to two different $z \in \Ha^+ \times \Ha^+$ are
isomorphic (with an $\sO_D$-equivariant isomorphism) exactly when these
two points differ by an element of $\SL_2(\sO_D\0, \sO_D^*)$.
Therefore, it follows that the moduli space of abelian surfaces with real
multiplication is $\SL_2(\sO_D\0, \sO_D^*) \backslash (\Ha^+ \times \Ha^+)$,
which as we have shown above, is biholomorphic with
$\SL_2(\sO_D) \backslash (\Ha^+ \times \Ha^-)$.

The construction above yields a map from $\SL_2(\sO_D) \backslash
(\Ha^+ \times \Ha^-)$ to the quotient of $\sS_2$, the Siegel upper
half space of degree $2$, by the arithmetic group $\Sp_4(\Z)$.  In
other words, we get a holomorphic map to $\A_2$, the moduli space of
principally polarized abelian surfaces. Its image (or its closure in
$\A_2$) is the {\em Humbert surface} $\sH_D$ for discriminant $D$.
We next show that the map from the Hilbert modular surface $\SL_2(\sO_D)
\backslash (\Ha^+ \times \Ha^-)$ to the Humbert surface is generically
$2$ to $1$. We may compute this degree above a very general point on
the Humbert surface. Such a point corresponds to a principally
polarized abelian surface $A$, where one has forgotten the action of
$\sO_D$ by endomorphisms. Generically, there are exactly two ways to
extend the obvious map $\Z \rightarrow \End(A) \cong \sO_D$ to
$\sO_D$, corresponding to the choice of image of $(D + \sqrt{D})/2$.

Our approach to computing equations of Hilbert modular surfaces begins
as follows. Fix a discriminant $D$, which we assume to be a
fundamental discriminant.  First, we need to compute a model of the
Humbert surface, i.e.\ the subvariety of $\A_2$ corresponding to
abelian surfaces with real multiplication by $\sO_D$. Via the inverse
of the isomorphism $\phi: \sF_L \rightarrow \A_2$ of subsection
\ref{thesis} above, the Humbert surface corresponds to a surface
inside the three-dimensional moduli space of $L$-polarized K3 surfaces.

Define a pairing on $\sO_D$ by $(\alpha, \beta) \mapsto \tr(\alpha \beta^*)$,
where $\beta^*$ is the Galois conjugate of $\beta$.  This gives $\sO_D$
the structure of an indefinite lattice, which we next identify with
the N\'{e}ron-Severi lattice of~$A$.

\begin{proposition}
Let $A$ be a principally polarized abelian surface with $\End(A) \cong \sO_D$.
Then $\NS(A) \cong \End(A)$.  The lattice $\NS(A)$ has a basis with Gram matrix
\begin{equation}
\left(\begin{array}{cc}
\label{eq:ODGram}
2 & D \\
D & (D^2 - D)/2
\end{array}\right)
\end{equation}
of signature $(1,1)$ and discriminant $-D$.
\end{proposition}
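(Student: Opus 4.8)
The plan is to identify $\NS(A)$ with $\End(A)=\sO_D$ as lattices by means of the principal polarization, and then to evaluate the resulting Gram matrix on an explicit $\Z$-basis of $\sO_D$. First I would invoke the standard correspondence attached to a principal polarization $\lambda\colon A\to\hat A$: the map $\NS(A)\to\End^{\mathrm{sym}}(A)$ sending the class of a line bundle $L$ to $\lambda^{-1}\circ\phi_L$ is an isomorphism of abelian groups, where $\End^{\mathrm{sym}}(A)$ denotes the endomorphisms fixed by the Rosati involution $\psi\mapsto\lambda^{-1}\hat\psi\lambda$. Since this involution is positive and $K=\Q(\sqrt{D})$ is totally real, the only such involution on $\End^0(A)=K$ is the identity (Galois conjugation $\psi\mapsto\psi^*$ would give the indefinite form $\psi\mapsto\Tr_{K/\Q}(\psi\psi^*)=2N_{K/\Q}(\psi)$, which is not positive definite). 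Hence $\End^{\mathrm{sym}}(A)=\End(A)=\sO_D$, and we obtain an isomorphism of groups $\NS(A)\cong\sO_D$.

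Next I would pin down the intersection form under this identification. For $L\leftrightarrow\psi$ one has $(L^2)=2\chi(L)$ and $\chi(L)^2=\deg\phi_L=\deg(\lambda\circ\psi)=\deg\psi$, the last equality because $\lambda$ is principal. Regarding $\psi\in K$ as acting by scalar multiplication on $H_1(A,\Q)\cong K^{2}$, its rational representation has characteristic polynomial $(x^2-\Tr_{K/\Q}(\psi)x+N_{K/\Q}(\psi))^2$, so $\deg\psi=N_{K/\Q}(\psi)^2$ and $(L^2)=\pm 2\,N_{K/\Q}(\psi)$. The sign is $+$: the polarization class corresponds to $\psi=1$ and satisfies $(\Theta^2)=2\chi(\Theta)=2=2\,N_{K/\Q}(1)$. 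Polarizing the quadratic form $\psi\mapsto 2\,N_{K/\Q}(\psi)=2\psi\psi^*$ then yields the bilinear form $(L_1\cdot L_2)=\psi_1\psi_2^*+\psi_1^*\psi_2=\Tr_{K/\Q}(\psi_1\psi_2^*)$, which is precisely the trace pairing defined above. This gives $\NS(A)\cong\End(A)$ as lattices.

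Finally I would evaluate the Gram matrix on the $\Z$-basis $\{1,\omega\}$ of $\sO_D$, where $\omega=(D+\sqrt{D})/2$ and $\omega^*=(D-\sqrt{D})/2$. A direct computation gives $\Tr_{K/\Q}(1)=2$, $\Tr_{K/\Q}(\omega)=\omega+\omega^*=D$, and $\Tr_{K/\Q}(\omega\omega^*)=2\,N_{K/\Q}(\omega)=2\cdot\frac{D^2-D}{4}=\frac{D^2-D}{2}$, reproducing the matrix~\eqref{eq:ODGram}. Its determinant is $2\cdot\frac{D^2-D}{2}-D^2=-D$, so the lattice has discriminant $-D$; being negative, the determinant forces an indefinite form of signature $(1,1)$, consistent with the Hodge index theorem and with $\rho(A)=2$.

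The main obstacle is the second step: establishing the precise dictionary between the geometric intersection form and the arithmetic trace form, in particular the degree computation $\deg\psi=N_{K/\Q}(\psi)^2$ together with the correct fixing of the overall sign via the polarization class. Once this identification is secured, the Gram-matrix computation and the signature and discriminant assertions are entirely routine.
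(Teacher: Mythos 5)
Your proof is correct, and its skeleton coincides with the paper's: identify $\NS(A)$ with the Rosati-fixed part of $\End(A)$ via the principal polarization, use positivity of the Rosati involution to conclude it acts as the identity on the real quadratic field $\End(A)\otimes\Q$ (so the fixed part is all of $\End(A)\cong\sO_D$), and then evaluate the resulting pairing on the basis $\bigl(1,(D+\sqrt{D})/2\bigr)$. The genuine difference is in the middle step: the paper disposes of both the group isomorphism and the isometry statement by citing Proposition 5.2.1 of Birkenhake--Lange, taking on faith that the intersection form corresponds to $(\alpha,\beta)\mapsto\Tr_{K/\Q}(\alpha\beta^*)$, whereas you derive the isometry from scratch: Riemann--Roch gives $(L^2)=2\chi(L)$, the identity $\chi(L)^2=\deg\phi_L=\deg\psi$ (using $\deg\lambda=1$), and the rational representation of $\psi\in K$ on the two-dimensional $K$-vector space $H_1(A,\Q)$ gives $\deg\psi=N_{K/\Q}(\psi)^2$; polarizing the quadratic form $2N_{K/\Q}$ then produces exactly the trace pairing. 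This buys self-containedness, and it also makes transparent why the pairing involves the Galois conjugate $\beta^*$ rather than the Rosati involute (which is trivial here) -- a point the paper's citation leaves implicit. One spot to tighten: from $\chi(L)^2=N_{K/\Q}(\psi)^2$ you only get $(L^2)=\pm 2N_{K/\Q}(\psi)$ with a sign a priori depending on $L$, so checking the single class $L=\Theta$ does not by itself fix the sign everywhere. The fix is one sentence: both $(L^2)$ and $2N_{K/\Q}(\psi)$ are quadratic forms on the rank-$2$ lattice $\NS(A)$ whose values agree up to sign at every lattice point, so $(q_1-q_2)(q_1+q_2)$ vanishes identically as a polynomial and hence one factor is identically zero; evaluating at the polarization class, where both forms take the nonzero value $2$, then gives $q_1=q_2$ globally. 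With that observation added, your argument is complete and slightly stronger than the paper's, in the sense of being independent of the reference.
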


\begin{proof}
For a principally polarized abelian surface $A$, there is an isomorphism
$$
\NS(A) \rightarrow (\End(A))^\dagger
$$
induced naturally by the polarization, where $\dagger$ is the
Rosati involution (also arising from the polarization). Note that for
a general polarization one only gets a weaker isomorphism, between the
$\Q$-spans of both sides. For the proof of both assertions see
Proposition 5.2.1 in \cite{BL}. Now, the Rosati involution is a
positive involution of the real quadratic field $\End(A) \otimes \Q$,
and hence cannot be the nontrivial element of the Galois group.
It must therefore be the identity, whence the subring of~$\End(A)$
fixed by~$\dagger$ is $\End(A)$ itself, proving the first statement.
The isomorphism of groups $\NS(A) \rightarrow \End(A)$ is an isometry,
taking the intersection form on $\NS(A)$ to the form
$(\phi, \psi) \mapsto \tr(\phi \psi^{\vee})$ on $\End(A)$,
which becomes $(\alpha, \beta) \mapsto \tr(\alpha \beta^*)$ on $\sO_D$.
Computing the matrix of this form on the basis $(1,(D+\sqrt{D})/2)$ of~$\sO_D$,
we obtain the claimed Gram matrix~(\ref{eq:ODGram}).
\end{proof}

We henceforth use $\sO_D$ also to denote the lattice with underlying group
 $\sO_D \cong \Z^2$ and form $(\alpha, \beta) \mapsto \tr(\alpha \beta^*)$,
 with Gram matrix~(\ref{eq:ODGram}).

\begin{proposition}
  There is a primitive embedding, unique up to isomorphism, of the lattice
  $\sO_D$ into $U^3$.  Let $T_D$ be the orthogonal complement of
  $\sO_D$ in $U^3$.  Then there is a primitive embedding,
  unique up to isomorphism, of $T_D$ into the K3 lattice $\Lambda$.
\end{proposition}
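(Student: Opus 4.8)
The plan is to use Nikulin's theory of primitive embeddings of even lattices into even unimodular lattices, as developed in his foundational paper \cite{Ni1}, which is already cited in the excerpt. The two assertions to be proved are parallel in structure: first, that the rank-$2$ lattice $\sO_D$ of signature $(1,1)$ and discriminant $-D$ embeds primitively and uniquely into $U^3$ (signature $(3,3)$, unimodular); and second, that the resulting orthogonal complement $T_D$ (of rank $4$ and signature $(2,2)$) embeds primitively and uniquely into the full K3 lattice $\Lambda = E_8(-1)^2 \oplus U^3$ (signature $(3,19)$, unimodular, rank $22$).

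First I would recall Nikulin's existence-and-uniqueness criterion: a primitive embedding of an even lattice $S$ into an even unimodular lattice is governed by the discriminant form $q_S$ on the discriminant group $A_S = S^*/S$, together with the signature. The key structural fact is that the orthogonal complement $S^\perp$ has discriminant form $-q_S$ (with the signature of $S^\perp$ determined by subtracting that of $S$ from that of the ambient lattice). For $S = \sO_D$, the discriminant group is cyclic of order $D$, since $|\disc(\sO_D)| = D$, and one reads off the discriminant form from the Gram matrix~(\ref{eq:ODGram}) by inverting it. The existence of a primitive embedding into $U^3$ then follows once the signature and rank conditions of Nikulin's theorem are met, which here amounts to checking that the complement $T_D$, forced to have signature $(2,2)$ and discriminant form $-q_{\sO_D}$, genuinely exists as an even lattice; uniqueness follows from Nikulin's uniqueness criterion, which for a small-rank lattice with cyclic discriminant group of the appropriate type is guaranteed by the inequalities $\ell(A_S) + 2 \le \operatorname{rank}$ of the complement being comfortably satisfied (here $\ell(A_{\sO_D}) \le 1$ since the group is cyclic, while the complement has rank $4$). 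The second embedding is then handled identically: $T_D$ has signature $(2,2)$ and discriminant form $-q_{\sO_D}$, and Nikulin's theorem applied to the embedding $T_D \hookrightarrow \Lambda$ gives existence and uniqueness because the ambient rank $22$ is large and $\ell(A_{T_D}) = \ell(A_{\sO_D})$ is again at most $1$, leaving abundant slack in the rank inequalities.

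The main obstacle, and the step deserving genuine care, is the verification of the precise numerical hypotheses in Nikulin's uniqueness theorem, which are sensitive to the $2$-adic and $p$-adic structure of the discriminant form and can fail when the rank of the ambient lattice is barely large enough relative to $\ell(A_S)$. Here, however, both target lattices ($U^3$ and $\Lambda$) are unimodular of sufficiently high rank, and the discriminant group $A_{\sO_D}$ is cyclic of order $D$, so $\ell(A_{\sO_D}) \le 1$ at every prime; this keeps us well inside the regime where Nikulin's sufficient conditions (roughly, $\operatorname{rank} \ge \ell + 2$, together with a signature balancing condition) hold automatically. I would therefore organize the argument as: (i) compute $A_{\sO_D}$ and $q_{\sO_D}$ explicitly from~(\ref{eq:ODGram}); (ii) invoke \cite[Theorem 1.14.4]{Ni1} to get existence and uniqueness of $\sO_D \hookrightarrow U^3$, identifying $T_D$ as the complement with form $-q_{\sO_D}$ and signature $(2,2)$; (iii) repeat verbatim for $T_D \hookrightarrow \Lambda$, where the slack is even larger. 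The only real bookkeeping is confirming the signature arithmetic and that the cyclic discriminant group keeps $\ell$ small, after which uniqueness is immediate.
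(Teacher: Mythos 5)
Your overall strategy---two applications of Nikulin's embedding theorem, using the signature inequalities together with a bound on $\ell(A_S)$---is exactly the argument the paper gives. However, there is a factual error in your verification of the hypothesis, and it affects half of the discriminants treated in the paper. You claim that the discriminant group $A_{\sO_D}$ is cyclic of order $D$ ``since $|\disc(\sO_D)| = D$,'' and you deduce $\ell(A_{\sO_D}) \le 1$. The order of a finite abelian group does not determine its structure, and cyclicity in fact fails whenever $D \equiv 0 \pmod 4$: for even fundamental $D$, every entry of the Gram matrix~(\ref{eq:ODGram}) is even, so the Smith normal form is $\mathrm{diag}(2, D/2)$ and $A_{\sO_D} \cong (\Z/2\Z) \oplus (\Z/(D/2)\Z)$ (for example, $D=8$ gives $\Z/2\Z \oplus \Z/4\Z$); hence $\ell(A_{\sO_D}) = 2$, not $1$. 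This is precisely why the paper bounds $\ell$ only via the rank: since $\sO_D$ is two-dimensional, $\ell(A_{\sO_D}) \le 2$ unconditionally, and the paper then records the exact group structure in the two cases ($D$ odd: cyclic of order $D$; $D$ even: $(\Z/2\Z)\oplus(\Z/(D/2)\Z)$).

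The conclusion survives because Nikulin's numerical condition for $\sO_D \hookrightarrow U^3$ reads $\ell(A_{\sO_D}) + 2 \le \mathrm{rank}(U^3) - \mathrm{rank}(\sO_D) = 4$, which $\ell = 2$ still satisfies---but with equality, not ``comfortably'' as you assert; the slack you rely on simply is not there for even $D$, so your verification as written fails for those discriminants even though the theorem still applies. The repair is immediate: replace the false cyclicity claim by the rank bound $\ell \le 2$ (or by the correct case analysis above). The second embedding $T_D \hookrightarrow \Lambda$ is unaffected, since there $\ell(A_{T_D}) \le \mathrm{rank}(T_D) = 4$ while the rank difference is $22 - 4 = 18$. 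One further small correction: the reference for Theorem 1.14.4 is \cite{Ni3} (Nikulin's paper on integral symmetric bilinear forms), not \cite{Ni1}.
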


{\em Remark.}
  In the present paper we analyze only fundamental discriminants~$D$,
  for which every embedding into an even lattice of $\sO_D$ and $T_D$
  (and of the lattice $L_D$, to be introduced before the next theorem)
  is automatically primitive.  But the condition of primitivity is necessary
  to extend our theoretical analysis also to non-fundamental~$D$.

\begin{proof}
  Since $\sO_D$ is two-dimensional, the number of generators
  $\ell(\sO_D^*/\sO_D)$ of the discriminant group is at most $2$. In fact,
  the discriminant group is cyclic of order $D$\/ if $D$\/ is odd, and
  isomorphic to $(\Z/2\Z) \oplus (\Z/(\frac{D}{2} \Z))$ if $D$\/ is even.
  Therefore, by~\cite[Theorem 1.14.4]{Ni3}, $\sO_D$ has a
  unique embedding into the even unimodular lattice~$U^3$.
  Let the orthogonal complement be $T_D$; it has signature $(2,2)$ and the same
  discriminant group. By another application of~\cite[Theorem 1.14.4]{Ni3},
  we see that $T_D$ has a unique embedding into $\Lambda_{K3}$.
\end{proof}

Let $q_D$ be the discriminant form of $\sO_D$, and let $L_D$ be the
orthogonal complement of $T_D$ in $\Lambda$. Then $L_D$ has signature
$(1,17)$ and discriminant form $q_D$.  By \cite[Theorem 1.13.2]{Ni3}
$L_D$ is characterized uniquely by its signature and discriminant form.
Since $\Lambda \cong U^3 \oplus E_8(-1)^2$, it is clear that
$L_D \cong E_8(-1)^2 \oplus \sO_D$.  Finally, $L_D$ has a primitive
embedding in~$\Lambda$, which (again by~\cite[Theorem 1.14.4]{Ni3})
is unique up to isomorphism.

\begin{theorem} \label{humbert}
  Let $\sF_{L_D}$ be the moduli space of K3 surfaces that are lattice
  polarized by $L_D$.  Then the isomorphism $\phi: \sF_L \rightarrow
  \A_2$ of Section~\ref{thesis} induces a birational surjective
  morphism $\sF_{L_D} \rightarrow \sH_D$.
\end{theorem}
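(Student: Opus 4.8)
The plan is to realize $\sF_{L_D}$ as a sublocus of $\sF_L$ and then carry it across the isomorphism $\phi$ of the previous section. The first step is purely lattice-theoretic: because $L_D \cong E_8(-1)^2 \oplus \sO_D$ admits a primitive embedding of $L = U \oplus E_8(-1) \oplus E_7(-1)$ (place one $E_8(-1)$ diagonally, embed $E_7(-1)$ into the second $E_8(-1)$ with orthogonal complement $A_1(-1)$ there, and use the remaining rank-$3$ piece together with $\sO_D$ to accommodate the hyperbolic plane $U$), unique up to isometry by \cite[Theorem 1.14.4]{Ni3} and automatically primitive for fundamental $D$ as remarked above, one obtains a forgetful morphism $\sF_{L_D} \to \sF_L$: restricting an $L_D$-marking along $L \hookrightarrow L_D$ turns an $L_D$-polarized K3 surface into an $L$-polarized one. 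Composing with $\phi$ yields a morphism $\sF_{L_D} \to \A_2$ whose image I will identify with $\sH_D$.

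The heart of the argument is the transcendental-lattice dictionary. By the Torelli theorem, an $L$-polarized K3 surface $X$ lies in the image of $\sF_{L_D}$ exactly when its $L$-polarization extends to an $L_D$-polarization, i.e.\ when $\NS(X) \supseteq L_D$, equivalently when $T_X := \NS(X)^{\perp}$ embeds primitively into $L_D^{\perp} = T_D$. Now $\phi$ is defined so that $X$ and its Shioda-Inose partner $A$ satisfy a Hodge isometry $T_X \cong T_A$; under this identification the condition $T_X \hookrightarrow T_D$ becomes $T_A \hookrightarrow T_D$, that is $\NS(A) \supseteq \sO_D$. By the Proposition identifying $\NS(A)$ with $\End(A)$, this says precisely that $A$ has real multiplication by $\sO_D$, i.e.\ $A \in \sH_D$. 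Hence the image of $\sF_{L_D} \to \A_2$ is exactly $\sH_D$, and surjectivity onto $\sH_D$ follows from the surjectivity of the period map, since every period in the $T_D$-domain is realized by a genuine K3 surface whose Shioda-Inose partner is the required abelian surface.

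It remains to show the morphism $\sF_{L_D} \to \sH_D$ has degree one. Both source and target are quotients of the same bounded symmetric domain of type $\mathrm{IV}_2$ attached to the signature-$(2,2)$ lattice $T_D$, so the degree equals the index of the group defining $\sF_{L_D}$ (the stable orthogonal group of $T_D$, coming from isometries of $\Lambda$ fixing $L_D$) inside the group cutting out $\sH_D$ from $\Sp_4(\Z)$. \emph{This index computation is the main obstacle.} Concretely one must rule out the value $2$: the Hilbert modular surface $\SL_2(\sO_D)\backslash(\Ha^+\times\Ha^-)$ is a degree-$2$ cover of $\sH_D$, the two sheets corresponding to the two extensions of $\Z \to \End(A)$ to $\sO_D$ (equivalently the Galois involution of $\sO_D$), and one must check that $\sF_{L_D}$ sits on the Humbert side of this cover rather than the Hilbert side. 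I would verify this by computing the very general fibre: over an abelian surface $A$ with $\NS(A) = \sO_D$ the K3 partner $X$ has $\NS(X) = L_D$, so an $L_D$-polarization is an element of $O(L_D)$ modulo $\Gamma_{L_D}$, and the claim is that this leaves a single class. I expect this to reduce to showing that the orientation-reversing isometry of $\sO_D$ is induced by an isometry of $\Lambda$ acting trivially on $T_D$, so that it is invisible to the K3 period and collapses the two sheets; here the fundamental-discriminant hypothesis and the uniqueness of the embeddings above do the work. Finally, the statement is only a birational morphism, not an isomorphism, because of the degenerate behaviour along the locus of products of elliptic curves (the hypersurface $a' = 0$, polarization $U \oplus E_8(-1)^2$), where the Shioda-Inose correspondence breaks down, together with such boundary discrepancies between the coarse spaces; away from this locus one gets an isomorphism onto the image.
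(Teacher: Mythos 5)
Your overall architecture coincides with the paper's: establish $L \subset L_D$, use the Shioda--Inose dictionary $T_X \cong T_A$ to translate $\NS(X) \supseteq L_D$ into $\NS(A) \supseteq \sO_D$ (hence real multiplication, via the Rosati-positivity argument), conclude that the image is exactly $\sH_D$ with surjectivity coming from the Torelli theorem and surjectivity of the period map, and then reduce everything to showing the map has degree one. The gap is in that last step, which you yourself flag as the main obstacle. The precise statement you propose to prove --- that the Galois (``orientation-reversing'') isometry of $\sO_D$ is induced by an isometry of $\Lambda$ acting \emph{trivially} on $T_D$ --- is false. By Nikulin's gluing criterion, an isometry of $\Lambda$ preserving the primitive sublattice $L_D$ restricts to a pair $(\sigma,\tau) \in O(L_D) \times O(T_D)$ whose induced actions on the discriminant groups agree under the canonical identification $q_{L_D} \cong -q_{T_D}$; so if $\tau = \mathrm{id}_{T_D}$, then $\sigma$ must act trivially on the discriminant group of $L_D$, which is $\sO_D^*/\sO_D$ since $E_8(-1)^2$ is unimodular. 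But Galois conjugation acts by $-1$ on $\sO_D^*/\sO_D$: conjugation is trivial on the residue fields at the ramified primes while it negates $1/\sqrt{D}$, and this group has order $D > 2$ (for even $D$ it contains $\Z/(D/2)$ with $D/2 > 2$), so the action is genuinely nontrivial. Hence no isometry of $\Lambda$ of the kind you postulate exists, and your reduction cannot be carried out.

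What is true --- and is exactly what the paper proves --- is the weaker but sufficient statement: there is an element $\gamma_0 \in O(\Lambda)$ fixing $L$ pointwise, preserving $L_D$, and acting by $-1$ on all of $L^{\perp} \supset T_D$. Acting by $-1$ on $T_D$ is compatible with the discriminant-form constraint above, and it is still ``invisible to the K3 period'' in the sense you need, because the period line lies in $\Proj(T_D \otimes \C)$ and $-1$ is projectively trivial; this is how the two sheets of the Hilbert-to-Humbert cover get collapsed. Note that the existence of such a $\gamma_0$ is not formal: the paper constructs it explicitly, separately for $D \equiv 1$ and $D \equiv 0 \pmod 4$, using a reflection $s_\delta$ in a root of $E_8(-1)$ together with $-1$ on a $U \oplus U$ summand, and then verifies that it preserves $L_D$. (The paper also needs, and proves, the preliminary fact that a very general identification $\gamma \in \Gamma_L$ between points of $\Omega_{L_D}$ automatically preserves $L_D$, since $L_D$ is recovered Hodge-theoretically as $\Lambda^{1,1}$; your fibre description assumes this implicitly.) A secondary, smaller issue: your parenthetical construction of the embedding $L \hookrightarrow L_D$ requires $U$ to embed primitively in $\langle -2 \rangle \oplus \sO_D$, i.e.\ an isomorphism $\langle -2 \rangle \oplus \sO_D \cong U \oplus \langle -2D \rangle$, which is again a discriminant-form verification you do not supply; the paper obtains $L \subset L_D$ more cleanly by embedding $\langle 2 \rangle \subset \sO_D$ and taking orthogonal complements twice inside $U^3$ and then inside $\Lambda$.
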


\begin{proof}
  First, we note that $\Z \subset \sO_D$ induces
  an embedding of lattices $\langle 2 \rangle \subset \sO_D$, and therefore
  an embedding  $T_D \subset U^2 \oplus \langle -2 \rangle$ of orthogonal
  complements.  Taking orthogonal complements once more in
  $\Lambda_{K3}$, we deduce $L \subset L_D$.

  Fix embeddings $L \subset L_D \subset \Lambda_{K3}$. Then we have
  $L_D^{\perp} \hookrightarrow L^{\perp}$, which induces a map of
  period domains $\Omega_{L_D} \hookrightarrow \Omega_{L}$. We also
  have a map $\Gamma_{L_D} \subset \Gamma_L$. These induce a map
  $\sF_{L_D} = \Gamma_{L_D} \backslash \Omega_{L_D}
  \stackrel{\beta_D}{\rightarrow} \Gamma_L \backslash \Omega_L = \sF_L$.
  We will use the fact that this morphism has degree $1$ onto
  the image (i.e.\ it is an embedding on the generic point of $\sF_{L_D}$).
  We postpone the proof of this fact until the conclusion of the present
  argument.

  Let $\phi_D: \sF_{L_D} \stackrel{\beta_D}{\rightarrow} \sF_L
  \stackrel{\phi}{\rightarrow} \A_2$ be the composition of the maps
  above.  Now suppose $X$\/ is a K3 surface corresponding to a point
  $p$ in the image of $\beta_D$. Then we have $\NS(X) \supset L_D
  \cong E_8(-1)^2 \oplus \sO_D$, and therefore $T_X \subset T_D$. If
  $A$ (corresponding to $\phi(p)$) is the abelian surface connected to
  $X$ through a Shioda-Inose structure, we have $T_A \cong T_X \subset
  T_D$. Therefore, $\NS(A) \supset \sO_D$, the orthogonal complement
  of $T_D$ in $U^3$. Therefore $\End(A)^{\dagger} \supset \sO_D$, and
  $\phi(p)$ must lie on the Humbert surface $\sH_D$. This proves that
  the image of $\sF_{L_D}$ lands in $\sH_D$.

  Conversely, suppose $q$ is a point on $\sH_D$ corresponding to an
  abelian surface $A$ with real multiplication by $\sO_D$. Then
  $\End(A) \supset \sO_D$, and since the Rosati involution (being
  positive) can only act as the trivial element of $\Q(\sqrt{D})$, we
  must have $\End(A)^{\dagger} \supset \sO_D$. Retracing the argument
  in the previous paragraph, we see $p = \phi^{-1}(q)$ must have
  $\NS(X) \supset L_D$, and therefore is in the image of
  $\beta_D$. This proves that $\phi \circ \beta_D$ is surjective onto $\sH_D$.

  Since $\phi$ is an isomorphism and $\beta_D$ has degree $1$ onto
  its image, so does $\phi \circ \beta_D$.
\end{proof}

We now prove that fact used above: $\beta_D$ is generically an embedding.

\begin{proposition}
The map $\sF_{L_D} \rightarrow \sF_L$ is generically an embedding.
\end{proposition}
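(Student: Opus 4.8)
The plan is to compute the generic fibre of $\beta_D$ directly, using the Torelli theorem together with the period descriptions $\sF_{L_D} = \Gamma_{L_D}\backslash\Omega_{L_D}$ and $\sF_L = \Gamma_L\backslash\Omega_L$. Fix a very general point of $\Omega_{L_D}$; the corresponding K3 surface $X$ has $\NS(X) = L_D$ exactly and transcendental lattice $T_X = L_D^\perp$ (the orthogonal complement of $L_D$ in $\Lambda$). Two points of $\sF_{L_D}$ have the same image in $\sF_L$ precisely when the underlying $L$-polarized surfaces are isomorphic, so the fibre over the class of $X$ is identified with the set of $L_D$-markings of $X$ extending its $L$-marking, taken modulo Hodge automorphisms of $X$. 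Concretely this is $\{\sigma\in O(L_D) : \sigma|_L = \mathrm{id}\}$, modulo those $\sigma$ induced by a Hodge isometry of $H^2(X,\Z)$. First I would show this stabiliser group is as small as possible.

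Since $L$ has rank $17$ and $L_D$ has rank $18$, the orthogonal complement $M := L_D\cap L^\perp$ has rank~$1$; write $M = \Z m$. Any $\sigma$ fixing $L$ pointwise must preserve $M$ and hence act on it by $\pm 1$, and because $L\oplus M$ has finite index in $L_D$, the value $\sigma|_M = +1$ forces $\sigma = \mathrm{id}$ on $L_D$. Thus the stabiliser is either trivial or equal to $\{\mathrm{id},\alpha_0\}$, where $\alpha_0$ is the identity on $L$ and $-1$ on $M$. I would make this explicit by identifying $L$ with the fixed lattice of the Galois involution $\tau$ on the summand $\sO_D$ of $L_D = E_8(-1)^2\oplus\sO_D$: the fixed lattice is $E_8(-1)^2\oplus\langle 2\rangle \cong U\oplus E_8(-1)\oplus E_7(-1) = L$ (an isometry of indefinite lattices with matching discriminant form, using uniqueness of such embeddings for fundamental $D$), while $M$ is spanned by $m=\sqrt D$ with $m^2 = -2D$, and $\alpha_0 = \mathrm{id}_{E_8(-1)^2}\oplus\tau$. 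In particular $m^2=-2D$, so the naive reflection in $m$ is not an isometry of $\Lambda$, and a genuine gluing argument is needed.

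It then remains to show that $\alpha_0$, whenever it occurs, is induced by a Hodge automorphism of $X$, so that the two markings give the same point of $\sF_{L_D}$ and the fibre collapses to a single point. The key computation is the action of $\alpha_0$ on the discriminant group $L_D^\vee/L_D$, which by construction carries the form $q_D$ of $\sO_D$: Galois conjugation sends $\tfrac{1}{\sqrt D}$ to $-\tfrac{1}{\sqrt D}$, so $\alpha_0$ acts as $-\mathrm{id}$ on $L_D^\vee/L_D$. Since $-\mathrm{id}_{L_D^\perp}$ induces $-\mathrm{id}$ on the (anti-isometric) discriminant group of $L_D^\perp$, Nikulin's theory of discriminant forms \cite{Ni3} shows that $\alpha_0\oplus(-\mathrm{id}_{L_D^\perp})$ extends to an isometry $g$ of $\Lambda$. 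As $g$ acts as $-1$ on $T_X = L_D^\perp$ it fixes the period line, hence is a Hodge isometry and, by the Torelli theorem \cite{PS}, is induced by an automorphism of $X$ realising $\alpha_0$. Therefore the generic fibre of $\beta_D$ is a single point, i.e.\ $\beta_D$ is generically an embedding.

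The main obstacle is the third step: the discriminant-form computation and its lift. Identifying $L_D^\vee/L_D$ with $q_D$ and checking that $\alpha_0$ acts by $-\mathrm{id}$ relies on $D$ being a fundamental discriminant, so that the embeddings of the preceding propositions are primitive and $L_D$ genuinely carries the form $q_D$; I would also need to be careful that $-\mathrm{id}$ on $L_D^\perp$ matches $\alpha_0$ under the correct sign convention in the gluing isomorphism, rather than merely up to an automorphism of the discriminant group.
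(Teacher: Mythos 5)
Your proposal is correct in its essential content, but it reaches the key object by a genuinely different route than the paper. Both arguments come down to producing an isometry of $\Lambda$ that fixes $L$ pointwise, preserves $L_D$, and acts by $-1$ on $M = L_D \cap L^\perp$ (hence by $-1$ on all of $L^\perp$, since $M \oplus L_D^\perp$ has finite index there); the paper calls it $\gamma_0$, you call it $g$. The paper constructs $\gamma_0$ explicitly, splitting into the cases $D \equiv 0$ and $D \equiv 1 \pmod 4$ and assembling it from a reflection $s_\delta$ in a root of $E_8(-1)$, $-1$ on $U \oplus U$, and the identity on the remaining $U \oplus E_8(-1)$. You instead take $\alpha_0 = \mathrm{id}_{E_8(-1)^2} \oplus \tau$, with $\tau$ the Galois involution on the $\sO_D$ summand, observe that $\tau$ acts as $-\mathrm{id}$ on the discriminant group $\frac{1}{\sqrt D}\sO_D/\sO_D$, and glue $\alpha_0$ with $-\mathrm{id}_{L_D^\perp}$ by Nikulin's criterion. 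Your construction is uniform in $D$, eliminates the case analysis, and makes the source of the involution (Galois conjugation on the real-multiplication lattice) conceptually transparent; the paper's construction is more hands-on and needs no gluing machinery at this point. Your reduction of the generic fibre to the group $\{\sigma \in O(L_D) : \sigma|_L = \mathrm{id}\} = \{\mathrm{id}, \alpha_0\}$ is the same finite-index argument as the paper's observation that any $\gamma \in \Gamma_L$ preserving $L_D$ acts by $\pm 1$ on $L_D/L \cong \Z$.

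Two points should be repaired, though neither is fatal. First, the appeal to the Torelli theorem to realize $g$ as an automorphism of $X$ is both incompletely justified and unnecessary. The strong Torelli theorem produces an automorphism only from an \emph{effective} Hodge isometry (one preserving the ample cone), and you have not checked effectiveness: $g$ fixes the pseudo-ample class in $j(L)$, but upgrading that to preservation of the ample cone requires an argument. Fortunately you do not need an automorphism at all: since $\sF_{L_D} = \Gamma_{L_D} \backslash \Omega_{L_D}$, it suffices that $g$ stabilizes the very general period point (it acts by $-1$ on $L_D^\perp$, hence trivially on $\Proj(L_D^\perp \otimes \C)$) and restricts to $\alpha_0$ on $L_D$; then for any $\gamma$ fixing $L$ pointwise, carrying one very general period to another, and inducing $\alpha_0$ on $L_D$, the composite $g\gamma$ fixes $L_D$ pointwise and carries the same period --- exactly the role $\gamma_0$ plays in the paper's first paragraph. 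Second, your discriminant computation checks only the single element $1/\sqrt D$, but when $D$ is even the discriminant group is $(\Z/2\Z) \oplus (\Z/(D/2)\Z)$, not cyclic; you should instead note that for every $c \in \sO_D$ one has $\tau(c/\sqrt D) + c/\sqrt D = (c - c^*)/\sqrt D \in \Z \subset \sO_D$, which gives $\tau = -\mathrm{id}$ on the whole discriminant group (on the $2$-torsion part $-\mathrm{id} = \mathrm{id}$, so there is no conflict).
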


\begin{proof}
  To see this, we claim that it is enough to show that there is an
  element $\gamma_0 \in O(\Lambda)$ such that $\gamma_0$ fixes $L$
  pointwise, preserves the sublattice $L_D$, and acts by $-1$ on
  $L^{\perp}$. For suppose we have $x = \gamma y$ with $x, y \in
  \Omega_{L_D}$ and $\gamma \in \Gamma_L$, with $x$ and $y$ very
  general. Then we would like to show that in fact we already have $x
  = \gamma' y$ with $\gamma' \in \Gamma_{L_D}$. Then $\Lambda^{1,1}_x
  := \{ z \in x \, | \, \langle z, x \rangle = 0 \}$ and
  $\Lambda^{1,1}_y$ both equal $L_D$, since $x$ and $y$ are
  very general, and therefore $\gamma$ preserves $L_D$ (though it might
  not fix this lattice pointwise). If $\gamma$ acts trivially on
  $L_D$, we may take $\gamma' = \gamma$, and are done. If not, then
  $\gamma$ must act by $-1$ on $L_D/L \cong \Z$. Then $\gamma' =
  \gamma_0 \gamma$ will suffice.

  To prove the claim, we may consider specific embeddings $L \subset
  L_D \subset \Lambda$. If $D \equiv 0 \pmod 4$, then $L_D \cong
  E_7(-1) \oplus \langle -D/2 \rangle \oplus U \oplus E_8(-1) $. We
  may embed it inside $\big(E_8(-1) \oplus U \oplus U \big) \oplus
  \big(U \oplus E_8(-1) \big)$ by embedding $E_7(-1)$ inside $E_8(-1)$
  as the orthogonal complement of a root $\delta$, and $ \langle -D/2
  \rangle$ primitively inside $U \oplus U$. Let $\gamma_0$ be the
  automorphism of $\Lambda$ which acts on the first copy of $E_8(-1)$
  as the reflection $s_\delta$ in the hyperplane orthogonal to
  $\delta$, by $-1$ on $U \oplus U$, and as the identity on the part
  $U \oplus E_8(-1)$.

  If $D \equiv 1 \pmod 4$, consider a system of simple roots in
  $E_8(-1)$ whose intersections give the standard Dynkin matrix. Let
  $\alpha$ be the simple root such that if we remove the corresponding
  vertex from the Dynkin diagram, we obtain the Dynkin diagram for
  $E_7(-1)$. The orthogonal complement of this copy of $E_7(-1)$ is
  some root~$\delta$. Let $\beta \in U$ be any element of norm $(1-D)/2$.
  It is easy to check that the copy of $E_7(-1)$ and $\alpha + \beta$
  generate a negative definite lattice $M$\/ of discriminant~$D$,
  and that $U \oplus M \oplus E_8(-1)$ is isometric
  to $L_D$ (since it is has the right signature and discriminant). In
  other words, the diagram of embeddings
$$
E_7(-1) \hookrightarrow M \subset E_8(-1) \oplus U \hookrightarrow
E_8(-1) \oplus U^2
$$
extends on adding a factor of $U \oplus E_8(-1)$ to
$$
L = E_7(-1) \oplus U \oplus E_8(-1) \hookrightarrow M \oplus U \oplus
E_8(-1) = L_D \subset E_8(-1) \oplus U^2 \oplus E_8(-1)
\rightarrow E_8(-1) \oplus U^3 \oplus E_8(-1) = \Lambda.
$$
Now, consider the automorphism $\gamma_0$ of $\Lambda \cong E_8(-1)
\oplus U^3 \oplus E_8(-1)$ given by
$(s_{\delta}\0, -1|_U\0, -1|_U\0, 1_U\0, 1_{E_8(-1)}\0)$.
By construction, $\gamma_0$ fixes $L$ pointwise,
and acts by $-1$ on $L^{\perp}$. We need to show that
$\gamma_0$ preserves $L_D$, or equivalently, that it preserves $M$. We
know that $M$\/ is spanned by $E_7(-1)$ (which is fixed by $\gamma_0$)
and $\alpha + \beta$. Now, $\gamma_0$ takes $\beta$ to $-\beta$, by
construction. Also, $\alpha + \gamma_0(\alpha) = \alpha +
s_\delta(\alpha) \in E_8(-1)$ is invariant by $s_\delta$, and is
therefore in the orthogonal complement $E_7(-1)$ of
$\delta$. Therefore $\gamma_0(\alpha) \in -\alpha + E_7(-1)$ and
$\gamma_0(\alpha + \beta) \equiv -(\alpha + \beta) \mod E_7(-1)$.
So $\gamma_0$ preserves the lattice~$M$.
\end{proof}

Next, we want to understand the Hilbert modular surface $Y_{-}(D)$,
which is a double cover of $\sH_D$. First, we must identify the branch
locus. Since the map $Y_{-}(D) \rightarrow \sH_D$ is obtained by
simply forgetting the action of $e_D = (D+\sqrt{D})/2$ on the abelian
surface, the branch locus is the subvariety $W$ of $\sH_D$
corresponding to abelian surfaces $A$ such that $e_D = (D+\sqrt{D})/2$
and $e'_D = (D - \sqrt{D})/2$ are conjugate in the endomorphism ring,
say by $\iota \in \End(A)$. It follows that $\iota^2$ fixes $\sO_D$
pointwise.  Generically this implies $\iota^2 = \pm 1$, and
$\iota e_D\0 = e'_D \iota$. This shows that $\End(A)$ for such~$A$
is generically an order in a quaternion algebra $B$.

In fact, the branch locus corresponds to the case when we have a split
quaternion algebra, i.e.\ $\iota^2 = 1$.  Then $A$ is isogenous
over~$\Qbar$ to the square of an elliptic curve.  To see this fact,
observe that the map $\Gamma \backslash \sH^2 \rightarrow \Sp_4(\Z)
\backslash \sS_2$ (where $\Gamma = \SL_2(\sO_D\0, \sO_D^*)$) factors
through the quotient $(\Gamma \cup \Gamma \sigma) \backslash \sH^2$,
where $\sigma$ is the involution $(z_1, z_2) \mapsto (z_2, z_1)$
exchanging the two factors of $\sH^2$, and the induced map on this
quotient is generically one to one \cite[page 158]{HvdG}.  Therefore,
to understand the branch locus, we need to understand the
one-dimensional part of the fixed point set of~$\sigma$ on $\Gamma
\backslash \sH^2$. This (and more) was done by Hausmann in \cite[page
  35]{Ha}. The result is that the fixed point set consists of a small
number of explicit modular curves $F_w$ for $w \in \{1,4,D,D/4\}$
(some of these may be empty, and when they are non-empty, these curves
are irreducible). A simple explicit analysis of the condition relating
$z_1$ and $z_2$ on these curves reveals that the generic point on each
of these corresponds to an abelian surface whose ring of endomorphisms
contains zero divisors, and is therefore a split quaternion
algebra. For instance, the image of the diagonal of $\sH^2$ (given by
$z_1 = z_2$) in $\Gamma \backslash \sH^2$ is an obvious component of
the branch locus. For the corresponding abelian surface, as
constructed earlier in this section, the map $(\zeta_1, \zeta_2)
\mapsto (\zeta_2, \zeta_1)$ is a holomorphic involution, showing that
the endomorphism algebra is split. For completeness, we prove this
next.

\begin{proposition}
  Let $\Gamma = \SL_2(\sO_D, \sO_D^*)$ and let $C$ be one of the curves
  $F_w$ for $w \in \{1,D\}$ if $D$ is odd, $w \in \{1,4,D/4,D\}$ if
  $D$ is even. The generic point on $C$ corresponds to an abelian
  surface whose algebra of endomorphisms is a split quaternion
  algebra.
\end{proposition}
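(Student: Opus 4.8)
The plan is to produce, for a very general point $z=(z_1,z_2)$ on each component $C=F_w$, an explicit extra endomorphism $\iota$ of the abelian surface $A=A_z=V/L_z(M)$ which conjugates the real multiplication by the nontrivial element of $\mathrm{Gal}(K/\Q)$, and then to show that the resulting quaternion algebra inside $\End(A)\otimes\Q$ is split. Throughout I write $\alpha\mapsto\alpha'$ for Galois conjugation on $K=\Q(\sqrt D)$ and $\rho:\sO_D\to\End(A)$ for the real multiplication, so that $\rho(\alpha)$ acts on $V=\C^2$ by $(\zeta_1,\zeta_2)\mapsto(\sigma_1(\alpha)\zeta_1,\sigma_2(\alpha)\zeta_2)$.

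First I would recall from Hausmann's analysis in \cite{Ha} that a point $z$ lies on the $\sigma$-fixed locus exactly when $\sigma(z)=g^{-1}z$ for some $g\in\Gamma$, and that the component $F_w$ is cut out by an explicit such $g=g_w\in\Gamma$; unwinding the two coordinates of $g_w\sigma(z)=z$ yields a concrete relation between $z_1$ and $z_2$ along $F_w$ (for $w=1$ this is simply the diagonal $z_1=z_2$). Next I would lift the automorphism $g_w\sigma$ of $\sH^2=\Ha^+\times\Ha^+$ to a $\C$-linear map $\iota$ of $V=\C^2$: it is the coordinate swap $(\zeta_1,\zeta_2)\mapsto(\zeta_2,\zeta_1)$ followed by the linear automorphy action of $g_w$. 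The defining relation of $F_w$ is precisely what is needed to verify that $\iota$ carries the lattice $L_z(M)$ into itself, so that $\iota\in\End(A)$.

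The key structural point is that $\iota$ conjugates the real multiplication: because $\sigma$ interchanges the $\sigma_1$- and $\sigma_2$-eigenlines of $\rho(\sO_D)$ while $g_w$ lies in the commutant of the real multiplication, one gets $\iota\,\rho(\alpha)=\rho(\alpha')\,\iota$ for all $\alpha\in\sO_D$; in particular $\iota\sqrt D=-\sqrt D\,\iota$. Hence $\End(A)\otimes\Q$ contains $K+K\iota$, a quaternion algebra $B=\bigl(\tfrac{D,\,\iota^2}{\Q}\bigr)$, which for generic $z\in F_w$ exhausts $\End(A)\otimes\Q$. To compute $\iota^2$, note that it lifts $(g_w\sigma)^2=g_w\bar g_w$ (bar denoting Galois conjugation on matrix entries), which lies in $\Gamma$; for very general $z\in F_w$ the stabilizer in $\Gamma$ is $\{\pm1\}$, so $g_w\bar g_w$ acts trivially on $\sH^2$ and $\iota^2$ is multiplication by a totally positive rational scalar $n_w$, read off from the automorphy factors of $g_w$. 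Replacing $\iota$ by $\lambda\iota$ with $\lambda\in K^\times$ scales $\iota^2$ by $N_{K/\Q}(\lambda)$, so $\iota^2$ is well defined only in $\Q^\times/N_{K/\Q}(K^\times)$, and $B$ is split precisely when $\iota^2$ is trivial there, i.e.\ a norm from $K$.

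It then remains to verify, for each admissible $w$, that $n_w\in N_{K/\Q}(K^\times)$. On the diagonal $F_1$ one may take $\iota$ to be the bare swap, for which $\iota^2=1$ and $(1-\iota)(1+\iota)=0$ exhibits a zero divisor directly; equivalently $A$ is isogenous to the product $E_+\times E_-$ of the two eigen-elliptic-curves of $\iota$, so $B\cong M_2(\Q)$. For the remaining $w$ the same conclusion follows once $n_w$ is checked to be a norm, and geometrically this reflects that $A$ is isogenous to a product of two isogenous elliptic curves. I expect this last step — extracting $n_w\bmod N_{K/\Q}(K^\times)$ from Hausmann's explicit $g_w$ and confirming it is a norm — to be the main obstacle, since it is exactly here that the special arithmetic of the fixed components $F_w$ is used (on a generic $\sigma$-twisted fixed locus $\iota^2$ could be a non-norm and $B$ a division algebra); the construction of $\iota$ and its intertwining relation with the real multiplication are formal by comparison.
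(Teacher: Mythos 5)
Your construction of $\iota$ from $g_w\sigma$ and the reduction of the proposition to the claim ``$\iota^2$ is a norm from $K=\Q(\sqrt D)$'' are sound; this is in essence the content of Proposition $\mathrm{V}.1.5$ in van der Geer's book, which the paper simply cites rather than re-derives. But your proposal stops exactly where the proposition's content begins: apart from the diagonal component you never verify that $n_w \in N_{K/\Q}(K^\times)$, and you explicitly defer this as ``the main obstacle.'' This is a genuine gap, not a routine verification, because the claim cannot follow from a formal computation with $g_w$ alone. For $w=1$ (and $w=4$) the algebra in question is $Q_1 \cong \left(\frac{D,-1}{\Q}\right)$, and this is an honest division algebra for many fundamental discriminants: if a prime $q\equiv 3 \pmod 4$ divides $D$ then $(D,-1)_q = \left(\frac{-1}{q}\right) = -1$. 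The proposition survives only because for precisely such $D$ the curve $F_1$ is empty. So any correct proof must invoke the arithmetic criterion for non-emptiness of $F_w$ and show that it forces the relevant Hilbert symbols to vanish; your framework contains no such input, and without it the plan of ``reading off $n_w$ from the automorphy factors and confirming it is a norm'' would either fail or amount to re-deriving Hausmann's classification from scratch.

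For comparison, the paper's proof is short precisely because it uses both of van der Geer's statements: Proposition $\mathrm{V}.1.5$ identifies the endomorphism algebra of points on $F_N$ as $Q_N = \left(\frac{D,\,-N/(AD)}{\Q}\right)$ (with $AD=1$ after replacing $\sO_D^*$ by the integral ideal $\sqrt D\,\sO_D$, which changes $A$ only by a square), and Lemma $\mathrm{V}.1.4$ gives the non-emptiness criterion. Then $Q_D = \left(\frac{D,-D}{\Q}\right)$ is split outright (likewise $Q_{D/4}$), while for $N=1$ non-emptiness forces $(D,D)_q = 1$ for every $q \mid D$, and the identity $(D,D)_q = (D,-D)_q\,(D,-1)_q = (D,-1)_q$ then shows $Q_1$ is split; $N=4$ is similar. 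Two smaller points: in this normalization the diagonal is the component $F_D$, not $F_1$ (the diagonal always exists, whereas $F_1$ is empty whenever $D$ has a prime factor $\equiv 3 \bmod 4$), and your assertion that $n_w$ is automatically totally positive is unjustified — the paper's own discussion allows $\iota^2 = -1$ for a conjugating endomorphism, and positivity is irrelevant to the norm condition anyway.
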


\begin{proof}
  We use the notation of \cite[Chapter V]{vdG}. The ideal $\mathfrak{a} =
  \sO_D^*$ has norm $A = 1/D$. In the reference, it is assumed that
  $\mathfrak{a}$ is an integral ideal, but we may for instance replace
  $\sO_D^*$ by the integral $D\sO_D^* = \sqrt{D} \sO_D$ without loss
  of generality in Hausmann's proof. This would replace $A$ by $D^2 A
  = D$, and would not affect any of the arguments below, which depend
  only on the square class of $A$. Proposition $\mathrm{V}.1.5$ of \cite{vdG}
  states that a point on $F_N$ corresponds to an abelian surface whose
  endomorphism algebra is isomorphic to the indefinite quaternion
  algebra $Q_N = \left(\frac{ D, -N/(AD)}{\Q} \right)$, while Lemma
  $\mathrm{V}.1.4$ in the book says that $F_N$ is non-empty iff for each prime
  $q$ dividing $D$ and not dividing $N$, we have $\chi_{D(q)}(N) =
  (A,D)_q$. For an explanation of the notation, see
  \cite[pg.~2--3]{vdG}. In our situation, we have $AD = 1$. If $N =
  D$, we get the algebra $\left( \frac{D,-D}{\Q} \right)$, which is
  obviously split. This argument also takes care of $N = D/4$ when $D$
  is even. Next, suppose $N = 1$. Then $F_1$ is non-empty iff for
  every prime dividing $D$, we have $(D,D)_q = 1$. So
$$ 
1 = (D,D)_q = (D,-D)_q \cdot (D,-1)_q = (D,-1)_q.
$$
It follows that the quaternion algebra $Q_1 = \left( \frac{D,-1}{\Q}
\right)$ is split. The proof for $N = 4$ is similar.
\end{proof}

\begin{remark}
  A further analysis of the proof above reveals the number of
  components of the branch locus, which is corroborated by the
  calculations in this paper.
\end{remark}

Coming back to our analysis of the endomorphisms of the abelian
surfaces corresponding to points on the branch locus, we note that the
Rosati involution must fix $\sO_D$ and $\iota$, by positivity.
Consider the form $(x,y) \mapsto \Tr(x\bar{y})$, where $\bar \, :
B \rightarrow B$ is the natural involution taking $\sqrt{D}$ to its
negative and $\iota$ to its negative, and $\Tr$ is the reduced
trace. The matrix of this form acting on $\sO_D + \Z \iota$ is
$$
\left(\begin{array}{ccc} 2 & D & 0 \\ D & (D^2 - D)/2 & 0 \\ 0 & 0 & -2
 \end{array}\right),
$$
which therefore gives an even lattice of signature $(1,2)$ and
discriminant $2D$.
We claim that when $D$\/ is fundamental the index, call it $c$, of
$\sO_D \oplus \Z \iota$ as a sublattice of $\End(A)$ is $1$ or $2$,
with $c=2$ possible only if $4|D$.  Indeed $\End(A)$ is an even
lattice of rank~$3$ and discriminant $2D/c^2$.  But the discriminant
is an even integer because the rank is odd, and $D$\/ has no repeated
prime factors except possibly $2^2$ or~$2^3$.  Thus as claimed the
index~$c$ must be either $1$ or $2$, with $c=2 \Rightarrow 4|D$.
Therefore, the only possibilities for the discriminant of the
resulting N\'{e}ron-Severi group for a point corresponding to the
branch locus are $2D$\/ and $D/2$. This is also the discriminant of
the transcendental lattice of the abelian surface, and by the
Shioda-Inose structure, the corresponding K3 surface also has a
N\'{e}ron-Severi lattice of rank~$19$ and discriminant $2D$\/ or
$D/2$, and contains $L_D \cong E_8(-1)^2 \oplus \sO_D$.

Finally, we mention that a group of involutions acts naturally on the
Hilbert modular surface $Y_{-}(D)$. This group is isomorphic to
$(\Z/2\Z)^{t-1}$, where $t$ is the number of distinct primes dividing
$D$. These extra involutions arise from the Hurwitz-Maass extension of
$\PSL_2(\sO_D\0, \sO_D^*)$. For more details see \cite[Section
I.4]{vdG}.

\section{Method of computation} \label{method}

We now outline our method to obtain equations for the Hilbert modular
surfaces $Y_{-}(D)$ for fundamental discriminants $D$.

\subsection*{Step 1}
We compute the unique lattice $L_D$, writing it in the form $U \oplus
N$ (we know $L_D$ contains a copy of $U$, since $L = U \oplus E_8(-1)
\oplus E_7(-1)$ does, and $L \supset L_D$). Note that $N$ is not
uniquely determined, and it turns out to be useful to choose $N$ such
that its root lattice $R$ has small codimension in $N$, and such that
$N/R$ has a basis consisting of vectors of small (fractional)
norm. One may then attempt to realize $L_D$ as the N\'{e}ron-Severi
lattice of an elliptic K3 surface with reducible fibers corresponding
to the irreducible root lattices in $R$. The remaining generators of
the N\'{e}ron-Severi lattice should then arise as sections (of small
height) in the \MoW\ group of the elliptic fibration. Of course, we
need to make sure that $N$ contains $E_8 \oplus E_7$, but this is
easily achieved by showing that the dual lattice $N^*$ contains a
vector of norm $2/D$.

For most of the examples in this paper, we were able to describe a
family of elliptic K3 surfaces with N\'{e}ron-Severi lattice $L_D$ and
with \MoW\ rank $0$ or $1$.  This construction gives us a Zariski-open
subset of $\sF_{L_D}$, with some possible missing curves corresponding
to jumps in the \MoW\ ranks or the ranks of the reducible fibers, or
to denominators introduced in the parametrization process.

For all the examples we considered, the moduli space $\sF_{L_D}$ turns
out to be rational (i.e.\ birational with~$\Proj^2$ over~$\Q$).
This property will fail to hold once $D$\/ is large enough,
but there is still some hope of writing down usable equations
for $\sF_{L_D}$ if it is not too complicated.

Suppose that we have exhibited a Zariski open subset of $\sF_{L_D}$ as
a Zariski open subset $U = \Proj_{r,s}^2 \backslash V$ of the
projective plane.

\subsection*{Step 2}
We find a different elliptic fibration on the generic member of the
family of K3 surfaces in Step~1, with reducible fibers of types $E_8$
at $t = \infty$ and $E_7$ at $t = 0$. This is accomplished by $2$- and
$3$-neighbor steps, which we shall describe in the next section.

Once we obtain the alternate elliptic fibration, we may compute the
map from $U$ to $\A_2$ by the explicit formulas of section
\ref{thesis}. This gives us the Igusa-Clebsch invariants of the
associated genus $2$ curve, in terms of the two parameters $r$ and
$s$.

\subsection*{Step 3}
Consider the base change diagram
$$
\begin{CD}
  \widetilde{Y}_{-}(D) @>{\widetilde{\eta}}>> \sF_{L_D} \\
  @V{\widetilde{\phi}}VV          @VV{\phi}V        \\
  Y_{-}(D)  @>{\eta}>> \sH_D
\end{CD}
$$
Since $\phi$ is a birational map, so is $\widetilde{\phi}$. We want to
describe the degree $2$ map $\widetilde{\eta}$. In the current
situation, we must have a model for $\widetilde{Y}_{-}(D)$ (which is
birational to $Y_{-}(D)$) of the form
$$
z^2 = f(r,s)
$$
where $f(r,s) = 0$ describes the branch locus of
$\widetilde{\eta}$.  As we have seen, this locus $f(r,s) = 0$
must be the union of some irreducible curves, which are either
components of the excluded locus $V$ above, or belong to the sublocus
of $U$ where the Picard number jumps by $1$, the discriminant changing
to $2D$\/ or $D/2$. There are finitely many possibilities for how this
may happen. For instance, the elliptic fibration may have an extra $A_1$
fiber, or a $D_6$ fiber may get promoted to an $E_7$ fiber, or the surface
may have a new section that raises the \MoW\ rank.  Similarly,
in any instance, we may easily list the allowed changes in the
reducible fibers, or the allowed heights and intersections with
components of reducible fibers of a new section of the
fibration. Hence we get a list of polynomials $f_1(r,s), f_2(r,s),
\dots, f_k(r,s) \in \Z[r,s]$ whose zero loci are the possible
components of the branch locus. We may assume that each $f_i(r,s)$ has
content $1$.

We therefore have the following model for $\widetilde{Y}_{-}(D)$ as a
double cover of $\Proj_{r,s}^2$:
$$
z^2 = f(r,s) = C f_{i_1}(r,s) f_{i_2}(r,s) \dots f_{i_m}(r,s)
$$
for some subset $\{i_1, \dots, i_m \} \subseteq \{1, \dots, k \}$ and
some squarefree integer $C$.

\subsection*{Step 4}
In the final step, we show how to compute the subset $I = \{i_1,
\dots, i_m \}$ and the twist $C$. First, we note that the Hilbert
modular surface $Y_{-}(D)$ has good reduction outside primes dividing
$D$, by \cite{Ra, DP}. Therefore, there are only finitely many choices
for $C$ as well. Now we check each of these choices by computer, using
the method described in the next paragraph, and rule out all but one.

So suppose we have a putative choice of $C$ and $I$. We proceed to
check whether this choice of twist is correct by reduction modulo
several odd primes $p$ that do not divide the discriminant~$D$. We
specialize $r,s$ to elements of $\F_p$, and by the formulas of Step 2,
obtain Igusa-Clebsch invariants for in weighted projective space over~$\F_p$.
Since the Brauer obstruction vanishes for finite fields, we can construct
a curve $C_{r,s}$ over~$\F_p$.

We may use the following lemma to detect whether the Jacobian of
$C_{r,s}$ has real multiplication defined over $\F_p$ or not.

\begin{lemma} \label{twistrealmult} Let $A$ be an abelian surface
  over~$\F_p$, and let $\phi$ be the Frobenius endomorphism of $A$
  relative to~$\F_p$.  Suppose that the characteristic polynomial
  $P(T)$ of~$\phi$ is irreducible over $\Q$.  Let $Q$\/ be the
  symmetric characteristic polynomial of $\phi + p \phi^{-1}$, defined
  by $P(T) = T^2 Q(T + pT^{-1})$.
\begin{enumerate}
\item If $A$ has real multiplication by an order in~$\sO_D$
  defined over~$\F_p$, then $Q$\/ is a quadratic polynomial
  of discriminant $c^2 D$\/ for some integer~$c$.
\item If $A$ has real multiplication by an order in~$\sO_D$, defined
  over~$\F_{p^2}$, but not over~$\F_p$, then we have $Q(X) = X^2 - n$
  for some $n \in \Z$ not of the form $c^2D$ for any integer $c$.
\item If $Q(X)$ is a quadratic polynomial of discriminant $D$
  (resp. $c^2 D$ for some positive integer $c$), then $A$ has real
  multiplication by~$\sO_D$ (resp.\ an order in $\sO_D$ of conductor
  dividing $c$), defined over~$\F_p$\,.
\end{enumerate}
\end{lemma}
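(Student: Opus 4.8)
The plan is to reduce all three statements to the structure of $\End^0(A/\F_p)$ supplied by Tate's theorem on abelian varieties over finite fields. First I would record the setup. Since $P$ is irreducible over $\Q$, the surface $A$ is simple and $\phi$ generates a quartic field $F = \Q(\phi)$; the dimension formula ($2\dim A = [F:\Q]$ forces the division algebra to coincide with its center) gives $\End^0(A/\F_p) = F$. As the Weil numbers $\alpha_i$ are non-real with $\alpha_i\bar\alpha_i = p$, the field $F$ is a quartic CM field whose complex conjugation is induced by the Rosati involution; under it $\bar\phi = p\phi^{-1}$ is the Verschiebung, so $\beta := \phi + p\phi^{-1} \in \End(A/\F_p)$ is an algebraic integer generating the unique totally real quadratic subfield $F_0 \subset F$. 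Grouping the eigenvalues into conjugate pairs $\{\alpha_1, p/\alpha_1\}$, $\{\alpha_3, p/\alpha_3\}$ gives $P(T) = (T^2-\beta_1T+p)(T^2-\beta_3T+p)$ with $\beta_j = \alpha_j + p/\alpha_j$, so $Q$ is exactly the minimal polynomial $(X-\beta_1)(X-\beta_3)$ of $\beta$, of degree $2$ since irreducibility of $P$ forbids $\beta_1 = \beta_3$.

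With this in hand, parts (1) and (3) are bookkeeping. An endomorphism is defined over $\F_p$ iff it commutes with $\phi$; since $F$ is commutative, every element of $F$ — in particular every real-multiplication element — is automatically defined over $\F_p$, and $\End(A/\F_p)$ is an order in $F$. Thus RM by an order in $\sO_D$ over $\F_p$ means $\Q(\sqrt D) \hookrightarrow F$ as a totally real quadratic subfield; a quartic CM field has a unique such subfield, so $F_0 = \Q(\sqrt D)$, and $Q$ is the minimal polynomial of $\beta \in \sO_{F_0}$ with $\disc(Q) = c^2\disc(\sO_{F_0}) = c^2 D$, where $c$ is the conductor of $\Z[\beta]$; this is (1). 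Conversely, $\disc(Q) = c^2 D$ forces $F_0 = \Q(\sqrt D)$ and identifies $\Z[\beta]$ as the order of conductor $c$ in $\sO_D$; since $\beta \in \End(A/\F_p)$, the surface has RM over $\F_p$ by $\Z[\beta]$, hence by the possibly larger order $\End(A/\F_p)\cap F_0$ of conductor dividing $c$ (equal to $\sO_D$ when $c=1$), giving (3).

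For (2) the crux is that RM by an order in $\sO_D$ over $\F_{p^2}$ but not over $\F_p$ forces the endomorphism algebra to grow strictly from $\F_p$ to $\F_{p^2}$: if $\Q(\sqrt D)$ already embedded in $F = \End^0(A/\F_p)$ it would be defined over $\F_p$, as every element of the commutative field $F$ commutes with $\phi$; so $\End^0(A/\F_{p^2}) \supsetneq F$. Because $\End^0(A/\F_{p^2})$ is the centralizer of $\phi^2$, this growth occurs only when $\Q(\phi^2) \subsetneq \Q(\phi)$, i.e.\ $[\Q(\phi^2):\Q]=2$ (it cannot be $1$, else $\phi$ would satisfy a quadratic). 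Then the multiset $\{\alpha_i^2\}$ takes exactly two values $\gamma,\bar\gamma$ with multiplicity two each; matching this against the conjugate-pair structure $\alpha_2 = p/\alpha_1$, $\alpha_4 = p/\alpha_3$ and using that the $\alpha_i$ are distinct forces $\alpha_3 = -\alpha_1$, $\alpha_4 = -\alpha_2$. Hence $\beta_3 = -\beta_1$, $\Tr(\phi) = \beta_1+\beta_3 = 0$, and $Q(X) = X^2 - \beta_1^2 = X^2 - n$ with $n \in \Z$. Finally, if $n$ were of the form $c^2 D$ then $F_0 = \Q(\sqrt n) = \Q(\sqrt D)$, and by the argument for (3) the element $\beta \in \End(A/\F_p)$ would already give RM by an order in $\sO_D$ over $\F_p$, contradicting the hypothesis; so $n \ne c^2 D$.

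I expect the main obstacle to be the structural input of (2): justifying cleanly that real multiplication appearing only after base change to $\F_{p^2}$ is equivalent to strict growth of the endomorphism algebra, and then the eigenvalue bookkeeping showing that such growth forces $\Tr(\phi)=0$. The appeal to Tate's theorem to pin down $\End^0(A/\F_p)=F$ and the identification of $\bar\phi$ with $p\phi^{-1}$ via the Rosati involution are standard but must be stated with care; the conductor and discriminant computations underlying (1) and (3) are routine.
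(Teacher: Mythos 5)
Your proof is correct, and it shares the paper's skeleton -- Tate's theorem forcing $\End^0(A/\F_p)=\Q(\phi)$ to be a quartic field, the identification of $Q$ as the minimal polynomial of $\beta=\phi+p\phi^{-1}$, and for (2) the strict inclusion $\Q(\phi^2)\subsetneq\Q(\phi)$ -- but two steps are carried out by genuinely different means. For part (1), the paper argues via the Rosati involution: writing $f\sqrt{D}=g(\phi)$, using that $f\sqrt{D}$ is its own dual isogeny while Rosati sends $\phi\mapsto p\phi^{-1}$, and symmetrizing to conclude $f\sqrt{D}\in\Q(\phi+p\phi^{-1})$; you instead invoke the CM structure of $\Q(\phi)$ and the uniqueness of its totally real quadratic subfield. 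Both are sound; the paper's route is more self-contained, while yours gets the conductor interpretation of $c$ essentially for free. For part (2), after the common step $[\Q(\phi^2):\Q]=2$, the paper simply observes that the characteristic polynomial must then have the even form $T^4+aT^2+b$ with $b=p^2$ (the constant term of a Weil polynomial of an abelian surface), whence $(\phi+p\phi^{-1})^2=2p-a$; your Weil-number pairing $\alpha_3=-\alpha_1$, $\alpha_4=-\alpha_2$ reaches the same trace-zero conclusion more laboriously, and one case is glossed: you must rule out $\alpha_2=-\alpha_1$, i.e.\ $\alpha_1^2=-p$, which does follow from $[\Q(\phi^2):\Q]=2$ because an irreducible quadratic has no rational root, but that line should be said. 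Finally, you prove the ``$n$ not of the form $c^2D$'' clause of (2) by contradiction with (3); the paper's own proof derives only $Q(X)=X^2-n$ and leaves that clause implicit, so on this point your write-up is the more complete one (with the caveat that the clause requires reading the hypothesis of (2) as saying that no order of $\sO_D$ acts on $A$ over $\F_p$, the reading the paper's subsequent remark also presupposes).
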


Note that $A$ might simultaneously have real multiplication $\iota:
\sO \hookrightarrow \End(A)$ and $\iota' : \sO' \hookrightarrow
\End(A)$ by two orders of $\sO_D$, if its ring of endomorphisms is a
quaternion algebra. Furthermore, if one of these is defined over
$\F_p$ and the other only over $\F_p^2$, then by parts (1) and (2) of
the Lemma, $Q(X)$ must be of the form $X^2 - c^2D$ for some integer~$c$.

We will give the proof of the lemma below, but first we indicate how
to use it to find the correct twist. We choose a suitable prime $p
\nmid D$, and for some $r,s$ in $\F_p$, we calculate the number of
points mod $p$ and $p^2$ of the resulting curve $C_{r,s}$. This is
enough to describe the polynomial $Q(X)$ for the abelian surface $A =
J(C)$, by the Lefschetz-Grothendieck trace formula.  Suppose $P(T) =
T^2 Q(T+pT^{-1})$ is irreducible.  Then the first hypothesis of the
lemma is satisfied.  If in addition $Q(X) = X^2 + aX + b $ has
discriminant $D$ and non-zero linear term $a$, then $A$ must have real
multiplication defined over~$\F_p$, by part (3) of the Lemma (and the
comments following it). In fact, most of the time we can even make do
with the weaker assumption that $Q(X)$ has discriminant in the square
class of $D$, since (3) guarantees that $A$ has real multiplication by
an order in $\Q(\sqrt{D})$. We can compute the discriminant of
$\NS(A)$ by computing that of $\NS(Y)$, where $Y$ is the elliptic K3
surface related to $A$ by a Shioda-Inose structure. If this
discriminant is $D$, it follows that $A$ must have real multiplication
by the full ring of integers $\sO_D$.

Now, by the property of a coarse moduli space, $A$ gives rise to an
$\F_p$-point $(r,s)$ of the Hilbert modular surface $Y_{-}(D)$,
i.e.\ the corresponding point on the Humbert surface must actually
lift to the double cover.  Therefore, if $Cf_{i_1}(r,s) \dots
f_{i_m}(r,s)$ is not a square, we must have the wrong quadratic
twist. We can run this test for many such $(r,s) \in \F_p \times
\F_p$. Note that a single large prime is usually enough to pin down
the correct choice of $\{i_1, \dots, i_m\}$, by eliminating all but
one possibility. However, to pin down the correct choice of $C$, we
may need to use several primes, until we find one for which $C/C'$ is
not a quadratic residue, where $C'$ is the correct twist. However, in
any case, this procedure is guaranteed to terminate, and in practice,
it terminates fairly quickly.

At the end of step $4$ of the algorithm, we have determined a
birational model of $Y_{-}(D)$ over $\Q$.
We now give the proof of the lemma above.

\begin{proof}[Proof of Lemma \ref{twistrealmult}]
We will use \cite[Theorem 2]{Ta3}. Since $P(T)$ is irreducible, it
follows that $F = \Q[\phi]$ is simple, and also that $F = E
:= \End_{\F_p}(A) \otimes \Q$ is a quartic number field.  First,
assume that real multiplication by an order $\sO \subset \sO_D$ is
defined over $\F_{p^2}$ but not over $\F_p$.  Say $\sO$ contains
$f\sqrt{D}$ for some positive integer~$f$.  Then consider the base
change of $A$ to $\F_{p^2}$.  The Frobenius over the new field is
$\phi^2$, and therefore, $\phi^2$ commutes with $f\sqrt{D}$, but
$\phi$ must anticommute with $f\sqrt{D}$.  Therefore $\Q(\phi^2)$ is a
strict subfield of $\Q(\phi)$, and must be quadratic.  Hence $\phi^2$
satisfies a quadratic equation
$$
\phi^4 + a \phi^2 + b = 0
$$
and since this must be the characteristic polynomial of $\phi$, we
must have $b = p^2$. Therefore
$$
\phi^2 + p^2\phi^{-2} + a = (\phi + p\phi^{-1})^2 + a - 2p = 0,
$$
proving the assertion (1). 

Now, assume that $A$ has real multiplication by $\sO \subset \sO_D$,
defined over $\F_p$, and let $f \sqrt{D} \subset \sO$, as before. Then
$\Q(\phi + p \phi^{-1})$ is a quadratic subfield of $\Q(\phi)$. Also,
$f\sqrt{D} \in \Q(\phi)$, so we have $f\sqrt{D} = g(\phi)$ for some
polynomial $g \in \Q[T]$. Then $f\sqrt{D}$ is its own dual isogeny, and
so $f\sqrt{D} = g(p \phi^{-1})$ as well. Therefore,
$$
f \sqrt{D} = \frac{1}{2} \Big( g(\phi) + g(p \phi^{-1}) \Big)
$$
can be expressed as $h(\phi + p \phi^{-1})$ for some $h \in \Q(T)$. It
follows that $\Q(\sqrt{D}) = \Q(\phi + p \phi^{-1})$ and therefore the
minimal polynomial $Q(T)$ of $\phi + p \phi^{-1}$ has discriminant
equal to $D$\/ times a square. This proves (2).

Finally, if $Q$ is a quadratic polynomial of discriminant $c^2D$, then
$\eta = \phi + p \phi^{-1}$ is an endomorphism of $A$ satisfying
$Q(\eta) = 0$.  Therefore $\Z[\eta] \cong \sO := \Z + c\, \sO_D$, and we
conclude that $A$ has real multiplication by $\sO$ defined over
$\F_p$, proving (3).
\end{proof}

We conclude this section with a few comments on the models of Hilbert
modular surfaces computed in this paper. We first give formulas for
the family of elliptic K3 surfaces over $\sF_{L_D}$, and then describe
the $2$- and $3$-neighbor steps necessary to reach the alternate
fibration of Step 2. The details of the parametrization will not be
included in the paper, but they are available in the auxiliary
computer files. Steps 3 and 4 are relatively easy to automate. We
simply write down the result, which is the equation of $Y_{-}(D)$ as a
double cover of the Humbert surface. We give a table of points of
small height for which the Brauer obstruction vanishes, and the
associated curves of genus two. We also analyze the Hilbert modular
surface further in the cases that we can describe it as a K3 or
honestly elliptic surface. In particular, we determine the (geometric)
Picard number and generators for the Mordell-Weil group of sections in
most of the cases. We also analyze the branch locus of Step 3,
identifying it with a union of quotients of classical modular curves
in several cases, with the help of explicit formulas given
in~\cite{E3} or obtained by the methods of that paper.  Finally, for
many discriminants, we are able to exhibit curves of low genus on the
surface, possessing infinitely many rational points.

We found the method of van Luijk \cite{vL} quite useful in determining
the ranks of the N\'eron-Severi lattices of these surfaces. Briefly,
the method is as follows. Let $X$ be a smooth projective surface over
$\Q$. For a prime $p$ of good reduction, let $X_p$ be the reduction of
a good model of $X$ at $p$. By counting points on $X_p(\F_q)$ for a
small number of prime powers $q = p^e$, we obtain the characteristic
polynomial of the Frobenius $\phi_p$ on some $\ell$-adic \'etale
cohomology group $H^2(X \times \overline{\F}_p, \Q_l)$. The number of
roots $\rho_0(p)$ of this polynomial which are $p$ times a root of
unity is an upper bound on the geometric Picard number of
$X_p$. Therefore $\rho(X) \leq \rho_0(p)$ for such primes. If we have
$\rho_0(p_1) = \rho_0(p_2) = \rho_0$, but the (expected) square
classes of the discriminant of the N\'eron-Severi groups modulo these
primes (as predicted by the Artin-Tate formula \cite{Ta2}) are distinct, we may
even deduce $\rho(X) < \rho_0$. For if $X_p$ does not satisfy the Tate
conjecture for some $p \in \{ p_1, p_2 \}$, then $\rho(X) \leq \rho(X_p) <
\rho_0$. On the other hand, if both these reductions satisfy the Tate
conjecture, then they also satisfy the Artin-Tate conjecture
\cite{Mi1, Mi2}, and since the size of the Brauer group of $X_p$ is a
square \cite{Mi1, LLR}, the N\'eron-Severi groups must have
discriminants in the same square class.

\section{Neighbor method}\label{neighbors}

Finally, we describe how to transform from one elliptic fibration to
another, using 2- and 3-neighbor steps. We start with an elliptic K3
surface over a field $k$, which we assume has characteristic different
from $2$ or $3$. Let $F$\/ be the class of the fiber, and $O$ be the
class of the zero section. The surface $X$\/ is a minimal proper model
of a given Weierstrass equation
$$
y^2 = x^3 + a_2(t) x^2 + a_4(t) x + a_6(t).
$$ 
Here the elliptic fibration is $\pi: X \rightarrow \Proj^1_t$. Now,
let $F'$ be an elliptic divisor (i.e. $F'$ is effective, $F'^2 = 0$,
the components of $F'$ are smooth rational curves, and $F'$ is
primitive).

We would like to write down the Weierstrass equation for this new
elliptic fibration on $X$.  The space of global sections $H^0(X, \sO_X(F'))$
has dimension $2$ over $k$. The ratio of any two linearly
independent sections gives us the new elliptic parameter $u$.
To compute the space of global sections, we proceed as follows.
Any global section gives a section of the generic fiber $E = \pi^{-1}(\eta)$,
which is an elliptic curve over $k(t) = k(\eta)$.
Therefore, if we have a basis of global sections of $D = F'_{\hor, \eta}$
over $k(t)$, say $\{s_1, \dots, s_r\}$
(where $r = h^0(\sO_E(D)) = \deg(D) = F'_{\hor} \cdot F = F' \cdot F$),
we can assume that any global section of $\sO_X(F')$ is of the form
$$
b_1(t) s_1 + \dots + b_r(t) s_r.
$$
We can now use the information from $F_{\ver}$, which gives us
conditions about the zeroes and poles of the functions $b_i$, to find
the linear conditions cutting out $H^0(X, \sO_X(F'))$, which will be
two-dimensional.

When $F \cdot F' = r$, we say that going between these elliptic
fibrations is an $r$-neighbor step. We will explain the reason for this
terminology shortly. In this paper we use only $2$- and $3$-neighbor steps.
First, we describe how to convert to a genus~$1$
curve in the case when $E = F_{\hor, \eta} = 2O$ or $3O$. These are
the most familiar cases of the $2$- and $3$-neighbor steps, and the
other cases of the two-neighbor step that are needed are more
exhaustively described in \cite{Kum2}.

Suppose $D = 2O$. Then $\{1,x\}$ is a basis of the global sections of
$\sO_E(D)$.  Therefore, on~$X$\/ we obtain two global sections $1$ and
$c(t) + d(t) x$ for some $c(t), d(t) \in k(t)$. The ratio of these two
sections gives the elliptic parameter $u$. We set $x = (u -
c(t))/d(t)$, and substitute into the Weierstrass equation to obtain an
equation
$$
y^2 = g(t,u).
$$
Because $F'$ is an elliptic divisor, the generic fiber of this surface
over $\Proj^1_u$ is a curve of genus $1$.  Thus, once we absorb
square factors into $y^2$ we obtain an equivalent $g$ that is a
polynomial of degree $3$ or~$4$ in~$t$.
Then $y^2 = g(t,u)$ is standard form of the equation of a genus-$1$ curve
as a branched double cover of $\Proj^1$.

Suppose $D = 3O$. Then $\{1,x,y\}$ is a basis of global sections of
$\sO_E(D)$. On~$X$\/ we obtain two global sections $1$ and $c(t) +
d(t) x + e(t) y$.  We set the ratio equal to $u$, solve for $y$, and
substitute into the Weierstrass equation to obtain
$$
\Big( u - c(t) + d(t) x \Big)^2 = e(t)^2 \Big(x^3 + a_2(t) x^2 +
a_4(t) x + a_6(t) \Big).
$$
This equation is of degree $3$ in $x$, and after some simple algebra
(scaling and shifting $x$), we may arrange it to have degree $3$ in
$t$ as well. We end up with a plane cubic curve, which is the standard
model of a genus-$1$ curve with a degree $3$ line bundle.

Finally, if we also know that the elliptic fibration corresponding to
$F'$ has a section (which follows in each of our examples by
Proposition \ref{automaticsections}), then it is isomorphic to the
Jacobian of the genus-$1$ curve we have computed.  We may use standard
formulas to write down the Weierstrass equation of the Jacobian (see
\cite{AKMMMP} and the references cited therein), and this is the
desired Weierstrass equation for the elliptic fibration with fiber
$F'$.

This neighbor step from $F$\/ to~$F'$ corresponds to
computing an explicit isomorphism between two presentations
$\Z O + \Z F + T$ and $\Z O' + \Z F' + T'$ of $\NS(X)$.
Note that $T \cong
F^{\perp}/\Z F$, where $^{\perp}$ refers to the orthogonal complement
in $\NS(X)$. The sublattice $(\Z F + \Z F')^{\perp}$ projects to an
index $r$ sublattice of both $F^{\perp}/\Z F$\/ and $F'^{\perp}/\Z
F'$, where $r = F \cdot F'$. Therefore these lattices are $r$-neighbors.

\section{Discriminant $5$}

\subsection{Parametrization}

This is the smallest fundamental discriminant for real multiplication,
and it is small enough that we do not need any $2$- or $3$-neighbor
steps: we can instead just start with a K3 surface with $E_8$ and
$E_7$ fibers, and ask for the extra condition which allows a section
of height $5/2 = 4 - 3/2$. 

\begin{proposition}
The moduli space of K3 surfaces lattice polarized by $L_5$, the unique
even lattice of signature $(1,17)$ and discriminant $5$ containing $U
\oplus E_8(-1) \oplus E_7(-1)$, is birational to the projective plane
$\Proj^2_{g,h}$. The family of K3 surfaces is given by the following
equation
$$
y^2 = x^3 + \frac{1}{4} t^3 \, \big( -3g^2 \, t + 4 \big) \, x -
\frac{1}{4} t^5 \, \big( 4 h^2 t^2 + (4h + g^3)\, t + ( 4g + 1) \big).
$$
\end{proposition}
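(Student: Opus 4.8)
The plan is to realize the family directly as elliptic K3 surfaces carrying the configuration ``$E_8$ fiber $+$ $E_7$ fiber $+$ one section of height $5/2$'', to check that the resulting Néron--Severi lattice is forced to be $L_5$, and then to parametrize this configuration explicitly. I would begin from the general Weierstrass equation of an elliptic K3 surface with an $E_8$ fiber at $t=\infty$ and an $E_7$ fiber at $t=0$,
$$
y^2 = x^3 + t^3(at + a')\,x + t^5(b''t^2 + bt + b'),
$$
as in Section~\ref{thesis}, with $a'\neq 0$ and $b''\neq 0$ so that the fibers really are $E_7$ and $E_8$. After dividing by the two--dimensional group of admissible scalings of $(x,y,t)$ this is a three--dimensional family, matching $\dim\M_2 = 3$. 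The proposition then says that imposing one extra section of height $5/2$ cuts this down to the two--dimensional locus $\sF_{L_5}$.

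Next I would pin down the shape of such a section. Since the $E_8$ fiber has trivial component group, any section contributes $0$ there, so by the height formula $h(P) = 2\chi(\sO_X) + 2(P\cdot O) - \sum_v \operatorname{contr}_v(P)$ with $\chi(\sO_X)=2$ we need $P\cdot O = 0$ and $\operatorname{contr}_0(P) = 3/2$ to get $h(P) = 4 - 3/2 = 5/2$; that is, $P$ is disjoint from $O$ and meets the unique non-identity component of the $\III^*$ fiber. Writing $P = (X(t),Y(t))$ with $X,Y$ polynomials of degrees at most $4$ and $6$ (disjointness from $O$), I would analyze $Y^2 = X^3 + AX + B$ modulo powers of $t$: with $\operatorname{ord}_{t=0}A = 3$ and $\operatorname{ord}_{t=0}B = 5$, the value $m=\operatorname{ord}_{t=0}X=0$ gives the identity component; $m=1$ forces $Y^2$ to vanish to odd order $3$; $m\ge 3$ leaves an uncancellable $t^5$ from $B$; and only $m=2$ survives, automatically landing on the nontrivial coset. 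Hence $X = t^2(x_2 t^2 + x_1 t + x_0)$ with $x_0\neq 0$ and $Y = t^3 Z(t)$ with $Z$ a cubic.

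The main step is then to substitute this ansatz and solve $Y^2 = X^3 + AX + B$. Matching coefficients gives one relation at order $t^5$ (forcing $a'x_0 + b' = 0$) together with seven relations in degrees $t^6,\dots,t^{12}$; against the twelve unknowns $a,a',b,b',b'',x_0,x_1,x_2$ and the four coefficients of $Z$, these eight relations and the two--dimensional scaling group leave exactly a two--parameter family, as expected. I would solve these rationally. The one delicate point is the leading relation $z_3^2 = x_2^3$ coming from the $t^{12}$ coefficient; after uniformizing it (for instance by scaling $x_2$ to a square) the remaining equations become linear, and a convenient choice of the two free parameters --- say $g,h$ with $x_0 = g+\tfrac14$ and $b'' = -h^2$ --- produces the Weierstrass equation in the statement. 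This explicit rationalization, together with finding the clean coordinates $g,h$ that make the coefficients come out exactly as displayed, is the part I expect to be the real work.

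Finally I would verify the lattice and the birationality. By the Shioda--Tate formula the generic member has $\NS$ of rank $2 + 8 + 7 + 1 = 18$, and by the discriminant formula $|\disc\NS| = \disc(E_8)\,\disc(E_7)\,h(P) = 1\cdot 2\cdot\tfrac52 = 5$; since $\NS$ is then an even lattice of signature $(1,17)$ and discriminant $5$ containing $U\oplus E_8(-1)\oplus E_7(-1)$ (spanned by $O,F$ and the reducible fibers), the uniqueness of $L_5$ recorded before Theorem~\ref{humbert} forces $\NS\cong L_5$. Thus the construction defines a dominant rational map $\Proj^2_{g,h}\dashrightarrow \sF_{L_5}$ of irreducible surfaces; it is generically injective because from an $L_5$--polarized K3 one recovers the $E_8$--$E_7$ fibration and its height-$5/2$ section (unique up to the sign of $Y$), so that $(g,h)$ is determined by the surface. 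Hence the map is birational, proving the proposition.
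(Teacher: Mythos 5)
Your proposal is correct and follows essentially the same route as the paper: start from the general Weierstrass family with $E_8$ and $E_7$ fibers, impose a section of height $5/2 = 4 - 3/2$ whose $x$-coordinate is forced to be $t^2$ times a quadratic with square leading coefficient, solve the resulting coefficient relations, and normalize by the two-parameter scaling of $(x,y,t)$. The extra details you supply (the order-of-vanishing analysis justifying the ansatz, the Shioda--Tate/discriminant verification that $\NS \cong L_5$, and the generic-injectivity argument for birationality) are points the paper's terse proof leaves implicit, but the core computation is identical.
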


\begin{proof}
We start with a family of elliptic K3 surfaces with $E_8$ and $E_7$ fibers:
$$
y^2 = x^3 + x t^3 (a_0 + a_1 t) + t^5 (b_0 + b_1 t + b_2 t^2).
$$ 

To have discriminant $-5$ for the Picard group, the elliptic K3
surfaces in this family must have a section of height $5/2 = 4 - 3/2$.
The $x$-coordinate for such a section must have the form $t^2(x_0 +
x_1t + h^2 t^2)$. Substituting $x$ into the right-hand side, and
completing the square, we may solve for $b_0, b_1, b_2$ and $a_1$ in
terms of $a_0, h, x_0$ and $x_1$. We then set $x_1 = eh$ and $x_0 = g
+ e^2/4$ to simplify the expressions. Finally, we note that scaling
$x,y,t$ by $\lambda^2, \lambda^3, \mu/\lambda$ gives $a_0, e, g, h$
weights $(1,3), (0,1), (0,2)$ and $(-1,2)$ respectively with respect
to $(\lambda, \mu)$. Therefore, we may scale $a_0$ and $e$ to equal
$1$ independently (at most removing hypersurfaces in the moduli
space), and get the parameterization in the statement of the
proposition. We note that the section $P$ of height $5/2$ is given by
$$
x(P) = \frac{1}{4}t^2 \big( (1 + 2ht)^2 + 4g \big), \qquad y(P) =
\frac{1}{8} t^3 (1 + 2ht) \big( (1 + 2ht)^2 + 6g \big).
$$
\end{proof}

\begin{corollary}
The Humbert surface $\sH_5$ is birational to $\Proj^2_{g,h}$, with the
map to $\A_2$ given by the Igusa-Clebsch invariants 
$$
(I_2: I_4: I_6: I_{10}) = \big( 6(4g+1),9g^2,9(4h+9g^3+2g^2),4h^2 \big).
$$
\end{corollary}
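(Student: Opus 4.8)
The plan is to separate the two assertions of the corollary. The birational equivalence $\sH_5 \cong \Proj^2_{g,h}$ is immediate: Theorem~\ref{humbert} supplies a birational surjective morphism $\sF_{L_5} \to \sH_5$, and the preceding proposition identifies $\sF_{L_5}$ birationally with $\Proj^2_{g,h}$, so composing gives the claim. The real content is therefore the explicit formula for the induced rational map $\Proj^2_{g,h} \to \A_2$, which I will obtain by reading off Igusa--Clebsch invariants.

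To do this I would invoke the Galois-invariant isomorphism $\phi: \sF_L \to \A_2$ of Section~\ref{thesis}, whose restriction to $\mathcal{E}_{E_8,E_7}$ inverts the explicit map $\psi$ of Theorem~\ref{thesisthm}. Concretely, for a K3 surface in the family this means: bring its Weierstrass equation into the normal form of Theorem~\ref{thesisthm}, whereupon the coefficients directly display $I_2, I_4, I_6, I_{10}$. The family of the proposition is already of the correct shape (an $E_8$ fiber at $t=\infty$, an $E_7$ fiber at $t=0$, with the $x$-coefficient a multiple of $t^3(\cdots)$ and the constant term a multiple of $t^5(\cdots)$), so I would apply the admissible rescalings $(x,y,t) \mapsto (\lambda^2 x, \lambda^3 y, c\,t)$ — the coordinate changes preserving this shape and fixing the two distinguished fibers — and then equate coefficients with the normal form.

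The one genuine subtlety, and the step I expect to be the main obstacle, is the normalization of the $t^3 x$-term. In the proposition's family this coefficient equals $+1$, whereas the normal form of Theorem~\ref{thesisthm} demands $-1$; matching them forces the relation $c^3 = -\lambda^4$. Once this is imposed, the $t^4$-coefficient of the $x$-term yields $I_4$, and the $t^5$, $t^6$, $t^7$ coefficients of the constant term yield $I_2$, then $I_6$, then $I_{10}$, each as a monomial in $g,h$ times a fixed power of $c/\lambda$. Because the Igusa--Clebsch invariants are defined only as a point of the weighted projective space $\Proj^3_{1,2,3,5}$, these prefactors must collapse to a single weighted scaling; I would verify that the induced scalar $s$ satisfies $s^2 = -c$ consistently across all four weights (using $\lambda^{12} = -c^9$), and clearing it leaves precisely the tuple $\big(6(4g+1): 9g^2: 9(4h + 9g^3 + 2g^2): 4h^2\big)$. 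The remaining work — expanding the coefficients and tracking the powers of $c$ and $\lambda$ — is purely mechanical bookkeeping, so I would present only the final normalized invariants.
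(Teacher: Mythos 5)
Your proposal is correct and takes essentially the same route as the paper: the published proof likewise invokes Theorem~\ref{humbert} for the birational statement and simply ``reads out'' the Igusa--Clebsch invariants from the Weierstrass equation of the preceding proposition by matching it to the normal form of Theorem~\ref{thesisthm}. The scaling bookkeeping you single out---the relation $c^3=-\lambda^4$ forced by the sign of the $t^3x$-coefficient, and the consistency of the weighted-projective scalar $u$ with $u^2=-c$ across all four weights---is exactly the computation the paper leaves implicit, and it does check out, yielding $\big(6(4g+1):9g^2:9(4h+9g^3+2g^2):4h^2\big)$.
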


\begin{proof}
  This follows immediately from Theorem \ref{humbert}. The
  Igusa-Clebsch invariants may be read out directly from the
  Weierstrass equation above.
\end{proof}

\begin{theorem} 
A birational model over $\Q$ of the Hilbert modular surface $Y_{-}(5)$
is given by the following double cover of $\Proj^2_{g,h}$:
$$
z^2 = 2 \,(6250 \, h^2-4500 \, g^2 \, h-1350 \, g \, h-108
\,h-972\,g^5-324\,g^4-27\,g^3).
$$
It is a rational surface (i.e. birational to $\Proj^2$). 
\end{theorem}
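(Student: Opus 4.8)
The plan is to follow the four-step method of Section~\ref{method} to pin down the double cover, and then to prove rationality by a conic-bundle argument. Since the Humbert surface $\sH_5 \cong \Proj^2_{g,h}$ and the Igusa--Clebsch map have already been computed in the Corollary above, a birational model of $Y_-(5)$ is determined once we know its branch locus over $\Proj^2_{g,h}$ together with a global quadratic twist. Because $D=5$ is odd, the branch locus is (set-theoretically) the union of the images of the modular curves $F_1$ and $F_5$, and on the K3 side these are exactly the loci where the Picard number jumps from $18$ to $19$ with the N\'eron--Severi discriminant becoming $2D=10$; the companion value $D/2$ does not occur since $D$ is odd.

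First I would carry out Step~3: enumerate the finitely many geometric ways this jump can occur for the family of the Proposition---an extra $A_1$ fiber, a promotion of a reducible fiber, or a new section of the appropriate small height---and for each translate the condition into a polynomial $f_i(g,h)\in\Z[g,h]$ of content~$1$. The candidate model is then $z^2 = C\prod_{i\in I} f_i(g,h)$ for some subset $I$ and some squarefree $C$, with $C$ supported on primes dividing $2D=10$ by the good-reduction result of \cite{Ra, DP}. Next I would execute Step~4 to select $I$ and $C$: for several primes $p\nmid 5$, specialize $(g,h)\in\F_p^2$, read off the Igusa--Clebsch invariants from the Corollary, build the genus-$2$ curve $C_{g,h}/\F_p$, and count its points over $\F_p$ and $\F_{p^2}$ to form $P(T)$ and hence $Q(X)$. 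By Lemma~\ref{twistrealmult} the Jacobian has real multiplication by $\sO_5$ over $\F_p$ precisely when $Q$ has discriminant in the square class of $5$, and this must agree with $C\prod_{i\in I}f_i(g,h)$ being a nonzero square in $\F_p$. Running this consistency check over enough points and primes isolates a unique $I$ and $C$, yielding the stated equation; a useful sanity check is that its right-hand side factors as $-54\,g^3(6g+1)^2$ when $h=0$.

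For the rationality claim, the key observation is that $f(g,h)$ has degree~$2$ in~$h$, so the surface $z^2=f(g,h)$ is a conic bundle over $\Proj^1_g$, with generic fiber the smooth conic $z^2 = 12500\,h^2 + B(g)\,h + C(g)$ over $\Q(g)$, where $B,C\in\Q[g]$ are the evident coefficient polynomials. A conic bundle over a rational base is rational as soon as it admits a section, i.e.\ a $\Q(g)$-rational point on the generic fiber: such a point makes the generic fiber $\Proj^1_{\Q(g)}$, so the surface becomes birational to $\Proj^1\times\Proj^1$ and hence to $\Proj^2$. I would therefore look for $h,z\in\Q(g)$ solving the conic equation.

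I expect the main obstacle to be twofold. On the enumeration side, one must ensure that the lattice analysis of Step~3 produces \emph{all} components of the branch locus, so that the point-count certification of Step~4 is selecting from a complete list rather than merely being consistent with one. On the rationality side, the real work is exhibiting the section of the conic bundle explicitly; the high-order contact of the branch quintic with the line $h=0$ recorded by the factorization $-54\,g^3(6g+1)^2$ suggests that projecting from the singular locus of the branch curve is the most promising way to produce the required $\Q(g)$-point.
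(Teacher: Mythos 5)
The first half of your plan --- deriving the displayed equation via Steps 3--4, i.e.\ enumerating candidate branch components coming from lattice jumps to discriminant $2D=10$ (the paper's explicit list is $g$, $h$, $8h-9g^2$, $64h^2+48g^2h+48g^5+9g^4$, and the quintic that actually occurs) and then certifying the correct subset and the twist $C=2$ by point counts through Lemma~\ref{twistrealmult} --- is exactly the paper's route, and your sanity check $2f(g,0)=-54\,g^3(6g+1)^2$ is correct. The genuine gap is in the rationality argument: the conic bundle over $\Proj^1_g$ that you propose to use admits \emph{no} section, so the search for $h,z\in\Q(g)$ that you describe (whether by projecting from the singular locus of the branch curve or otherwise) cannot succeed. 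Indeed, completing the square in $h$ via $h = k + 9(250g^2+75g+6)/6250$ turns the equation into
$$
z^2 - 12500\,k^2 \;=\; -\tfrac{27}{3125}\,(10g+3)^3\,(15g+2)^2 ,
$$
so, modulo squares in $\Q(g)^{\times}$, the generic fiber is the conic $z^2 - 5k^2 + 15\,(10g+3)\,w^2 = 0$. A primitive solution over the local ring at the place $10g+3=0$ would reduce to $\bar z^2 = 5\bar k^2$ in the residue field $\Q$, forcing $\bar z=\bar k=0$ since $5$ is not a rational square, and then $(10g+3)\mid w$ as well --- a contradiction. Hence the generic fiber has no $\Q(g)$-point at all. (This does not contradict the theorem: this surface is precisely an example of a rational surface whose given conic-bundle structure has no section.)

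What the paper does, after the same completion of the square, is to exploit the displayed factorization in the opposite way: absorb the square factor by setting $k = 3m(10g+3)(15g+2)/6250$ and $z = 3n(10g+3)(15g+2)/25$, which reduces the equation to $5n^2 - m^2 + 30g + 9 = 0$. This is \emph{linear} in $g$, so solving $g = (m^2-5n^2-9)/30$ gives an explicit birational map $\Proj^2_{m,n} \dashrightarrow Y_{-}(5)$: rationality comes from projecting to the $(m,n)$-plane, i.e.\ from interchanging the roles of base and fiber variables, not from a section over $\Proj^1_g$. This inversion is the step your outline is missing, and it also shows why the high-order contact of the branch curve with $h=0$ is a dead end rather than a source of the (nonexistent) $\Q(g)$-point.
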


\begin{proof}
We follow the method of section \ref{method}. The possible factors of
the branch locus are $g$, $h$, $8h - 9g^2$ (the zero locus of this
polynomial defines a subvariety of the moduli space for which the
corresponding elliptic K3 surfaces acquire an extra $\I_2$ fiber),
$64h^2+48g^2h+48g^5+9g^4$ (extra $\II$ fiber), and
$6250h^2-4500g^2h-1350gh -108h-972g^5-324g^4-27g^3$ (extra $\I_2$
fiber). By Step 4 of the method, we deduce that only the last factor
occurs, and the correct quadratic twist is by $2$. It was already
well-known that the Hilbert modular surface is a rational surface, but
we give an explicit parametrization in the following analysis.
\end{proof}

\subsection{Analysis}

This is a rational surface: to obtain a parametrization, we
complete the square in $h$, writing $h = k + 9\,(250\,g^2+75
\,g+6)/6250$. Following this up with $k = 3 \,m\,(10g + 3)(15g +
2)/6250$, and removing square factors by writing $z = 3 n (10 g+3) (15
g+2)/25$, we obtain the equation $5n^2-m^2+9 + 30g =0$, which we can
solve for $g$. This gives an explicit birational map between
$Y_{-}(5)$ and $\Proj^2_{m,n}$.

The branch locus is the curve obtained by setting $z = 0$, or
alternatively $n = 0$ in the above parametrization. It is parametrized
by one variable $m$; we have
$$
(g,h) =  \big( (m^2 - 9)/30, (m-2)^2 (m+3)^3/12500 \big).
$$

\subsection{Examples}

We list some points of small height (at most $100$) and
corresponding genus $2$ curves.

\begin{tabular}{l|c}
Rational point $(g,h)$ & Sextic polynomial $f_6(x)$ defining the genus $2$ curve $y^2 = f_6(x)$. \\
\hline\hline \\ [-2.5ex]
$(-8/3, 47/2)$ & $ -x^5 + x^4 - x^3 - x^2 + 2x - 1 $ \\
$(-37/6, 67)$ & $ x^6 + 2x^5 + x^4 - 2x^3 + 2x^2 - 4x + 1 $ \\
$(0, 27/50)$ & $ 3x^5 + 5x^3 + 1 $ \\
$(-1/24, 1/100)$ & $ 4x^6 + 4x^5 + 5x^4 - 5x^2 - 2x - 2 $ \\
$(-25/54, 59/81)$ & $ -x^6 - 2x^4 - 6x^3 - 5x^2 - 6x - 1 $ \\
$(-2/3, -14/25)$ & $ -7x^6 - 7x^5 - 5x^4 + 5x^2 - x - 1 $ \\
$(47/54, 71/81)$ & $ 3x^6 - 6x^5 + 7x^4 - 2x^3 - 2x^2 - 1 $ \\
$(-1/6, 1/25)$ & $ x^6 - 4x^5 + 10x^4 - 10x^3 + 5x^2 + 2x - 3 $ \\
$(-4/3, 16/25)$ & $ -x^6 - x^5 + 5x^2 - 7x - 12 $ \\
$(-4/3, 49/22)$ & $ 7x^5 + 5x^4 + 3x^3 - 9x^2 - 14x - 7 $ \\
$(0, -54/11)$ & $ 2x^6 - 6x^5 - 6x^4 + 3x^3 - 18x^2 - 6x - 2 $ \\
$(11/6, 53/11)$ & $ 9x^6 - 14x^5 + 13x^4 - 2x^3 - 22x^2 - 8x - 7 $ \\
$(-5/24, 1/64)$ & $ 6x^6 - 2x^5 - 15x^4 - 16x^3 - 25x^2 - 8x - 4 $ \\
$(11/6, -89/25)$ & $ -x^6 + 2x^5 - 5x^4 + 30x^3 - 10x^2 + 8x - 1 $ \\
$(-2/3, 68/11)$ & $ -2x^6 - 2x^5 - 11x^4 - 29x^3 - 31x^2 - 26x - 6 $ \\
$(-2/3, 26/25)$ & $ -2x^6 + 36x^5 + 5x^4 + 35x^3 - 10x^2 - 21x - 17 $ 
\end{tabular}

The second entry in the list is the modular curve $X_0(67)/\langle w \rangle$,
where $w$ is the Atkin-Lehner involution.  We find several other modular
curves with real multiplication by $\sO_5$ (and also a few for
discriminants other than~$5$) corresponding to points of larger height.

\subsection{Brumer's and Wilson's families of genus $2$ curves}

Genus $2$ curves whose Jacobians have real multiplication by $\sO_5$
have been studied earlier, by Brumer \cite{Br} and Wilson \cite{Wi1,
  Wi2}. Brumer describes a three-dimensional family of genus $2$
curves, given by
$$
y^2 + \big(1+x+x^3+c(x+x^2)\big)y = -bdx^4 + (b-d-2bd)x^3 + (1-3b-bd)x^2 + (1+3b)x + b.
$$ 
In the auxiliary files, we give formulas for the corresponding
values of our parameters $g$ and $h$.

Wilson describes a family of genus two curves by their
Igusa-Clebsch invariants. His moduli space is two-dimensional, though
he uses three coordinates $z_6, s_2$ and $\sigma_5$ of weights $1,2,5$
respectively, in a weighted projective space. These coordinates are
related to ours via
$$
(g,h) = \left( -\frac{2z_6^2+s_2}{12z_6^2} , \frac{\sigma_5}{64z_6^5} \right).
$$

\section{Discriminant $8$}

\subsection{Parametrization}

We start with an elliptic K3 surface with fibers of type $D_9$ and
$E_7$. A Weierstrass equation for such a family is given by
$$
y^2 = x^3 + t \, \big( (2r+1)\,t+r \big)\,x^2 + 2rst^4 (t+1) \,x + rs^2t^7.
$$

We then identify a fiber of type $E_8$, and transform to it by a
$2$-neighbor step.

\begin{center}
\begin{tikzpicture}

\draw (0,0)--(15,0);
\draw (3,0)--(3,1);
\draw (9,0)--(9,1);
\draw (14,0)--(14,1);
\draw [very thick] (7,0)--(14,0);
\draw [very thick] (9,0)--(9,1);

\fill [white] (0,0) circle (0.1);
\fill [white] (1,0) circle (0.1);
\fill [white] (2,0) circle (0.1);
\fill [white] (3,0) circle (0.1);
\fill [white] (3,1) circle (0.1);
\fill [white] (4,0) circle (0.1);
\fill [white] (5,0) circle (0.1);
\fill [white] (6,0) circle (0.1);
\fill [black] (7,0) circle (0.1);
\fill [black] (8,0) circle (0.1);
\fill [black] (9,0) circle (0.1);
\fill [black] (10,0) circle (0.1);
\fill [black] (11,0) circle (0.1);
\fill [black] (12,0) circle (0.1);
\fill [black] (13,0) circle (0.1);
\fill [black] (9,1) circle (0.1);
\fill [white] (14,1) circle (0.1);
\fill [black] (14,0) circle (0.1);
\fill [white] (15,0) circle (0.1);

\draw (7,0) circle (0.2);
\draw (0,0) circle (0.1);
\draw (1,0) circle (0.1);
\draw (2,0) circle (0.1);
\draw (3,0) circle (0.1);
\draw (3,1) circle (0.1);
\draw (4,0) circle (0.1);
\draw (5,0) circle (0.1);
\draw (6,0) circle (0.1);
\draw (7,0) circle (0.1);
\draw (8,0) circle (0.1);
\draw (9,0) circle (0.1);
\draw (10,0) circle (0.1);
\draw (11,0) circle (0.1);
\draw (12,0) circle (0.1);
\draw (13,0) circle (0.1);
\draw (9,1) circle (0.1);
\draw (14,1) circle (0.1);
\draw (14,0) circle (0.1);
\draw (15,0) circle (0.1);

\end{tikzpicture}
\end{center}

The resulting elliptic fibration has $E_8$ and $E_7$ fibers, and we
may read out the Igusa-Clebsch invariants, and then compute the branch
locus of the double cover that defines the Hilbert modular surface.
It corresponds to elliptic K3 surfaces with an extra $\I_2$ fiber.
We obtain the following result for $Y_{-}(8)$.
\begin{theorem}
The Humbert surface $\sH_8$ is birational to $\Proj^2_{r,s}$, with the
explicit map to $\A_2$ given by the Igusa-Clebsch invariants
\begin{align*}
I_2 &= -4(3s+8r-2) & \qquad I_4 &= 4(9rs+4r^2+4r+1) \\
I_6 &= -4(36rs^2+94r^2s-35rs+4s+48r^3+40r^2+4r-2) & \qquad I_{10} &= -8s^2r^3.
\end{align*}
A birational model over $\Q$ for the Hilbert modular surface
$Y_{-}(8)$ as a double cover of $\Proj^2$ is given by the following
equation:
$$
z^2 = 2 \,(16rs^2+32 r^2s-40rs-s+16r^3+24r^2+12r+2).
$$
It is also a rational surface.
\end{theorem}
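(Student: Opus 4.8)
The plan is to follow the four-step method of Section~\ref{method}, exactly as in the $D=5$ case. The starting point is the displayed family of elliptic K3 surfaces with a $D_9$ fiber and an $E_7$ fiber, and the first task is to verify that its generic member really is $L_8$-polarized. By the Shioda--Tate formula the trivial lattice here has rank $2+9+7=18$, and since the family is meant to carry \MoW\ rank $0$ this forces $\rho=18$ with $|\disc \NS| = \disc(D_9)\cdot\disc(E_7)=4\cdot 2=8$. Matching the signature $(1,17)$ and the discriminant form ($\Z/4 \oplus \Z/2$ coming from $D_9$ and $E_7$) against the uniqueness statement for $L_8$ then identifies $\NS$ of the generic member with $L_8$; checking that $L_8 \cong U\oplus N$ with $N\supset E_8\oplus E_7$ (equivalently that $N^*$ contains a vector of norm $2/8$) is what makes Proposition~\ref{automaticsections} applicable below. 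Since both $\Proj^2_{r,s}$ and $\sF_{L_8}$ are two-dimensional and the construction is reversible on a dense open set, the induced map $\Proj^2_{r,s}\dashrightarrow \sF_{L_8}$ is birational.

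The heart of the argument, and the step I expect to be the main obstacle, is the $2$-neighbor step indicated by the figure. One has to locate among the nodal classes (the $D_9$ and $E_7$ fiber components together with the zero section) a subdiagram forming an extended $E_8$ diagram; the corresponding primitive, nef, isotropic class $F'$ defines a new genus-$1$ fibration by Lemma~\ref{lem:section_criterion}, and it has a section by Proposition~\ref{automaticsections}. Because $F\cdot F'=2$, I would compute $H^0(X,\sO_X(F'))$ from the basis $\{1,x\}$ of the old fiber, impose the vertical conditions coming from $F'_{\ver}$, extract the new elliptic parameter $u$, and reduce to a double cover $y^2=g(t,u)$ of $\Proj^1_t$; taking its Jacobian (via the formulas of \cite{AKMMMP}) and minimizing yields a Weierstrass equation with $E_8$ and $E_7$ fibers of the form $y^2=x^3+t^3(at+a')x+t^5(b''t^2+bt+b')$. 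Reading off the Igusa--Clebsch invariants through Theorem~\ref{thesisthm} then gives the stated $(I_2:I_4:I_6:I_{10})$, and Theorem~\ref{humbert} makes $\Proj^2_{r,s}\dashrightarrow\sH_8$ birational, establishing the Humbert-surface half of the statement. This step is mechanical but delicate: the change of variables introduces denominators that must be tracked carefully to land on exactly the displayed invariants.

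For the double cover $Y_{-}(8)\to\sH_8$ I would carry out Steps~3 and~4. Lattice theory locates the branch locus where the Picard number jumps by one with the discriminant changing from $D=8$ to $2D=16$ or $D/2=4$; enumerating the degenerations of the $E_8$--$E_7$ fibration (extra $\I_2$, extra $\II$, promotion of a fiber, or a new section) produces a short finite list of candidate polynomials $f_i(r,s)$, one of which corresponds to acquiring an extra $\I_2$ fiber. To pin down which product occurs and the squarefree twist $C$, I would reduce modulo several odd primes $p\nmid 8$, specialize $(r,s)\in\F_p$, build the genus-$2$ curve from the Igusa--Clebsch invariants (the Brauer obstruction vanishing over finite fields), count points over $\F_p$ and $\F_{p^2}$ to form the polynomial $Q$, and apply Lemma~\ref{twistrealmult}: a point lifts to $Y_{-}(8)(\F_p)$ exactly when $A$ has real multiplication over~$\F_p$, i.e.\ when $Q$ has discriminant in the square class of~$8$, which forces $C\cdot f(r,s)$ to be a square. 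Good reduction away from~$2$ (by \cite{Ra,DP}) restricts $C$ to finitely many values, and the point counts single out the one factor together with the twist $C=2$, yielding $z^2 = 2\,(16rs^2+32r^2s-40rs-s+16r^3+24r^2+12r+2)$.

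Finally, for rationality I would make the parametrization explicit as in the $D=5$ case. Viewing the equation as a conic bundle over $\Proj^1_r$ — the right-hand side is quadratic in~$s$ — it suffices to exhibit a rational section, i.e.\ a $\Q(r)$-point of the generic conic $z^2=2g(r,s)$. The top-degree part of $g$ factors as $16r(r+s)^2$, which suggests a convenient coordinate change (completing the square in~$s$ and absorbing square factors into~$z$) that should expose such a section, and hence a birational map to $\Proj^2$, proving that $Y_{-}(8)$ is a rational surface.
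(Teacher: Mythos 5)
Your Steps 1--3 are essentially the paper's own proof: the same $D_9+E_7$ family, the same $2$-neighbor step to an $E_8E_7$ fibration followed by reading off the Igusa--Clebsch invariants via Theorem~\ref{thesisthm} and Theorem~\ref{humbert}, and the same determination of the branch locus (an extra $\I_2$ fiber) and of the twist $C=2$ by reduction modulo odd primes and Lemma~\ref{twistrealmult}. Up to that point the proposal is correct.

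The rationality argument in your last paragraph has a genuine gap: the section you propose to exhibit does not exist. Complete the square as you suggest, $s=s_1-r+\frac54+\frac{1}{32r}$; the generic fiber of $Y_{-}(8)\to\Proj^1_r$ becomes the conic $z^2=32r\,s_1^2+\frac{(16r-1)^3}{32r}$ over $\Q(r)$, equivalently $u^2=32r\,z^2-(16r-1)^3w^2$ with $u=32rs_1$. Up to squares the associated quaternion algebra is $(32r,\,1-16r)$, and its residue at the place $r=\frac{1}{16}$ of $\Q(r)$ is the square class of $32\cdot\frac{1}{16}=2$ in $\Q^{\times}/(\Q^{\times})^{2}$, which is nontrivial. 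Hence the algebra is ramified there, the generic conic has no $\Q(r)$-point, and the conic bundle $Y_{-}(8)\to\Proj^1_r$ admits no rational section. (Concretely, the fiber at $r=\frac{7}{16}$ is $u^2=14z^2-6w^2$ up to squares, which has no $\Q_3$-point.) Your heuristic --- the leading form $16r(r+s)^2$ --- only says that the two points at infinity of each fiber are conjugate over $\Q(\sqrt{2r})$, so they do not descend to a section. This route does work for $D=12$, where $e=0$ gives a visible section of the conic bundle over $\Proj^1_f$, but it is closed for $D=8$.

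What the paper does instead is exploit the fact that the completed-square constant term is the cube $(16r-1)^3$ divided by $32r$: substituting $s_1=\frac{m(16r-1)}{32r}$ and $z=n(16r-1)$ strips the square factor $(16r-1)^2$ and leaves $32rn^2=m^2+16r-1$, which is \emph{linear} in $r$. Solving, $r=(m^2-1)/(32n^2-16)$, gives a birational map $\Proj^2_{m,n}\dashrightarrow Y_{-}(8)$. Note that this parametrization does not respect the fibration over $\Proj^1_r$, which is exactly how rationality coexists with the nonexistence of a conic-bundle section. So your argument is sound through the computation of the equation of $Y_{-}(8)$, but the final rationality claim requires this (or some other) substitution in place of the section you assumed.
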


\subsection{Analysis}

This Hilbert modular surface is a rational surface. To see this, we
may complete the square in $s$, setting $s = s_1 - r + 5/4 + 1/(32r)$.
Then setting $s_1 = m(16r-1)/(32r)$ and $z = n(16r-1)$, we remove
square factors from the equation, which becomes linear in~$r$.  We
find $r = (m^2-1)/(32n^2-16)$, thus obtaining an explicit
parametrization of $Y_{-}(8)$ by $\Proj^2_{m,n}$.

The branch locus is a genus $0$ curve; we obtain a parametrization by
setting $z = 0$:
$$
(r,s) = \big( (1-t^2)/16, (t+3)^3/(16(t+1)) \big).
$$

\subsection{Examples}

We list some points of small height and corresponding genus $2$ curves.

\begin{tabular}{l|c}
Rational point $(r,s)$ & Sextic polynomial $f_6(x)$ defining the genus $2$ curve $y^2 = f_6(x)$. \\
\hline \hline  \\ [-2.5ex]
$(2/25, 83/50)$ & $ x^6 - x^5 + 3x^4 + x^3 + x^2 + 2x + 1 $ \\
$(-34/9, 50/9)$ & $ x^6 - 2x^5 - 2x^4 - 4x^3 + 2x^2 + 4x - 3 $ \\
$(-22, 59/2)$ & $ x^6 + 2x^5 - 3x^4 + 5x^3 + x^2 - x - 1 $ \\
$(2/49, 58/49)$ & $ -x^6 - 4x^5 - 6x^4 + 2x^2 + 2x - 1 $ \\
$(-52, 83/2)$ & $ -4x^6 + 2x^5 + 5x^4 - 7x^3 - x + 1 $ \\
$(1/8, 59/32)$ & $ -x^6 - x^5 - 7x^2 + 5x - 4 $ \\
$(-4/9, 23/8)$ & $ -2x^6 + 6x^5 - x^4 - 7x^3 - x^2 - 3x $ \\
$(80, 83/2)$ & $ x^6 + 5x^5 + 8x^4 + 5x^3 + 5x^2 - 8x + 4 $ \\
$(19/2, 22)$ & $ -x^6 - 4x^5 - 8x^4 - 8x^3 - 8x^2 + 4x - 4 $ \\
$(94, -54)$ & $ -x^6 + 6x^4 - 8x^3 + 6x^2 + 6x - 9 $ \\
$(13/8, -2/9)$ & $ -x^6 + 6x^5 - 9x^4 + 3x^2 - 6x - 2 $ \\
$(86/9, -13/2)$ & $ x^6 - 7x^4 - 7x^3 - x^2 + 9x + 9 $ \\
$(-3/8, 9/4)$ & $ -3x^6 - 9x^5 - 2x^4 + 10x^3 + x^2 + 3x $ \\
$(1/14, 10/7)$ & $ -x^6 - 2x^5 - 7x^4 + 4x^3 - 3x^2 + 10x - 5 $ \\
$(1/18, 31/18)$ & $ -3x^6 - 10x^5 - 7x^4 - 5x^2 + 2x - 1 $ \\
$(1/32, 19/16)$ & $ 4x^6 - 7x^5 - 3x^4 - 2x^3 + 10x^2 + 5x + 1 $
\end{tabular}

\section{Discriminant $12$}

\subsection{Parametrization}

We start with an elliptic K3 surface with fibers of types $E_8$, $D_6$
and $A_2$. The Weierstrass equation for such a family is given by

$$
y^2 = x^3 + \big((1-f^2)\,(1-t) + t \big)\,t\,x^2 +
2\,e\,t^3\,(t-1)\,x + e^2\,(t-1)^2\,t^5.
$$

We identify a fiber of type $E_7$, and move to the associated elliptic
fibration by a $2$-neighbor step.

\begin{center}
\begin{tikzpicture}

\draw (0,0)--(13,0);
\draw (1,0)--(1,1);
\draw (3,0)--(3,1);
\draw (11,0)--(11,1);
\draw (5,0)--(5,1)--(4.5,1.866)--(5.5,1.866)--(5,1);
\draw [very thick] (0,0)--(5,0)--(5,1);
\draw [very thick] (3,0)--(3,1);

\draw (5,0) circle (0.2);
\fill [black] (0,0) circle (0.1);
\fill [black] (1,0) circle (0.1);
\fill [black] (2,0) circle (0.1);
\fill [black] (3,0) circle (0.1);
\fill [black] (4,0) circle (0.1);
\fill [black] (5,0) circle (0.1);
\fill [white] (6,0) circle (0.1);
\fill [white] (1,1) circle (0.1);
\fill [black] (3,1) circle (0.1);
\fill [white] (7,0) circle (0.1);
\fill [white] (8,0) circle (0.1);
\fill [white] (9,0) circle (0.1);
\fill [white] (10,0) circle (0.1);
\fill [white] (11,0) circle (0.1);
\fill [white] (12,0) circle (0.1);
\fill [white] (13,0) circle (0.1);
\fill [white] (11,1) circle (0.1);
\fill [black] (5,1) circle (0.1);
\fill [white] (4.5,1.866) circle (0.1);
\fill [white] (5.5,1.866) circle (0.1);

\draw (0,0) circle (0.1);
\draw (1,0) circle (0.1);
\draw (2,0) circle (0.1);
\draw (3,0) circle (0.1);
\draw (4,0) circle (0.1);
\draw (5,0) circle (0.1);
\draw (6,0) circle (0.1);
\draw (1,1) circle (0.1);
\draw (3,1) circle (0.1);
\draw (7,0) circle (0.1);
\draw (8,0) circle (0.1);
\draw (9,0) circle (0.1);
\draw (10,0) circle (0.1);
\draw (11,0) circle (0.1);
\draw (12,0) circle (0.1);
\draw (13,0) circle (0.1);
\draw (11,1) circle (0.1);
\draw (5,1) circle (0.1);
\draw (4.5,1.866) circle (0.1);
\draw (5.5,1.866) circle (0.1);

\end{tikzpicture}
\end{center}

The new elliptic fibration has $E_8$ and $E_7$ fibers, and so we may
read out the Igusa-Clebsch invariants, and determine the branch locus
for the Hilbert modular surface. 

\begin{theorem}
A birational model over $\Q$ for the Hilbert modular surface
$Y_{-}(12)$ as a double cover of $\Proj^2_{e,f}$ is given by the
equation
$$
z^2 = (f-1)\,(f+1)\,(f^6-f^4-18\,e\,f^2+27\,e^2+16\,e).
$$
It is a rational surface.
\end{theorem}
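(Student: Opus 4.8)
The plan is to follow the four-step procedure of Section~\ref{method}. For Step~1 I would first check that the displayed Weierstrass equation defines a family of $L_{12}$-polarized K3 surfaces: the reducible fibers $E_8$, $D_6$, $A_2$ contribute a root part of discriminant $1\cdot 4\cdot 3=12$, and together with $\Z O\oplus\Z F\cong U$ this gives a lattice of rank $18$, signature $(1,17)$, and---provided the \MW\ rank is $0$ with no torsion---discriminant $12$. Computing its discriminant form and comparing with $q_{12}$, the uniqueness of an even lattice with given signature and discriminant form identifies it with $L_{12}\cong U\oplus E_8(-1)\oplus D_6(-1)\oplus A_2(-1)$. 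The family is genuinely two-dimensional (matching $\dim\sF_{L_{12}}=2$) by the scaling bookkeeping used for $D=5$: after using $(x,y,t)\mapsto(\lambda^2 x,\lambda^3 y,\mu t/\lambda)$ to normalize away two degrees of freedom, exactly $e$ and $f$ survive. By Theorem~\ref{humbert} this exhibits $\Proj^2_{e,f}$ as a birational model of the Humbert surface $\sH_{12}$.

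For Step~2 I would carry out the indicated $2$-neighbor step. The bold subdiagram is the extended diagram $\tilde E_7$ (a central node with arms of lengths $1,3,3$), hence spans an elliptic divisor $F'$ with $F\cdot F'=2$, and Proposition~\ref{automaticsections} guarantees the new fibration has a section. Following the $E=2O$ recipe of Section~\ref{neighbors}---computing $H^0(X,\sO_X(F'))$ to find the new elliptic parameter $u$, reducing to a double cover $y^2=g(t,u)$ with $\deg_t g\le 4$, and passing to the Jacobian---I obtain a Weierstrass model with an $E_8$ fiber at $u=\infty$ and an $E_7$ fiber at $u=0$. Substituting its coefficients into Theorem~\ref{thesisthm} yields the Igusa--Clebsch invariants $(I_2:I_4:I_6:I_{10})$ as explicit rational functions of $(e,f)$, and hence the map $\Proj^2_{e,f}\to\A_2$.

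For Step~3 I would enumerate the candidate components of the branch locus. Since $Y_{-}(12)\to\sH_{12}$ is branched where the geometric Picard number jumps by one with discriminant moving to $2D=24$ or $D/2=6$, the candidates are the components of the excluded locus of the parametrization together with the loci realizing such a jump---an extra $\I_2$ fiber (discriminant $24$), a promotion $D_6\rightsquigarrow E_7$ (discriminant $6$), or a new section of the appropriate small height. Each such condition is a content-$1$ polynomial in $\Z[e,f]$; I expect the factors $f-1$, $f+1$ and the sextic $f^6-f^4-18ef^2+27e^2+16e$ to appear among them, the first two arising from degenerations along $f=\pm1$. The model is then $z^2=C\prod_{j\in I}f_j(e,f)$ for some sublist $I$ and squarefree $C$.

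The main obstacle is Step~4: to \emph{prove}, rather than merely compute, that precisely these three factors occur and that $C=1$. Here I would use that $Y_{-}(12)$ has good reduction away from primes dividing $12$, so only finitely many $C$ are possible, and then test candidates over several odd primes $p\nmid 12$. For many specializations $(e,f)\in\F_p^2$ I build the genus-$2$ curve $C_{e,f}/\F_p$ from its Igusa--Clebsch invariants and count points over $\F_p$ and $\F_{p^2}$ to form the polynomial $Q(X)$ of Lemma~\ref{twistrealmult}; whenever $Q$ has discriminant in the square class of $12$ the Jacobian has real multiplication by an order in $\Q(\sqrt{12})$ over~$\F_p$, so $(e,f)$ lifts to an $\F_p$-point of $Y_{-}(12)$ and the branch value must be a square. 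Confronting this with enough points eliminates every sublist and every twist except $z^2=(f-1)(f+1)(f^6-f^4-18ef^2+27e^2+16e)$. Finally, the rationality is immediate from this equation: writing it as
$$z^2=(f^2-1)\bigl(27e^2+(16-18f^2)e+f^4(f^2-1)\bigr),$$
it is a conic in $(e,z)$ over $\Q(f)$ carrying the obvious rational point $(e,z)=(0,\,f^2(f^2-1))$ (at $e=0$ the right-hand side equals $f^4(f^2-1)^2$); parametrizing this conic gives a birational map to $\Proj^2$ over $\Q$, so $Y_{-}(12)$ is a rational surface.
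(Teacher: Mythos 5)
Your proposal is correct and follows essentially the same route as the paper: the same $E_8+D_6+A_2$ parametrization of $\sF_{L_{12}}$, the same $2$-neighbor step through the $\tilde E_7$ divisor, the same identification of the branch components ($f=\pm1$ from the $D_6\rightsquigarrow E_7$ promotion, the sextic from an extra $\I_2$ fiber) with the twist pinned down by counting points modulo small primes. Your rationality argument is also the paper's: the equation is a conic in $(e,z)$ over $\Q(f)$ with the section at $e=0$ (where the right-hand side becomes $f^4(f^2-1)^2$), and parametrizing this conic bundle gives the birational map to $\Proj^2$.
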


\subsection{Analysis}

Note the extra involution $(e,f) \mapsto (e,-f)$ arising from the
Hurwitz-Maass extension (as described at the end of Section
\ref{realmult}), since $12$ is not a prime power. In fact, there are
two independent involutions evident in the diagram above, but one of
them has been used up to fix the Weierstrass scaling of $x$ (namely,
the coefficient of $x^2$ evaluates to $1$ at $t = 1$). The other
involution is reflected in the Weierstrass equation for the universal
family of elliptic K3 surfaces as $f \mapsto -f$. It preserves the
branch locus of the map $Y_{-}(12) \to \sH_{12}$, and therefore lifts
to $Y_{-}(12)$.

The branch locus has three components; the two simple components $f =
\pm 1$ correspond to the $D_6$ fiber getting promoted to an $E_7$
fiber, while the remaining component corresponds to having an extra
$\I_2$ fiber.  This last component is a rational curve; completing the
square with respect to~$e$, we find after some easy algebraic
manipulation the parametrization
$$
(e,f) = \big( 16 (h^2-1)/(h^2+3)^3, -4h/(h^2+3) \big).
$$

This Hilbert modular surface is rational as well. To obtain an
explicit parametrization, note that the right hand side of the above
equation is quadratic in $e$, and $e = 0$ makes it a square. Therefore
the conic bundle over $\Proj^1_f$ has a section. Setting $z = ge + f^2
(f^2 - 1)$, we may solve for $e$, obtaining a birational
parametrization by $\Proj^2_{f,g}$.

We will not list the Igusa-Clebsch invariants for this (and higher)
discriminants, as they are complicated expressions. They are available
in the auxiliary computer files.

\subsection{Examples}
We list some points of small height and corresponding genus $2$ curves.

\begin{tabular}{l|c}
Rational point $(e,f)$ & Sextic polynomial $f_6(x)$ defining the genus $2$ curve $y^2 = f_6(x)$. \\
\hline\hline \\ [-2.5ex]
$(34/27, 5/3)$ & $ -2x^6 - 2x^5 + x^4 - 3x^2 + 2x - 1 $ \\
$(34/27, -5/3)$ & $ x^5 - x^4 + x^3 - 3x^2 - x + 5 $ \\
$(51/100, 2)$ & $ -3x^6 + 6x^5 + 4x^4 - 2x^3 - 8x^2 - 6x - 6 $ \\
$(-11/3, -2)$ & $ -x^6 - 8x^3 + 12x - 12 $ \\
$(-11/3, 2)$ & $ -x^6 + 12x^4 + 8x^3 - 12x^2 + 12x - 4 $ \\
$(4/3, -2)$ & $ -8x^6 + 12x^4 + 8x^3 - 6x^2 - 12x - 3 $ \\
$(4/3, 2)$ & $ -5x^6 + 12x^5 - 6x^4 + 8x^3 - 12x^2 - 8 $ \\
$(-14/27, 1/3)$ & $ -x^5 + 13x^4 - 6x^3 - 2x^2 + 3x - 7 $ \\
$(-29/14, -15/14)$ & $ x^6 + 2x^5 + 13x^4 - 16x^3 + 17x^2 + 4x + 8 $ \\
$(80/81, 2)$ & $ -8x^6 - 12x^5 + 15x^4 + 5x^3 - 21x^2 + 15x - 5 $ \\
$(51/100, -2)$ & $ -x^6 - 6x^5 - 11x^4 - 14x^3 - 23x^2 + 6x + 3 $ \\
$(-5/2, -3/2)$ & $ -5x^6 - 10x^5 - x^4 + 24x^3 - 5x^2 - 8x - 20 $ \\
$(-23/54, -1/2)$ & $ x^6 - 6x^5 - 3x^4 + 24x^3 - 3x^2 - 4 $ \\
$(25/18, -3/2)$ & $ 4x^6 - 24x^5 + 27x^4 - 28x^3 + 21x^2 - 5 $ \\
$(-5/54, 2/3)$ & $ 3x^6 - 26x^5 + 31x^4 + 12x^3 - 3x^2 - 10x - 15 $ \\
$(13/64, -5/4)$ & $ -5x^6 + 3x^5 - 12x^4 + 28x^3 + 12x^2 - 36x $ 
\end{tabular}

We mention a few special curves on the Hilbert modular surface, which
one may recognize by looking at a plot of the rational points. First,
specializing $f$ to some $f_0 \in \Q$ gives a conic in $z$ and $e$,
which always has a rational point (since, as we noted above, $e =0$
makes the right hand side a square). These may be used to produce many
rational points and examples of genus-$2$ curves when the Brauer
obstruction happens to vanish.

Another noticeable feature of the plot is the parabola $e = 4(f^2-1)/9$.
We check that the Brauer obstruction vanishes identically
on this locus, giving a family of genus-$2$ curves whose Jacobians
have real multiplication by $\sO_{12}$. Note that this is a
non-modular curve: generically the endomorphism ring is no larger
(i.e.\ not a quaternion algebra).

\section{Discriminant $13$}

\subsection{Parametrization}

We start with an elliptic K3 surface with reducible fibers of types
$E_8$, $E_6$ and $A_1$, and with a section of height $13/6 = 4 - 4/3 -
1/2$. A Weierstrass equation for such a family is given by
$$
y^2 = x^3 + (4\,g+1)\,t^2\,x^2 - 4\,g\,(h-g-1)\,(t-1)\,t^3\,x +
4\,g^2\,(t-1)^2\,t^4\,(h^2\,t+1).
$$

We identify an $E_7$ fiber below and move to it by a $2$-neighbor
step.

\begin{center}
\begin{tikzpicture}

\draw [very thick] (8,0)--(11,0)--(11,2);
\draw [very thick] (11,2)--(8,3);
\draw [very thick] (11,0)--(12,0);

\draw (0,0)--(13,0);
\draw (2,0)--(2,1);
\draw (11,0)--(11,2);
\draw (8,2)--(8,3)--(11,2);
\draw (8.05,1)--(8.05,2);
\draw (7.95,1)--(7.95,2);
\draw (8,0)--(8,1);
\draw [bend right] (8,3) to (7,0);

\draw (8,0) circle (0.2);
\fill [white] (0,0) circle (0.1);
\fill [white] (1,0) circle (0.1);
\fill [white] (2,0) circle (0.1);
\fill [white] (3,0) circle (0.1);
\fill [white] (4,0) circle (0.1);
\fill [white] (5,0) circle (0.1);
\fill [white] (6,0) circle (0.1);
\fill [white] (7,0) circle (0.1);
\fill [black] (8,0) circle (0.1);
\fill [black] (9,0) circle (0.1);
\fill [black] (10,0) circle (0.1);
\fill [black] (11,0) circle (0.1);
\fill [black] (12,0) circle (0.1);
\fill [white] (13,0) circle (0.1);
\fill [black] (11,1) circle (0.1);
\fill [black] (11,2) circle (0.1);
\fill [white] (2,1) circle (0.1);
\fill [black] (8,3) circle (0.1);
\fill [white] (8,1) circle (0.1);
\fill [white] (8,2) circle (0.1);

\draw (0,0) circle (0.1);
\draw (1,0) circle (0.1);
\draw (2,0) circle (0.1);
\draw (3,0) circle (0.1);
\draw (4,0) circle (0.1);
\draw (5,0) circle (0.1);
\draw (6,0) circle (0.1);
\draw (7,0) circle (0.1);
\draw (8,0) circle (0.1);
\draw (9,0) circle (0.1);
\draw (10,0) circle (0.1);
\draw (11,0) circle (0.1);
\draw (12,0) circle (0.1);
\draw (13,0) circle (0.1);
\draw (11,1) circle (0.1);
\draw (11,2) circle (0.1);
\draw (2,1) circle (0.1);
\draw (8,1) circle (0.1);
\draw (8,2) circle (0.1);
\draw (8,3) circle (0.1);

\draw (8,3) [above] node{$P$};

\end{tikzpicture}
\end{center}

The resulting elliptic fibration has $E_8$ and $E_7$ fibers, so we may
read out the Igusa-Clebsch invariants and work out the branch locus of
the double cover defining the Hilbert modular surface. It corresponds
to the elliptic K3 surface having an extra $\I_2$ fiber. 

\begin{theorem}
A birational model over $\Q$ for the Hilbert modular surface
$Y_{-}(13)$ as a double cover of $\Proj^2_{g,h}$ is given by the
equation
$$
z^2 = 108\,g\,h^3 -(27\,g^2+468\,g-4)\,h^2 +
8\,(82\,g^2+71\,g-2)\,h -16\,(2\,g-1)^3.
$$
It is a rational surface.
\end{theorem}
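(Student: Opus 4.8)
The plan is to carry out the four-step procedure of Section~\ref{method} for the family displayed in the Parametrization subsection, leaving the mechanical neighbor computation to the auxiliary files and concentrating on the points that require an argument. First I would confirm that the displayed Weierstrass family really is a birational model of $\sF_{L_{13}}$. By Shioda--Tate, the fibration with reducible fibers $E_8$, $E_6$, $A_1$ and Mordell--Weil group generated by the section $P$ of height $13/6$ has $\rho = 2 + 1 + 8 + 6 + 1 = 18$, and
$$
|\disc \NS| = \frac{(13/6)\cdot 1 \cdot 3 \cdot 2}{1} = 13,
$$
so $\NS \cong L_{13}$ and $(g,h)$ give a birational model of $\sF_{L_{13}} \cong \sH_{13}$. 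Next I would run the indicated $2$-neighbor step: the bold subdiagram in the figure is an extended $\widetilde E_7$ diagram, hence an elliptic divisor $F'$ by Lemma~\ref{lem:section_criterion}, and Proposition~\ref{automaticsections} supplies a section, so the new fibration has $E_8$ and $E_7$ fibers. Computing $H^0(\sO_X(F'))$ as in Section~\ref{neighbors} yields its Weierstrass form, and Theorem~\ref{thesisthm} lets me read off $(I_2:I_4:I_6:I_{10})$ as functions of $g,h$; this is the map to $\A_2$ whose image is $\sH_{13}$.

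With the map to $\A_2$ in hand, I would determine the branch locus of $Y_{-}(13) \to \sH_{13}$. As explained in Section~\ref{realmult}, branching occurs exactly where the discriminant of $\NS$ jumps from $13$ to $2\cdot 13 = 26$; the alternative value $D/2$ is excluded here because $4 \nmid 13$. I would then enumerate the finitely many geometric degenerations that raise the Picard number with the correct discriminant change --- an extra $\I_2$ or $\II$ fiber, a promotion of an existing reducible fiber, or a new section increasing the Mordell--Weil rank --- and write the corresponding conditions as primitive polynomials $f_i(g,h) \in \Z[g,h]$. Matching the remark preceding the theorem, the expected conclusion is that only the extra-$\I_2$ locus survives, giving the stated cubic-in-$h$ polynomial.

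To pin down which $f_i$ actually occur and the squarefree constant $C$, I would execute Step~4: for several primes $p \nmid 13$ and many $(g,h) \in \F_p \times \F_p$, build the genus-$2$ curve from the Igusa--Clebsch invariants, count points over $\F_p$ and $\F_{p^2}$, form the polynomial $Q$ of Lemma~\ref{twistrealmult}, and test whether each candidate right-hand side is a square precisely when $Q$ has discriminant in the square class of $13$. Since $Y_{-}(13)$ has good reduction away from $13$, finitely many primes suffice to eliminate all but one possibility; I expect the data to force $C = 1$ and the single $\I_2$-component. For the rationality claim I would exploit the evident section: since the right-hand side, as a cubic in $g$, has constant term $16 - 16h + 4h^2 = 4(h-2)^2$, the specialization $g = 0$ gives the section $z = 2(h-2)$, hence a supply of rational points. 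From here I would produce an explicit birational parametrization over $\Q$ by completing a square and organizing the surface as a conic (or rational elliptic) bundle, exactly as in the analyses of the smaller discriminants.

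The heaviest computation is the neighbor step, but it is routine and automatable. The genuinely delicate points are two. First, the \emph{completeness} of the enumeration of candidate branch components: one must be certain that no fiber degeneration or section-jump producing the discriminant $26$ has been overlooked, since a missing factor would be invisible to the point-counting check (which only detects incorrect, not incomplete, guesses). Second, establishing rationality \emph{over $\Q$} rather than merely over $\Qbar$: a double cover of $\Proj^2$ branched over the relevant quartic is only a del~Pezzo surface of degree~$2$, whose $\Q$-rationality is not automatic, so the rationality must be extracted from the explicit parametrization, using the singularities of the branch curve and the distinguished section found above.
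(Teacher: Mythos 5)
Your proposal follows the paper's own route: Step 1 is justified exactly by your Shioda--Tate computation ($\rho = 2+1+8+6+1 = 18$, $|\disc\NS| = \tfrac{13}{6}\cdot 1\cdot 3\cdot 2 = 13$), Steps 2--4 proceed as you describe (the branch locus is indeed the single extra-$\I_2$ component, and the twist is fixed by counting points modulo primes), and rationality is established by an explicit parametrization. However, two of your side claims are wrong, and one of them touches the only genuinely non-routine point of the proof.

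First, the alternative offered in ``conic (or rational elliptic) bundle'' would not work: a geometrically rational elliptic surface with a section need not be $\Q$-rational. Contracting the zero section of a relatively minimal rational elliptic surface yields a del Pezzo surface of degree $1$, and when that del Pezzo is $\Q$-minimal it is never $\Q$-rational (Iskovskikh's criterion requires $K^2 \ge 5$); blowing the anticanonical base point back up produces rational elliptic surfaces with section over $\Q$ that are not $\Q$-rational. So the fibration by cubics in $g$, together with your section at $g = 0$ coming from the constant term $4(h-2)^2$, proves nothing about rationality over $\Q$. What the paper actually uses is a \emph{conic} bundle: the substitutions $g = g_1 - 1/54$, $h = h_1 + 64/27$, $h_1 = m g_1$ --- geometrically, the pencil of lines through a double point of the quartic branch curve --- make the right-hand side quadratic in $g_1$ up to a square factor, and then $g_2 = 1$ supplies a rational point on the generic conic, whence a birational map from $\Proj^2_{m,n}$. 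Your closing hedge about ``singularities of the branch curve'' gestures at exactly this, but the conic structure is indispensable, not optional. Second, your parenthetical that the point-counting check ``only detects incorrect, not incomplete, guesses'' is backwards: if a candidate omits a true factor $f_j$, then at $\F_p$-points carrying real multiplication the candidate equals a square divided by $f_j(g,h)$, which is a nonresidue roughly half the time, so an incomplete guess fails the test just as surely as one with a spurious factor. What completeness of the enumeration really guarantees is that the correct product occurs among the candidates at all, so the elimination terminates with an answer instead of ruling out every possibility.
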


\subsection{Analysis}

The surface $Y_{-}(13)$ is rational.  To show this, we proceed as follows.
The substitutions $g = g_1 - 1/54$ and $h = h_1 + 64/27$, followed by
$h_1 = mg_1$, make the right hand side quadratic in $g_1$, up to
a square factor.  Removing this factor by setting $z = z_1 g_1/18$,
and then setting $g_1 = g_2/54$, we get the following conic bundle
over~$\Q(m)$:
$$
z_1^2 = 3\,m^2\,(4\,m-1)\,g_2^2
-6\,(2\,m^3-301\,m^2-528\,m+128)\,g_2 -507\,(m^2-96\,m-1024).
$$
Furthermore, $g_2 = 1$ makes the resulting expression a square,
giving us a rational point on the conic over $\Q(m)$. Therefore, we
may set $z_1 = 36(m+20) + n(g_2 - 1)$ and solve for $g_2$. This gives
us a birational parametrization of $Y_{-}(13)$ by $\Proj^2_{m,n}$.

We can also deduce that the branch locus is a rational curve, as
follows. By setting $z_1 = 0$, we obtain a quadratic equation in $g_2$
whose discriminant, up to a square factor, is $m^2 - 44m + 16$. Setting it
equal to $n^2$ and noting that $m = 0$ makes the expression a square,
we obtain a parametrization of this conic, as $m = -4(2r+11)/(r^2-1)$.
Working backwards, we obtain a parametrization of the
branch locus as
$$
(g,h) = \left(\frac{-2(r-2)^2(r+1)}{27(r+7)},  \frac{2(2r+5)^3}{27(r+1)(r+7)}\right).
$$

\subsection{Examples}
We list some points of small height and corresponding genus $2$ curves.

\begin{tabular}{l|c}
Rational point $(g,h)$ & Sextic polynomial $f_6(x)$ defining the genus $2$ curve $y^2 = f_6(x)$. \\
\hline \hline \\ [-2.5ex]
$(-17, 1/3)$ & $ -3x^6 - 6x^5 + 4x^3 + 3x^2 + 6x - 5 $ \\
$(-13/2, -11/2)$ & $ x^5 + 5x^4 + 5x^3 - 5x^2 + 6x - 1 $ \\
$(-17/2, 7/2)$ & $ -x^5 - 2x^4 - 3x^3 - 6x^2 - 7 $ \\
$(11/5, 1/5)$ & $ x^6 + 4x^5 + 2x^4 - 8x^3 - 5x^2 - 5 $ \\
$(-1/3, 11/9)$ & $ -x^6 + 3x^4 + 12x^3 + 6x^2 - 11 $ \\
$(-14/11, -2/11)$ & $ 13x^6 + 12x^5 + 6x^4 + 10x^3 - 7x^2 - 2x + 1 $ \\
$(-2/17, 2/17)$ & $ -x^6 - 2x^5 - x^4 + 14x^3 + 2x^2 - 8x - 9 $ \\
$(1/5, 17/15)$ & $ 3x^6 + 12x^5 + 6x^4 - 4x^3 - 15x^2 + 5 $ \\
$(-10/3, -10/9)$ & $ -x^6 + 18x^5 + 3x^4 - 6x^3 + 6x^2 - 5 $ \\
$(-10, 6)$ & $ -x^6 - 6x^4 - 10x^3 - 9x^2 - 30x + 11 $ \\
$(-18/5, 14/5)$ & $ -x^6 - 2x^4 - 10x^3 + 7x^2 - 30x - 9 $ \\
$(-7/13, 3/13)$ & $ -31x^6 + 12x^5 - 30x^4 + 4x^3 - 33x^2 - 12x - 1 $ \\
$(10/3, 10/3)$ & $ 7x^6 + 18x^5 - 9x^4 - 34x^3 + 18x^2 - 5 $ \\
$(-11, -11/9)$ & $ -5x^6 + 6x^5 + 3x^4 - 4x^3 + 18x^2 - 36x - 9 $ \\
$(-7/8, 10/3)$ & $ -x^6 + 3x^4 - 20x^3 + 30x^2 - 36x + 9 $ \\
$(-13/9, 13/9)$ & $ -x^6 - 9x^4 + 8x^3 - 30x^2 + 36x - 3 $ 
\end{tabular}

On a plot of rational points we observe the line $h = (g+4)/3$,
along which the Brauer obstruction vanishes, leading to a family of
genus-$2$ curves whose Jacobians have ``honest'' real multiplication
by $\sO_{13}$.

\section{Discriminant $17$}

\subsection{Parametrization}

We start with an elliptic K3 surface with a $\I_{17}$ fiber.
A Weierstrass equation for such a family of surfaces is given by
\begin{align*}
y^2 &= x^3+ \Big(1+2\,g\,t+\big(2\,h+(g+1)^2
\big)\,t^2+2\,(g\,h+g+2\,g^2+h)\,t^3+ \big((g+h)^2+2\,g^3 \big)\,t^4
\Big)\,x^2 \\
& \qquad - 4\,h^2\,t^5\,\big(1 + g\,t + (h +
2\,g+1)\,t^2 + (h+2\,g^2+g)\,t^3 \big)\,x \\
& \qquad + 4\,h^4\,t^{10}\,\big( (2\,g+1)\,t^2 + 1\big).
\end{align*}

We first identify an $E_7$ fiber and go to the associated elliptic
fibration via a $2$-neighbor step.

\begin{center}
\begin{tikzpicture}

\draw (0,0)--(1,0)--(1.5,0.866)--(8.5,0.866)--(8.5,-0.866)--(1.5,-0.866)--(1,0);
\draw [very thick] (3.5,0.866)--(1.5,0.866)--(1,0)--(1.5,-0.866)--(3.5,-0.866);
\draw [very thick] (0,0)--(1,0);

\draw (0,0) circle (0.2);
\fill [black] (0,0) circle (0.1);
\fill [black] (1,0) circle (0.1);
\fill [black] (1.5,0.866) circle (0.1);
\fill [black] (1.5,-0.866) circle (0.1);
\fill [black] (2.5,0.866) circle (0.1);
\fill [black] (2.5,-0.866) circle (0.1);
\fill [black] (3.5,0.866) circle (0.1);
\fill [black] (3.5,-0.866) circle (0.1);
\fill [white] (4.5,0.866) circle (0.1);
\fill [white] (4.5,-0.866) circle (0.1);
\fill [white] (5.5,0.866) circle (0.1);
\fill [white] (5.5,-0.866) circle (0.1);
\fill [white] (6.5,0.866) circle (0.1);
\fill [white] (6.5,-0.866) circle (0.1);
\fill [white] (7.5,0.866) circle (0.1);
\fill [white] (7.5,-0.866) circle (0.1);
\fill [white] (8.5,0.866) circle (0.1);
\fill [white] (8.5,-0.866) circle (0.1);

\draw (0,0) circle (0.1);
\draw (1,0) circle (0.1);
\draw (1.5,0.866) circle (0.1);
\draw (1.5,-0.866) circle (0.1);
\draw (2.5,0.866) circle (0.1);
\draw (2.5,-0.866) circle (0.1);
\draw (3.5,0.866) circle (0.1);
\draw (3.5,-0.866) circle (0.1);
\draw (4.5,0.866) circle (0.1);
\draw (4.5,-0.866) circle (0.1);
\draw (5.5,0.866) circle (0.1);
\draw (5.5,-0.866) circle (0.1);
\draw (6.5,0.866) circle (0.1);
\draw (6.5,-0.866) circle (0.1);
\draw (7.5,0.866) circle (0.1);
\draw (7.5,-0.866) circle (0.1);
\draw (8.5,0.866) circle (0.1);
\draw (8.5,-0.866) circle (0.1);

\end{tikzpicture}
\end{center}

The resulting elliptic fibration has $E_7$ and $A_8$ fibers, and a
section $P$ of height $17/18 = 4 - 3/2 - 14/9$. We next identify a
fiber $F'$ of type $E_8$ and perform a $3$-neighbor step to move to
the associated elliptic fibration.

\begin{center}
\begin{tikzpicture}

\draw (-1,0)--(7,0)--(7.5,0.866)--(7.5,0.866)--(10.5,0.866)--(10.5,-0.866)--(7.5,-0.866)--(7,0);
\draw (2,0)--(2,1);
\draw [bend right] (6,2) to (-1,0);
\draw [bend left] (6,2) to (8.5,0.866);
\draw [very thick] (8.5, 0.866)--(7.5,0.866)--(7,0)--(7.5,-0.866)--(10.5,-0.866)--(10.5,0.866);
\draw [very thick] (6,0)--(7,0);

\draw (6,0) circle (0.2);
\fill [white] (-1,0) circle (0.1);
\fill [white] (0,0) circle (0.1);
\fill [white] (1,0) circle (0.1);
\fill [white] (2,0) circle (0.1);
\fill [white] (3,0) circle (0.1);
\fill [white] (4,0) circle (0.1);
\fill [white] (5,0) circle (0.1);
\fill [black] (6,0) circle (0.1);
\fill [white] (2,1) circle (0.1);
\fill [black] (7,0) circle (0.1);
\fill [black] (7.5,0.866) circle (0.1);
\fill [black] (7.5,-0.866) circle (0.1);
\fill [black] (8.5,0.866) circle (0.1);
\fill [black] (8.5,-0.866) circle (0.1);
\fill [white] (9.5,0.866) circle (0.1);
\fill [black] (9.5,-0.866) circle (0.1);
\fill [black] (10.5,0.866) circle (0.1);
\fill [black] (10.5,-0.866) circle (0.1);
\fill [white] (6,2) circle (0.1);

\draw (-1,0) circle (0.1);
\draw (0,0) circle (0.1);
\draw (1,0) circle (0.1);
\draw (2,0) circle (0.1);
\draw (3,0) circle (0.1);
\draw (4,0) circle (0.1);
\draw (5,0) circle (0.1);
\draw (6,0) circle (0.1);
\draw (2,1) circle (0.1);
\draw (7,0) circle (0.1);
\draw (7.5,0.866) circle (0.1);
\draw (7.5,-0.866) circle (0.1);
\draw (8.5,0.866) circle (0.1);
\draw (8.5,-0.866) circle (0.1);
\draw (9.5,0.866) circle (0.1);
\draw (9.5,-0.866) circle (0.1);
\draw (10.5,0.866) circle (0.1);
\draw (10.5,-0.866) circle (0.1);
\draw (6,2) circle (0.1);
\draw (6,2) [above] node{$P$};

\end{tikzpicture}
\end{center}

Since $P \cdot F' = 2$, while the remaining component of the
$A_8$ fiber intersects $F'$ with multiplicity~$3$, the new elliptic
fibration has a section. We may therefore convert to the Jacobian;
this has $E_8$ and $E_7$ fibers, and we may read out the Igusa-Clebsch
invariants.

\begin{theorem}
A birational model over $\Q$ for the Hilbert modular surface
$Y_{-}(17)$ as a double cover of $\Proj^2$ is given by the following
equation:
$$
z^2 = -256\,h^3 + (192\,g^2+464\,g+185)\,h^2
-2\,(2\,g+1)\,(12\,g^3-65\,g^2-54\,g-9)\,h + (g+1)^4\,(2\,g+1)^2.
$$
It is a rational surface.
\end{theorem}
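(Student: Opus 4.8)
The plan is to follow the four-step procedure of Section~\ref{method}, taking as input the family of $L_{17}$-polarized K3 surfaces already exhibited above, namely the Weierstrass family with an $\I_{17}$ fiber parametrized by $(g,h)$. Since Proposition~\ref{automaticsections} applies to $L_{17}=U\oplus N(-1)$ (whose $N$ contains $E_8\oplus E_7$), every elliptic divisor produced in the process automatically carries a section, so each neighbor step legitimately returns a genuine elliptic fibration with zero section. First I would carry out the two-neighbor step indicated by the first diagram, passing from the $\I_{17}$ model to a fibration with $E_7$ and $A_8$ fibers together with the section $P$ of height $17/18=4-3/2-14/9$; then the three-neighbor step of the second diagram, using that $P\cdot F'=2$ is coprime to the intersection $3$ of the residual $A_8$ component with $F'$ (so that the coprimality criterion following Lemma~\ref{lem:section_criterion} guarantees a section), to reach a fibration with $E_8$ and $E_7$ fibers. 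Putting this in the normal form of Theorem~\ref{thesisthm} and reading off $(I_2:I_4:I_6:I_{10})$ realizes $\sH_{17}$, via Theorem~\ref{humbert}, as the image of $\Proj^2_{g,h}$ together with its explicit map to $\A_2$.

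Next I would determine the branch locus of the double cover $\widetilde{Y}_{-}(17)\to\sF_{L_{17}}$. By the discussion in Step~3, the branch curve is a union of components along which the Picard number jumps by one, the discriminant of $\NS$ changing from $17$ to $2\cdot 17=34$ (the alternative $D/2$ does not arise here since $17$ is odd). Enumerating, through the usual lattice bookkeeping, the possible degenerations of the generic member of the family --- an extra $\I_2$ fiber, a promotion of a reducible fiber, or a jump in the \MW\ rank --- yields a finite list of candidate polynomials $f_i(g,h)\in\Z[g,h]$ of content~$1$. The degeneration relevant here is the appearance of an extra $\I_2$ fiber, and I expect its locus to be precisely the cubic-in-$h$ polynomial displayed in the theorem.

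To pin down which candidates actually occur and the squarefree twist~$C$, I would run Step~4: choosing several odd primes $p\nmid 17$ and many specializations $(g,h)\in\F_p^2$, construct the genus-$2$ curve $C_{g,h}$ from its Igusa-Clebsch invariants (the Brauer obstruction vanishing over $\F_p$), count points over $\F_p$ and $\F_{p^2}$ to form the polynomial $Q$ of Lemma~\ref{twistrealmult}, and retain those $(g,h)$ for which $Q$ has discriminant in the square class of $17$ --- equivalently, for which $\disc\NS$ of the associated K3 surface equals $17$. By the coarse-moduli property such points must lift to $Y_{-}(17)$, so the chosen product $C\prod f_i(g,h)$ is forced to be a square at each of them; comparing against the candidate list eliminates all but the displayed factor and fixes $C$ (a single large prime usually determines the subset $I$, while a few more primes nail the twist). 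I expect this arithmetic step, rather than the neighbor-step algebra, to be the main obstacle, both because it is where an incorrect enumeration of degenerations would first be exposed and because separating genuine $\sO_{17}$-multiplication from the ``extra'' split-quaternion cases living on the branch locus (cf.\ Lemma~\ref{twistrealmult}(1)--(2)) requires care.

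Finally, to prove rationality I would produce an explicit birational map to $\Proj^2$ in the style of the earlier discriminants. Following that template, I would apply substitutions (completing the square and a linear change of one variable) that turn the defining equation into a conic bundle over a rational base, then exhibit a rational section of this conic bundle --- the perfect-square structure of the $h$-constant term, $(g+1)^4(2g+1)^2=\bigl((g+1)^2(2g+1)\bigr)^2$, suggests a natural candidate point --- and use the section to parametrize the surface by $\Proj^2$. The availability of such a section is exactly what makes the rationality argument routine once the defining equation is in hand.
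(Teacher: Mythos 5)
Your proposal follows the paper's own route step for step: the same $2$-neighbor step to the $E_7+A_8$ fibration with the height-$17/18$ section, the same $3$-neighbor step justified by the coprimality of $P\cdot F'=2$ with the multiplicity $3$ of the residual $A_8$ component, the same identification of the branch locus as the extra-$\I_2$ locus (discriminant jumping to $34$), and the same point-counting determination of the twist. For rationality the paper likewise passes to a conic bundle, via the substitution $h = \bigl(4(2g+1)+m(2g+1)(27g+13)\bigr)/27$ with the square factor $(2g+1)^2(27g+13)^2$ absorbed into $z$, and takes the section at $g=-1/2$; your candidate section coming from the square constant term (the curve $h=0$, $z=\pm(g+1)^2(2g+1)$) is an equally valid section of that same bundle, so your sketch matches the paper's argument in all essentials.
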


\subsection{Analysis}

This is evidently a rational elliptic surface over $\Proj^1_g$. In
fact, it is a rational surface over $\Q$. We exhibit a birational
parametrization as follows: set $h = \big(4 (2\,g + 1) + m
\,(2\,g+1)\,(27\,g+13) \big)/27$ and absorb square factors by setting
$z = z_1\,(2\,g+1)\,(27\,g+13)/243$, to get (after scaling $g = g_1/3$)
the following equation
$$
z_1^2 = -9\,(8\,m-1)^3\,g_1^2
-2\,(16\,m+7)\,(424\,m^2-385\,m+1)\,g_1
-3\,(3328\,m^3-1923\,m^2-3138\,m-803)
$$
which is a conic bundle over $\Proj^1_m$. We see that $g_1 = -3/2$
makes the right hand side a square, and so setting
$z_1 = 9\,(2\,m+11)/2 + n\,(g_1 + 3/2)$ and solving for $g_1$,
we get a birational map from $\Proj^2_{m,n}$.

The branch locus is a curve of genus 0. To produce a parametrization,
we set $z_1 = 0$ and note that the discriminant of the resulting
quadratic equation in $g_1$ is a square times $64m^2 + 218 m - 8$.
Setting $64m^2 + 218 m - 8 = (8m+r)^2$ and solving for $m$,
we ultimately obtain:
$$
(g,h) = \left( \frac{-(8r^3-111r^2+1212r-8146)}{2(2r-7)^3}, \frac{(r-17)^2(r+10)^4}{4(2r-7)^6} \right).
$$

\subsection{Examples}

We list some points of small height and corresponding genus $2$ curves.

\begin{tabular}{l|c}
Rational point $(g,h)$ & Sextic polynomial $f_6(x)$ defining the genus $2$ curve $y^2 = f_6(x)$. \\
\hline \hline \\ [-2.5ex]
$(0, 13/32)$ & $ -2x^6 - x^5 - 6x^4 - 5x^3 - 4x^2 - 4x $ \\
$(-5/11, 1/88)$ & $ 3x^6 + 4x^5 + 4x^4 - 6x^3 - 5x^2 - 4x + 4 $ \\
$(6, 26)$ & $ -2x^6 + 2x^5 + x^4 + 8x^3 + 7x^2 + 4x $ \\
$(9/4, 77/64)$ & $ -2x^6 - x^5 + 8x^4 - 5x^3 - 4x^2 + 4x - 4 $ \\
$(3/5, -11/50)$ & $ -8x^6 + 8x^5 + 7x^4 + 2x^3 - x $ \\
$(5, -11)$ & $ 4x^5 + 9x^4 + 2x^3 - 8x^2 - 2x $ \\
$(5, 22)$ & $ 2x^6 - 4x^5 + 9x^4 - 10x^3 + 4x^2 - 4x $ \\
$(1/5, 28/25)$ & $ -10x^6 - 10x^5 - 2x^4 - 7x^3 - x $ \\
$(-1/4, -11/64)$ & $ 4x^5 + 3x^4 + 11x^3 - 7x^2 + x $ \\
$(5/4, -35/64)$ & $ x^5 - 7x^3 + 2x^2 - 8x + 12 $ \\
$(-5/2, -13/8)$ & $ 4x^5 - x^4 - 13x^3 - 3x^2 + 13x $ \\
$(1/5, -7/20)$ & $ 3x^6 + 7x^5 + 6x^4 + 16x^3 + 14x^2 - 8x - 8 $ \\
$(-12, -23/2)$ & $ -7x^6 - 19x^5 - 7x^4 + 14x^3 - x $ \\
$(0, -1/16)$ & $ -4x^6 + 19x^5 - 20x^4 + 11x^3 + 15x^2 - 8x - 4 $ \\
$(2, -35/4)$ & $ -4x^5 + 20x^4 - 12x^3 - 15x^2 - 4x $ \\
$(4, 51/8)$ & $ -4x^6 - 12x^5 - 27x^4 + 2x^3 + 12x^2 + 18x $ 
\end{tabular}

A plot of the rational points on $Y_{-}(17)$ reveals two
special curves on the surface. First, there is the line $h = -g/2$.
Substituting this in to the equation for the Hilbert modular
surface, we obtain a conic, which can be parametrized as
$(g,h) = \big( -(m^2-4)/(2(m-6)), (m^2-4)/(4(m-6)) \big)$. The Brauer
obstruction vanishes along this locus. However, this curve is modular:
the endomorphism ring is a split quaternion algebra, strictly containing
$\sO_{17}$.  The second curve is the parabola $h = -(6g^2 + g - 1)/8$.
The Brauer obstruction vanishes along this curve too, giving a $1$-parameter
family of genus-$2$ curves whose Jacobians have real multiplication by
$\sO_{17}$.

\section{Discriminant $21$}

\subsection{Parametrization}

We start with an elliptic K3 surface with fibers of type $E_8$, $A_6$
and $A_2$ at $t = \infty$, $0$ and $1$ respectively.

A Weierstrass equation for such a family is
$$
y^2 = x^3 + (a_0 + a_1 t + a_2 t^2)\, x^2 + 2 t^2 (t-1) (b_0 + b_1
t) \, x + t^4 (t-1)^2 (c_0 + c_1 t)
$$
with
\begin{align*}
a_0 &= 1, &
a_1 &= -r^2 + 2rs - 1, &
a_2 &= (r-s)^2; \\
b_0 &= (r^2-1) (s-r)^2, &
b_1 &=  (r^2-1) (s-r)^2 (r s-1); & \\
c_0 &= (r^2-1)^2 (s-r)^4, &
c_1 &= (r^2-1)^3 (s-r)^4. &
\end{align*}

We identify a fiber of type $E_7$, and a $3$-neighbor step gives us
the desired $E_8 E_7$ fibration.

\begin{center}
\begin{tikzpicture}

\draw (0,0)--(9,0);
\draw (7,0)--(7,1);
\draw (0,0)--(-0.5,0.866)--(-2.5,0.866)--(-2.5,-0.866)--(-0.5,-0.866)--(0,0);
\draw (1,0)--(1,1)--(0.5,1.866)--(1.5,1.866)--(1,1);
\draw [very thick] (-2.5,-0.866)--(-0.5,-0.866)--(0,0)--(1,0)--(1,1)--(0.5,1.866);
\draw [very thick] (0,0)--(-0.5,0.866);

\draw (1,0) circle (0.2);
\fill [black] (0,0) circle (0.1);
\fill [black] (1,0) circle (0.1);
\fill [white] (2,0) circle (0.1);
\fill [white] (3,0) circle (0.1);
\fill [white] (4,0) circle (0.1);
\fill [white] (5,0) circle (0.1);
\fill [white] (6,0) circle (0.1);
\fill [white] (7,0) circle (0.1);
\fill [white] (8,0) circle (0.1);
\fill [white] (7,1) circle (0.1);
\fill [white] (9,0) circle (0.1);

\fill [black] (-0.5,0.866) circle (0.1);
\fill [white] (-1.5,0.866) circle (0.1);
\fill [white] (-2.5,0.866) circle (0.1);
\fill [black] (-0.5,-0.866) circle (0.1);
\fill [black] (-1.5,-0.866) circle (0.1);
\fill [black] (-2.5,-0.866) circle (0.1);

\fill [black] (1,1) circle (0.1);
\fill [black] (0.5,1.866) circle (0.1);
\fill [white] (1.5,1.866) circle (0.1);

\draw (0,0) circle (0.1);
\draw (1,0) circle (0.1);
\draw (2,0) circle (0.1);
\draw (3,0) circle (0.1);
\draw (4,0) circle (0.1);
\draw (5,0) circle (0.1);
\draw (6,0) circle (0.1);
\draw (7,0) circle (0.1);
\draw (8,0) circle (0.1);
\draw (9,0) circle (0.1);
\draw (7,1) circle (0.1);

\draw (1,1) circle (0.1);
\draw (0.5,1.866) circle (0.1);
\draw (1.5,1.866) circle (0.1);

\draw (-0.5,0.866) circle (0.1);
\draw (-1.5,0.866) circle (0.1);
\draw (-2.5,0.866) circle (0.1);
\draw (-0.5,-0.866) circle (0.1);
\draw (-1.5,-0.866) circle (0.1);
\draw (-2.5,-0.866) circle (0.1);

\end{tikzpicture}
\end{center}

We read out the Igusa-Clebsch invariants, and the branch locus of
$Y_{-}(21)$ as a double cover of $\Proj^2_{r,s}$ corresponds to the
subvariety of the moduli space where the elliptic K3 surfaces have an
extra $\I_2$ fiber.

\begin{theorem}
A birational model over $\Q$ for the Hilbert modular surface
$Y_{-}(21)$ as a double cover of $\Proj^2_{r,s}$ is given by the following
equation:
$$ 
z^2 = 16 s^4 - 8 r (27 r^2-23) s^3 + (621 r^4-954 r^2+349) s^2 - 18
(r^3-r) (33 r^2-29) s + (r^2-1) (189 (r^4-r^2)+16).
$$
It is a singular K3 surface (i.e. of Picard number $20$).
\end{theorem}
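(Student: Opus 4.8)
The plan is to run the four-step procedure of Section~\ref{method} to produce the double-cover equation, and then to present $Y_{-}(21)$ as an elliptic K3 surface whose Picard number is computed by Shioda--Tate. I would begin with Step~2. The bold subdiagram in the figure is an extended $E_7$ (that is, $\widetilde{E}_7$) diagram, so the associated elliptic divisor $F'$ defines a new genus-$1$ fibration; since this is a $3$-neighbor step ($F\cdot F'=3$) and $\sO_{21}$ has the shape demanded by Proposition~\ref{automaticsections}, the new fibration has a section. Computing $H^0(X,\sO_X(F'))$ and carrying out the $3$-neighbor step as in Section~\ref{neighbors} produces the Weierstrass model with $E_8$ and $E_7$ fibers, and the formulas of Section~\ref{thesis} then give the Igusa--Clebsch invariants as explicit functions of $r,s$, realizing $\sH_{21}$ as birational to $\Proj^2_{r,s}$ together with its morphism to $\A_2$.

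Next I would determine the branch locus by Step~3. Because $D=21$ is odd, the index bound of Section~\ref{realmult} forces the N\'eron--Severi discriminant along the branch locus to be $2D=42$ (the alternative $D/2$ requires $4\mid D$), and geometrically this jump is realized by the elliptic K3 acquiring an extra $\I_2$ fiber. Imposing that the discriminant of the $E_8E_7$ Weierstrass cubic acquire one further double root gives a polynomial in $r,s$ which, up to content, is the displayed sextic (which is quartic in $s$). The squarefree twist $C$ is then fixed by Step~4: for several odd primes $p\nmid 21$ I would specialize $(r,s)$, build $C_{r,s}$ from the Igusa--Clebsch invariants, count points over $\F_p$ and $\F_{p^2}$, and use Lemma~\ref{twistrealmult} to test whether the Jacobian has real multiplication by $\sO_{21}$ over $\F_p$; the consistent value of $C$ (here the claimed one) is singled out by requiring that every such real-multiplication point lift to the double cover.

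For the surface type, note that once $\Proj^2_{r,s}$ is homogenized $f(r,s)$ is homogeneous of degree $6$, so $z^2=f$ is a double cover of $\Proj^2$ branched over a plane sextic; after checking, by computing its singular locus, that this sextic has at worst rational double points, the minimal resolution is a K3 surface. The crucial structural observation is that $f$ has degree $4$ in $s$ with constant leading coefficient $16=4^2$, so the projection $(r,s,z)\mapsto r$ exhibits $Y_{-}(21)$ as an elliptic fibration over $\Proj^1_r$ with section (the two branches $z/s^2\to\pm4$ over $s=\infty$). I would then run Tate's algorithm on the discriminant in $s$, regarded as a polynomial in $r$, to list the reducible fibers, search for low-height sections, and apply the Shioda--Tate formula $\rho=2+\mathrm{rank}\,\MW+\sum_v\mathrm{rank}\,L_v$. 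Since $\rho\le 20$ holds automatically over $\C$, the statement $\rho=20$ reduces to producing a rank-$18$ contribution from the fibers and the Mordell--Weil group.

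The hard part will be exactly this count: the only real risk is to fall one class short and be unable to distinguish $\rho=20$ from $\rho=19$, and there is no help from the upper side since $\rho\le 20$ is free. The binding task is therefore the lower bound, which is delicate because a misread Kodaira type (for instance two singular fibers that collide only over $\overline{\Q}$) or an overlooked section will change the Shioda--Tate total. I would address it by determining the fiber types at every root of the discriminant with care, and by exhibiting enough explicit sections to reach rank $18$; the van Luijk point-counting already assembled in Step~4 then serves as an independent check that $\rho_0(p)=20$ for a good prime $p$, confirming that no hidden obstruction holds the rank below $20$.
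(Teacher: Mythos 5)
Your proposal follows essentially the same route as the paper: a $3$-neighbor step to the $E_8E_7$ fibration, branch locus identified as the extra-$\I_2$ locus with the twist pinned down by reduction mod $p$, and then the key observation that the coefficient $16$ of $s^4$ is a square, so projection to $\Proj^1_r$ gives an elliptic fibration with section whose Jacobian model yields, via Shioda--Tate, three $\I_2$ and six $\I_3$ fibers (rank $15$) together with Mordell--Weil rank $3$ (the paper exhibits a $3$-torsion section and three independent sections over $\Q(\sqrt{21})$ and $\Q(\sqrt{-1})$), giving $\rho = 2 + 15 + 3 = 20$. One caution: your closing appeal to van Luijk point counts as ``confirming that no hidden obstruction holds the rank below $20$'' is backwards---reduction modulo $p$ only bounds $\rho$ from above, which as you yourself note is free for a K3---so the entire weight of the claim rests on the explicit fibers and sections, exactly as in the paper.
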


\subsection{Analysis}

The extra involution (corresponding to $21 = 3 \cdot 7$) here is given
by $(r,s) \mapsto (-r,-s)$.

The branch locus is a rational curve; a parametrization is given by
$$
(r,s) = \left( \frac{h^4+72h^2-81}{18h(h^2+3)}, \frac{(h^2-9)(h^4-126h^2+189)}{432h(h^2+3)} \right).
$$

The equation of $Y_{-}(21)$ above expresses it as a surface fibered by
genus-$1$ curves over $\Proj^1_r$. In fact, since the coefficient of
$s^4$ is a square, this fibration has a section, and we may convert it
to its Jacobian form, which after some simple Weierstrass
transformations may be written as
\begin{align*}
z^2 &= w^3 + (-27 r^4 + 43) \, w^2
  + 4 (r^2-1)  (8127 r^4-18459 r^2+9740)  \,w \\
  & \qquad + 4 (r^2-1)^2 w (-186624 r^6+1320813 r^4-1817964 r^2+705679).
\end{align*}

This is a K3 surface with an elliptic fibration to $\Proj^1_r$. The
discriminant of the cubic polynomial is
$$
(r^2-1)^3  (27 r^2-25)^2  (27 r^4+342r^2-289)^3,
$$
from which we deduce that we have three $I_2$ fibers (including $r =
\infty$) and six $I_3$ fibers, which contribute $15$ to the Picard
number.  We find the following sections.

\begin{align*}
P_0 &= \bigl( 6(r^2-1)(6r^2-7), 4(r-1)(r+1)(27r^4+342r^2-289) \bigr) \\
P_1 &= \bigl( 2 (324 r^4 - 1503 r^2 + 1019)/21, 8(27r^2-25)(27r^4+342r^2-289)/(21\mu )\bigr) \\
P_2 &= \bigl( (-102 + 32\nu )(r^2 - 1), 32\nu r(r^2-1)(-27r^2 + 31 + 22\nu )\bigr) \\
P_3 &= \bigl( (-102 - 32\nu )(r^2 - 1), -32\nu r(r^2-1)(-27r^2 + 31 - 22\nu )\bigr).
\end{align*}
Here $\mu = \sqrt{21}$ and $\nu = \sqrt{-1}$. Note that $P_0$ is a
$3$-torsion section, whereas the height matrix for $P_1, P_2, P_3$ is
$$
\frac{1}{6} \left(
\begin{array}{ccc}
2 & 0 & 0 \\ 0 & 13 & 1 \\ 0 & 1 & 13
\end{array} \right).
$$ 
Therefore, the Picard number of the surface is $20$, and the
discriminant of the lattice spanned by these sections and the trivial
lattice is $1008 = 2^4 3^2 7$. We showed that this is the entire
N\'eron-Severi group by checking that the above subgroup of the
Mordell-Weil group is $2$- and $3$-saturated.

Using $P_0$, we may rewrite the equation in the much simpler form
$$
z^2 + (9 r^2-13) w z + (r^2-1) (27 r^4+342 r^2-289)  z = w^3.
$$

The quotient of the Hilbert modular surface by the involution $(r,s,z)
\mapsto (-r,-s,-z)$ is the rational elliptic surface
\begin{align*}
z^2 &= w^3 + (-27t^2 + 43)w^2  + 4(t-1)  (8127t^2-18459t+9740)  w \\
 & \quad  + 4(t-1)^2  (-186624t^3+1320813t^2-1817964t+705679).
\end{align*}

It has three reducible fibers of type $\I_3$ and one of type $\I_2$, a
$3$-torsion section, and \MoW\ rank $1$, generated by the
following section of height $1/6$.
$$
(z,w) = \bigl( 2 (324 r^2 - 1503 r + 1019)/21, 8 (27 r-25) (27 r^2+342 r-289)/(21 \mu) \bigr).
$$

\subsection{Examples}

We list some points of small height and corresponding genus $2$ curves.

\begin{tabular}{l|c}
Rational point $(r,s)$ & Sextic polynomial $f_6(x)$ defining the genus $2$ curve $y^2 = f_6(x)$. \\
\hline \hline \\ [-2.5ex]
$(3/2, 1/44)$ & $ -5x^6 - 8x^5 + 20x^4 + 5x^3 - 13x $ \\
$(21/34, -43/68)$ & $ 13x^6 + 26x^5 + 33x^4 + 9x^3 - 5x^2 - 11x $ \\
$(-36/13, -49/26)$ & $ 7x^6 - 42x^5 - 44x^4 - 12x^3 - 11x^2 - 14x - 7 $ \\
$(0, 1/2)$ & $ 13x^6 + 54x^5 + 32x^4 - 28x^3 - 25x^2 + 14x - 1 $ \\
$(0, -1/2)$ & $ -x^6 + 2x^5 + 4x^4 + 36x^3 + 25x^2 + 42x - 59 $ \\
$(45/46, -25/92)$ & $ 9x^6 + 16x^5 + 12x^4 - 73x^3 - 41x^2 + 14x + 77 $ \\
$(-3/2, -1/44)$ & $ -13x^5 + 39x^4 + 31x^3 - 115x^2 - 50x - 125 $ \\
$(-21/34, 43/68)$ & $ -13x^5 + 58x^4 - 83x^3 - 90x^2 + 100x + 125 $ \\
$(-45/46, 25/92)$ & $ 5x^5 + 30x^4 - 61x^3 - 122x^2 + 112x + 161 $ \\
$(3, -1/28)$ & $ 28x^6 + 52x^5 - 149x^4 - 174x^3 + 235x^2 - 60x - 100 $ \\
$(55/63, -4/63)$ & $ 8x^6 + 192x^5 + 237x^4 + 238x^3 - 15x^2 - 60x - 76 $ \\
$(-1/3, -11/9)$ & $ -56x^6 + 132x^5 + 102x^4 + 195x^3 - 240x^2 - 204x - 178 $ \\
$(-3, 1/28)$ & $ 4x^6 - 52x^5 + 133x^4 + 34x^3 - 171x^2 - 308x - 292 $ \\
$(1/9, 11/9)$ & $ -2x^6 - 54x^5 + 135x^4 + 120x^3 - 135x^2 - 324x - 108 $ \\
$(-4, -7/2)$ & $ -15x^6 + 36x^5 + 5x^4 + 52x^3 + 50x^2 + 156x + 375 $ \\
$(-35/69, 11/69)$ & $ 4x^6 - 12x^5 - 219x^4 - 460x^3 + 15x^2 - 246x - 230 $ 
\end{tabular}

The torsion section $P_0$ on the Jacobian model of $Y_{-}(21)$ pulls
back to the curve
$$
s = \frac{45r^3+9r^2-45r-17}{2(27r^2+6r-17)}.
$$
The Brauer obstruction always vanishes for this family, and yields
a family of genus $2$ curves over $\Q(r)$, whose Jacobians have real
multiplication by $\sO_{21}$.

Another special curve which we observe from a plot of the rational
points is the hyperbola $s^2 = (3r^2 + 1)/4$. It may be parametrized as 
$$
(r,s) = \left( \frac{4m}{4m^2 - 3} , \frac{3 + 4m^2}{2(3-4m^2)} \right).
$$ 
The Brauer obstruction also vanishes for this family.

\section{Discriminant $24$}

\subsection{Parametrization}

We start with an elliptic K3 surface with fibers of type $E_6$, $E_7$
and $A_3$ at $t = \infty$, $0$ and $1$ respectively.

A Weierstrass equation for such a family is given by
$$
y^2 = x^3 + t^2 \, x^2 + a (t-1) t^3 \big(2+ (d^2 - a +
1)(t-1)\big) \, x + a^2 (t-1)^2 t^5 \big(1 + d^2 (t-1)\big).
$$

We identify a fiber of type $E_8$, and this leads us by a $3$-neighbor
step to an $E_8 E_7$ fibration.

\begin{center}
\begin{tikzpicture}
\draw (0,0)--(12,0);
\draw (2,0)--(2,2);
\draw (5,0)--(5,1)--(4.29,1.71)--(5,2.41)--(5.71,1.71)--(5,1);
\draw (9,0)--(9,1);
\draw [very thick] (0,0)--(5,0)--(5,1)--(4.29,1.71);
\draw [very thick] (2,0)--(2,1);

\draw (5,0) circle (0.2);
\fill [black] (0,0) circle (0.1);
\fill [black] (1,0) circle (0.1);
\fill [black] (2,0) circle (0.1);
\fill [black] (3,0) circle (0.1);
\fill [black] (4,0) circle (0.1);
\fill [black] (2,1) circle (0.1);
\fill [white] (2,2) circle (0.1);
\fill [black] (5,0) circle (0.1);
\fill [black] (5,1) circle (0.1);
\fill [white] (6,0) circle (0.1);
\fill [white] (7,0) circle (0.1);
\fill [white] (8,0) circle (0.1);
\fill [white] (9,0) circle (0.1);
\fill [white] (10,0) circle (0.1);
\fill [white] (11,0) circle (0.1);
\fill [white] (12,0) circle (0.1);
\fill [white] (9,1) circle (0.1);
\fill [black] (4.29,1.71) circle (0.1);
\fill [white] (5.71,1.71) circle (0.1);
\fill [white] (5,2.41) circle (0.1);

\draw (0,0) circle (0.1);
\draw (1,0) circle (0.1);
\draw (2,0) circle (0.1);
\draw (3,0) circle (0.1);
\draw (4,0) circle (0.1);
\draw (2,1) circle (0.1);
\draw (2,2) circle (0.1);
\draw (5,0) circle (0.1);
\draw (5,1) circle (0.1);
\draw (6,0) circle (0.1);
\draw (7,0) circle (0.1);
\draw (8,0) circle (0.1);
\draw (9,0) circle (0.1);
\draw (10,0) circle (0.1);
\draw (11,0) circle (0.1);
\draw (12,0) circle (0.1);
\draw (9,1) circle (0.1);
\draw (4.29,1.71) circle (0.1);
\draw (5.71,1.71) circle (0.1);
\draw (5,2.41) circle (0.1);

\end{tikzpicture}
\end{center}

From the new elliptic fibration we read out the Igusa-Clebsch
invariants as usual, and then obtain the branch locus of $Y_{-}(24)$
as a double cover of $\Proj^2_{a,d}$, which is a union of two curves:
one corresponding to an extra $\I_2$ fiber, and one corresponding to
the locus where the $E_7$ fiber promotes to an $E_8$ fiber.

\begin{theorem}
A birational model over $\Q$ for the Hilbert modular surface
$Y_{-}(24)$ as a double cover of $\Proj^2_{a,d}$ is given by the
following equation:
$$
z^2 = (d^2 - a - 1) \, (16 a d^4-8 a^2 d^2-20 a d^2+d^2+a^3-3 a^2+3 a-1).
$$ 
It is a singular K3 surface.
\end{theorem}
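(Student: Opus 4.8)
The plan is to run the four-step procedure of Section~\ref{method}. First I would check that the displayed Weierstrass equation defines a family of $L_{24}$-polarized K3 surfaces over $\Proj^2_{a,d}$: its reducible fibers of types $E_6$, $E_7$, $A_3$ contribute a root lattice of rank $6+7+3=16$ and discriminant $3\cdot 2\cdot 4=24$, so together with $\Z O\oplus\Z F$ and a generically trivial \MoW\ group the Shioda--Tate formula gives $\rho=18$ and $|\disc\NS|=24=D$. Verifying that the resulting discriminant form agrees with $q_{24}$, I conclude by the Nikulin uniqueness already invoked in the excerpt that the generic member is $L_{24}$-polarized, so this is a rational two-parameter model of $\sF_{L_{24}}$; by Theorem~\ref{humbert} it already realizes a birational model of $\sH_{24}$.

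Next I would carry out Step~2: using the $E_8$ subdiagram drawn above, perform the indicated $3$-neighbor step of Section~\ref{neighbors} to pass to an elliptic fibration with fibers of types $E_8$ and $E_7$. By Proposition~\ref{automaticsections} this fibration has a section, so after converting to its Jacobian I obtain its Weierstrass form, and Theorem~\ref{thesisthm} lets me read off the Igusa--Clebsch invariants $(I_2:I_4:I_6:I_{10})$ as explicit functions of $a,d$, making the morphism $\sF_{L_{24}}\to\A_2$ and hence $\sH_{24}$ fully explicit. For Step~3 I would determine the branch locus of $Y_{-}(24)\to\sH_{24}$. Since this cover merely forgets the action of $e_{24}=(24+\sqrt{24})/2$, the branch locus is the sublocus where the Picard number of the associated K3 jumps by one and $\disc(\NS)$ passes to $2D=48$ or $D/2=12$; as noted in Section~\ref{realmult} these are the only options. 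Enumerating the finitely many geometric degenerations of the $E_6E_7A_3$ model, I expect exactly two relevant loci, one where the $E_7$ fiber is promoted to $E_8$ and one where an extra $\I_2$ fiber appears, and computing these conditions on $(a,d)$ should produce the factors $d^2-a-1$ and $16ad^4-8a^2d^2-20ad^2+d^2+a^3-3a^2+3a-1$.

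Step~4 then fixes the exact twist. By \cite{Ra, DP} the surface $Y_{-}(24)$ has good reduction away from primes dividing $24$, so the squarefree constant is supported on $\{2,3\}$. Reducing modulo several primes $p\nmid 24$, specializing $(a,d)\in\F_p^2$, building a curve $C_{a,d}/\F_p$ from the Igusa--Clebsch invariants, and counting points over $\F_p$ and $\F_{p^2}$, I apply Lemma~\ref{twistrealmult} to decide whether real multiplication by $\sO_{24}$ is defined over $\F_p$, and then check that $f(a,d)$ is a square exactly on those specializations. Matching this over enough points and enough primes singles out both the subset of factors and the constant, yielding the displayed equation. The main obstacle is twofold: the $3$-neighbor computation and the ensuing Igusa--Clebsch formulas, though algorithmic, are heavy, and separating the two candidate branch components together with the correct twist requires enough reductions for the point-count data to become decisive.

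Finally, the assertion that $Y_{-}(24)$ is a singular K3 surface needs an independent Picard-number computation, which I would model on the $D=21$ analysis. Viewing $z^2=f(a,d)$ as a family of genus-$1$ curves over, say, $\Proj^1_a$, and passing to the Jacobian, I would obtain an elliptic surface whose Weierstrass coefficients have the degrees forcing a K3; reading the reducible fibers off the discriminant of the resulting cubic and exhibiting explicit sections, I would compute the height pairing and check that these sections together with the trivial lattice span a rank-$20$ sublattice of $\NS$. Combining this lower bound with van~Luijk's method \cite{vL} of Section~\ref{method} --- reducing modulo two primes to force $\rho\le 20$ and comparing discriminant square classes --- confirms $\rho=20$, so the surface is a K3 of maximal Picard number as claimed.
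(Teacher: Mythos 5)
Your proposal follows essentially the same route as the paper: the same $E_6E_7A_3$ elliptic K3 family realizing $\sF_{L_{24}}$ (rank $18$, discriminant $24$), the same $3$-neighbor step to an $E_8E_7$ fibration, the same two-component branch locus (promotion of $E_7$ to $E_8$, plus an extra $\I_2$ fiber), the same point-counting determination of the twist, and the same strategy of exhibiting an elliptic fibration with explicit sections to reach Picard number $20$. Two small corrections: the genus-one fibration runs over $\Proj^1_d$, not $\Proj^1_a$ (the right-hand side is quartic in $a$ but sextic in $d$, so fibering over $\Proj^1_a$ gives genus-$2$ fibers), and van Luijk's method is superfluous at the last step, since $\rho\le 20$ is automatic for a K3 surface in characteristic zero — exhibiting a rank-$20$ sublattice, as the paper does with sections defined over $\Q$, $\Q(\sqrt{-1})$, and $\Q(\sqrt{6})$, already forces equality.
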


\subsection{Analysis}

Note the extra involution $(a,d) \mapsto (a,-d)$.

The branch locus has two components. The first is the zero locus of
$d^2 - a - 1$, and is obviously a rational curve (i.e.\ of genus $0$)
in the moduli space. It parametrizes the K3 surfaces in the family for
which the $E_7$ fiber gets promoted to an $E_8$ fiber. The other
component parametrizes elliptic K3 surfaces for which there is an
extra $\I_2$ fiber. It is also a genus $0$ curve, though this fact is
less obvious. A parametrization is given by
$$
(a,d) = \left( \frac{1-g^2}{2g^2-1} , \frac{g^3}{2g^2 - 1} \right).
$$

The equation of the Hilbert modular surface describes it as a family
of curves of genus one fibered over $\Proj^1_d$. In fact, we readily
check that $(a,z) = (0, d^2 - 1)$ gives a section. So in fact, we have
an elliptic surface, and by using the formula for the Jacobian, we can
write it in Weierstrass form as
\begin{align*}
y^2 &= x^3 -x \, d^2 (144 d^6-324 d^4 + 235 d^2-54)/48  \\
& \qquad -d^2 (3456 d^{10}- 22032 d^8+ 50625 d^6-54866 d^4+28647 d^2-5832)/1728.
\end{align*}
This is an elliptic K3 surface $E$, and in fact, it is the base change
(by $d \mapsto d^2$) of a rational elliptic surface with reducible
fibers of types $\I_2$, $\I_3$ and $\I_4$ (and therefore with
\MoW\ rank $2$). We can use this to readily compute one section
$$
(x_0,y_0) = \big(11 d^4 - 239/12 d^2 + 9, (d^2-1) (9 d^2 -8) (32 d^2-27)/8 \big)
$$
of height $1/6$. Translating $x$ by $x_0$ and scaling to get
rid of denominators, we get the following nicer form for $E$\/:
\begin{align*}
y^2 &= x^3 + (132 d^4 - 239 d^2 + 108) \, x^2 \\
  & \quad + 2   (d^2 - 1)  (9 d^2 - 8)  (32 d^2 - 27)   (10 d^2 - 9) \, x \\
  & \quad + ((d^2 - 1)  (9 d^2 - 8)   (32 d^2 - 27))^2.
\end{align*}
This has reducible fibers of type $\I_2$ at $d = \infty$ and $d^2 =
8/9$, type $\I_3$ at $d^2 = 27/32$, type $\IV$ at $d = 0$, and type
$\I_4$ at $d = \pm 1$. This gives a root system of type $A_1^3 \oplus
A_2^3 \oplus A_3^2$, which has rank $15$ and discriminant $1152$.

In addition to the section $P_1 = \big(0 ,(d^2-1) (9 d^2 -8) (32 d^2-27)
\big)$, we also find the sections
\begin{align*}
P_2 &= \big(-5 (d^2-1) (9 d^2-8), \sqrt{-1}  (d^2-1)^2  (9 d^2-8) \big) \\
P_3 &= \big(-(32 d^2-27) (9 d^2-8)/6 , d (32 d^2-27) (9 d^2-8) / {\sqrt{216}} \big).
\end{align*}

This shows that the elliptic K3 surface $Y_{-}(24)$ has geometric
Picard number $20$, i.e.\ is a singular K3 surface.  The discriminant of
the span of the algebraic divisors exhibited is $96$.  We showed that
this is the entire N\'{e}ron-Severi group by checking that our subgroup
of the \MoW\ group is $2$-saturated.

As mentioned above, the quotient by the involution $d \mapsto -d$ is a
rational elliptic surface
\begin{align*}
y^2 &= x^3 + (132 t^2 - 239 t + 108) \, x^2 \\
  & \quad + 2   (t - 1)  (9 t - 8)  (32 t - 27)   (10 t - 9) \, x \\
  & \quad + ((t - 1)  (9 t - 8)   (32 t - 27))^2.
\end{align*}
This surface has an $\I_2$ fiber at $t = 8/9$, an $\I_3$ fiber at $t =
27/32$ and an $\I_4$ fiber at $t = 1$.

The \MoW\ group is generated by the sections
\begin{align*}
P_1 &= \big(0 ,(t-1) (9 t -8) (32 t-27) \big), \\
P_2 &= \big(-5 (t-1) (9 t-8), \sqrt{-1}  (t-1)^2  (9 t-8) \big),
\end{align*}
with height matrix
$$
\left(\begin{array}{cc}
1/12 &  0 \\ 0 & 1/2
\end{array}\right).
$$
\subsection{Examples}

We list some points of small height and corresponding genus $2$ curves.

\begin{tabular}{l|c}
Rational point $(a,d)$ & Sextic polynomial $f_6(x)$ defining the genus $2$ curve $y^2 = f_6(x)$. \\
\hline \hline \\ [-2.5ex]
$(77/36, -1/6)$ & $ -7x^6 - 18x^4 - 10x^3 + 3x^2 + 10x + 22 $ \\
$(21/16, 1/8)$ & $ -12x^5 + 8x^4 + 24x^3 - 16x^2 - 21x + 14 $ \\
$(9/4, -1/2)$ & $ 3x^6 - 12x^5 + 20x^4 - 79x^3 + 11x^2 - 84x + 60 $ \\
$(-7/81, -17/18)$ & $ 27x^6 - 56x^5 - 21x^4 + 52x^3 + 57x^2 - 84x + 49 $ \\
$(77/36, 1/6)$ & $ -32x^6 - 80x^5 + 94x^4 + 115x^3 - 91x^2 - 55x + 33 $ \\
$(9/4, -5/4)$ & $ -24x^6 + 48x^5 + 100x^4 + 120x^3 - 50x^2 - 72x - 41 $ \\
$(-3/4, -1/4)$ & $ 25x^6 + 70x^4 + 24x^3 + 124x^2 + 48x + 72 $ \\
$(9/4, 1/2)$ & $ -72x^6 + 84x^5 + 127x^4 - 123x^3 - 83x^2 + 51x + 25 $ \\
$(1, 3/2)$ & $ 9x^6 + 9x^4 - 60x^3 - 45x^2 + 132x - 53 $ \\
$(33/50, -1/5)$ & $ 50x^5 - 50x^4 + 35x^3 - 35x^2 - 31x + 139 $ \\
$(-3/4, 1/4)$ & $ 33x^6 - 36x^5 + 110x^4 - 120x^3 + 140x^2 - 96x + 72 $ \\
$(21/16, -1/8)$ & $ 7x^5 + 3x^4 + 78x^3 - 2x^2 + 63x + 147 $ \\
$(21, -5/2)$ & $ -8x^6 - 24x^5 + 80x^4 - 100x^3 + 170x^2 - 84x + 63 $ \\
$(-4/9, 2/3)$ & $ -3x^6 + 14x^5 - 63x^4 + 96x^3 - 171x^2 - 48x - 188 $ \\
$(-7/81, 17/18)$ & $ -7x^6 + 56x^5 - 95x^4 - 20x^3 - 205x^2 - 44x - 93 $ \\
$(21, 5/2)$ & $ -24x^6 - 24x^5 - 280x^4 - 260x^3 + 170x^2 - 24x + 1 $ 
\end{tabular}

For instance, for the genus two curves corresponding to $(1,3/2)$ and
$(1,-3/2)$, the point counts modulo $p$ match the twist by $\Q(\sqrt{2})$
of a modular form of level $2592$.

We describe a few curves on the Hilbert modular surface, which can be
used to produce rational points. Setting $a = -1/9$ gives a rational
curve of genus $0$, with infinitely many points. Sections of the
fibration will also lead to curves birational to $\Proj^1$ over~$\Q$.
For instance, $P_1$ and $2P_1$ describe the rational curves given by
$a = 4(d^2-1)/5$ and $a = (4d^2 - 5)/13$, respectively.  The Brauer
obstruction does not vanish identically on any of these.

\section{Discriminant $28$}

\subsection{Parametrization}

We start with an elliptic K3 surface with fibers of type $E_6$, $D_5$
and $A_4$ at $t = \infty$, $0$ and $1$ respectively, and a section of
height $28/60 = 7/15 = 4 - 6/5 - 1 - 4/3$.

A Weierstrass equation for this family is given by
$$
y^2 = x^3 + a t x^2 + b t^2 (t-1)^2 x + c t^3 (t-1)^4
$$
where
\begin{align*}
a &= 2 (f^2 - g^2) (t-1) + t, \\
b &= (f^2 - g^2)^2 (1-t) - 2 (f^2 - g^2)(f+1) t, \\
c &= (f+1)^2 (f^2 - g^2) t.
\end{align*}

We identify the class of an $D_8$ fiber and carry out a $2$-neighbor
step to convert to an elliptic fibration with $D_8$ and $E_7$ fibers,
and a section of height $7/2 = 4 + 2\cdot 1 - 1 - 3/2$.

\begin{center}
\begin{tikzpicture}

\draw (0,0)--(9,0);
\draw (2,0)--(2,2);
\draw (5,0)--(5,1)--(4.05,1.69)--(4.41,2.81)--(5.59,2.81)--(5.95,1.69)--(5,1);
\draw (7,0)--(7,1);
\draw (8,0)--(8,1);
\draw (6.5,3.5)--(5.59,2.81);
\draw [bend right] (6.5,3.5) to (2,2);
\draw [bend left] (6.5,3.5) to (7,1);
\draw [very thick] (5,1)--(5,0)--(9,0);
\draw [very thick] (8,0)--(8,1);
\draw [very thick] (4.05,1.69)--(5,1)--(5.95,1.69);

\draw (5,0) circle (0.2);
\fill [white] (0,0) circle (0.1);
\fill [white] (1,0) circle (0.1);
\fill [white] (2,0) circle (0.1);
\fill [white] (2,1) circle (0.1);
\fill [white] (2,2) circle (0.1);
\fill [white] (3,0) circle (0.1);
\fill [white] (4,0) circle (0.1);
\fill [black] (5,0) circle (0.1);
\fill [black] (6,0) circle (0.1);
\fill [black] (7,0) circle (0.1);
\fill [black] (8,0) circle (0.1);
\fill [black] (9,0) circle (0.1);
\fill [white] (7,1) circle (0.1);
\fill [black] (8,1) circle (0.1);
\fill [black] (5,1) circle (0.1);
\fill [black] (4.05,1.69) circle (0.1);
\fill [black] (5.95,1.69) circle (0.1);
\fill [white] (4.41,2.81) circle (0.1);
\fill [white] (5.59,2.81) circle (0.1);
\fill [white] (6.5,3.5) circle (0.1);

\draw (0,0) circle (0.1);
\draw (1,0) circle (0.1);
\draw (2,0) circle (0.1);
\draw (2,1) circle (0.1);
\draw (2,2) circle (0.1);
\draw (3,0) circle (0.1);
\draw (4,0) circle (0.1);
\draw (5,0) circle (0.1);
\draw (6,0) circle (0.1);
\draw (7,0) circle (0.1);
\draw (8,0) circle (0.1);
\draw (9,0) circle (0.1);
\draw (7,1) circle (0.1);
\draw (8,1) circle (0.1);
\draw (5,1) circle (0.1);
\draw (4.05,1.69) circle (0.1);
\draw (5.95,1.69) circle (0.1);
\draw (4.41,2.81) circle (0.1);
\draw (5.59,2.81) circle (0.1);

\draw (6.5,3.5) circle (0.1);

\end{tikzpicture}
\end{center}

Then we identify the class of an $E_8$ fiber, and carry out a
$2$-neighbor step to get the desired $E_8 E_7$ fibration.

\begin{center}
\begin{tikzpicture}

\draw (0,0)--(14,0);
\draw (3,0)--(3,1);
\draw (13,0)--(13,1);
\draw (9,0)--(9,1);
\draw (7,2)--(7,0);
\draw [bend right] (7,2) to (0,0);
\draw [bend left] (7,2) to (9,1);
\draw [very thick] (7,0)--(14,0);
\draw [very thick] (9,0)--(9,1);

\draw [above] (7,2) node{$P$};

\draw (7,0) circle (0.2);
\fill [white] (0,0) circle (0.1);
\fill [white] (1,0) circle (0.1);
\fill [white] (2,0) circle (0.1);
\fill [white] (3,0) circle (0.1);
\fill [white] (4,0) circle (0.1);
\fill [white] (5,0) circle (0.1);
\fill [white] (6,0) circle (0.1);
\fill [black] (7,0) circle (0.1);
\fill [black] (8,0) circle (0.1);
\fill [black] (9,0) circle (0.1);
\fill [black] (10,0) circle (0.1);
\fill [black] (11,0) circle (0.1);
\fill [black] (12,0) circle (0.1);
\fill [black] (13,0) circle (0.1);
\fill [black] (14,0) circle (0.1);
\fill [white] (3,1) circle (0.1);
\fill [white] (13,1) circle (0.1);
\fill [black] (9,1) circle (0.1);

\fill [white] (7,2) circle (0.1);

\draw (0,0) circle (0.1);
\draw (1,0) circle (0.1);
\draw (2,0) circle (0.1);
\draw (3,0) circle (0.1);
\draw (4,0) circle (0.1);
\draw (5,0) circle (0.1);
\draw (6,0) circle (0.1);
\draw (7,0) circle (0.1);
\draw (8,0) circle (0.1);
\draw (9,0) circle (0.1);
\draw (10,0) circle (0.1);
\draw (11,0) circle (0.1);
\draw (12,0) circle (0.1);
\draw (13,0) circle (0.1);
\draw (14,0) circle (0.1);
\draw (3,1) circle (0.1);
\draw (13,1) circle (0.1);
\draw (9,1) circle (0.1);

\draw (7,2) circle (0.1);

\end{tikzpicture}
\end{center}

The new elliptic fibration has a section, since $P \cdot F' = 5$,
while the remaining component of the $D_8$ fiber has intersection
number $2$ with $F'$. We now read out the Igusa-Clebsch invariants.

\begin{theorem}
A birational model over $\Q$ for the Hilbert modular surface
$Y_{-}(28)$ as a double cover of $\Proj^2_{g,h}$ is given by the following
equation:
$$
z^2 = -(g-f-2)(g+f+2)(8g^4+92f^2g^2+180fg^2+71 g^2-100 f^4-180 f^3-71 f^2+4f+4).
$$ 
It is a singular K3 surface.
\end{theorem}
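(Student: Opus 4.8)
The plan is to run the four-step procedure of Section~\ref{method} on the displayed family, obtaining first the Humbert surface $\sH_{28}$ with its map to $\A_2$ and then the twisted double cover realizing $Y_{-}(28)$. First I would confirm that the stated Weierstrass family has N\'eron--Severi lattice $L_{28}$. Tate's algorithm gives the fibers of types $E_6$, $D_5$, $A_4$ at $t=\infty,0,1$, contributing root lattices of ranks $6,5,4$; together with the asserted section of height $28/60=7/15$, which supplies \MW\ rank~$1$, the Shioda--Tate formula yields $\rho = 2 + 1 + (6+5+4) = 18 = \operatorname{rank} L_{28}$. The discriminant formula then gives $\tfrac{7}{15}\cdot(3\cdot 4\cdot 5) = 28$, and since $28$ is a fundamental discriminant the embedding is automatically primitive, so $\NS \cong L_{28}$. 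I would also check that the positive-definite part $N$ contains $E_8\oplus E_7$ (equivalently that $N^{*}$ carries a vector of norm $2/28$), so that Proposition~\ref{automaticsections} applies to every elliptic divisor used below.

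Next, for Step~2 I would perform the two $2$-neighbor steps indicated by the diagrams: first pass to the fibration with fibers $D_8$ and $E_7$ and a section of height $7/2$, then to the target $E_8E_7$ fibration. The new fibration has a section because the marked divisor $P$ meets the new fiber class $F'$ with $P\cdot F'=5$ while the spare component of the $D_8$ fiber meets $F'$ with multiplicity~$2$, and $\gcd(5,2)=1$; in any case Proposition~\ref{automaticsections} guarantees this unconditionally. Converting to the Jacobian via the standard formulas (Section~\ref{neighbors}) and normalizing to the $E_8E_7$ Weierstrass shape of Section~\ref{thesis}, I would read off the coefficients $(a,a',b,b',b'')$ and apply Theorem~\ref{thesisthm} to extract the Igusa--Clebsch invariants $(I_2:I_4:I_6:I_{10})$ as explicit rational functions of $(f,g)$; by Theorem~\ref{humbert} this presents $\sH_{28}$ as a birational model of $\Proj^2_{f,g}$.

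For Steps~3 and~4 I would list the finitely many degenerations that raise the Picard number by one, i.e.\ that change the K3 discriminant from $28$ to $2\cdot 28 = 56$ or to $28/2 = 14$: an extra $\I_2$ fiber, a fiber promotion (such as $A_4\rightsquigarrow A_5$, $D_5\rightsquigarrow D_6$, or $E_7\rightsquigarrow E_8$), or a jump in \MW\ rank. Each produces a candidate factor $f_i(f,g)$, and I expect the quartic factor to record the extra-$\I_2$ locus and the two linear factors $g\mp(f+2)$ a fiber promotion. To pin down the exact product and the squarefree twist $C$ I would use Step~4: $Y_{-}(28)$ has good reduction away from $2$ and $7$, so I reduce modulo several primes $p\nmid 28$, specialize $(f,g)\in\F_p^2$, reconstruct the genus-$2$ curve from its Igusa--Clebsch invariants, count its points over $\F_p$ and $\F_{p^2}$ to form the polynomial $Q$, and apply Lemma~\ref{twistrealmult} to detect real multiplication by $\sO_{28}$ over $\F_p$. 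Requiring $C\,f_{i_1}\cdots f_{i_m}$ to be a square exactly at the points with such real multiplication fixes both the subset and $C$.

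Finally, to establish that $Y_{-}(28)$ is a singular K3 surface I note that its branch curve has degree $2+4=6$, so the double cover of $\Proj^2$ is a K3 surface and hence $\rho\le 20$ automatically in characteristic~$0$; it therefore suffices to produce $20$ independent algebraic classes. Imitating the $D=24$ analysis, I would exhibit a genus-$1$ fibration on $Y_{-}(28)$, pass to its Jacobian Weierstrass model, read the reducible fibers off the discriminant of the Weierstrass cubic, and display explicit sections until the trivial lattice together with the \MW\ lattice reaches rank~$20$; the height pairing then gives the discriminant of this sublattice, and a $2$- and $3$-saturation check identifies it with $\NS$. I expect this last step to be the \emph{main obstacle}: one must find a fibration whose visible reducible fibers and explicitly computable sections account for all $20$ classes and then certify that no enlargement survives saturation. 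If explicit sections prove hard to come by, the point-count (Artin--Tate) estimates of van~Luijk's method from the end of Section~\ref{method} can at least corroborate the predicted discriminant square class at the good primes already used in Step~4.
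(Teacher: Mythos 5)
Your plan follows the paper's proof essentially step for step: the same Shioda--Tate and discriminant check that the family is $L_{28}$-polarized, the same two $2$-neighbor steps to the $E_8E_7$ fibration (with the same $\gcd(5,2)=1$ argument for existence of a section), the same Step 3--4 enumeration-plus-point-counting to fix the branch factors and the twist, and the same route to singularity: the sextic branch curve makes the double cover a K3, and an explicit elliptic fibration on it (the paper uses $f=h-2$, $h=t(1+1/x)$, $g=t(1-1/x)$) has trivial lattice of rank $18$ plus two sections of heights $7/60$ and $2/3$, giving Picard number $20$ after a saturation check. One immaterial slip: the linear factors $g\mp(f+2)$ are not a fiber promotion but the locus where an extra $\I_2$ fiber appears while the height-$7/15$ section becomes $2$-divisible (so the discriminant drops to $14$ rather than rising to $56$); your Step-4 point counts would identify the correct components regardless.
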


\subsection{Analysis}

It has a second involution $(f,g) \mapsto (f, -g)$.  The branch locus
consists of three components.  The factors $g \pm (f + 2)$
correspond to the locus where the Picard number of the K3 surface
jumps to~$19$, due to the presence of an $\I_2$ fiber, but the
discriminant decreases to~$14$, because the non-trivial section of
height~$7/15$ becomes divisible by~$2$. The more complicated factor
corresponds to just the presence of an extra $\I_2$ factor, which
makes the discriminant~$56$. This component of the branch locus is
also a genus~$0$ curve; a parametrization is given by
$$
(f,g) = \left(
  -\frac{2m^4+17m^3+57m^2+85m+47}{2(m+1)(m+2)(m^2+6m+11)},
  -\frac{(m^2+6m+7)(2m^2+7m+7)}{2(m+1)(m+2)(m^2+6m+11)}
\right).
$$

Since the double cover is branched along a sextic, the Hilbert modular
surface is itself a K3 surface. Setting $f = h-2$ and then using the
invertible substitution $h = t(1+1/x), g = t(1-1/x)$ (and absorbing
square factors) converts it to an elliptic fibration over $\Proj^1_t$,
which we can write in Weierstrass form as

\begin{align*}
y^2 &= x^3 -t (108 t^3-176 t^2+63 t+4) x^2 \\
  &\quad + 32 (t-1)^2 t^3 (135 t^2-36 t-106) x -64 (t-1)^4 t^4 (6075 t^2-6075 t-196) .
\end{align*}

This has fibers of type $\I_2$ at $t = \infty$, $4/5$ and $28/27$,
$\I^*_1$ at $t = 0$, $\I_5$ at $t = 1$,
and $\I_3$ at $t = (19 \pm 7 \sqrt{7})/36$,
giving a contribution of $D_5 \oplus A_4 \oplus A_1^3 \oplus A_2^2$
to the N\'{e}ron-Severi lattice. Therefore the Picard
number is at least $18$. We identify the following sections:
\begin{align*}
P_1 &= \Big( 12 t^2 (t-1) (36 t - 37), \, 4 t^2 (t-1)(27 t-28) (72
t^2-76 t+1) \Big), \\
P_2 &= \Big( t (1080 t^3-2064 t^2+953 t+28)/7, \,
2 t^2 (5 t-4) (27 t-28) (72 t^2-76 t+1)/7^{3/2} \Big),
\end{align*}
with height matrix
$$
\left(\begin{array}{cc} 7/60 & 0 \\ 0 & 2/3
\end{array} \right).
$$
Therefore the Picard number is $20$. An easy lattice-theoretic
argument (see the auxiliary files) shows that these sections must
generate the \MoW\ group, and therefore the N\'{e}ron-Severi
lattice has discriminant $112$.

The quotient by the involution $g \mapsto -g$ has equation
\begin{align*}
y^2 &= x^3 -(84f^2+148f+39)x^2/4 -(96f^4+364f^3+615f^2+500f+140)x \\
& \quad -(f+2)^2(5f+2)^2(4f^2+4f-1).
\end{align*}

This is a rational elliptic surface with an $\I_4$ fiber at $f = -1$,
an $\I_3$ fiber at $f = -17/18$, and $\I_2$ fibers at $t = -26/27$ and
$t = \infty$. The \MoW\ group is generated by the $2$-torsion
section $\big(-2(f+2)^2, 0 \big)$ and the non-torsion section
$\big(-2(f^2 + 3f + 3), (f+1)(18f + 17) \big)$ of height $1/12$.

\subsection{Examples}

We list some points of small height and corresponding genus $2$ curves.

\begin{tabular}{l|c}
Rational point $(f,g)$ & Sextic polynomial $f_6(x)$ defining the genus $2$ curve $y^2 = f_6(x)$. \\
\hline \hline \\ [-2.5ex]
$(-31/16, -9/16)$ & $ 9x^6 - 15x^5 + 39x^4 - 25x^3 + 36x^2 + 11 $ \\
$(-56/65, -61/65)$ & $ -37x^6 + 42x^5 + 17x^4 - 33x^3 - 7x^2 - 3x - 3 $ \\
$(-31/16, 9/16)$ & $ 15x^5 - 39x^4 - 11x^3 + 45x^2 + 27x $ \\
$(-35/24, -31/24)$ & $ -27x^6 + 54x^5 - 36x^4 + 30x^3 - 24x^2 - 6x - 2 $ \\
$(-12/11, -1/11)$ & $ x^6 + 14x^5 + 61x^4 + 73x^3 - 49x^2 + 3x + 1 $ \\
$(-14/9, -5/9)$ & $ 37x^6 + 69x^5 + 75x^4 + 79x^3 + 69x^2 + 12x + 20 $ \\
$(-2/5, -7/5)$ & $ -4x^6 + 12x^5 - x^4 + 87x^3 + 68x^2 + 48x + 3 $ \\
$(-13/8, -7/8)$ & $ 3x^6 - 6x^5 + 5x^4 - 10x^3 + 41x^2 + 30x + 87 $ \\
$(-42/41, 1)$ & $ -92x^6 - 76x^5 + 21x^4 + 9x^3 + 9x^2 - x - 1 $ \\
$(-13/8, 7/8)$ & $ 18x^6 + 78x^5 + 44x^4 - 94x^3 - 76x^2 + 30x + 25 $ \\
$(5/16, -19/16)$ & $ -48x^6 + 45x^5 + 11x^4 + 87x^3 - 97x^2 - 63x + 56 $ \\
$(-30/41, -36/41)$ & $ -27x^6 + 57x^5 - 100x^4 - 68x^3 - 76x^2 + 36x $ \\
$(-11/3, 7/3)$ & $ 16x^6 - 24x^5 - 111x^4 + 9x^3 + 102x^2 - 27x - 33 $ \\
$(26/31, -64/31)$ & $ -9x^6 - 6x^5 + 32x^4 + 32x^3 - 112x^2 - 6x + 99 $ \\
$(13/80, -15/16)$ & $ -9x^6 - 15x^5 + 85x^3 - 135x + 54 $ \\
$(26/31, 64/31)$ & $ -25x^6 + 30x^5 - 64x^4 + 72x^3 - 136x^2 + 102x - 69 $ 
\end{tabular}

Next, we describe some special curves on the surface, which may be
used to produce rational points.
First, $f = -17/18$ gives a rational curve, which can be parametrized
as $g = -19(h^2-2)/(18(h^2+2))$. The Brauer obstruction vanishes
identically for this family, giving a family of genus $2$ curves with
real multiplication by $\sO_{28}$.
Next, the specialization $f = -26/27$ gives another rational curve, which
can be parametrized as $g = -2(13h^2+729h-75816)/(27(h^2+5832))$.
The Brauer obstruction does not vanish identically for this family.
Finally, the section $P_1$ on the Jacobian of $Y_{-}(28)$ can be described as
$$
(f,g) = \left(  \frac{4t^2-8t+1}{4t-1} , \frac{4t^2-2t+1}{4t-1} \right).
$$
The Brauer obstruction vanishes identically on this family as well.

From a plot of the rational points, we observe many rational points on
the lines $g = \pm (5f + 2)/3$. However, the Brauer obstruction does
not vanish identically along these lines.

\section{Discriminant $29$}

\subsection{Parametrization}

We start with an elliptic K3 surface with fibers of type $E_7$ and
$A_8$, and a section $P$ of height $29/18 = 4 - 3/2 - 8/9$. A
Weierstrass equation for this family is given by
$$
y^2 = x^3 + \big(-(4\,f-1)\,t^2+(g-2)\,t+1\big)\,x^2
-2\,g\,t^3\,\big(2\,f^2\,t^2+(-g+2\,f+1)\,t-1\big)\,x +
g^2\,t^6\,\big( (g-4\,f)\,t+1\big).
$$
We identify a fiber $F$\/ of type $E_8$, and go to the associated
elliptic fibration by a $3$-neighbor step. Note that $P \cdot F = 4$,
while the remaining component of the $A_8$ fiber intersects $F$\/ with
multiplicity $3$. Therefore the new fibration has a section.

\begin{center}
\begin{tikzpicture}

\draw (-1,0)--(7,0)--(7.5,0.866)--(7.5,0.866)--(10.5,0.866)--(10.5,-0.866)--(7.5,-0.866)--(7,0);
\draw (2,0)--(2,1);
\draw [bend right] (6,2) to (-1,0);
\draw [bend left] (6,2) to (7.5,0.866);
\draw [very thick] (6,0)--(7,0);
\draw [very thick] (8.5,0.866)--(7.5,0.866)--(7,0)--(7.5,-0.866)--(10.5,-0.866)--(10.5,0.866);

\draw (6,0) circle (0.2);
\fill [white] (-1,0) circle (0.1);
\fill [white] (0,0) circle (0.1);
\fill [white] (1,0) circle (0.1);
\fill [white] (2,0) circle (0.1);
\fill [white] (3,0) circle (0.1);
\fill [white] (4,0) circle (0.1);
\fill [white] (5,0) circle (0.1);
\fill [black] (6,0) circle (0.1);
\fill [white] (2,1) circle (0.1);
\fill [black] (7,0) circle (0.1);
\fill [black] (7.5,0.866) circle (0.1);
\fill [black] (7.5,-0.866) circle (0.1);
\fill [black] (8.5,0.866) circle (0.1);
\fill [black] (8.5,-0.866) circle (0.1);
\fill [white] (9.5,0.866) circle (0.1);
\fill [black] (9.5,-0.866) circle (0.1);
\fill [black] (10.5,0.866) circle (0.1);
\fill [black] (10.5,-0.866) circle (0.1);
\fill [white] (6,2) circle (0.1);

\draw (-1,0) circle (0.1);
\draw (0,0) circle (0.1);
\draw (1,0) circle (0.1);
\draw (2,0) circle (0.1);
\draw (3,0) circle (0.1);
\draw (4,0) circle (0.1);
\draw (5,0) circle (0.1);
\draw (6,0) circle (0.1);
\draw (2,1) circle (0.1);
\draw (7,0) circle (0.1);
\draw (7.5,0.866) circle (0.1);
\draw (7.5,-0.866) circle (0.1);
\draw (8.5,0.866) circle (0.1);
\draw (8.5,-0.866) circle (0.1);
\draw (9.5,0.866) circle (0.1);
\draw (9.5,-0.866) circle (0.1);
\draw (10.5,0.866) circle (0.1);
\draw (10.5,-0.866) circle (0.1);
\draw (6,2) circle (0.1);
\draw (6,2) [above] node{$P$};

\end{tikzpicture}
\end{center}

The new elliptic fibration has $E_8$ and $E_7$ fibers, and we may read
out the Igusa-Clebsch invariants. The branch locus for the double
cover of $\Proj^2_{f,g}$ corresponds to elliptic K3 surfaces having an
extra $\I_2$ fiber. 

\begin{theorem}
A birational model over $\Q$ for the Hilbert modular surface
$Y_{-}(29)$ as a double cover of $\Proj^2$ is given by the following
equation:
\begin{align*}
z^2 &= -g^4+(6\,f+11)\,g^3 -(27\,f^4-18\,f^3+5\,f^2+102\,f-1)\,g^2 \\
  & \qquad   +8\,f\,(36\,f^3-25\,f^2+35\,f-1)\,g -16\,f^2\,(4\,f-1)^3.
\end{align*}
It is a K3 surface of Picard number $19$.
\end{theorem}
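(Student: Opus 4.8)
The plan is to follow the four-step procedure of Section~\ref{method}, since the groundwork has already been laid: the Weierstrass family above realizes $\sF_{L_{29}}$ as (an open subset of) $\Proj^2_{f,g}$, and the $3$-neighbor step to the $E_8E_7$ fibration produces the map $\sF_{L_{29}} \to \A_2$ in terms of Igusa-Clebsch invariants. By Theorem~\ref{humbert} this map is a birational morphism onto the Humbert surface $\sH_{29}$, so $\sH_{29}$ is rational and $Y_{-}(29)$ is a double cover of $\Proj^2_{f,g}$ of the shape $z^2 = F(f,g)$. The whole content of the theorem is then (a) to identify the branch polynomial $F$, and (b) to determine the Picard number of the resulting surface.

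For (a) I would carry out Steps~3 and~4. Since $29$ is an odd (prime) fundamental discriminant, the analysis preceding Theorem~\ref{humbert} shows that along the branch locus the N\'eron-Severi discriminant can only jump to $2\cdot 29 = 58$ (the value $D/2$ being excluded because $D$ is odd), and that this jump is forced by the appearance of an extra reducible fiber; here it is realized by an extra $\I_2$ fiber. Computing the discriminant of the cubic in the $E_8E_7$ Weierstrass model and asking where it acquires an additional double root yields a short list of candidate factors $f_1(f,g),\dots,f_k(f,g) \in \Z[f,g]$, so that $F = C\,f_{i_1}\cdots f_{i_m}$ for some subset of indices and a squarefree twist $C$. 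To pin down the subset and $C$, I would reduce modulo several primes $p \nmid 29$: specialize $(f,g)\in\F_p^2$, build the genus-$2$ curve from its Igusa-Clebsch invariants, count its points over $\F_p$ and $\F_{p^2}$, and form the polynomial $Q$ of Lemma~\ref{twistrealmult}. By part~(3) of that lemma, a point with $\disc Q$ in the square class of $29$ carries real multiplication by $\sO_{29}$ over $\F_p$, hence must lift to $Y_{-}(29)$; requiring $C\,f_{i_1}\cdots f_{i_m}(f,g)$ to be a square at every such point, and a nonsquare where the lift fails, eliminates all but the asserted choice. Good reduction of $Y_{-}(29)$ away from $29$ (so that only finitely many $C$ are possible) guarantees termination.

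For (b), note first that $F$ has degree $6$ in $(f,g)$, so $z^2 = F(f,g)$ is a double cover of $\Proj^2$ branched over a sextic and hence, after resolving its simple singularities, a K3 surface. To compute its Picard number I would produce an explicit elliptic fibration: substitute to put $z^2 = F$ into Weierstrass form over some $\Proj^1$, exactly as in the $D=28$ analysis. Reading the reducible fibers off the discriminant gives $\sum_v \operatorname{rank} L_v$, and exhibiting a few independent sections bounds the \MoW\ rank from below; the Shioda-Tate formula then gives $\rho \ge 19$. For the matching upper bound I would invoke van Luijk's method~\cite{vL}: choose two primes $p_1,p_2$ of good reduction, count points over $\F_{p_i}$ and its small extensions to obtain the characteristic polynomial of Frobenius on $H^2$, and compute the bound $\rho_0(p_i)$ together with the square class of $\disc\NS$ predicted by the Artin-Tate formula~\cite{Ta2}. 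If $\rho_0(p_1)=\rho_0(p_2)=20$ while the two square classes differ, then $\rho < 20$, forcing $\rho = 19$.

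The main obstacle is exactly this last upper bound. Showing $\rho \le 19$, i.e.\ that $Y_{-}(29)$ is \emph{not} a singular K3 surface, cannot be achieved from a single reduction: the quantity $\rho_0(p)$ is always even, so the best a single good prime can give is $\rho \le \rho_0(p) = 20$. One must instead locate a pair of primes realizing $\rho_0 = 20$ with incompatible discriminant square classes, which in practice means some searching and the heavier point counts over $\F_{p^2}$ (and possibly higher extensions) needed to determine the Frobenius polynomial. By contrast, the neighbor-step and branch-locus computations of part~(a), though lengthy, are entirely algorithmic within the framework already set up.
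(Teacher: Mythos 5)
Your proposal is correct and follows essentially the same route as the paper: the equation is obtained by the general Steps 3--4 (candidate factors from the extra $\I_2$ locus, then point counts modulo primes via Lemma~\ref{twistrealmult} to fix the subset and the twist), and the Picard number is computed by converting the sextic double cover into an elliptic K3 surface (the paper substitutes $g=hf$, uses the rational point at $f=0$, and passes to the Jacobian), exhibiting three independent sections to get $\rho\ge 19$, and applying van Luijk's two-prime comparison (the paper uses $p=11$ and $13$) for the upper bound $\rho\le 19$. The only cosmetic difference is that the paper additionally checks $2$- and $3$-saturation to pin down the full N\'eron--Severi lattice, which goes beyond what the stated theorem requires.
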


\subsection{Analysis}

The branch locus is a rational curve, and a parametrization is given by
$$
(f,g) = \left(  \frac{(m+2)^2(m^2+2m-4)}{m^2(m^2+6m+12)} , \frac{4(m+2)^2(m+4)^3}{m(m^2+6m+12)^2} \right).
$$

The substitution $g = hf$ makes the right hand side of the equation a
quartic in $f$, after absorbing a factor of $f^2$ into $z^2$ on the
left. Also, $f = 0$ makes the right hand side a square (namely
$(h-4)^2$), giving us a point on the genus-$1$ curve over
$\Proj^1_h$. We may then replace the genus-$1$ curve by its Jacobian,
which has the following equation after the change of parameter $h = 4
- t$, and some Weierstrass transformations:
\begin{align*}
y^2 &= x^3 -(t^4-10\,t^3-t^2-t-16)\,x^2  \\
& \qquad -t\,(20\,t^5-140\,t^4-3740\,t^3+14234\,t^2+3349\,t-1120)\,x \\
& \qquad -t^2\,(3475\,t^6-46300\,t^5+180355\,t^4-10362\,t^3-849193\,t^2+276269\,t-19600).
\end{align*}

This is an elliptic K3 surface, with bad fibers of type $\I_4$ at $t =
\infty$, $\I_3$ at $t = 0$ and at the roots of the polynomial
$15t^3+12t^2+160t-64$ (which gives the cubic field of discriminant
$-87$), and $\I_2$ at the roots of $2t^3-38 t^2+255 t-28$ (which gives
the cubic field of discriminant $-116$).  The trivial lattice
therefore has rank $16$, leaving room for \MoW\ rank at most~$4$. We
find the following linearly independent sections:
\begin{align*}
P_1 &= \big( t(125t^3-700t^2+800t-76)/4, \quad t(1375t^5-11450t^4+29240t^3-23616t^2+7296t-512)/8 \big), \\
P_2 &= \big(  (30t^4-306t^3+636t^2+1833t-448)/29, \quad (2t^3-38t^2+255t-28)(15t^3+12t^2+160t-64)/(29b) \big), \\
P_3 &= \Big( 2(3a-5)t^2-3(8a-23)t , \quad at\big(-99t^3+ 18(a+37)t^2-144(3a+7)t+416a-288 \big)/3 \Big).
\end{align*}
Here $a = \sqrt{-3}$ and $b = \sqrt{29}$. Therefore, the Picard number
of the Hilbert modular surface is at least $19$.  We showed by
counting points modulo $11$ and $13$ that the Picard number must be
exactly~$19$.  This agrees with Oda's calculations
\cite[pg.~109]{Oda}. The height matrix of the sections above is
$$
\frac{1}{6} \left(\begin{array}{rrr}
20 & 0 & -10 \\
0 & 3 & 0 \\
-10 & 0 & 14
\end{array}\right).
$$ 
Therefore, the sublattice of the N\'eron-Severi lattice spanned by the
sections above together with the trivial lattice has discriminant $2^4
\cdot 3^4 \cdot 5 = 6480$. By checking that it is $2$- and
$3$-saturated, we showed that it is the entire N\'eron-Severi lattice,
and therefore the sections $P_1, P_2, P_3$ generate the Mordell-Weil
group.

\subsection{Examples}

We list some points of small height and corresponding genus $2$ curves.

\begin{tabular}{l|c}
Rational point $(f,g)$ & Sextic polynomial $f_6(x)$ defining the genus $2$ curve $y^2 = f_6(x)$. \\
\hline \hline \\ [-2.5ex]
$(-37, -37/2)$ & $ -98x^5 - 56x^4 - 131x^3 - 114x^2 + 10x - 68 $ \\
$(20/67, 16/67)$ & $ 264x^6 + 760x^5 + 183x^4 - 630x^3 - 53x^2 - 20x - 4 $ \\
$(6/7, 48/7)$ & $ 12x^6 - 12x^5 - 409x^4 + 1062x^3 + 287x^2 - 588x - 252 $ \\
$(-40, 40)$ & $ -53x^6 + 227x^5 + 374x^4 + 1191x^3 + 669x^2 + 680x + 900 $ \\
$(-1, -29/10)$ & $ 1210x^5 - 110x^4 + 511x^3 - 17x^2 + 53x + 1 $ \\
$(-4, -16/5)$ & $ -200x^6 + 1360x^5 - 995x^4 + 242x^3 - 191x^2 - 4x - 12 $ \\
$(-1/7, -37/98)$ & $ -1588x^6 + 986x^5 - 122x^4 + 221x^3 - 68x^2 - 2x - 8 $ \\
$(-6, 8)$ & $ 540x^6 + 2052x^5 - 1149x^4 + 1724x^3 - 39x^2 - 894x - 506 $ \\
$(5/43, 19/43)$ & $ -2x^6 + 80x^5 - 786x^4 + 2265x^3 + 74x^2 + 80x - 2 $ \\
$(8/9, 16/9)$ & $ -4x^6 + 204x^5 - 837x^4 - 160x^3 + 2451x^2 + 1620x - 228 $ \\
$(-6/13, -16/13)$ & $ 236x^6 - 796x^5 + 2293x^4 - 2178x^3 + 1525x^2 + 2492x - 764 $ \\
$(-4/9, 16/9)$ & $ 552x^6 - 2232x^5 + 3183x^4 + 562x^3 - 4713x^2 - 1248x - 88 $ \\
$(-3, 36/5)$ & $ 1024x^6 - 1920x^5 - 2252x^4 + 1065x^3 - 2288x^2 - 6195x + 66 $ \\
$(18/19, 48/19)$ & $ -768x^6 - 2560x^5 + 1571x^4 + 7838x^3 - 2133x^2 - 6912x + 2376 $ \\
$(-4/5, -32/25)$ & $ -1412x^6 - 1372x^5 + 2149x^4 + 8226x^3 + 4889x^2 - 896x - 4096 $ \\
$(2/9, 7/9)$ & $ 3189x^6 + 4599x^5 - 6897x^4 - 9331x^3 + 5424x^2 + 5040x - 1968 $ 
\end{tabular}

The section $P_1$ corresponds to the rational curve given by
$$
g = \frac{4uf}{5} \textrm{ with } f = \frac{22u^5-321u^4+1651u^3-3377u^2+1980u+50}{25u^6-400u^5+2375u^4-6050u^3+4813u^2+2325u+225}.
$$
The Brauer obstruction vanishes identically, yielding a
$1$-parameter family of genus $2$ curves with real multiplication by
$\sO_{29}$.

\section{Discriminant $33$}

\subsection{Parametrization}

We start with an elliptic K3 surface with fibers of type $A_{10}$ and
$E_6$ at $t = \infty$ and $t = 0$ respectively. A Weierstrass equation
for such a family is given by
$$
y^2 = x^3 + (c+2d + 1) \, t^2 \,x^2 + 2 (c+d)\, t^4 \,x + c \,t^4,
$$
with
\begin{align*}
c &= (s^2 - r^2)^2 \, t^2 - (s^2 - r^2)(s^2 - r^2 + 2r) t + s^2 \\
d &= (s^2 - r^2) (1-t) + r
\end{align*}

We identify the class of an $E_7$ fiber below, and perform a $2$-neighbor step.

\begin{center}
\begin{tikzpicture}

\draw (0,0)--(6,0)--(6.5,0.866)--(10.5,0.866)--(10.5,-0.866)--(6.5,-0.866)--(6,0);
\draw (2,0)--(2,2);
\draw [very thick] (5,0)--(6,0);
\draw [very thick] (8.5,0.866)--(6.5,0.866)--(6,0)--(6.5,-0.866)--(8.5,-0.866);

\draw (5,0) circle (0.2);
\fill [white] (0,0) circle (0.1);
\fill [white] (1,0) circle (0.1);
\fill [white] (2,0) circle (0.1);
\fill [white] (3,0) circle (0.1);
\fill [white] (4,0) circle (0.1);
\fill [black] (5,0) circle (0.1);
\fill [white] (2,1) circle (0.1);
\fill [white] (2,2) circle (0.1);
\fill [black] (6,0) circle (0.1);
\fill [black] (6.5,0.866) circle (0.1);
\fill [black] (7.5,0.866) circle (0.1);
\fill [black] (8.5,0.866) circle (0.1);
\fill [white] (9.5,0.866) circle (0.1);
\fill [white] (10.5,0.866) circle (0.1);
\fill [black] (6.5,-0.866) circle (0.1);
\fill [black] (7.5,-0.866) circle (0.1);
\fill [black] (8.5,-0.866) circle (0.1);
\fill [white] (9.5,-0.866) circle (0.1);
\fill [white] (10.5,-0.866) circle (0.1);

\draw (0,0) circle (0.1);
\draw (1,0) circle (0.1);
\draw (2,0) circle (0.1);
\draw (3,0) circle (0.1);
\draw (4,0) circle (0.1);
\draw (5,0) circle (0.1);
\draw (2,1) circle (0.1);
\draw (2,2) circle (0.1);
\draw (6,0) circle (0.1);
\draw (6.5,0.866) circle (0.1);
\draw (7.5,0.866) circle (0.1);
\draw (8.5,0.866) circle (0.1);
\draw (9.5,0.866) circle (0.1);
\draw (10.5,0.866) circle (0.1);
\draw (6.5,-0.866) circle (0.1);
\draw (7.5,-0.866) circle (0.1);
\draw (8.5,-0.866) circle (0.1);
\draw (9.5,-0.866) circle (0.1);
\draw (10.5,-0.866) circle (0.1);

\end{tikzpicture}
\end{center}

The new elliptic fibration has fibers of type $E_6, E_7$ and $A_2$, as
well as a section $P$ of height $33/18 = 11/6 = 4 - 0 - 3/2 - 2/3$. We
then identify the class of an $E_8$ fiber and carry out a $3$-neighbor
step to an $E_8 E_7$ elliptic fibration.

\begin{center}
\begin{tikzpicture}

\draw (0,0)--(12,0);
\draw (2,0)--(2,2);
\draw (5,0)--(5,1);
\draw (9,0)--(9,1);
\draw (5,1)--(4.5,1.866);
\draw (5,1)--(5.5,1.866);
\draw (4.5,1.866)--(5.5,1.866);
\draw [bend right] (4,3) to (4,0);
\draw [bend left] (4,3) to (12,0);
\draw (4,3)--(4.5,1.866);
\draw [very thick] (0,0)--(5,0)--(5,1)--(4.5,1.866);
\draw [very thick] (2,0)--(2,1);

\fill [black] (0,0) circle (0.1);
\fill [black] (1,0) circle (0.1);
\fill [black] (2,0) circle (0.1);
\fill [black] (3,0) circle (0.1);
\fill [black] (4,0) circle (0.1);
\fill [black] (5,0) circle (0.1);
\fill [white] (6,0) circle (0.1);
\fill [white] (7,0) circle (0.1);
\fill [white] (8,0) circle (0.1);
\fill [white] (9,0) circle (0.1);
\fill [white] (10,0) circle (0.1);
\fill [white] (11,0) circle (0.1);
\fill [white] (12,0) circle (0.1);
\fill [black] (2,1) circle (0.1);
\fill [white] (2,2) circle (0.1);
\fill [black] (5,1) circle (0.1);
\fill [white] (9,1) circle (0.1);
\fill [black] (4.5,1.866) circle (0.1);
\fill [white] (5.5,1.866) circle (0.1);
\fill [white] (4,3) circle (0.1);

\draw (5,0) circle (0.2);
\draw (0,0) circle (0.1);
\draw (1,0) circle (0.1);
\draw (2,0) circle (0.1);
\draw (3,0) circle (0.1);
\draw (4,0) circle (0.1);
\draw (5,0) circle (0.1);
\draw (6,0) circle (0.1);
\draw (7,0) circle (0.1);
\draw (8,0) circle (0.1);
\draw (9,0) circle (0.1);
\draw (10,0) circle (0.1);
\draw (11,0) circle (0.1);
\draw (12,0) circle (0.1);
\draw (2,1) circle (0.1);
\draw (2,2) circle (0.1);
\draw (5,1) circle (0.1);
\draw (9,1) circle (0.1);
\draw (4.5,1.866) circle (0.1);
\draw (5.5,1.866) circle (0.1);
\draw (4,3) circle (0.1);
\draw [above] (4,3) node {$P$};

\end{tikzpicture}
\end{center}

The new elliptic fibration has a section, since $P \cdot F'= 5$,
while the intersection number of the remaining component of the
$E_6$ fiber with $F'$ is $3$, and these are coprime.

From the Weierstrass equation of the $E_8 E_7$ fibration, we determine
the Igusa-Clebsch invariants, and then the equation of the branch
locus.

\begin{theorem}
A birational model over $\Q$ for the Hilbert modular surface
$Y_{-}(33)$ as a double cover of $\Proj^2$ is given by the following
equation:
\begin{align*}
z^2 &=  9 s^6 - (26 r^2 - 80 r + 104) \, s^4  + (25 r^4 - 152 r^3 + 400 r^2 - 408 r + 432) \, s^2 \\
   & \qquad  - (8 r^6 - 72 r^5 + 280 r^4 - 472 r^3 + 336 r^2 - 64 r - 16)
\end{align*}
It is a singular K3 surface.
\end{theorem}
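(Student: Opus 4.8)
The plan is to follow the general method of Section~\ref{method}, specializing each step to the case $D = 33$. First I would confirm that the family displayed in the Parametrization subsection really realizes $\sF_{L_{33}}$. The lattice $L_{33}$ has signature $(1,17)$ and discriminant $33 = 11 \cdot 3$; the Weierstrass equation has reducible fibers of types $A_{10}$ and $E_6$, contributing root lattices of ranks $10$ and $6$ and discriminants $11$ and $3$. Together with $\Z O$ and $\Z F$ these account for rank $2 + 10 + 6 = 18$, forcing \MoW\ rank $0$; with no torsion the N\'eron-Severi discriminant is $11 \cdot 3 = 33$, identifying the lattice as $L_{33}$. The two free parameters $r, s$ then give a birational identification of $\sF_{L_{33}}$ with $\Proj^2_{r,s}$.

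Next comes Step~2, the transformation to an $E_8 E_7$ fibration, following the two Dynkin diagrams. I would first carry out a $2$-neighbor step to the fibration with $E_6, E_7, A_2$ fibers and a section of height $11/6$, and then a $3$-neighbor step to reach the $E_8 E_7$ fibration. The latter has a section by Proposition~\ref{automaticsections}: as noted, $P \cdot F' = 5$ is coprime to the intersection $3$ of the residual $E_6$ component with $F'$. Converting the resulting genus-$1$ curve to its Jacobian via the standard formulas yields an explicit Weierstrass model, from which the Igusa-Clebsch invariants $(I_2 : I_4 : I_6 : I_{10})$ follow by Theorem~\ref{thesisthm}; by Theorem~\ref{humbert} this is the map $\Proj^2_{r,s} \to \A_2$ onto the Humbert surface $\sH_{33}$.

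For Step~3, I would enumerate the candidate branch components: the loci in $\Proj^2_{r,s}$ along which the Picard number jumps by one and the discriminant changes to $2D = 66$ (since $33$ is odd, the value $D/2$ cannot occur). The dominant degeneration is the appearance of an extra $\I_2$ fiber; I would compute its defining equation along with those of any other admissible degenerations, obtaining polynomials $f_i(r,s)$, so that the branch locus is a product of some subset of these times a squarefree twist $C$. In Step~4 I would pin down both the subset and the twist by reduction modulo several odd primes $p \nmid 33$: specializing $(r,s)$, constructing the genus-$2$ curve $C_{r,s}$ over~$\F_p$, counting points over $\F_p$ and $\F_{p^2}$ to form the polynomial $Q$, and applying Lemma~\ref{twistrealmult} to decide whether $A = J(C_{r,s})$ has real multiplication by $\sO_{33}$ over $\F_p$. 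Requiring that ``$f(r,s)$ is a square'' agree with the presence of real multiplication across many specializations and primes singles out the correct branch polynomial and twist, yielding the displayed sextic.

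Since the double cover is branched over a sextic curve, $Y_{-}(33)$ is a K3 surface; the assertion that it is \emph{singular} (Picard number $20$) I would establish in the subsequent analysis by exhibiting $20$ independent divisor classes on a convenient elliptic fibration of the surface and checking saturation of the \MoW\ group. I expect the main obstacle to be the explicit neighbor-step computation---determining $H^0(X, \sO_X(F'))$ and reducing to a clean Weierstrass form at each stage---since the intermediate equations are large and the $3$-neighbor step in particular demands careful bookkeeping of the vertical and horizontal parts of the relevant divisor.
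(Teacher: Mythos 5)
Your proposal is correct and follows essentially the same route as the paper: the $A_{10}+E_6$ family realizing $\sF_{L_{33}}$ (rank $2+10+6=18$, discriminant $11\cdot 3=33$), a $2$-neighbor step to the $E_6 E_7 A_2$ fibration with its height-$11/6$ section, a $3$-neighbor step to $E_8 E_7$ (with a section since $P\cdot F'=5$ is coprime to $3$), then the Igusa-Clebsch invariants and the branch-locus/twist determination by enumerating degenerations and counting points modulo primes via Lemma~\ref{twistrealmult}. Your handling of the final claim also matches the paper's analysis, which passes to an elliptic fibration on $Y_{-}(33)$, exhibits enough sections to reach Picard number $20$, and verifies saturation.
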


\subsection{Analysis}

This is a double cover of the $(r,s)$-plane branched along a sextic,
and is therefore a K3 surface.  The extra involution is given by
$(r,s) \mapsto (r,-s)$. The equation of the branch locus may be
transformed as follows: setting $t = s^2$ we have
\begin{align*}
 -8 r^6 + 72 r^5 + (25 t - 280) r^4 + (-152 t + 472) r^3+ (-26 t^2 + 400 t - 336) r^2 & \\
  + (80 t^2 - 408 t + 64) r + (9 t^3 - 104 t^2 + 432 t + 16) &= 0
\end{align*}
which, after resolution of singularities, becomes a genus zero curve,
parametrized by
$$
r = \frac{m^3+4 m^2+4 m+4}{m^2 (m+1)},\qquad  t= \frac{8 (m^3+4 m^2+4 m+2)}{m^4 (m+1)^2}.
$$
Then the branch locus can be written as a double cover
$$
s^2 = 8 (m^3+4 m^2+4 m+2 )/\big( m^4 (m+1)^2 \big).
$$ 
After removing square factors and performing a Weierstrass
transformation, it is converted to the elliptic curve $y^2 + y = x^3 -
x^2$. It is isomorphic to $X_1(11) \cong X_0(33)/\langle w_{33} \rangle$,
where $w_{33}$ is the Atkin-Lehner involution.

For the equation of the Hilbert modular surface, the transformation $s
= r + t$ makes the right hand side of the equation a quartic in $r$,
with the coefficient of $r^4$ being a square. Converting to the
Jacobian form, and applying a Weierstrass transformation as well as
scaling $t$, we get an elliptic fibration
\begin{align*}
y^2 &= x^3 + (t^4+24 t^3+58 t^2+84 t+1) \,x^2 \\
    & \qquad  + (280 t^5+5488 t^4+1376 t^3+2192 t^2+72 t) \,x \\
    & \qquad  + 4608 t^7+95632 t^6+32576 t^5+26848 t^4+14656 t^3+1296 t^2.
\end{align*}
This has bad fibers of type $\I_5$ at $t = \infty$, type $\II$ at $t = 1$,
type $\I_3$ at $t = 0, 1/2$ and $(-17 \pm 3 {\sqrt{33}})/2$,
and type $\I_2$ at $t = -21/2 \pm 11 {\sqrt{33}}/6$.
These contribute $A_1^3 \oplus A_2^4 \oplus A_4$ to the N\'{e}ron-Severi lattice.

By finding sections modulo a small prime and attempting to lift them
to $\Q$ or $\Q({\sqrt{33}})$, we find the following sections of small height:
\begin{align*}
P_1 &= \big( 4 t (3 t^2+60 t-13) , \, 4t (t^2+17 t-2) (3t^2+63 t-2) \big) \\
P_2 &= \big( -4 t(3t + 11),\, 4 t (t-1) (3t^2 + 63t - 2) \big) \\
P_3 &= \big(  (4 {\sqrt{33}}+12) t^2 - (2{\sqrt{33}} + 34) t, \, ({\sqrt{33}} + 3) t (2t - 1) (2t + 3 {\sqrt{33}} + 17) (6t + 63 - 11 {\sqrt{33}})/6 \big) .
\end{align*}
These are linearly independent in the \MoW\ group, and the
matrix of N\'{e}ron-Tate heights is
$$
\frac{1}{30} \left(\begin{array}{rrr}
6 & -2 & 3 \\
-2 & 19 & -1 \\
3 &  -1 & 9
\end{array}\right).
$$
It has determinant $11/360$, and so the sublattice of the
N\'{e}ron-Severi group generated by these sections and the trivial
lattice has rank $20$ and discriminant
$-(11/360) \cdot 2^3 \cdot 3^4 \cdot 5 = -99$.  
We show that this is the full Picard group by checking that our
subgroup of the \MoW\ group is $3$-saturated.
We deduce that $Y_{-}(33)$ is a singular K3 surface with
Picard lattice of discriminant $-99$.

The quotient of the $Y_{-}(33)$ by the involution $s \mapsto -s$ is
given by the Weierstrass equation
\begin{align*}
y^2 &= x^3 -2(13r^2-40r+52)x^2 + 9(25r^4-152r^3+400r^2-408r+432)x \\
  & \quad -648(r-1)^3(r^3-6r^2+14r+2).
\end{align*}

It is a rational elliptic surface, with reducible fibers of types
$\I_5$, $\I_3$ and $\I_2$ at $r = \infty, 19$ and $23$
respectively. The \MoW\ group is generated by the section
$\big( 3(3t^2-16t+112), 12(r-23)(r-19) \big)$ of height $1/30$.

\subsection{Examples}

We list some points of small height and corresponding genus $2$ curves.

\begin{tabular}{l|c}
Rational point $(r,s)$ & Sextic polynomial $f_6(x)$ defining the genus $2$ curve $y^2 = f_6(x)$. \\
\hline \hline \\ [-2.5ex]
$(11/3, 8/3)$ & $ -9x^6 - 6x^5 - 7x^4 + 7x^3 + 2x^2 + 3x - 2 $ \\
$(1, 3)$ & $ -4x^6 + 15x^4 - x^3 - 9x^2 + 12x + 5 $ \\
$(-13/5, 27/5)$ & $ -14x^5 + 20x^4 + 2x^3 - 15x^2 - 4x $ \\
$(28/3, 23/3)$ & $ -5x^6 + 6x^5 - 5x^4 + 27x^3 - 11x^2 + 12x - 24 $ \\
$(73/19, -41/19)$ & $ -7x^6 + 15x^5 + x^4 - 31x^3 - 2x^2 + 12x + 12 $ \\
$(1, -3)$ & $ -x^6 - 3x^5 + 9x^4 + 34x^3 - 30x^2 - 9x + 8 $ \\
$(41/51, 7/51)$ & $ -9x^6 - 9x^5 - 35x^4 + 11x^3 - 8x^2 + 12x $ \\
$(16/15, 1/15)$ & $ 2x^6 - 3x^5 + 7x^4 + 13x^3 - 20x^2 + 36x - 15 $ \\
$(-13/5, -27/5)$ & $ -5x^5 - 3x^4 + 13x^3 - 17x^2 - 40x $ \\
$(46/3, 41/3)$ & $ -8x^6 + 36x^5 - 23x^4 - 21x^3 - 47x^2 - 18x - 9 $ \\
$(-17/10, 27/10)$ & $ -20x^5 + 28x^4 - 37x^3 + 60x^2 + 44x $ \\
$(13/3, -11/3)$ & $ 7x^6 - 48x^5 + 68x^4 - 2x^3 - 25x^2 + 24x - 36 $ \\
$(-38/11, 61/11)$ & $ -9x^6 - 6x^5 + 69x^4 - 19x^3 - 39x^2 + 12x - 8 $ \\
$(-29/3, -11)$ & $ -31x^5 + 71x^4 - 32x^3 + 23x^2 - 40x - 8 $ \\
$(1/33, 56/33)$ & $ 3x^6 + 24x^5 + 8x^4 + 41x^3 + 32x^2 - 72x - 36 $ \\
$(13/3, 11/3)$ & $ -12x^6 + 24x^5 - 31x^4 + 81x^3 - 67x^2 + 39x - 70 $ 
\end{tabular}

We may attempt to match up these examples with eigenforms in the
tables of modular forms. For instance, for the points $(1,3)$ and
$(1,-3)$, the corresponding genus $2$ curves (with isogenous
Jacobians)
$$
y^2 = -(x^3-3x-1)(4x^3-3x+5)
$$
and
$$
y^2 =  -(x^3-3x^2+1)(x^3+6x^2+9x-8).
$$
have the property that their traces match those of a newform of
weight $1296 = 2^4 \cdot 3^4$ in the modular forms database.

We also see some simple rational curves on the Hilbert modular
surface: the specialization $r = 19$ gives a rational curve, with a
parametrization $s = -(16m^2 + 41)/(3m)$, while $r = 23$ is also a
rational curve, parametrized by $s = -(16m^2+7)/m$. The Brauer
obstructions do not identically vanish for points on these curves.

We have slightly better luck with sections of the elliptic fibration:
for instance, the sections $P_1, 2P_1, 3P_1$ give rise to rational
curves with parametrizations
\begin{align*}
\left( -\frac{3t^2-112}{2(3t+8)}, \frac{3t^2+16t+112}{2(3t+8)} \right), \quad \left( -\frac{ t^2-12}{2(t+2)}, \frac{t^2+4t+12}{2(t+2)} \right) \quad \textrm{and} \quad \left( -\frac{3t^2-20}{6(t+2)}, \frac{3t^2+12t+20}{6(t+2)} \right)
\end{align*}
respectively. The Brauer obstruction vanishes identically on each of these.

\section{Discriminant $37$}

\subsection{Parametrization}

We start with an elliptic K3 surface with fibers of type $E_6$, $D_5$
and $A_4$ at $t = \infty, 0$ and $1$ respectively, and a section of
height $37/60 = 4 - 4/3 - 5/4 - 4/5$.

A Weierstrass equation for this family is
$$
y^2 = x^3 + a t x^2 + b t^2 (t-1)^2 x + c t^3 (t-1)^4
$$
with
\begin{align*}
a &= (2g - f + 1)(t -1) + g^2 t/4, \\
b &= (f-g-1) \big( (f+g-1) (t-1) + (f-2) g t/2), \\
c &= (g-f+1)^2 \big(f^2 (t -1) + (f-2)^2 \big)/4 .
\end{align*}

We identify the class of an $D_8$ fiber and carry out a $2$-neighbor
step to convert to an elliptic fibration with $D_8$ and $E_6$ fibers.

\begin{center}
\begin{tikzpicture}

\draw (0,0)--(9,0);
\draw (2,0)--(2,2);
\draw (5,0)--(5,1)--(4.05,1.69)--(4.41,2.81)--(5.59,2.81)--(5.95,1.69)--(5,1);
\draw (7,0)--(7,1);
\draw (8,0)--(8,1);
\draw (6.5,3.5)--(5.95,1.69);
\draw [bend right] (6.5,3.5) to (2,2);
\draw [bend left] (6.5,3.5) to (8,1);
\draw [very thick] (5,1)--(5,0)--(9,0);
\draw [very thick] (8,0)--(8,1);
\draw [very thick] (4.05,1.69)--(5,1)--(5.95,1.69);

\draw (5,0) circle (0.2);
\fill [white] (0,0) circle (0.1);
\fill [white] (1,0) circle (0.1);
\fill [white] (2,0) circle (0.1);
\fill [white] (2,1) circle (0.1);
\fill [white] (2,2) circle (0.1);
\fill [white] (3,0) circle (0.1);
\fill [white] (4,0) circle (0.1);
\fill [black] (5,0) circle (0.1);
\fill [black] (6,0) circle (0.1);
\fill [black] (7,0) circle (0.1);
\fill [black] (8,0) circle (0.1);
\fill [black] (9,0) circle (0.1);
\fill [white] (7,1) circle (0.1);
\fill [black] (8,1) circle (0.1);
\fill [black] (5,1) circle (0.1);
\fill [black] (4.05,1.69) circle (0.1);
\fill [black] (5.95,1.69) circle (0.1);
\fill [white] (4.41,2.81) circle (0.1);
\fill [white] (5.59,2.81) circle (0.1);
\fill [white] (6.5,3.5) circle (0.1);

\draw (0,0) circle (0.1);
\draw (1,0) circle (0.1);
\draw (2,0) circle (0.1);
\draw (2,1) circle (0.1);
\draw (2,2) circle (0.1);
\draw (3,0) circle (0.1);
\draw (4,0) circle (0.1);
\draw (5,0) circle (0.1);
\draw (6,0) circle (0.1);
\draw (7,0) circle (0.1);
\draw (8,0) circle (0.1);
\draw (9,0) circle (0.1);
\draw (7,1) circle (0.1);
\draw (8,1) circle (0.1);
\draw (5,1) circle (0.1);
\draw (4.05,1.69) circle (0.1);
\draw (5.95,1.69) circle (0.1);
\draw (4.41,2.81) circle (0.1);
\draw (5.59,2.81) circle (0.1);

\draw (6.5,3.5) circle (0.1);

\end{tikzpicture}
\end{center}

This new elliptic fibration has \MoW\ rank $2$. In fact, it is
quite easy to exhibit one non-torsion section $P$\/ (we do so in the
auxiliary files) of height $2/3 = 4 - 4/3 - 2$.  Then we find an
$E_8$ fiber as shown below, and proceed to it by a $2$-neighbor step.
The section $P$ combined with most of the components of the
$E_6$ fiber gives a disjoint $E_7$ configuration, and therefore the
new elliptic fibration has reducible fibers of type $E_8$ and~$E_7$.
The fact that $(-P)$ intersects the multiplicity one component
of the $E_8$ fiber shown below implies that the new fibration has a
section.

\begin{center}
\begin{tikzpicture}

\draw (0,0)--(12,0);
\draw (2,0)--(2,2);
\draw (7,0)--(7,1);
\draw (11,0)--(11,1);
\draw [bend right] (6.5,2.5) to (2,2);
\draw [bend left] (6.5,2.5) to (11,1);
\draw [very thick] (5,0)--(12,0);
\draw [very thick] (7,0)--(7,1);

\draw (5,0) circle (0.2);
\fill [white] (0,0) circle (0.1);
\fill [white] (1,0) circle (0.1);
\fill [white] (2,0) circle (0.1);
\fill [white] (2,1) circle (0.1);
\fill [white] (2,2) circle (0.1);
\fill [white] (3,0) circle (0.1);
\fill [white] (4,0) circle (0.1);
\fill [black] (5,0) circle (0.1);
\fill [black] (6,0) circle (0.1);
\fill [black] (7,0) circle (0.1);
\fill [black] (7,1) circle (0.1);
\fill [black] (8,0) circle (0.1);
\fill [black] (9,0) circle (0.1);
\fill [black] (10,0) circle (0.1);
\fill [black] (11,0) circle (0.1);
\fill [white] (11,1) circle (0.1);
\fill [black] (12,0) circle (0.1);

\fill [white] (6.5,2.5) circle (0.1);

\draw (0,0) circle (0.1);
\draw (1,0) circle (0.1);
\draw (2,0) circle (0.1);
\draw (2,1) circle (0.1);
\draw (2,2) circle (0.1);
\draw (3,0) circle (0.1);
\draw (4,0) circle (0.1);
\draw (5,0) circle (0.1);
\draw (6,0) circle (0.1);
\draw (7,0) circle (0.1);
\draw (7,1) circle (0.1);
\draw (8,0) circle (0.1);
\draw (9,0) circle (0.1);
\draw (10,0) circle (0.1);
\draw (11,0) circle (0.1);
\draw (11,1) circle (0.1);
\draw (12,0) circle (0.1);

\draw (6.5,2.5) circle (0.1);

\draw [above] (6.5,2.5) node{$P$};
\end{tikzpicture}
\end{center}

We then read out the Igusa-Clebsch invariants and write down the
equation of the Hilbert modular surface.

\begin{theorem}
A birational model over $\Q$ for the Hilbert modular surface
$Y_{-}(37)$ as a double cover of $\Proj^2$ is given by the following
equation:
\begin{align*}
z^2 &= f^2g^4 + 2f(14f - 1)g^3  -(126f^3-142f^2+44f-1)g^2 \\
  &\quad + (f-1)(54f^3-34f^2+17f-10)g - (f-1)^2(27f^2-8f+8).
\end{align*}
It is a K3 surface of Picard number $19$.
\end{theorem}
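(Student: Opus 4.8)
The plan is to follow the four-step method of Section~\ref{method}, using the parametrization just constructed. The neighbor-step analysis above (with full details in the auxiliary files) produces, on the generic member of the family over $\Proj^2_{f,g}$, an elliptic fibration with reducible fibers of type $E_8$ at $t=\infty$ and $E_7$ at $t=0$; this fibration has a section, as noted from the diagram above and in any case by Proposition~\ref{automaticsections}. Theorem~\ref{thesisthm} then lets me read off the Igusa-Clebsch invariants $(I_2:I_4:I_6:I_{10})$ as explicit rational functions of $f$ and $g$, and by Theorem~\ref{humbert} the resulting map $\Proj^2_{f,g}\to\A_2$ is a birational morphism onto the Humbert surface $\sH_{37}$. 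Thus $\sF_{L_{37}}$ is rational and $\Proj^2_{f,g}$ is a model of $\sH_{37}$.

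It remains to realize $Y_{-}(37)$ as a double cover $z^2=f(f,g)$ of this plane. First I would enumerate the candidate components of the branch locus. By the lattice discussion preceding Step~3, the branch locus is supported on the sublocus of $\Proj^2_{f,g}$ where the Picard number of the associated K3 surface jumps by one and the discriminant changes from $37$ to $2\cdot 37=74$ (the alternative $D/2$ cannot occur, since $37$ is odd). Running through the finite list of ways this can happen --- an extra $\I_2$ fiber, a promotion of a reducible fiber, or a new section --- yields a finite set of content-$1$ polynomials $f_i(f,g)\in\Z[f,g]$, and I expect, as in every earlier example, the relevant component to be the one recording an extra $\I_2$ fiber. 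This leaves a putative branch polynomial determined up to a subset $I$ of the $f_i$ and a squarefree twist $C$ supported on $\{2,37\}$ (the surface has good reduction away from $37$ by the good-reduction results cited in Section~\ref{method}).

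The decisive certification is the determination of $I$ and $C$ (Step~4), which is computational but is what actually pins down the formula. I would choose several primes $p\nmid 37$, specialize $(f,g)\in\F_p^2$, form the genus-$2$ curve $C_{f,g}/\F_p$ from its Igusa-Clebsch invariants, and count $\#C_{f,g}(\F_p)$ and $\#C_{f,g}(\F_{p^2})$ to obtain the symmetrized Frobenius polynomial $Q(X)$ on $J(C_{f,g})$. By Lemma~\ref{twistrealmult}, when $Q$ has discriminant in the square class of $37$ the Jacobian has real multiplication by an order in $\sO_{37}$ defined over $\F_p$, so $(f,g)$ must lift to an $\F_p$-point of $Y_{-}(37)$; hence the candidate $C\prod_{i\in I} f_i(f,g)$ must be a square at every such point. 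Sweeping over many $(f,g)$ and several primes eliminates all but one admissible pair $(I,C)$, which will be the stated sextic.

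The genuinely delicate point --- and the one I expect to be the main obstacle --- is the exact Picard number. Since the branch curve has degree six, $Y_{-}(37)$ is a K3 surface, so I must establish $\rho=19$ by squeezing from both sides. For the lower bound I would pass to an elliptic fibration on $Y_{-}(37)$ via a substitution analogous to those used in the Analysis for $D=28,33$, read off its reducible fibers from the discriminant, and exhibit enough independent sections in the \MW\ group to force $\rho\ge 19$. For the upper bound I would invoke van Luijk's method as described at the end of Section~\ref{method}: reduce modulo two good primes, count points over $\F_p,\F_{p^2},\dots$ to compute the Frobenius characteristic polynomial on $H^2$, use the resulting bound $\rho\le\rho_0(p)$, and --- since the naive count is likely to give $\rho_0(p)=20$ --- compare the predicted square classes of $\disc\NS$ modulo the two primes to rule out $\rho=20$. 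Combining the two bounds yields $\rho(Y_{-}(37))=19$, as claimed.
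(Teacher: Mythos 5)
Your proposal is correct and follows essentially the same route as the paper: the neighbor-step passage to an $E_8E_7$ fibration to read off Igusa-Clebsch invariants, identification of the branch locus as the extra $\I_2$ locus (discriminant $2\cdot 37$, with $D/2$ excluded since $37$ is odd), determination of the twist by counting points modulo small primes via Lemma~\ref{twistrealmult}, and the Picard number pinned down from below by explicit sections of an elliptic fibration on $Y_{-}(37)$ and from above by van Luijk's square-class refinement at two good primes. The paper does exactly this (using the fibration over $\Proj^1_f$, whose $g^4$-coefficient $f^2$ is a square, sections $P_0,P_1,P_2$, and reductions modulo $11$ and $13$, with Oda's tables as independent confirmation).
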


\subsection{Analysis}

The branch locus corresponds to the locus where the elliptic K3
surface acquires an extra $\I_2$ fiber. The transformation
$$
(f,g)  = \left( \frac{2x^2y+4xy+y+x^4+x^3-3x^2-x}{x^3(x+2)}, \frac{2y+x^2-2x-1}{x^2} \right)
$$
converts it to the elliptic curve
$$
y^2 + y = x^3 - x
$$
which is \textrm{37a} in Cremona's tables.  This is an elliptic
curve of rank $1$, isomorphic to $X_0(37)/\langle w \rangle$,
where $w$ is the Atkin-Lehner involution.

To analyze this Hilbert modular
surface, note that we have a genus $1$ fibration over $\Proj^1_f$,
which has a section because the coefficient of $g^4$ is a square.
Hence $Y_{-}(37)$ is an elliptic K3 surface. Taking the Jacobian of this
genus-$1$ curve over $\Q(f)$, and reparametrizing $f = t/(t+1)$,
we get after some Weierstrass transformations the following equation:
\begin{align*}
y^2 &= x^3 -(t+1)(27t^3+21t^2-19t-1)x^2 -8t^2(t+1)^2(30t^2-235t-1)x \\
& \quad  -16t^3(t+1)^2(3136t^4+5484t^3+1024t^2-2161t-108).
\end{align*}

This surface has reducible fibers of type $\IV$ at $t = -1$, type
$\I_2$ at $t = -1/28$, and type $\I_3$ at $t = 0, \infty$ and the four
roots of $(27 t^4+45 t^3+10 t^2+22 t+3)$.   This quartic
polynomial describes a quadratic extension of $\Q(\sqrt{37})$. The bad
fibers contribute $A_2^7 \oplus A_1$ to the trivial lattice. We also
have a $3$-torsion section
$$
P_0 = \Big(4t(t+1)(9t^2+7t-3), \,  4t(t+1)(27t^4+45t^3+10t^2+22t+3) \Big),
$$
and two non-torsion sections
\begin{align*}
P_1 &= \Big( 4t(t+1)(49t^2+28t-1), \, 4t(t+1)(637t^4+854t^3+276t^2+33t+1) \Big), \\
P_2 &= \Big( 4(252t^4+457t^3+118t^2-177t-9)/37, \, 4(t+3)(28t+1)(27t^4+45t^3+10t^2+22t+3)/37^{3/2} \Big).
\end{align*}
These two sections have height $8/3$ and $5/6$ respectively and are
orthogonal with respect to the N\'{e}ron-Tate height pairing. Therefore,
the N\'{e}ron-Severi lattice contains a sublattice of rank $19$ and
discriminant~$1080$.  Counting points modulo $11$ and $13$ shows that
the Picard number must be exactly~$19$.  This is again confirmed by
Oda's tables \cite[pg.~109]{Oda}.  We checked that the \MoW\ subgroup
generated by $P_0$, $P_1$, and $P_2$ is saturated at~$2$ and~$3$,
and thus that we have the full N\'{e}ron-Severi lattice.

\subsection{Examples}

We list some points of small height and corresponding genus $2$ curves.

\begin{tabular}{l|c}
Rational point $(f,g)$ & Sextic polynomial $f_6(x)$ defining the genus $2$ curve $y^2 = f_6(x)$. \\
\hline \hline \\ [-2.5ex]
$(3/2, 11/7)$ & $ 17x^6 - 24x^5 - 66x^4 + 68x^3 + 81x^2 - 54x - 27 $ \\
$(3/4, 11/9)$ & $ 15x^6 - 6x^5 - 71x^4 + 35x^3 + 94x^2 - 48x - 21 $ \\
$(3/2, 1/3)$ & $ 11x^6 - 54x^5 - 125x^4 - 52x^3 - 32x^2 - 42x - 75 $ \\
$(3/2, 59/65)$ & $ -135x^6 + 108x^5 + 45x^4 - 44x^3 + 130x^2 - 12x + 95 $ \\
$(1/4, 17/5)$ & $ 81x^5 - 135x^4 + 13x^3 - 9x^2 + 70x - 15 $ \\
$(-13/3, 16/13)$ & $ -13x^6 + 156x^5 - 24x^4 - 132x^3 - 45x^2 + 108x - 27 $ \\
$(-4/9, -13/6)$ & $ 36x^6 - 108x^5 + 165x^4 - 124x^3 + 21x^2 + 36x - 28 $ \\
$(5/3, 13/10)$ & $ -54x^6 + 54x^5 - 9x^4 - 84x^3 + 141x^2 - 180x + 100 $ \\
$(3/16, 13/8)$ & $ 52x^6 + 156x^5 - 39x^4 - 180x^3 + 9x^2 + 72x - 16 $ \\
$(-1/2, 15/13)$ & $ -31x^6 + 156x^5 - 195x^4 - 260x^3 + 210x^2 + 156x + 23 $ \\
$(3, 81/11)$ & $ -18x^6 + 122x^5 - 135x^4 - 268x^3 - 25x^2 + 144x + 176 $ \\
$(1/4, 3)$ & $ x^6 + 72x^5 - 18x^4 - 189x^3 - 117x^2 + 270x + 45 $ \\
$(31/15, 16/31)$ & $ -53x^6 + 6x^5 - 21x^4 + 208x^3 - 258x^2 - 276x + 259 $ \\
$(34/27, 14/51)$ & $ -2x^6 + 36x^5 - 138x^4 + 105x^3 - 33x^2 - 153x - 289 $ \\
$(3, 7/2)$ & $ -108x^6 - 324x^5 - 207x^4 - 116x^3 + 105x^2 - 12x - 12 $ \\
$(22/3, 38/11)$ & $ -22x^6 + 72x^5 + 84x^4 - 341x^3 - 441x^2 + 417x + 473 $ 
\end{tabular}

Next, we describe some curves of small genus on the surface, which may
be used to produce rational points. The specialization $f = -1/27$
gives a rational curve, with parametrization $g = 7(h^2-8h+19)/(3(h^2-1))$.
The Brauer obstruction does not vanish identically for this rational curve.

The sections $P_1$ and $-P_1$ give rational curves, parametrized by
$$
g = \frac{13f^2-7f+3}{f(3f+2)} \quad \textrm{and}
\quad  g = \frac{9f^2-2f+2}{7f+1}
$$
respectively. The Brauer obstruction vanishes on both these loci,
yielding families of genus $2$ curves whose Jacobians have real
multiplication by $\sO_{37}$.

\section{Discriminant $40$}

\subsection{Parametrization}

We start with an elliptic K3 surface with fibers of type $E_7$, $D_5$
and $A_4$ at $t = \infty, 0$ and $1$ respectively.

A Weierstrass equation for this family is given by
$$
y^2 = x^3 + t \, \big( e^2 \, t + (4\,d+1) \,(1-t) \big)\, x^2 + 2
\, t^2 \, (t-1)^2 \big( 2 \,d \,e \, t + 2 d (d+1) \, (1-t) \big) \, x
+ 4 \,d^2 \, t^3 \, (t-1)^4
$$
with
$$
d = (f-e+1)\,(f+e-1)/2.
$$

We first identify the class of a $D_8$ fiber, and perform a
$2$-neighbor step to an elliptic fibration with $D_8$ and $E_7$
fibers.

\begin{center}
\begin{tikzpicture}

\draw (0,0)--(11,0);
\draw (3,0)--(3,1);
\draw (7,0)--(7,1)--(6.05,1.69)--(6.41,2.81)--(7.59,2.81)--(7.95,1.69)--(7,1);
\draw (9,0)--(9,1);
\draw (10,0)--(10,1);
\draw [very thick] (7,1)--(7,0)--(11,0);
\draw [very thick] (10,0)--(10,1);
\draw [very thick] (6.05,1.69)--(7,1)--(7.95,1.69);

\draw (7,0) circle (0.2);
\fill [white] (0,0) circle (0.1);
\fill [white] (1,0) circle (0.1);
\fill [white] (2,0) circle (0.1);
\fill [white] (3,0) circle (0.1);
\fill [white] (3,1) circle (0.1);
\fill [white] (4,0) circle (0.1);
\fill [white] (5,0) circle (0.1);
\fill [white] (6,0) circle (0.1);
\fill [black] (7,0) circle (0.1);
\fill [black] (8,0) circle (0.1);
\fill [black] (9,0) circle (0.1);
\fill [black] (10,0) circle (0.1);
\fill [black] (11,0) circle (0.1);
\fill [white] (9,1) circle (0.1);
\fill [black] (10,1) circle (0.1);
\fill [black] (7,1) circle (0.1);
\fill [black] (6.05,1.69) circle (0.1);
\fill [black] (7.95,1.69) circle (0.1);
\fill [white] (6.41,2.81) circle (0.1);
\fill [white] (7.59,2.81) circle (0.1);

\draw (0,0) circle (0.1);
\draw (1,0) circle (0.1);
\draw (2,0) circle (0.1);
\draw (3,0) circle (0.1);
\draw (3,1) circle (0.1);
\draw (4,0) circle (0.1);
\draw (5,0) circle (0.1);
\draw (6,0) circle (0.1);
\draw (7,0) circle (0.1);
\draw (8,0) circle (0.1);
\draw (9,0) circle (0.1);
\draw (10,0) circle (0.1);
\draw (11,0) circle (0.1);
\draw (9,1) circle (0.1);
\draw (10,1) circle (0.1);
\draw (7,1) circle (0.1);
\draw (6.05,1.69) circle (0.1);
\draw (7.95,1.69) circle (0.1);
\draw (6.41,2.81) circle (0.1);
\draw (7.59,2.81) circle (0.1);

\end{tikzpicture}
\end{center}

This fibration has a section $P$ of height $5 = 4 + 2 - 1$. Next, we take a
$2$-neighbor step to go to a fibration with $E_8$ and $E_7$ fibers.
We have $P \cdot F' = 9$, while the intersection number of the $F'$ with
the remaining component of the $D_8$ fiber is $2$. Since these are
coprime, the new elliptic fibration has a section.

\begin{center}
\begin{tikzpicture}

\draw (0,0)--(14,0);
\draw (3,0)--(3,1);
\draw (13,0)--(13,1);
\draw (9,0)--(9,1);
\draw (8,2)--(8,0);
\draw [bend right] (8,2) to (7,0);
\draw [bend left] (8,2) to (9,1);
\draw [very thick] (7,0)--(14,0);
\draw [very thick] (9,0)--(9,1);

\fill [white] (0,0) circle (0.1);
\fill [white] (1,0) circle (0.1);
\fill [white] (2,0) circle (0.1);
\fill [white] (3,0) circle (0.1);
\fill [white] (4,0) circle (0.1);
\fill [white] (5,0) circle (0.1);
\fill [white] (6,0) circle (0.1);
\fill [black] (7,0) circle (0.1);
\fill [black] (8,0) circle (0.1);
\fill [black] (9,0) circle (0.1);
\fill [black] (10,0) circle (0.1);
\fill [black] (11,0) circle (0.1);
\fill [black] (12,0) circle (0.1);
\fill [black] (13,0) circle (0.1);
\fill [black] (14,0) circle (0.1);
\fill [white] (3,1) circle (0.1);
\fill [white] (13,1) circle (0.1);
\fill [black] (9,1) circle (0.1);

\fill [white] (8,2) circle (0.1);
\draw [above] (8,2) node{$P$};

\draw (8,0) circle (0.2);
\draw (0,0) circle (0.1);
\draw (1,0) circle (0.1);
\draw (2,0) circle (0.1);
\draw (3,0) circle (0.1);
\draw (4,0) circle (0.1);
\draw (5,0) circle (0.1);
\draw (6,0) circle (0.1);
\draw (7,0) circle (0.1);
\draw (8,0) circle (0.1);
\draw (9,0) circle (0.1);
\draw (10,0) circle (0.1);
\draw (11,0) circle (0.1);
\draw (12,0) circle (0.1);
\draw (13,0) circle (0.1);
\draw (14,0) circle (0.1);
\draw (3,1) circle (0.1);
\draw (13,1) circle (0.1);
\draw (9,1) circle (0.1);

\draw (8,2) circle (0.1);

\end{tikzpicture}
\end{center}

Finally, we read out the Igusa-Clebsch invariants, and compute the
branch locus of the double cover, which is a union of two curves,
one corresponding to an extra $\I_2$ fiber, the other to a promotion of
the fiber at $t=\infty$ from $E_7$ to $E_8$.

\begin{theorem}
A birational model over $\Q$ for the Hilbert modular surface
$Y_{-}(40)$ as a double cover of $\Proj^2_{e,f}$ is given by the following
equation:
$$
z^2 = -(f^2 - e^2 + 1) \big(8 f^4 + (-17 e^2+12 e-8) f^2+ 9 e^4-12
e^3+7 e^2+10 e+2\big).
$$
It is a K3 surface of Picard number $19$.
\end{theorem}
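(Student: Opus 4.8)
The plan is to follow the four-step method of Section~\ref{method} verbatim, using the two $2$-neighbor steps indicated in the diagrams above to pass from the displayed $E_7 D_5 A_4$ family to an $E_8 E_7$ fibration. \emph{Steps 1--2.} First I would check that the displayed Weierstrass family genuinely has generic N\'eron-Severi lattice $L_{40}$: the reducible fibers of types $E_7$, $D_5$, $A_4$ together with $O$ and $F$ should span a finite-index sublattice of the right signature $(1,17)$ and discriminant $-40$, with the remaining generator supplied by a Mordell-Weil section, and $N$ should contain $E_8 \oplus E_7$ (so that Proposition~\ref{automaticsections} applies). This exhibits $\Proj^2_{e,f}$ as a birational model of $\sF_{L_{40}}$. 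Each neighbor step is legitimate by Lemma~\ref{lem:section_criterion} and Proposition~\ref{automaticsections}; carrying them out (the computation is recorded in the auxiliary files) yields the Weierstrass equation of the associated $E_8 E_7$ fibration, from which I read off $(I_2:I_4:I_6:I_{10})$ via Theorem~\ref{thesisthm}. By Theorem~\ref{humbert} the resulting map $(e,f) \mapsto (I_2:I_4:I_6:I_{10})$ is then a birational surjective morphism of $\Proj^2_{e,f}$ onto the Humbert surface $\sH_{40}$.

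\emph{Steps 3--4.} Because the map $Y_{-}(40) \to \sH_{40}$ is the generically $2$-to-$1$ cover obtained by forgetting which square root of $40$ acts (Section~\ref{realmult}), a quadratic model $z^2 = C\prod_j f_{i_j}(e,f)$ is the correct shape. Its branch locus lies where the Picard number jumps to~$19$, which by the lattice analysis of Section~\ref{realmult} forces the N\'eron-Severi discriminant to change from $-40$ to $-80$ or $-20$. I would enumerate the finitely many geometric ways this can happen for the Step~1 family --- an extra $\I_2$ fiber, the promotion of the $E_7$ fiber at $t=\infty$ to $E_8$, and so on --- and compute the defining polynomial $f_i(e,f)\in\Z[e,f]$ of each, obtaining a finite candidate list (here I expect exactly the factor $f^2-e^2+1$ for the $E_8$ promotion and the displayed quartic for the extra $\I_2$ fiber). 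To pin down the subset $I$ and the twist $C$, I use that by \cite{Ra, DP} the surface has good reduction away from $2$ and $5$, so $C$ is a squarefree integer supported on $\{2,5\}$ and only finitely many pairs $(I,C)$ remain. For each candidate I would take several odd primes $p\nmid 40$, specialize $(e,f)\in\F_p^2$, build the genus-$2$ curve $C_{e,f}$ over $\F_p$ from its Igusa-Clebsch invariants, count $\#C_{e,f}(\F_p)$ and $\#C_{e,f}(\F_{p^2})$ to form the polynomial $Q(X)$ of its Jacobian, and apply Lemma~\ref{twistrealmult} to decide whether $\sO_{40}$-multiplication is defined over $\F_p$; the point lifts to $Y_{-}(40)(\F_p)$ exactly when $C\prod_j f_{i_j}(e,f)$ is a square. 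Running this over enough $(e,f)$ and $p$ eliminates every pair but the stated one.

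For the geometric classification, the right-hand side has degree $6$ in $(e,f)$, so the double cover of $\Proj^2$ branched along it is (after resolving the rational double points) a K3 surface. To compute $\rho$, I would exhibit an elliptic fibration --- for instance over $\Proj^1_f$ after completing the square in the genus-$1$ pencil, or via a pencil of lines --- and use its reducible fibers together with explicit sections to get $\rho \geq 19$. I would then apply van Luijk's method \cite{vL}: counting points on the reduction modulo two primes of good reduction gives the upper bound $\rho_0 = 20$ for the geometric Picard number, and comparing the square classes of the discriminants predicted by Artin-Tate \cite{Ta2} at the two primes shows them to be distinct, forcing $\rho = 19$ rather than $20$.

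The genuinely heavy part of this argument is the explicit neighbor-step computation producing the $E_8 E_7$ Weierstrass equation and hence the Igusa-Clebsch invariants; everything downstream --- enumerating branch candidates, fixing the twist by point counting, and bounding the Picard number --- is then routine once that data is in hand. The one subtle theoretical point is separating $\rho = 19$ from $\rho = 20$, which works only because the two chosen good primes yield discriminants in \emph{different} square classes; if they happened to agree one would be left with the weaker bound $\rho \leq 20$ and would need a third prime.
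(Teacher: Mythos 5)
Your proposal is correct and follows essentially the same route as the paper: the $E_7 D_5 A_4$ parametrization, the two $2$-neighbor steps to an $E_8 E_7$ fibration, a branch locus consisting of the conic $f^2 - e^2 + 1$ (promotion of $E_7$ to $E_8$) together with the quartic for an extra $\I_2$ fiber, the twist pinned down by point counting via Lemma~\ref{twistrealmult}, K3-ness from the sextic branch curve, and $\rho = 19$ by combining a lower bound from sections of an elliptic fibration with van Luijk's two-prime comparison (the paper uses $p = 7$ and $11$, where the Artin--Tate discriminant square classes differ). One minor correction to your Step 1: for $D = 40$ the trivial lattice $U \oplus E_7(-1) \oplus D_5(-1) \oplus A_4(-1)$ already has rank $18$ and discriminant $-40$, so no \MoW\ generator is needed --- the generic member of this family has \MoW\ rank $0$, and a nontrivial section would in fact change the discriminant.
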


\subsection{Analysis}

The extra involution is $(e,f) \mapsto (e,-f)$.  The branch locus is
a union of two curves.  The first, $f^2 - e^2 + 1 = 0$, is a rational
curve, parametrized by say $(e,f) = \big( (t^2+ 1)/(2t), (t^2 -
1)/(2t) \big)$. It corresponds to the sub-family of elliptic K3
surfaces for which the $E_7$ fiber is promoted to an $E_8$ fiber. The
other component is also a rational curve, parametrized by
$$
(e,f) = \left( - \frac{ (m^2-2)^2 }{ m (m-2) (m^2-4m+2)} , \frac{ 2(m^2-2m+2)(m^2-m-1)}{m (m-2)(m^2-4m+2) } \right).
$$
It corresponds to elliptic K3 surfaces with an extra $\I_2$ fiber.

The Hilbert modular surface is a double cover of a plane branched along
a sextic, and is therefore a K3 surface.  The transformation $f = e + t$
makes it a quartic in $e$, whose leading coefficient is a square.
We thus get an elliptic K3 surface whose Weierstrass equation
may be obtained by taking the Jacobian of this genus-$1$ curve.
After some elementary algebra, we get the elliptic K3 surface
\begin{align*}
y^3 &=  x^3 + (t^4+24 t^3+98 t^2+16 t+1) \, x^2 \\
  & \qquad + (128 t^5+2352 t^4+1088 t^3+64 t^2) \,x - (512 t^6+9216 t^5+704 t^4).
\end{align*}
It has reducible fibers of types $\I_6$ at $t = \infty$,
$\I_4$ at $t = 0$,
$\I_3$ at $t = -8 \pm 5 \sqrt{10}/2$, and
$\I_2$ at $t = 1/3$ and $t = -9 \pm 4 \sqrt{5}$,
giving a contribution of $A_5 \oplus A_3 \oplus A_1^3 \oplus A_2^2$
to the N\'{e}ron-Severi lattice. The trivial lattice thus has rank
$17$ and discriminant $1728$.

We also have the two sections
\begin{align*}
P_1 &= \big( 4 t (2 t^2+36 t+3), \, -4 t (t^2+18 t+1) (2 t^2+32 t+3)
\big) \\
P_2 &= \big( -(6 t^4+124 t^3+303 t^2+138 t+9)/10, \,
    {\sqrt{10}} (2t +3) (3t-1) (t^2+18 t+1) (2t^2+32 t+3) / 100 \Big).
\end{align*}
These have heights $1/12$ and $7/6$ respectively, and are orthogonal
with respect to the height pairing. Therefore, the Picard number is at
least $19$. Counting points modulo $7$ and $11$ proves that the Picard
number is $19$ (we thank Ronald van Luijk for carrying out such a
calculation), in agreement with Oda's calculations
\cite[pg.~109]{Oda}. The part of the N\'{e}ron-Severi lattice spanned
by the above sections with the trivial lattice has rank $19$ and
discriminant $168$. We showed that this lattice is $2$-saturated, so
it is the full N\'eron-Severi lattice.

The quotient of the Hilbert modular surface by the involution
$f \mapsto -f$\/ is the rational elliptic surface
$$
y^2 = (x+8e^2-8)\big(x^2+(17e^2-12e+8)x+ 8(3e+1)^2(e^2-2e+2) \big).
$$
It has reducible fibers of types $\I_6, \I_3, \I_2$ at $e = \infty, 8,
9$, respectively, and the \MoW\ group is generated by the
$6$-torsion section $\big(-4(2e^2 + e -11), 4(e-9)(e-8) \big)$;
indeed this is the universal elliptic curve over $X_1(6)$.

\subsection{Examples}

We list some points of small height and corresponding genus $2$ curves.

\begin{tabular}{l|c}
Rational point $(e,f)$ & Sextic polynomial $f_6(x)$ defining the genus $2$ curve $y^2 = f_6(x)$. \\
\hline \hline \\ [-2.5ex]
$(49/8, -47/8)$ & $ -12x^6 - 12x^5 - 21x^4 + 14x^3 + 39x^2 + 48x - 64 $ \\
$(31/6, 5)$ & $ 108x^6 + 108x^5 - 81x^4 + x^3 + 63x^2 - 33x + 9 $ \\
$(-1/12, -5/6)$ & $ 36x^5 + 78x^4 - 41x^3 - 129x^2 + 45x + 27 $ \\
$(-13/10, -4/5)$ & $ -72x^6 + 108x^5 + 135x^4 - 135x^3 - 219x^2 + 135x + 53 $ \\
$(49/8, 47/8)$ & $ -72x^6 + 216x^5 - 315x^4 + 162x^3 + 21x^2 - 72x - 16 $ \\
$(-23/14, -13/7)$ & $ -8x^6 + 60x^5 - 87x^4 - 163x^3 + 288x^2 + 324x + 27 $ \\
$(-29/12, -25/12)$ & $ 12x^6 + 132x^5 + 355x^4 - 90x^3 - 245x^2 + 36x - 44 $ \\
$(87/55, 12/55)$ & $ 44x^5 + 200x^4 - 422x^3 + 180x^2 - 81x $ \\
$(-75/28, 18/7)$ & $ -77x^6 + 147x^5 - 45x^4 - 335x^3 + 186x^2 - 180x - 432 $ \\
$(49/23, 43/23)$ & $ 46x^6 - 24x^5 + 252x^4 - 29x^3 + 468x^2 + 24x + 366 $ \\
$(-37/36, 35/36)$ & $ -18x^6 - 258x^5 - 475x^4 + 220x^3 - 325x^2 + 72x - 48 $ \\
$(19/8, 13/8)$ & $ 432x^6 + 216x^5 - 27x^4 - 502x^3 - 87x^2 + 36x + 116 $ \\
$(55/28, -43/28)$ & $ -496x^6 + 48x^5 - 545x^4 + 90x^3 - 257x^2 + 120x - 80 $ \\
$(-1/28, 15/28)$ & $ 314x^6 + 426x^5 + 555x^4 + 140x^3 - 195x^2 - 264x - 176 $ \\
$(-23/15, 22/15)$ & $ -586x^6 + 330x^5 - 512x^4 + 150x^3 - 110x^2 - 24x - 1 $ \\
$(-5/4, -1/4)$ & $ 8x^6 - 168x^5 - 269x^4 + 466x^3 + 451x^2 - 624x - 376 $ 
\end{tabular}

The section $P_1$ gives a rational curve
$$
(e,f) = \left( - \frac{ 2g^2+11}{ 4g-1}, \frac{ 2g^2-g-11}{4g-1} \right).
$$
However, the Brauer obstruction does not vanish identically on this locus.
The section $2 P_1$ gives a rational curve
$$
(e,f) = \left( -\frac{2g^2+3}{4g}, \frac{2g^2-3}{4g} \right)
$$
on the surface. Here, the Brauer obstruction does vanish
identically, yielding a family of genus $2$ curves with real
multiplication by $\sO_{40}$.

\section{Discriminant $41$}

\subsection{Parametrization}

We start with an elliptic K3 surface with fibers of type $A_5$ at $t = 0$
and $A_{10}$ at $t = \infty$, with a section of height
$41/66 = 4 - (4 \cdot 7)/11 - 5/6$.

A Weierstrass equation for this family is given by
$$
y^2 = x^3 + (t^2 + 2\,d\,f\,t + c\,f^2)\, x^2
  + 2\,t^2\, (d \, t + c\,f) \, x + c \ t^4,
$$
with
\begin{align*}
c &= r^2\,s^2 \big( 16 \,t^2 -8 \,(4 \,r\, s-16\, s-r) \,t
  + (4\,r \,s-16\, s+r)^2 \big), \\
d &= r\,s\,(4\,t-12\,r\,s+16\,s+r), \\
f &= (t+4\, s)/(4 \,r \,s).
\end{align*}

We identify the class of an $E_8$ fiber, and perform a $3$-neighbor
step to an elliptic fibration with $E_8$ and $A_7$ fibers.

\begin{center}
\begin{tikzpicture}

\draw (0,0)--(0.5,0.866)--(1.5,0.866)--(2,0)--(1.5,-0.866)--(0.5,-0.866)--(0,0);
\draw (2,0)--(4,0)--(4.5,0.866)--(8.5,0.866)--(8.5,-0.866)--(4.5,-0.866)--(4,0);
\draw [bend right] (4,2) to (1.5,0.866);
\draw [bend left] (4,2) to (7.5,0.866);
\draw [very thick] (3,0)--(4,0);
\draw [very thick] (5.5,0.866)--(4.5,0.866)--(4,0)--(4.5,-0.866)--(8.5,-0.866);

\draw (3,0) circle (0.2);
\fill [white] (0,0) circle (0.1);
\fill [white] (0.5,0.866) circle (0.1);
\fill [white] (0.5,-0.866) circle (0.1);
\fill [white] (1.5,0.866) circle (0.1);
\fill [white] (1.5,-0.866) circle (0.1);
\fill [white] (2,0) circle (0.1);
\fill [black] (3,0) circle (0.1);
\fill [black] (4,0) circle (0.1);
\fill [black] (4.5, 0.866) circle (0.1);
\fill [black] (4.5, -0.866) circle (0.1);
\fill [black] (5.5, 0.866) circle (0.1);
\fill [black] (5.5, -0.866) circle (0.1);
\fill [white] (6.5, 0.866) circle (0.1);
\fill [black] (6.5, -0.866) circle (0.1);
\fill [white] (7.5, 0.866) circle (0.1);
\fill [black] (7.5, -0.866) circle (0.1);
\fill [white] (8.5, 0.866) circle (0.1);
\fill [black] (8.5, -0.866) circle (0.1);
\fill [white] (4,2) circle (0.1);

\draw (0,0) circle (0.1);
\draw (0.5,0.866) circle (0.1);
\draw (0.5,-0.866) circle (0.1);
\draw (1.5,0.866) circle (0.1);
\draw (1.5,-0.866) circle (0.1);
\draw (2,0) circle (0.1);
\draw (3,0) circle (0.1);
\draw (4,0) circle (0.1);
\draw (4.5, 0.866) circle (0.1);
\draw (4.5, -0.866) circle (0.1);
\draw (5.5, 0.866) circle (0.1);
\draw (5.5, -0.866) circle (0.1);
\draw (6.5, 0.866) circle (0.1);
\draw (6.5, -0.866) circle (0.1);
\draw (7.5, 0.866) circle (0.1);
\draw (7.5, -0.866) circle (0.1);
\draw (8.5, 0.866) circle (0.1);
\draw (8.5, -0.866) circle (0.1);
\draw (4,2) circle (0.1);

\end{tikzpicture}
\end{center}

This fibration has a section $P$ of height $41/8 = 4 + 2 - 7/8$. Next, we
take a $2$-neighbor step to go to a fibration with $E_8$ and $E_7$ fibers.

\begin{center}
\begin{tikzpicture}

\draw (7,0)--(7,2);
\draw (-1,0)--(8,0);
\draw (1,0)--(1,1);
\draw (8,0)--(8.5,0.866)--(10.5,0.866)--(11,0)--(10.5,-0.866)--(8.5,-0.866)--(8,0);

\draw [bend right] (7,2) to (6,0);
\draw [bend left] (7,2) to (8.5,0.866);
\draw [very thick] (7,0)--(8,0);
\draw [very thick] (10.5,0.866)--(8.5,0.866)--(8,0)--(8.5,-0.866)--(10.5,-0.866);

\draw (7,0) circle (0.2);
\fill [white] (-1,0) circle (0.1);
\fill [white] (0,0) circle (0.1);
\fill [white] (1,0) circle (0.1);
\fill [white] (2,0) circle (0.1);
\fill [white] (3,0) circle (0.1);
\fill [white] (4,0) circle (0.1);
\fill [white] (5,0) circle (0.1);
\fill [white] (6,0) circle (0.1);
\fill [white] (1,1) circle (0.1);
\fill [black] (7,0) circle (0.1);
\fill [black] (8,0) circle (0.1);
\fill [black] (8.5,0.866) circle (0.1);
\fill [black] (9.5,0.866) circle (0.1);
\fill [black] (10.5,0.866) circle (0.1);
\fill [black] (8.5,-0.866) circle (0.1);
\fill [black] (9.5,-0.866) circle (0.1);
\fill [black] (10.5,-0.866) circle (0.1);
\fill [white] (11,0) circle (0.1);
\fill [white] (7,2) circle (0.1);

\draw (-1,0) circle (0.1);
\draw (0,0) circle (0.1);
\draw (1,0) circle (0.1);
\draw (2,0) circle (0.1);
\draw (3,0) circle (0.1);
\draw (4,0) circle (0.1);
\draw (5,0) circle (0.1);
\draw (6,0) circle (0.1);
\draw (1,1) circle (0.1);
\draw (7,0) circle (0.1);
\draw (8,0) circle (0.1);
\draw (8.5,0.866) circle (0.1);
\draw (9.5,0.866) circle (0.1);
\draw (10.5,0.866) circle (0.1);
\draw (8.5,-0.866) circle (0.1);
\draw (9.5,-0.866) circle (0.1);
\draw (10.5,-0.866) circle (0.1);
\draw (11,0) circle (0.1);
\draw (7,2) circle (0.1);
\draw [above] (7,2) node {$P$};

\end{tikzpicture}
\end{center}
The intersection number of the new fiber $F'$ with the remaining
component of the $A_7$ fiber is $2$ and with the section $P$ is
$5$. Since these number are coprime, the new genus $1$ fibration
defined by $F'$ has a section.

Reading out the map to $\M_2$ from the $E_8 E_7$ fibration, we obtain
the following result.

\begin{theorem}
A birational model over $\Q$ for the Hilbert modular surface
$Y_{-}(41)$ as a double cover of $\Proj^2_{r,s}$ is given by the following
equation:
\begin{align*}
z^2 &= (4s+1)^4  r^4 - 16(8s^2+42s-1)(4s+1)^2  r^3 \\
&\quad   + 32(128s^4+4672s^3+1976s^2+248s+3)  r^2  \\
& \quad - 256(2688s^3+872s^2+82s-1)  r  + 256(16s+1)^3.
\end{align*}
It is a K3 surface of Picard number $19$.
\end{theorem}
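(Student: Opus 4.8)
The plan is to follow the four-step method of Section~\ref{method}, using the parametrization already displayed, and then to analyze the resulting double cover as an elliptic K3 surface. First I would carry out Steps~1 and~2. The Weierstrass family with $A_5$ and $A_{10}$ fibers and a section of height $41/66$ exhibits a Zariski-open subset of $\sF_{L_{41}}$ as an open subset of $\Proj^2_{r,s}$, which by Theorem~\ref{humbert} is birational to the Humbert surface $\sH_{41}$. To compute the map to $\A_2$ I would perform the two neighbor steps indicated by the bold subdiagrams: the $3$-neighbor step to the $E_8,A_7$ fibration carrying its section $P$ of height $41/8$, followed by the $2$-neighbor step to an $E_8 E_7$ fibration, whose section exists either by Proposition~\ref{automaticsections} or directly from the coprime intersection numbers $5$ and $2$ noted after the second diagram. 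Feeding the Weierstrass coefficients of the $E_8 E_7$ model into the formulas of Theorem~\ref{thesisthm} produces the Igusa-Clebsch invariants $(I_2:I_4:I_6:I_{10})$ as explicit functions of $(r,s)$. This is mechanical if lengthy linear algebra over $\Q(r,s)$.

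Next I would determine the double cover (Steps~3 and~4). The branch locus of $Y_{-}(41)\to\sH_{41}$ is the sublocus where the Picard number of the associated K3 jumps and the discriminant moves to $2D$ or $D/2$; a finite enumeration of the allowed fiber degenerations singles out the extra-$\mathrm{I}_2$ locus, giving a short list of candidate factors $f_i(r,s)\in\Z[r,s]$ of content~$1$. To pin down which factors occur and the squarefree twist $C$ — which by the good-reduction result \cite{Ra,DP} is supported only on primes dividing $41$ — I would reduce modulo several primes $p\nmid 41$. For many $(r,s)\in\F_p$ I build the genus-$2$ curve $C_{r,s}$ from its invariants (the Brauer obstruction vanishing over finite fields), compute the polynomial $Q$ of Lemma~\ref{twistrealmult} from the point counts of $C_{r,s}$ over $\F_p$ and $\F_{p^2}$, and use part~(3) of that lemma to detect when $J(C_{r,s})$ has real multiplication by $\sO_{41}$ over $\F_p$. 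Requiring that $C\prod_j f_{i_j}(r,s)$ be a square exactly on the real-multiplication locus eliminates all but one choice of the index set and of $C$, yielding the stated equation.

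It then remains to analyze the surface. Because the coefficient of $r^4$ in the defining equation is the perfect square $(4s+1)^4$, viewing $r$ as the fiber coordinate over $\Proj^1_s$ presents $Y_{-}(41)$ as a genus-$1$ fibration with a section; passing to its Jacobian gives a Weierstrass elliptic fibration. I would compute the discriminant, read off the Kodaira fiber types, and verify through the Euler-number count that $\chi(\sO)=2$, so that the surface is a K3. The Shioda-Tate formula then fixes the rank of the trivial lattice, and I would exhibit explicit sections — found by reduction at a small prime and lifting, some defined over $\Q(\sqrt{41})$ — whose height-pairing matrix is nondegenerate, forcing $\rho\ge 19$.

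The hard part is the reverse inequality $\rho\le 19$, i.e.\ excluding the possibility that the surface is singular of Picard number~$20$: explicit divisors bound $\rho$ only from below, and geometrically a $\rho=19$ K3 is difficult to separate from a $\rho=20$ one. Here I would invoke van Luijk's method \cite{vL} recalled at the end of Section~\ref{method}: for one or two good primes $p$ (for instance $p=7,11$, as in the $D=40$ case) I count points on the reduction $X_p$ over $\F_{p^e}$ for small $e$, form the characteristic polynomial of Frobenius on the second $\ell$-adic cohomology, and bound the geometric Picard number by the number of its roots equal to $p$ times a root of unity; a prime where this count is $19$ gives $\rho(Y_{-}(41))=19$. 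Finally I would confirm that the exhibited sections together with the trivial lattice already span $\NS$ by checking that the Mordell-Weil subgroup is saturated at the relevant small primes, thereby pinning down the Néron-Severi lattice.
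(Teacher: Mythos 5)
Your Steps 1--4, the passage to the Jacobian of the genus-$1$ fibration via the square leading coefficient $(4s+1)^4$, and the lower bound $\rho \ge 19$ from explicit sections over $\Q(\sqrt{41})$ all match the paper's argument. The gap is in your upper bound. You propose to conclude by finding ``a prime where this count is $19$,'' i.e.\ a good prime $p$ for which the number of Frobenius eigenvalues on the second $\ell$-adic cohomology equal to $p$ times a root of unity is $19$. No such prime can exist. The characteristic polynomial of Frobenius has integer coefficients and degree $b_2 = 22$; its non-real roots occur in complex-conjugate pairs, and its real roots are $\pm p$, so the number of real roots is even, and hence the eigenvalue count $\rho_0(p)$ is \emph{always even}. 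Moreover $\rho_0(p) \ge \rho(\overline{X}_p) \ge \rho(\overline{X}) = 19$, so in fact $\rho_0(p) \ge 20$ at every good prime: the naive single-prime bound can never descend below $20$. This parity obstruction is exactly the difficulty that van Luijk's method is designed to circumvent, and citing it does not help unless you deploy its second ingredient.

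The correct mechanism, which the paper uses for $D=41$ (with $p = 7$ and $p = 11$) and which is recalled in Section~\ref{method}, is: find two primes at which $\rho_0(p) = 20$, and compare the square classes of the discriminants of the putative rank-$20$ N\'eron--Severi lattices predicted by the Artin--Tate formula at each prime. If $\rho(\overline{X})$ were $20$, then $\NS(\overline{X})$ would embed with finite index into the N\'eron--Severi group of each reduction; since the Brauer group order appearing in Artin--Tate is a perfect square, the two predicted discriminants would have to lie in the same square class. When they do not, one concludes $\rho(\overline{X}) \le 19$, which together with your lower bound gives equality. As written, your plan stalls at $\rho \le 20$; inserting this Artin--Tate discriminant comparison (or some equally strong refinement) is necessary to finish.
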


\subsection{Analysis}

The branch locus is a rational curve, with a parametrization given by
$$
(r,s) = \left( -\frac{4(u-1)(u+1)^3}{(u^2-u-1)^2} , -\frac{1}{4u^2 (u-1)^2} \right).
$$

The Hilbert modular surface has a genus $1$ fibration to $\Proj^1_r$,
which is in fact an elliptic fibration, since the coefficient of $r^4$
is a perfect square.  Converting to the Jacobian form, we get the
Weierstrass equation
\begin{align*}
y^2 & =  x^3 + (t-1)  (t^3 + 23 t^2 + 96 t - 32) x^2 \\
   & \quad  + 16 (t-1) (4 t^4 + 53 t^3 - 217 t^2 + 112 t - 16) x.
\end{align*}

This is an elliptic K3 surface, with bad fibers of type $\I_6$ at $t = \infty$,
$\I_4$ at $t = 0$, $\III$ at $t =1$, $\I_3$ at $t = -15$, and
$\I_2$ at the four roots of $q(t) = 4 t^4 + 53 t^3 - 217 t^2 + 112 t - 16$.
This quartic $q(t)$ describes a dihedral Galois extension $K$\/ of~$\Q$,
quadratic over $\Q(\sqrt{41})$. Thus we get a trivial lattice of
rank $17$ and discriminant $2304$.  We also have a $2$-torsion
point $(0,0)$, and the following two non-torsion sections:
\begin{align*}
P_1 &=  \Big( -164 (t-1) ,\quad  4 \mu (t-1) (t+15) (5 t-6) \Big), \\
P_2 &= \Big( (5+\mu)(t-1)(8t^2+53t+9t\mu-185-29\mu)/16 , \\
& \qquad  (5+\mu)(t-1)(t+15)(4t-11+\mu)(8t^2+53t+9t\mu-185-29\mu)/64 \Big)
\end{align*}
where $\mu = \sqrt{41}$. The height matrix for $P_1$ and $P_2$ is 
$$
\frac{1}{3} \left(\begin{array}{rr}
4 & -2  \\
-2 & 1 \\
\end{array}\right).
$$ 
Therefore the Picard number is at least $19$. Counting points
modulo $7$ and $11$ shows that the Picard number must be $19$, in
agreement with \cite{Oda}. The sections above and the trivial lattice
therefore generate a lattice of rank $19$ and discriminant $512$.  We
showed that it is $2$-saturated, and is thus the full N\'eron-Severi
lattice.

\subsection{Examples}

We list some points of small height and corresponding genus $2$ curves.

\begin{tabular}{l|c}
Rational point $(r,s)$ & Sextic polynomial $f_6(x)$ defining the genus $2$ curve $y^2 = f_6(x)$. \\
\hline \hline \\ [-2.5ex]
$(52/3, -1/5)$ & $ -6x^6 + 3x^5 - 4x^4 + 3x^3 + 20x^2 - 36x + 20 $ \\
$(7/2, 13/12)$ & $ -x^6 - 15x^5 - 65x^4 - 57x^3 + 40x^2 + 6x + 20 $ \\
$(56/9, -43/40)$ & $ -56x^6 - 42x^5 + 64x^4 - 78x^3 - 29x^2 + 36x - 28 $ \\
$(52/7, -2/3)$ & $ -24x^6 + 36x^5 - 34x^4 - 51x^3 + 86x^2 - 87x + 18 $ \\
$(13, -7/12)$ & $ -15x^6 + 33x^5 + 23x^4 + 73x^3 - 72x^2 - 54x - 108 $ \\
$(32/5, -47/48)$ & $ -18x^6 - 3x^5 + 46x^4 - 111x^3 + 22x^2 + 48x - 84 $ \\
$(92/9, -11/14)$ & $ -4x^6 + 24x^5 - 17x^4 + 3x^3 - 125x^2 - 72x - 95 $ \\
$(60/13, 3)$ & $ -10x^6 + 33x^5 + 40x^4 - 111x^3 - 152x^2 + 60x + 140 $ \\
$(76/13, -4/3)$ & $ -24x^6 - 12x^5 - 50x^4 + 111x^3 + 10x^2 + 129x - 153 $ \\
$(28, -23/24)$ & $ -46x^6 + 69x^5 - 14x^4 + 169x^3 - 134x^2 - 84x - 72 $ \\
$(68/15, -11/4)$ & $ 48x^6 - 48x^5 - 119x^4 - 84x^3 + 145x^2 + 180x + 100 $ \\
$(23/6, 21/4)$ & $ 168x^5 + 85x^4 + 70x^3 + 229x^2 + 24x - 72 $ \\
$(40/3, -55/56)$ & $ -39x^6 + 36x^5 - 116x^4 + 186x^3 - 107x^2 + 240x - 200 $ \\
$(52/25, -1/24)$ & $ -112x^6 - 264x^5 + 25x^4 + 240x^3 - 125x^2 - 114x + 62 $ \\
$(4/13, -11/96)$ & $ 16x^6 + 24x^5 - 223x^4 + 274x^3 - 7x^2 + 216x - 120 $ \\
$(27/2, 11/76)$ & $ 12x^6 - 132x^5 - 219x^4 + 286x^3 + 201x^2 - 264x - 56 $ 
\end{tabular}

Next, we describe some curves which are a source of many rational
points. The specialization $s = -4$ gives a curve of genus $0$, with a
parametrization $r = -2(m^2 + 343)/(9(m-13))$. The specialization $r =
108/25$ also gives a rational curve, parametrized by $s =
-(4m-169)(4m+169)/(16(108m-10813))$. The Brauer obstruction vanishes
on both these loci, giving families of curves whose Jacobians have
real multiplication by $\sO_{41}$.

Finally, sections of the elliptic fibration also give rational curves
on the surface. For instance, the section $P_0$ is parametrized by
$$
r = \frac{4(256s^3-48s^2-16s-1)}{(4s+1)^2(16s-7)}.
$$
The Brauer obstruction vanishes here as well.

\section{Discriminant $44$}

\subsection{Parametrization}

We start with an elliptic K3 surface with fibers of type $A_{10}$ at
$t = 0$ and $D_6$ at $t = \infty$. This has N\'{e}ron-Severi lattice of
discriminant $11 \cdot 4 = 44$ and rank $2 + 10 + 6 = 18$. The
Weierstrass equation for this universal family can be written as
$$
y^2 = x^3 + a\, x^2 + 2\, b \, t^4 x +  c \, t^8,
$$
with
\begin{align*}
a &= s\,(r^2\,s^2-s^2+2\,s+2)\,t^3 -s\,(2\,r^2\,s-3\,s+2)\,t^2 + s\,(r^2-3)\,t   +  1,\\
b &= s^2\,\big( (2\,r^2\,s^2-2\,s^2+4\,s+1)/2\,t^2  -(r^2\,s-2\,s+1)\,t - 1 \big), \\
c &= s^4\,\big( (r^2\,s-s+2)\,t + 1 \big).
\end{align*}

We identify the class of an $E_8$ fiber and move to this fibration via
a $3$-neighbor step.

\begin{center}
\begin{tikzpicture}

\draw (0,0)--(6,0)--(6.5,0.866)--(10.5,0.866)--(10.5,-0.866)--(6.5,-0.866)--(6,0);
\draw (1,0)--(1,1); \draw (3,0)--(3,1);
\draw [very thick] (5,0)--(6,0);
\draw [very thick] (7.5,0.866)--(6.5,0.866)--(6,0)--(6.5,-0.866)--(10.5,-0.866);

\fill [white] (0,0) circle (0.1);
\fill [white] (1,0) circle (0.1);
\fill [white] (2,0) circle (0.1);
\fill [white] (3,0) circle (0.1);
\fill [white] (4,0) circle (0.1);
\fill [black] (5,0) circle (0.1);
\fill [black] (6,0) circle (0.1);
\fill [white] (1,1) circle (0.1);
\fill [white] (3,1) circle (0.1);
\fill [black] (6.5,0.866) circle (0.1);
\fill [black] (7.5,0.866) circle (0.1);
\fill [white] (8.5,0.866) circle (0.1);
\fill [white] (9.5,0.866) circle (0.1);
\fill [white] (10.5,0.866) circle (0.1);
\fill [black] (6.5,-0.866) circle (0.1);
\fill [black] (7.5,-0.866) circle (0.1);
\fill [black] (8.5,-0.866) circle (0.1);
\fill [black] (9.5,-0.866) circle (0.1);
\fill [black] (10.5,-0.866) circle (0.1);

\draw (5,0) circle (0.2);
\draw (0,0) circle (0.1);
\draw (1,0) circle (0.1);
\draw (2,0) circle (0.1);
\draw (3,0) circle (0.1);
\draw (4,0) circle (0.1);
\draw (5,0) circle (0.1);
\draw (6,0) circle (0.1);
\draw (1,1) circle (0.1);
\draw (3,1) circle (0.1);
\draw (6.5,0.866) circle (0.1);
\draw (7.5,0.866) circle (0.1);
\draw (8.5,0.866) circle (0.1);
\draw (9.5,0.866) circle (0.1);
\draw (10.5,0.866) circle (0.1);
\draw (6.5,-0.866) circle (0.1);
\draw (7.5,-0.866) circle (0.1);
\draw (8.5,-0.866) circle (0.1);
\draw (9.5,-0.866) circle (0.1);
\draw (10.5,-0.866) circle (0.1);

\end{tikzpicture}
\end{center}

The new elliptic fibration has $E_8$, $D_6$ and $A_1$ fibers, and a
section $P$ of height $11/2 = 4 + 2\cdot 1 - 1/2$. We identify an
$E_7$ fiber $F'$ below. Note that $P \cdot F' = 5$, while the excluded
component of the $D_6$ fiber intersects $F'$ in $2$. Since $2$ and $5$
are coprime, we see that the fibration defined by $F'$ has a
section. We move to it by a $2$-neighbor step.

\begin{center}
\begin{tikzpicture}

\draw (0,0)--(13,0);
\draw (1,0)--(1,1);
\draw (3,0)--(3,1);
\draw (11,0)--(11,1);
\draw (5,0)--(5,1);
\draw (4.95,1)--(4.95,2);
\draw (5.05,1)--(5.05,2);
\draw (5,2)--(5,3);
\draw [bend right] (5,3) to (4,0);
\draw [bend right] (5,3) to (5,0);
\draw [bend left] (5,3) to (6,0);
\draw [very thick] (0,0)--(5,0)--(5,1);
\draw [very thick] (3,0)--(3,1);

\draw (5,0) circle (0.2);
\fill [black] (0,0) circle (0.1);
\fill [black] (1,0) circle (0.1);
\fill [black] (2,0) circle (0.1);
\fill [black] (3,0) circle (0.1);
\fill [black] (4,0) circle (0.1);
\fill [black] (5,0) circle (0.1);
\fill [white] (6,0) circle (0.1);
\fill [white] (1,1) circle (0.1);
\fill [black] (3,1) circle (0.1);
\fill [white] (7,0) circle (0.1);
\fill [white] (8,0) circle (0.1);
\fill [white] (9,0) circle (0.1);
\fill [white] (10,0) circle (0.1);
\fill [white] (11,0) circle (0.1);
\fill [white] (12,0) circle (0.1);
\fill [white] (13,0) circle (0.1);
\fill [white] (11,1) circle (0.1);
\fill [black] (5,1) circle (0.1);
\fill [white] (5,2) circle (0.1);
\fill [white] (5,3) circle (0.1);

\draw (0,0) circle (0.1);
\draw (1,0) circle (0.1);
\draw (2,0) circle (0.1);
\draw (3,0) circle (0.1);
\draw (4,0) circle (0.1);
\draw (5,0) circle (0.1);
\draw (6,0) circle (0.1);
\draw (1,1) circle (0.1);
\draw (3,1) circle (0.1);
\draw (7,0) circle (0.1);
\draw (8,0) circle (0.1);
\draw (9,0) circle (0.1);
\draw (10,0) circle (0.1);
\draw (11,0) circle (0.1);
\draw (12,0) circle (0.1);
\draw (13,0) circle (0.1);
\draw (11,1) circle (0.1);
\draw (5,1) circle (0.1);
\draw (5,2) circle (0.1);
\draw (5,3) circle (0.1);
\draw (5,3) [above] node{$P$};

\end{tikzpicture}
\end{center}

Calculating the Igusa-Clebsch invariants and following the rest of the
algorithm in Section \ref{method}, we obtain the following result.

\begin{theorem}
A birational model over $\Q$ for the Hilbert modular surface
$Y_{-}(44)$ as a double cover of $\Proj^2_{r,s}$ is given by the following
equation:
$$
z^2 =  (rs+s-1)(rs-s+1)(r^6s^2-r^4s^2+18r^2s-16s+27).
$$ 
It is an honestly elliptic surface, with arithmetic genus $2$ and
Picard number $29$.
\end{theorem}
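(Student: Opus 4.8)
The plan is to follow the four-step method of Section~\ref{method}. The parametrization exhibited above already realizes $\sF_{L_{44}}$, and hence by Theorem~\ref{humbert} the Humbert surface $\sH_{44}$, birationally as $\Proj^2_{r,s}$; the two neighbor steps depicted above carry the initial $A_{10}\oplus D_6$ fibration to one with $E_8$ and $E_7$ fibers, from whose Weierstrass form I would read off the Igusa-Clebsch invariants and thus the map $\sF_{L_{44}}\to\A_2$. The first task is to determine the branch locus of the double cover $\widetilde{\eta}$. Following Step~3, I would enumerate the finitely many ways in which the Picard number can jump by one, with $\disc(\NS)$ changing to $88$ or $22$: enhancements of the reducible fibers and the appearance of an extra $\I_2$ fiber. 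This produces the candidate factors $rs+s-1$, $rs-s+1$, and the degree-eight factor $r^6s^2-r^4s^2+18r^2s-16s+27$. Step~4 then fixes which candidates actually occur and pins down the quadratic twist: specializing $(r,s)$ modulo several primes $p\nmid 44$, extracting $Q(X)$ from the point counts of the associated genus-$2$ curve via the Lefschetz-Grothendieck trace formula, and applying Lemma~\ref{twistrealmult}, one confirms that all three factors appear and that the twist is trivial (consistent with the absence of a constant in the displayed equation).

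It then remains to identify the surface and its numerical invariants. The branch curve $f=0$ has degree $12$, so the double cover of $\Proj^2$ would be of general type were this curve smooth; it is precisely the singularities of $f=0$ that bring the Kodaira dimension down to~$1$. To see this cleanly I would pass, as in the Analysis below, to an elliptic fibration: a suitable substitution presents the double cover as a relatively minimal elliptic surface $X\to\Proj^1$ with section. Summing the Euler numbers of the singular fibers gives $e(X)=36$, so $\chi(\sO_X)=3$ and the arithmetic genus is $p_a=\chi(\sO_X)-1=2$; since $\chi(\sO_X)>1$ and the fibration is non-isotrivial, $X$ is honestly elliptic.

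For the Picard number I would bracket it against the ceiling $h^{1,1}$. From $e(X)=36$ one gets $b_2=34$; since the double cover has $q=0$ we have $p_g=p_a=2$, so $h^{1,1}=b_2-2p_g=30$ and hence $\rho(X)\le 30$. Applying the Shioda-Tate formula to the elliptic fibration---using the reducible fibers together with explicitly exhibited \MoW\ generators (some possibly defined only over a quadratic field)---yields the lower bound $\rho(X)\ge 29$, and verifying that the sublattice they span together with the trivial lattice is saturated identifies $\NS(X)$ exactly. For the matching upper bound I would use van Luijk's method, as recalled at the end of Section~\ref{method}: counting points on a good reduction $X_p$ for a well-chosen prime yields a Frobenius characteristic polynomial exactly $29$ of whose roots are $p$ times a root of unity, forcing $\rho(X)\le\rho_0(p)=29$.

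The hard part will be this Picard-number determination, and in particular the upper bound. Because $X$ is \emph{not} of maximal Picard number ($\rho=29<30=h^{1,1}$), one must locate a prime at which $\rho_0(p)$ is already $29$ rather than $30$---which is not automatic and may require testing several primes---while the lower bound demands a genuine search for \MoW\ generators on the Weierstrass model followed by a saturation check at the relevant small primes, exactly as in the higher-discriminant cases treated earlier.
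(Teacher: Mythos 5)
Your Steps 1--4, the candidate branch components, the twist determination, and the numerical invariants ($e(X)=36$, $\chi(\sO_X)=3$, $p_a=2$, $b_2=34$, $h^{1,1}=30$) all agree with the paper, as does the lower bound $\rho\ge 29$: the trivial lattice has rank $26$ (fibers $\I_4$, $2\times \I_6$, $2\times \I_3$, $7\times \I_2$) and three independent non-torsion sections exist (over quadratic fields). The genuine gap is your upper bound. You propose to find a single prime $p$ with $\rho_0(p)=29$, i.e.\ exactly $29$ Frobenius eigenvalues on $H^2$ equal to $p$ times a root of unity. This is impossible for parity reasons: Poincar\'e duality makes the multiset of Frobenius eigenvalues stable under $\alpha\mapsto p^2/\alpha$, whose only fixed points are $\pm p$, so $\mathrm{mult}(p)+\mathrm{mult}(-p)\equiv b_2\equiv 0 \pmod 2$, while the remaining eigenvalues of the form $p\zeta$ with $\zeta\neq\pm 1$ a root of unity pair off with $p\bar{\zeta}$. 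Hence $\rho_0(p)$ is \emph{always even} and can never equal $29$; no amount of testing ``several primes'' repairs this. (This is exactly the parity obstruction that forces van Luijk's method, already for K3 surfaces of odd Picard number, to use two primes and compare square classes of the Artin--Tate discriminants.) A direct rescue would require that two-prime refinement applied to the honestly elliptic surface itself, i.e.\ controlling the Frobenius characteristic polynomial on a $34$-dimensional cohomology group at two primes --- far heavier than anything you budget for.

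The paper's actual route is different and avoids this entirely, and you miss it: the involution $(r,s)\mapsto(-r,s)$ acts compatibly with the elliptic fibration over $\Proj^1_r$, so the \MoW\ group splits (up to finite index) into invariant and anti-invariant parts, identified with the \MoW\ groups of the quotient surface and of its quadratic twist --- both elliptic \emph{K3} surfaces. The quotient is a singular K3: exhibiting sections which together with its trivial lattice span a rank-$20$ lattice already forces $\rho=20$ (no point counting needed, since $20$ is maximal for a K3), giving \MoW\ rank $2$. The twist has $\rho=19$, proved by van Luijk's two-prime square-class method modulo $5$ and $7$, giving \MoW\ rank $1$. Hence the \MoW\ rank of $Y_{-}(44)$ is $2+1=3$, so $\rho=26+3=29$, and the saturation check then pins down $\NS$ exactly. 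To make your proof correct, replace your final step either by this quotient/twist decomposition or by the two-prime Artin--Tate refinement; as written, the step fails.
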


\subsection{Analysis}

The extra involution is $\iota: (r,s) \mapsto (-r,s)$. The branch
locus has three components. The two simpler components $rs \pm (s-1) = 0$
are obviously rational curves, and a simple calculation shows that
they correspond to elliptic K3 surfaces for which the $D_6$ fiber gets
promoted to an $E_7$ fiber. The more complicated component of the branch locus,
corresponding to elliptic fibrations with an extra $\I_2$ fiber,
is also a rational curve; a parametrization is given by
$$
(r,s) = \left( \frac{4m}{m^2 + 3}, -\frac{(m^2+3)^3}{16(m^2-1)} \right).
$$

The right hand side of the equation defining the Hilbert modular surface
is quartic in $s$, whence the surface $Y_{-}(44)$ is elliptically
fibered over $\Proj^1_r$. The two linear factors (or the fact that the
coefficient of $s^4$ is a square) imply that there are sections, so we
may convert to the Jacobian form:
$$
y^2 = x^3 + 2(r^6-r^4-9r^2+11)\, x^2 + (r^2-1)^3(r^6+r^4+91r^2-121) \, x.
$$

This is an honestly elliptic surface, with $\chi = 3$. It has
reducible fibers of type $\I_2$ at $r =0$, $\I_6$ at $r = \pm 1$,
$\I_4$ at $r = \infty$, $\I_3$ at $r = \pm 2/\sqrt{3}$, and type
$\I_2$ at the six roots of $(r^3-3r^2+5r+11) \, (r^3+3r^2+5r-11)$
(both factors generate the cubic field $k_{-44}$ of discriminant $-44$). The
trivial lattice has rank $26$, leaving room for \MoW\ rank at
most~$4$.  We find the following sections, of which $P_0$ is
$2$-torsion, while $P_1$, $P_2$ and $P_3$ are linearly independent
non-torsion sections, orthogonal with respect to the height pairing
and of heights $7/6$, $3/2$ and $11/6$ respectively:
\begin{align*}
P_0 &= \big(0 , \, 0 \big), \\
P_1 &= \big( 11(r^2-1) , \quad 4 \sqrt{11}\,r (r^2-1)^3 (3r^2-4) \big), \\
P_2 &= \big( -(r+1)^3(r^3-3r^2+5r+11), \quad 6 \sqrt{-3}\,r (r+1)^3 (r^3-3r^2+5r+11) \big), \\
P_3 &= \big( (r-1)(r+1)^2(r^3-3r^2+5r+11), \quad 2 r^3 (r-1)(r+1)^2(r^3-3r^2+5r+11) \big) .
\end{align*}
Therefore, the Picard number is at least $29$. Analysis of the
associated quotient elliptic K3 surface and its twist (see below)
shows that the \MoW\ rank is exactly $3$ and therefore the Picard
number is exactly~$29$. The sections above together with the trivial
lattice generate a lattice of discriminant $133056 = 2^6 \cdot 3^3
\cdot 7 \cdot 11$. We checked that it is $2$- and $3$-saturated, and
so it is the entire N\'eron-Severi lattice. Therefore these sections
generate the \MoW\ group.

We next analyze the quotient of $Y_{-}(44)$ by the involution $\iota$.
Taking $t=r^2$, we find the equation
$$
z^2 = s^4\,t^4 -s^2\,(2\,s^2-2\,s+1)\,t^3 + s^2\,(s^2+16\,s+1)\,t^2 -s\,(2\,s-3)\,(17\,s-6)\,t + (s-1)^2\,(16\,s-27).
$$
This has an elliptic fibration over $\Proj^1_s$, and since the
coefficient of $t^4$ is a square, we may convert to the Jacobian,
which is
$$
y^2 = x^3 + s^2\,(s+2)\,(s+8)\,x^2 + 2\,s^3\,(6\,s^2+47\,s+9)\,x + s^4\,(36\,s^2+268\,s-27).
$$
This is an elliptic K3 surface, with reducible fibers of type $E_6$ at
$s = 0$, $\I_7$ at $s = \infty$, $\I_3$ at $s = -27/4$, and $\I_2$ at
the roots of $2s^3+14s^2-6s+1$, which generates the cubic field
$k_{-44}$. The trivial lattice has rank $19$. We find a non-torsion
section
$$
P = \big( 1-4\,s,\quad 2\,s^3+14\,s^2-6\,s+1 \big)
$$
of height $4 - 12/7 - 3/2 = 11/14$.
It is easy to show that $P$\/ generates the \MoW\ group:
the configuration of reducible fibers does not allow for either
nontrivial torsion or a section of height $11/14n^2$ for any integer $n>1$.
Therefore the K3 surface is singular, with N\'{e}ron-Severi lattice of
discriminant $-396 = -4 \cdot 9 \cdot 11$.

We may also analyze the quotient by considering it as an elliptic
surface over $\Proj^1_t$, and since the coefficient of $s^4$ is
$t^2(t-1)^2$, which is a square, there is a section. Converting to the
Jacobian, we get the elliptic K3 surface
$$
y^2 = x^3 + 2(t^3-t^2-9t+11) \, x^2 + (t-1)^3(t^3+t^2+91t-121) \, x
$$
which is also obtained by replacing $r^2$ by $t$ in the Weierstrass
equation of $Y_{-}(44)$. This elliptic fibration has bad fibers of
type $\I^*_6$, $\I_6$, and $\I_3$ at $t = \infty$, $1$, and $4/3$
respectively, and of type $\I_2$ at the roots of $t^3 + t^2 + 91t -
121$, which generates $k_{-44}$.  Therefore, the root lattice has rank
$18$. We find the sections
\begin{align*}
P_0 &= \big(0, 0\big), \\
P_1 &= \big((1-t)(t -3\sqrt{-3})^2, 2(3-\sqrt{-3})(t-1)(t-3\sqrt{-3})(9t-6-2\sqrt{-3})/3 \big) \\
P_2 &= \big((1-t)(t+ 3\sqrt{-3})^2, 2(3+\sqrt{-3})(t-1)(t+3\sqrt{-3})(9t-6+2\sqrt{-3})/3 \big),
\end{align*}
with $P_0$ being $2$-torsion, and $P_1$ and $P_2$ having height pairing matrix
$$
\left(\begin{array}{cc} 5/3 & 1/6 \\ 1/6 & 5/3
\end{array} \right).
$$
These sections, along with the trivial lattice, generate a lattice
of rank $20$ and discriminant $-396$, which must therefore be the
entire N\'{e}ron-Severi lattice.  The \MoW\ rank of this elliptic
surface is $2$.

Finally, we consider the quadratic twist of the above elliptic K3
surface, which is the quotient of $Y_{-}(44)$ by the involution
$\iota' : (r,s,z) \mapsto (-r,s,-z)$. It is given by the equation
$$
y^2 = x^3 + 2t(t^3-t^2-9t+11)\, x^2 + t^2(t-1)^3(t^3+t^2+91t-121) \, x.
$$
This is an elliptic K3 surface, with reducible fibers of type
$\I^*_1$, $\I_6$, $\I_3$ and $\I_2$ at $t = 0$, $1$, $4/3$ and $\infty$
respectively, and $\I_2$ at the roots of $t^3 + t^2 + 91t - 121$.
The trivial lattice has rank $18$. We find the sections
\begin{align*}
P_0 &= \big( 0, 0 \big) \\
P_1 &= \big( 11 t (t-1)^3 , 4 \sqrt{11} t^2 (t-1)^3 (3t - 4) \big),
\end{align*}
the first being $2$-torsion, and the second of height $7/12$. The
Picard number is therefore at least~$19$.  Counting points modulo $5$
and $7$ shows that it is exactly $19$; therefore the \MoW\ rank is
exactly $1$. These sections and the trivial lattice span a sublattice
of discriminant $168$ of the N\'eron-Severi lattice. This sublattice
is $2$-saturated, since the configuration of fibers does not allow for
a section of height $7/48$, and we can easily check that the elliptic
surface does not have $4$-torsion or other $2$-torsion
sections. Therefore, we have the entire N\'eron-Severi lattice, and
$P_0$ and $P_1$ generate the \MoW\ group.

The calculation of the \MoW\ ranks of the quotient elliptic
surface and its quadratic twist allows us to conclude that the
\MoW\ rank of the original (honestly) elliptic surface is $1 + 2 = 3$.

\subsection{Examples}

We list some points of small height and corresponding genus $2$ curves.

\begin{tabular}{l|c}
Point $(r,s)$ & Sextic polynomial $f_6(x)$ defining the genus $2$ curve $y^2 = f_6(x)$. \\
\hline \hline \\ [-2.5ex]
$(-2, -7/6)$ & $ 101x^6 - 60x^5 + 2x^4 + 92x^2 + 48x + 24 $ \\
$(2, -7/6)$ & $ 15x^6 - 168x^5 + 170x^4 - 112x^3 + 20x^2 - 8 $ \\
$(4, -7/15)$ & $ -24x^6 + 48x^5 + 52x^4 + 144x^3 + 238x^2 + 588x - 161 $ \\
$(-4, -7/15)$ & $ 56x^6 + 196x^4 - 320x^3 + 250x^2 - 480x + 723 $ \\
$(-5, -13/12)$ & $ -144x^6 + 120x^5 + 265x^4 + 700x^3 - 425x^2 - 750x - 1750 $ \\
$(2, 7/12)$ & $ 696x^6 - 2112x^5 + 7492x^4 - 7032x^3 + 10234x^2 - 756x + 5103 $ \\
$(5, -13/12)$ & $ 12x^6 - 60x^5 - 145x^4 + 400x^3 - 1225x^2 - 1500x + 11500 $ \\
$(2, 13/36)$ & $ -5193x^6 - 5124x^5 - 16906x^4 - 11576x^3 - 17212x^2 - 6240x - 5304 $ \\
$(-3/4, 36/7)$ & $ -4744x^6 - 15552x^5 + 7596x^4 + 42048x^3 - 20310x^2 - 32112x + 33553 $ \\
$(-2, 13/36)$ & $ -18261x^6 + 13668x^5 + 65210x^4 - 41512x^3 - 74284x^2 + 18816x + 24696 $ \\
$(3/4, 36/7)$ & $ -6504x^6 + 22608x^5 + 28428x^4 - 94288x^3 - 66510x^2 + 103092x + 73009 $ \\
$(-2, 7/12)$ & $ 25389x^6 - 97062x^5 - 511x^4 + 240860x^3 - 5989x^2 - 127758x - 83849 $ \\
$(-5, -9/32)$ & $ -691156x^6 + 20220x^5 - 232521x^4 + 19406x^3 - 22521x^2 + 2484x - 564 $ \\
$(2, -61/42)$ & $ -629624x^6 + 272400x^5 - 383596x^4 - 704000x^3 - 60778x^2 - 194100x - 32193 $ \\
$(-2, -61/42)$ & $ -550872x^6 + 1549296x^5 - 1810124x^4 - 2005984x^3 + 3719134x^2 - 1321788x - 4862401 $ \\
$(5, -9/32)$ & $ 1781676x^6 - 5240052x^5 + 5462991x^4 - 5705734x^3 + 1769571x^2 + 1002576x - 1011776 $ 
\end{tabular}

The specializations $r = \pm 1$ give rational curves on the surface,
but points on these correspond to decomposable abelian surfaces, which
therefore have an endomorphism ring strictly larger than $\sO_{44}$.
The section $P_0$ is a rational curve, given by
$s = (r^4+27)/\big(2(r^2-1)(r^2-8)\big)$.
However, the Brauer obstruction does not vanish identically on it.

\section{Discriminant $53$}

\subsection{Parametrization}

We start with an elliptic K3 surface with $A_8$, $A_6$ and $A_1$
fibers, and a section of height $53/126 = 4 - 1/2 - 6/7 - 20/9$. A
Weierstrass equation for this family is given by
$$
y^2 = x^3 + a\,x^2 + 2\,b\,t\,(t-h)\,x + c\,t^2\,(t-h)^2,
$$
with
\begin{align*}
a &= g^2\,t^4 + \big(4\,(h+1)^2 - 2\,g \big)\,t^3 + \big(4\,h^2+4\,g\,h+6\,g-3\big)\,t^2 + 2\,\big(8\,h^2-4\,g\,h+8\,h+1\big)\,t + (2\,h+1)^2, \\
b &= -4\,(h-g+1)\,\big( g\,(2\,h+1)\,t^2 + (6\,h^2-2\,g\,h+6\,h+1)\,t + (2\,h + 1)^2 \big), \\
c &= 16\,(h-g+1)^2\,(2\,h+1)^2.
\end{align*}

To transform to a fibration with $E_8$ and $E_7$ fibers, we first
identify an $E_7$ fiber, and move to the associated elliptic fibration
via a $2$-neighbor step.

\begin{center}
\begin{tikzpicture}

\draw (0,0)--(2,0)--(2.5,0.866)--(5.5,0.866)--(5.5,-0.866)--(2.5,-0.866)--(2,0);
\draw (0,0)--(-0.5,0.866)--(-2.5,0.866)--(-2.5,-0.866)--(-0.5,-0.866)--(0,0);
\draw (1,0)--(1,1);
\draw (1.05,1)--(1.05,2);
\draw (0.95,1)--(0.95,2);
\draw (1,2)--(2,3);
\draw [bend right] (2,3) to (-0.5,0.866);
\draw [bend left] (2,3) to (5.5,0.866);
\draw [very thick] (1,0)--(2,0);
\draw [very thick] (4.5,0.866)--(2.5,0.866)--(2,0)--(2.5,-0.866)--(5.5,-0.866);

\draw (1,0) circle (0.2);
\fill [white] (0,0) circle (0.1);
\fill [black] (1,0) circle (0.1);
\fill [black] (2,0) circle (0.1);
\fill [black] (2.5,0.866) circle (0.1);
\fill [black] (3.5,0.866) circle (0.1);
\fill [black] (4.5,0.866) circle (0.1);
\fill [white] (5.5,0.866) circle (0.1);
\fill [black] (2.5,-0.866) circle (0.1);
\fill [black] (3.5,-0.866) circle (0.1);
\fill [black] (4.5,-0.866) circle (0.1);
\fill [white] (5.5,-0.866) circle (0.1);

\fill [white] (-0.5,0.866) circle (0.1);
\fill [white] (-1.5,0.866) circle (0.1);
\fill [white] (-2.5,0.866) circle (0.1);
\fill [white] (-0.5,-0.866) circle (0.1);
\fill [white] (-1.5,-0.866) circle (0.1);
\fill [white] (-2.5,-0.866) circle (0.1);
\fill [white] (1,1) circle (0.1);
\fill [white] (1,2) circle (0.1);

\fill [white] (2,3) circle (0.1);

\draw (0,0) circle (0.1);
\draw (1,0) circle (0.1);
\draw (2,0) circle (0.1);
\draw (2.5,0.866) circle (0.1);
\draw (3.5,0.866) circle (0.1);
\draw (4.5,0.866) circle (0.1);
\draw (5.5,0.866) circle (0.1);
\draw (2.5,-0.866) circle (0.1);
\draw (3.5,-0.866) circle (0.1);
\draw (4.5,-0.866) circle (0.1);
\draw (5.5,-0.866) circle (0.1);

\draw (-0.5,0.866) circle (0.1);
\draw (-1.5,0.866) circle (0.1);
\draw (-2.5,0.866) circle (0.1);
\draw (-0.5,-0.866) circle (0.1);
\draw (-1.5,-0.866) circle (0.1);
\draw (-2.5,-0.866) circle (0.1);
\draw (1,1) circle (0.1);
\draw (1,2) circle (0.1);

\draw (2,3) circle (0.1);

\end{tikzpicture}
\end{center}

The resulting elliptic fibration has $E_7$ and $A_8$ fibers, and a
section $P$ of height $53/18 = 4 + 2 \cdot 1 - 3/2 - 4 \cdot 5/9$. We
identify a fiber $F$\/ of type $E_8$ and perform a $3$-neighbor step to
move to the associated elliptic fibration. Note that $P \cdot F = 4$,
while the remaining component of the $A_8$ fiber intersects $F$\/ with
multiplicity $3$. Therefore the new elliptic fibration has a section.

\begin{center}
\begin{tikzpicture}

\draw (-1,0)--(7,0)--(7.5,0.866)--(7.5,0.866)--(10.5,0.866)--(10.5,-0.866)--(7.5,-0.866)--(7,0);
\draw (2,0)--(2,1);
\draw [bend right] (6,2) to (-1,0);
\draw [bend left] (6,2) to (10.5,0.866);
\draw (6,2)--(6,0);
\draw [very thick] (6,0)--(7,0);
\draw [very thick] (8.5,0.866)--(7.5,0.866)--(7,0)--(7.5,-0.866)--(10.5,-0.866)--(10.5,0.866);

\draw (6,0) circle (0.2);
\fill [white] (-1,0) circle (0.1);
\fill [white] (0,0) circle (0.1);
\fill [white] (1,0) circle (0.1);
\fill [white] (2,0) circle (0.1);
\fill [white] (3,0) circle (0.1);
\fill [white] (4,0) circle (0.1);
\fill [white] (5,0) circle (0.1);
\fill [black] (6,0) circle (0.1);
\fill [white] (2,1) circle (0.1);
\fill [black] (7,0) circle (0.1);
\fill [black] (7.5,0.866) circle (0.1);
\fill [black] (7.5,-0.866) circle (0.1);
\fill [black] (8.5,0.866) circle (0.1);
\fill [black] (8.5,-0.866) circle (0.1);
\fill [white] (9.5,0.866) circle (0.1);
\fill [black] (9.5,-0.866) circle (0.1);
\fill [black] (10.5,0.866) circle (0.1);
\fill [black] (10.5,-0.866) circle (0.1);
\fill [white] (6,2) circle (0.1);

\draw (-1,0) circle (0.1);
\draw (0,0) circle (0.1);
\draw (1,0) circle (0.1);
\draw (2,0) circle (0.1);
\draw (3,0) circle (0.1);
\draw (4,0) circle (0.1);
\draw (5,0) circle (0.1);
\draw (6,0) circle (0.1);
\draw (2,1) circle (0.1);
\draw (7,0) circle (0.1);
\draw (7.5,0.866) circle (0.1);
\draw (7.5,-0.866) circle (0.1);
\draw (8.5,0.866) circle (0.1);
\draw (8.5,-0.866) circle (0.1);
\draw (9.5,0.866) circle (0.1);
\draw (9.5,-0.866) circle (0.1);
\draw (10.5,0.866) circle (0.1);
\draw (10.5,-0.866) circle (0.1);
\draw (6,2) circle (0.1);
\draw (6,2) [above] node{$P$};

\end{tikzpicture}
\end{center}

The new elliptic fibration has the requisite $E_8$ and $E_7$ fibers,
and therefore we may read out the Igusa-Clebsch invariants, and
describe the branch locus, which corresponds to elliptic K3 surfaces
having an extra $\I_2$ fiber. 

\begin{theorem}
A birational model over $\Q$ for the Hilbert modular surface
$Y_{-}(53)$ as a double cover of $\Proj^2_{g,h}$ is given by the following
equation:
\begin{align*}
z^2 &= -27\,h^4\,g^4 -2\,h^3\,(13\,h^2+9\,h+9)\,g^3 \\
    & \qquad -(11\,h^6+138\,h^5+383\,h^4+506\,h^3+353\,h^2+120\,h+16)\,g^2 \\
    & \qquad  -2\,(h+1)^2\,(52\,h^4+99\,h^3+65\,h^2+19\,h+2)\,g \\
    & \qquad -(h+1)^4\,(44\,h^3+76\,h^2+40\,h+7).
\end{align*}
It is an honestly elliptic surface, with arithmetic genus $2$ and
Picard number $28$.
\end{theorem}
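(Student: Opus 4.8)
The plan is to follow the four-step recipe of Section~\ref{method} for the equation itself, and then to carry out a separate fibration analysis for the three geometric assertions (honestly elliptic, arithmetic genus~$2$, Picard number~$28$).

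First I would establish the birational model. The stated Weierstrass family with $A_8$, $A_6$ and $A_1$ fibers and a section of height $53/126$ realizes a Zariski-open subset of $\sF_{L_{53}}$ as an open subset of $\Proj^2_{g,h}$; by Theorem~\ref{humbert} this maps birationally onto the Humbert surface $\sH_{53}$. Performing the indicated $2$-neighbor step (to an $E_7A_8$ fibration carrying a section $P$ of height $53/18$) followed by the $3$-neighbor step produces an elliptic fibration with $E_8$ and $E_7$ fibers; since $P\cdot F=4$ while the remaining component of the $A_8$ fiber meets $F$ with multiplicity $3$, the coprimality guarantees a section by Proposition~\ref{automaticsections}. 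Reading off the Igusa--Clebsch invariants from this $E_8E_7$ model and composing with $\phi^{-1}$ of Section~\ref{thesis} gives the explicit map $\Proj^2_{g,h}\to\A_2$. The branch locus of $Y_{-}(53)\to\sH_{53}$ is the sublocus where the Picard number jumps, here the locus where the elliptic K3 acquires an extra $\I_2$ fiber and the discriminant changes to $2\cdot 53=106$; I would list the finitely many candidate factors as in Step~3, then use Lemma~\ref{twistrealmult} together with point counts modulo several primes $p\nmid 53$ (Step~4) to select which factors occur and to fix the quadratic twist~$C$. The displayed octic is the resulting $z^2=f(g,h)$.

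Next I would analyze the surface. Since $f$ has total degree~$8$, the double cover $z^2=f(g,h)$ is a priori a surface whose canonical class is $\pi^*\sO(1)$; the drop from the smooth-branch value $\chi(\sO)=4$ to $\chi(\sO)=3$ (arithmetic genus~$2$) comes from the singularities of the branch octic, whose degree-$8$ leading form is divisible by $h^4g^2$. Rather than resolve directly, I would exhibit an elliptic fibration: view the equation as a quartic in one variable (after a linear substitution making the relevant leading coefficient a perfect square, so that a section is visible), and pass to the Jacobian to obtain a Weierstrass model over $\Proj^1$. From that model one reads $\chi(\sO_S)=3$, hence $e(S)=36$, confirming arithmetic genus~$2$ and Kodaira dimension~$1$ (honestly elliptic). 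To get the Picard number I would factor the cubic discriminant to list all reducible fibers and their Kodaira types, assemble the trivial lattice, then exhibit explicit Mordell--Weil sections, compute their height-pairing matrix, and apply the Shioda--Tate formula to obtain $\rho\ge 28$. The matching upper bound $\rho\le 28$ I would obtain by van Luijk's method as described at the end of Section~\ref{method}: count points on a good reduction modulo one or two primes to compute the characteristic polynomial of Frobenius on $H^2$ and bound the number of eigenvalues equal to $p$ times a root of unity. A final $2$- and $3$-saturation check shows the exhibited divisors span all of $\NS$.

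The hardest part will be the fibration analysis rather than the formal derivation. For a surface of Picard number~$28$ the Mordell--Weil rank is large, so finding enough independent sections of small height and proving that they, together with the trivial lattice, generate a saturated sublattice is the real obstacle; this typically requires searching for sections over $\Q$ and small quadratic fields and then a careful saturation argument at the primes dividing the lattice discriminant. Secondarily, the two-stage neighbor computation is delicate: one must track exactly how each fiber component and the section $P$ meet the new fiber class $F'$ to guarantee, via Proposition~\ref{automaticsections}, that each intermediate fibration has a section, and the Igusa--Clebsch bookkeeping is heavy. Pinning down the single correct twist $C$ may also force the use of several primes before all but one possibility is eliminated.
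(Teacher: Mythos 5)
Your route to the equation itself is exactly the paper's: the same $A_8\,A_6\,A_1$ family, the same $2$-neighbor step to an $E_7\,A_8$ fibration carrying a section of height $53/18$, the same $3$-neighbor step (where $P\cdot F=4$ versus multiplicity $3$ for the leftover $A_8$ component gives a section --- though note this is the corollary on coprime intersection numbers, not Proposition~\ref{automaticsections}, which is the lattice-theoretic fallback), and the same Step~3/Step~4 determination of the branch locus (extra $\I_2$ fiber) and of the quadratic twist by point counts. Your plan for the lower bound $\rho\ge 28$ (trivial lattice of rank $26$ plus explicit sections over small quadratic fields, then saturation) is also what the paper does; just note that the relevant discriminant is $2^2\cdot 3^6\cdot 7^3$, so saturation must be checked at $7$ as well as at $2$ and $3$.

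The one step that fails as written is your parenthetical for setting up the elliptic fibration: ``after a linear substitution making the relevant leading coefficient a perfect square, so that a section is visible.'' For $D=53$ this is impossible over $\Q$: the fibration is over $\Proj^1_h$ with the equation quartic in $g$, the coefficient of $g^4$ is $-27h^4$, no substitution $g\mapsto g+c(h)$ alters it, and the paper explicitly records that it could find no section over $\Q$ --- the leading coefficient becomes a square only over $\Q(\sqrt{-3})$. The repair is to drop the section claim entirely: the Jacobian of a genus-$1$ fibration $z^2=(\text{quartic in }g)$ can be written down with no rational point (classical invariant-theoretic formulas, cf.\ \cite{AKMMMP}), and both $\chi(\sO)$ and the geometric Picard number are unchanged when a genus-$1$ fibration is replaced by its Jacobian. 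That invariance, which the paper uses silently, is what transfers $\chi=3$ (hence honestly elliptic, $p_a=2$) and $\rho=28$ from the Jacobian model back to $Y_{-}(53)$; your write-up needs it made explicit, since you cannot place the computation on $Y_{-}(53)$ itself. Finally, for the upper bound $\rho\le 28$ the paper cites Oda's computations rather than van Luijk-style point counting; your alternative is legitimate (the paper uses reduction-mod-$p$ bounds for several other discriminants, and since $b_2=34$ and $28$ is even there is no parity obstruction), it is simply a different, more computational route to the same bound.
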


\subsection{Analysis}

The branch locus is a curve of genus $1$, and the change of coordinates
\begin{align*}
g &=  -\frac{ (x^2-x+1)(x^2+3x+3)}{(x+1)(2x+1)\, y + x^4+3x^3+5x^2+3} \\
h &= \frac{(x^5-x^4+4x^3-4x^2+x+1)\, y + x^2(x^6-x^5+2x^4+2x^3-x+1)}{(x^2-x+1)^2 \big((x+1)(2x+1) \, y + x^4+3x^3+5x^2+3 \big)}
\end{align*}
transforms it to the elliptic curve $y^2 + xy +y = x^3 - x^2$ of
conductor $53$, which is isomorphic to $X_0(53)/ \langle w \rangle$,
where $w$ is the Atkin-Lehner involution.

This surface has a genus $1$ fibration over $\Proj^1_h$,
making it honestly elliptic with $\chi = 3$.  We could find no sections
over~$\Q$, although there certainly exist sections over~$\Qbar$,
since the coefficient of $g^4$ is a square in~$\Q(\sqrt{-3})$.
The Jacobian of this fibration has the following equation
(after making the linear fractional transformation $h = -t/(t+1)$
on the base, and some Weierstrass transformations):
\begin{align*}
y^2 &= x^3 + (t^6-18\,t^5+55\,t^4+106\,t^3-179\,t^2+24\,t-16)\,x^2 \\
    & \qquad -8\,(t-1)\,t^2\,(37\,t^5-471\,t^4-140\,t^3+1121\,t^2-309\,t+248)\,x \\
   & \qquad  -16\,(t-1)^2\,t^4\,(196\,t^5-2797\,t^4+2712\,t^3+8606\,t^2-3084\,t+3115).
\end{align*}
This surface has reducible fibers of type $\I_7$, $\I_4$ and $\I_3$ at
$t = \infty, 0, 1$ respectively, type $\I_2$ at the roots of $7t^3 -
99 t^2 + 104t - 32$ (which generates the cubic field of discriminant
$-2^2 \cdot 53$), and $\I_3$ at the roots of $t^5 - 11t^4 - 11t^3 +
6t^2 - 3t - 9$ (which generates the quintic field of discriminant
$-3^2 \cdot 53^2$, and whose roots generate a dihedral $D_{10}$
extension unramified over its quadratic subfield $\Q(\sqrt{-3 \cdot
  53})$).  The trivial lattice has rank $26$, leaving room for \MoW\
rank at most~$4$. We find the sections 
\begin{align*}
P_1 &= \big( -4(7t^6-106t^5+189t^4+202t^3-778t^2+342t-216)/53, \\
 & \qquad  4(5t-3)(7t^3-99t^2+104t-32)(t^5-11t^4-11t^3+6t^2-3t-9)/(53\sqrt{53}) \big) \\
P_2 &= \big( -4(49t^6-63t^5+99t^4+162t^3-351t^2+162t-108)/27, \\
& \qquad 4t(637t^8-378t^7-1485t^6+3186t^5-1755t^4-1782t^3+3942t^2-243t-972)/(81\sqrt{-3})  \big) 
\end{align*}
of heights $7/6$ and $21/4$ respectively, orthogonal with respect to
the height pairing. Therefore, the Mordell-Weil rank is least $2$. By
Oda's calculations \cite{Oda}, we deduce that the Mordell-Weil rank is
exactly $2$. The sections $P_1$ and $P_2$ and the trivial lattice span
a lattice of discriminant $1000188 = 2^2 \cdot 3^6 \cdot
7^3$. Checking that it is saturated at $2$, $3$ and $7$, we deduce
that it must be the full N\'eron-Severi lattice.

\subsection{Examples}

We list some points of small height and corresponding genus $2$ curves.

\begin{tabular}{l|c}
Rational point $(g,h)$ & Sextic polynomial $f_6(x)$ defining the genus $2$ curve $y^2 = f_6(x)$. \\
\hline \hline \\ [-2.5ex]
$(11/152, -16/19)$ & $ -2332x^6 - 902x^5 - 5060x^4 - 17111x^3 - 5995x^2 - 17545x - 27951 $ \\
$(8/21, -2/3)$ & $ -8788x^6 + 34200x^5 - 22425x^4 - 11907x^3 - 16230x^2 - 35604x + 34024 $ \\
$(72/49, -13/7)$ & $ 816x^6 - 1944x^5 - 13459x^4 + 24712x^3 + 37733x^2 + 7596x - 2300 $ \\
$(75/26, -19/4)$ & $ -106605x^6 - 62661x^5 + 467345x^4 + 193313x^3 - 691816x^2 - 149592x + 346546 $ \\
$(12/5, -10)$ & $ 139968x^6 + 471744x^5 - 1301409x^4 - 77363x^3 + 671633x^2 + 236496x + 18756 $ \\
$(-72/31, -4)$ & $ -138482x^6 + 1643417x^5 + 2645029x^4 - 1507309x^3 - 2429567x^2 $ \\
& $ + 1188320x + 36288 $ \\
$(2/135, -10/11)$ & $ 2175200x^6 - 2750760x^5 + 2725545x^4 + 7678368x^3 - 5205621x^2$ \\
& $ + 3781674x + 9014158 $ \\
$(16/63, -5)$ & $ 3829988x^6 + 11621820x^5 - 19617225x^4 - 25097450x^3 + 29201451x^2$ \\
& $ + 14626080x - 14560512 $ \\
$(18/103, -8/11)$ & $ -4788022x^6 - 21151494x^5 - 18288935x^4 - 20340320x^3 - 61042325x^2$ \\
& $ + 10128456x - 40124160 $ \\
$(-39/76, -29/16)$ & $ 394632000x^6 - 1964113200x^5 - 1523778060x^4 + 4757784967x^3 $  \\
& $ + 148400811x^2 - 3811137819x + 540964447 $ \\
$(55/117, -79/144)$ & $ -5511986931x^6 + 20881501795x^5 + 17115817125x^4 + 19864594645x^3 $ \\
& $ + 1353729618x^2 - 16117938900x + 5833685448 $ \\
$(49/135, -61/75)$ & $ -26148549648x^6 - 278797809744x^5 - 748507062651x^4 + 329438683288x^3$ \\
& $ + 1420002530997x^2 - 1751808944796x + 1934174962804 $ \\
$(-50/117, -13/9)$ & $ -159682912000x^6 - 1077016472800x^5 - 2039981245815x^4 - 762577047304x^3$ \\
& $ + 6811301171385x^2 - 4055008902300x + 449504680500 $ \\
$(81/91, -13/7)$ & $ 134236157214x^6 + 962817170858x^5 - 12198892111873x^4 + 23659009829816x^3 $ \\
& $ + 9649525790385x^2 - 12776814846900x - 8264106337500 $ \\
$(59/153, -17/27)$ & $ -687158622928816x^6 + 23483931596064x^5 - 14038441316573x^4 - 893569395800x^3 $ \\
 & $ - 20141231607x^2 - 200112822x - 748062 $ 
\end{tabular}

We could not find any curves of genus $0$ which were not
contained in the ``bad locus'' where the abelian surfaces have a
strictly larger ring of endomorphisms.

\section{Discriminant $56$}

\subsection{Parametrization}

We start with a K3 elliptic surface with $D_6$, $A_8$ and $A_1$
fibers, and a section $P$ of height $56/72 = 7/9 = 4 - 1 - 20/9$. The
Weierstrass equation of this family is
$$
y^2 = x^3 + a x^2 + 2 b t^2 (\lambda t - \mu) \,x + ct^4 (\lambda t - \mu)^2,
$$
with
\begin{align*}
\lambda &= (2h - g^2 + 1), \qquad \mu = -2h, \\
c &= -(g^2-1)^2\big( 4(g^2-1)t-(h+2)^2 \big), \\
b &= (g^2-1)\big( 2(g^2-1)t^2 -(h^2+4h-4g^2+8)t - (h+2)^2 \big), \\
a &= (g^2-1)^2 t^3 + (h+2)(h-2g^2+4)t^2 + 2(h^2+4h-2g^2+6)t + (h+2)^2. \\
\end{align*}

To find an $E_8 E_7$ fibration on such a K3 surface,
we first identify the class of an $E_7$ fiber below.

\begin{center}
\begin{tikzpicture}
\draw (2.05,1)--(2.05,2);
\draw (1.95,1)--(1.95,2);
\draw (1,0)--(0.5,0.866)--(-2.5,0.866)--(-2.5,-0.866)--(0.5,-0.866)--(1,0)--(7,0);
\draw (4,0)--(4,1);
\draw (6,0)--(6,1);
\draw (2,0)--(2,1);
\draw [very thick] (2,1)--(2,0)--(7,0);
\draw [very thick] (4,0)--(4,1);
\draw [bend right] (1,3) to (-2.5,0.866);
\draw [bend left] (1,3) to (4,1);
\draw (1,3)--(2,1);

\fill [white] (2,2) circle (0.1);
\fill [black] (2,1) circle (0.1);
\fill [black] (2,0) circle (0.1);
\fill [black] (3,0) circle (0.1);
\fill [black] (4,0) circle (0.1);
\fill [black] (5,0) circle (0.1);
\fill [black] (6,0) circle (0.1);
\fill [black] (7,0) circle (0.1);
\fill [white] (1,0) circle (0.1);
\fill [black] (4,1) circle (0.1);
\fill [white] (6,1) circle (0.1);

\fill [white] (1,3) circle (0.1);
\fill [white] (0.5,0.866) circle (0.1);
\fill [white] (0.5,-0.866) circle (0.1);
\fill [white] (-0.5,0.866) circle (0.1);
\fill [white] (-0.5,-0.866) circle (0.1);
\fill [white] (-1.5,0.866) circle (0.1);
\fill [white] (-1.5,-0.866) circle (0.1);
\fill [white] (-2.5,0.866) circle (0.1);
\fill [white] (-2.5,-0.866) circle (0.1);

\draw (1,3) [above] node{$P$};
\draw (1,3) circle (0.1);
\draw (2,0) circle (0.2);
\draw (2,2) circle (0.1);
\draw (2,1) circle (0.1);
\draw (2,0) circle (0.1);
\draw (3,0) circle (0.1);
\draw (4,0) circle (0.1);
\draw (5,0) circle (0.1);
\draw (6,0) circle (0.1);
\draw (7,0) circle (0.1);
\draw (1,0) circle (0.1);
\draw (4,1) circle (0.1);
\draw (6,1) circle (0.1);

\draw (0.5,0.866) circle (0.1);
\draw (-0.5,0.866) circle (0.1);
\draw (-1.5,0.866) circle (0.1);
\draw (-2.5,0.866) circle (0.1);
\draw (0.5,-0.866) circle (0.1);
\draw (-0.5,-0.866) circle (0.1);
\draw (-1.5,-0.866) circle (0.1);
\draw (-2.5,-0.866) circle (0.1);

\end{tikzpicture}
\end{center}

This converts it to an $A_8 E_7$ fibration. Note that this fibration
has a section, because the section $P$ intersects $F'$ in $3$,
whereas (for instance) the non-identity component of the $A_1$ fiber
intersects $F'$ in $2$, and these two numbers are coprime.

The new fibration has a section of height $56/18 = 28/9 = 4 - 8/9$,
which intersects the identity component of the $E_7$ fiber and
component $1$ of the $A_8$ fiber. We then do a $3$-neighbor step to
get an $E_8 E_7$ fibration.

\begin{center}
\begin{tikzpicture}

\draw (-1,0)--(7,0)--(7.5,0.866)--(10.5,0.866)--(10.5,-0.866)--(7.5,-0.866)--(7,0);
\draw (2,0)--(2,1);
\draw [very thick] (7,0)--(6,0);
\draw [very thick] (8.5,0.866)--(7.5,0.866)--(7,0)--(7.5,-0.866)--(10.5,-0.866)--(10.5,0.866);
\draw [bend right] (6,2) to (5,0);
\draw [bend left] (6,2) to (7.5,0.866);

\fill [white] (6,2) circle (0.1);
\fill [white] (-1,0) circle (0.1);
\fill [white] (0,0) circle (0.1);
\fill [white] (1,0) circle (0.1);
\fill [white] (2,0) circle (0.1);
\fill [white] (3,0) circle (0.1);
\fill [white] (4,0) circle (0.1);
\fill [white] (5,0) circle (0.1);
\fill [black] (6,0) circle (0.1);
\fill [white] (2,1) circle (0.1);
\fill [black] (7,0) circle (0.1);
\fill [black] (7.5,0.866) circle (0.1);
\fill [black] (7.5,-0.866) circle (0.1);
\fill [black] (8.5,0.866) circle (0.1);
\fill [black] (8.5,-0.866) circle (0.1);
\fill [white] (9.5,0.866) circle (0.1);
\fill [black] (9.5,-0.866) circle (0.1);
\fill [black] (10.5,0.866) circle (0.1);
\fill [black] (10.5,-0.866) circle (0.1);

\draw [above] (6,2) node{$P'$};
\draw (6,2) circle (0.1);
\draw (6,0) circle (0.2);
\draw (-1,0) circle (0.1);
\draw (0,0) circle (0.1);
\draw (1,0) circle (0.1);
\draw (2,0) circle (0.1);
\draw (3,0) circle (0.1);
\draw (4,0) circle (0.1);
\draw (5,0) circle (0.1);
\draw (6,0) circle (0.1);
\draw (2,1) circle (0.1);
\draw (7,0) circle (0.1);
\draw (7.5,0.866) circle (0.1);
\draw (7.5,-0.866) circle (0.1);
\draw (8.5,0.866) circle (0.1);
\draw (8.5,-0.866) circle (0.1);
\draw (9.5,0.866) circle (0.1);
\draw (9.5,-0.866) circle (0.1);
\draw (10.5,0.866) circle (0.1);
\draw (10.5,-0.866) circle (0.1);

\end{tikzpicture}
\end{center}

The new fiber $F''$ satisfies $P' \cdot F'' = 2$, while the excluded
component of the $A_8$ fiber intersects $F''$ in $3$. Since these are
coprime, the fibration defined by $F''$ has a section.

Now we can read out the Igusa-Clebsch invariants, and describe the
branch locus of $Y_{-}(56)$ as a double over of $\Proj^2_{g,h}$. 

\begin{theorem}
A birational model over $\Q$ for the Hilbert modular surface
$Y_{-}(56)$ as a double cover of $\Proj^2_{g,h}$ is given by the
following equation:
\begin{align*}
z^2 =& (2h-g^2+1) (2h^5+27g^2h^4-11h^4+72g^2h^3-24h^3-40g^2h^2 \\
     & \quad +104h^2-160g^2h+192h+64g^4-144g^2+80).
\end{align*}
It is a surface of general type.
\end{theorem}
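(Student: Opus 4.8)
The plan is to follow the four-step method of Section~\ref{method}. First I would take the family of elliptic K3 surfaces with $D_6$, $A_8$ and $A_1$ fibers and a section of height $7/9$ as the birational model of $\sF_{L_{56}}$; its two free parameters exhibit this moduli space as an open subset of $\Proj^2_{g,h}$. I would then carry out the indicated neighbor steps of Section~\ref{neighbors}: a $2$-neighbor step to an $A_8 E_7$ fibration, followed by a $3$-neighbor step to the desired $E_8 E_7$ fibration. At each stage Proposition~\ref{automaticsections}, together with the coprimality of the displayed intersection numbers (e.g.\ $P'\cdot F''=2$ against $3$ for the excluded $A_8$ component), guarantees that the new fibration has a section. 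From the resulting $E_8 E_7$ Weierstrass form I read off the Igusa-Clebsch invariants via Theorem~\ref{thesisthm}, which by Theorem~\ref{humbert} gives the birational map $\sF_{L_{56}}\to\sH_{56}$ and exhibits $\sH_{56}$ as birational to $\Proj^2_{g,h}$.

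Next I would determine the branch locus of the double cover. As in Step~3, the branch locus is supported on the sublocus where the Picard number jumps and the N\'eron-Severi discriminant becomes $2\cdot 56$ or $56/2$; I enumerate the allowed degenerations (an extra $\I_2$ fiber, or the promotion of the $D_6$ fiber to $E_7$) and translate each into a polynomial condition $f_i(g,h)=0$. This yields the candidate factor $2h-g^2+1$ (fiber promotion) and the degree-$6$ factor (extra $\I_2$ fiber). To pin down which factors occur and the squarefree twist~$C$, I invoke Step~4: reduce modulo several odd primes $p\nmid 56$, specialize $(g,h)\in\F_p^2$, build the genus-$2$ curve $C_{g,h}$ from its Igusa-Clebsch invariants, and use Lemma~\ref{twistrealmult} with point counts mod $p$ and $p^2$ to test whether the product $Cf_{i_1}\cdots f_{i_m}$ is a square precisely when $J(C_{g,h})$ has real multiplication by $\sO_{56}$. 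Since $Y_{-}(56)$ has good reduction away from primes dividing $56$, finitely many primes suffice to eliminate every competing possibility.

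For the claim that $Y_{-}(56)$ is of general type, I would compute the degree of the branch curve. The factor $2h-g^2+1$ has degree~$2$ and the second factor has degree~$6$ (top-degree term $27g^2h^4$), so the branch divisor has total degree~$8$ in $\Proj^2_{g,h}$. For a double cover $\pi\colon X\to\Proj^2$ branched over a curve of degree $2k$ one has $K_X=\pi^*\sO(k-3)$; here $2k=8$, so $K_X=\pi^*\sO(1)$, the pullback of an ample class under a finite morphism, hence big and nef. After resolving the singularities of $X$ over the singular points of the octic branch curve — which for a double cover of a smooth surface are canonical, so the resolution preserves $\kappa$ — the minimal model has Kodaira dimension~$2$, and therefore $Y_{-}(56)$ is of general type.

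I expect the main obstacle to be Step~4: correctly isolating the set $\{i_1,\dots,i_m\}$ and the twist~$C$. The degree-$6$ factor is geometrically subtle — unlike the $D_6\to E_7$ promotion it is not read off directly from the fiber configuration — so confirming that it is the genuine extra-$\I_2$ locus (rather than one of the other a priori possible degenerations), and verifying the precise twist, requires enough well-chosen primes and specializations to rule out all alternatives. By contrast, the neighbor-step bookkeeping, though lengthy, is routine given Section~\ref{neighbors} and the auxiliary files.
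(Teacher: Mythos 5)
Your computational outline reproduces the paper's route exactly: the $D_6\,A_8\,A_1$ family with a section of height $7/9$ as the model of $\sF_{L_{56}}$, a $2$-neighbor step to an $A_8\,E_7$ fibration, a $3$-neighbor step to $E_8\,E_7$ (with sections guaranteed by coprimality of the displayed intersection numbers, e.g.\ $P'\cdot F''=2$ against $3$), then Igusa--Clebsch invariants and the Step 3/Step 4 determination of the branch locus and twist via Lemma~\ref{twistrealmult}. That part is sound. Two steps, however, would fail as you describe them.

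First, you misidentify the conic component of the branch locus, and inside the method this is not a harmless labelling slip. On the locus $2h-g^2+1=0$ the $A_1$ and $D_6$ fibers \emph{merge} and are promoted to a single $D_8$ fiber; it is not the promotion $D_6\to E_7$. The discriminant bookkeeping of Step 3 distinguishes these: the given section has height $7/9 = 4 - 1 - 20/9$, so after the merger the lattice $U\oplus D_8\oplus A_8$ plus the section has discriminant $4\cdot 9\cdot(7/9)=28=D/2$, which is one of the two allowed values; whereas promoting $D_6$ to $E_7$ (keeping $A_1$) would change the $D_6$-contribution $1$ to $0$ or $3/2$ and give discriminant $36\cdot(16/9)=64$ or $36\cdot(5/18)=10$, neither $2D$ nor $D/2$. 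So the $D_6\to E_7$ condition is not even a legitimate candidate $f_i$, and if your list of candidate polynomials contains it but omits the merger condition, no subset $\{i_1,\dots,i_m\}$ and no twist $C$ will survive the point-counting of Step 4: the algorithm cannot terminate with the displayed equation. The enumeration must include all degenerations compatible with discriminant $2D$ or $D/2$, mergers included.

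Second, the general-type argument has a genuine gap. The formula $K_X=\pi^*\sO_{\Proj^2}(k-3)$ for a double cover branched over a degree-$2k$ curve is valid on a minimal resolution only when the branch curve has at worst \emph{simple} (ADE) singularities, so that the double cover has du Val singularities and the resolution is crepant. Your blanket claim that singularities of a double cover of a smooth surface ``are canonical'' is false: if the branch octic had an ordinary point of multiplicity $4$, blowing it up replaces $K$ by the pullback of $\sigma^*\sO(1)-E$, which is nef of self-intersection $0$, and the resolved surface is an elliptic surface of Kodaira dimension $1$ (the lines through that point pull back to a genus-one pencil), not of general type. Here the branch curve is a conic union a \emph{rational} sextic, which carries $\delta=10$ worth of singularities, and the components meet in $12$ further points; your argument therefore requires an actual verification that every one of these singular points (and the two double points on the line at infinity) is simple, which you have not done. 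The paper does not argue this way at all: the Kodaira type is quoted from the Hirzebruch--van de Ven--Zagier classification of Hilbert modular surfaces cited in the introduction, which suffices because Kodaira dimension is a birational invariant. Either invoke that classification or carry out the singularity analysis of the octic; as written, your final paragraph does not establish the last sentence of the theorem.
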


\subsection{Analysis}

The extra involution is $g \mapsto -g$.  The branch locus has two
components.  Points on the simpler component $2h-g^2+1=0$ (which is
clearly a rational curve) correspond to elliptic K3 surfaces for which
the $A_1$ and $D_6$ fibers merge and get promoted to a $D_8$ fiber.
The other component corresponds to elliptic K3 surfaces with an extra
$\I_2$ fiber. It is also of genus $0$, and a parametrization is given
by
$$
(g,h) = \left( \frac{s(s^4+22s^2-7)}{(3s^2+1)^2} , -\frac{(s^2-1)(s^2-5)}{2(3s^2+1)} \right).
$$

The Hilbert modular surface $Y_{-}(56)$ is of general type.

We now analyze the quotient of the Hilbert modular surface by the
involution $(g,h,z) \mapsto (-g,h,z)$. Setting $f = g^2$, the right
hand side becomes a cubic in $f$. After some elementary Weierstrass
transformations, we get the equation
$$
y^2 = x^3 -(27h^4+72h^3-40h^2+96h-16) \, x^2 + 512h^3(7h^2+20h-4) \, x.
$$
This is an elliptic K3 surface, with reducible fibers of type
$\I_6, \I_6, \I_4, \I_3$ at $h = 0, \infty, -2, 2/9$ respectively,
and $\I_2$ fibers at $h = (-10 \pm 8 \sqrt{2})/7$. The trivial lattice has rank
$19$ and discriminant $1728$.  There is an obvious $2$-torsion section
$P_0 = (0,0)$, and we find a non-torsion section
$$
P_1 = \big(128 h, \, 128 h(h+2)^2 \big)
$$
of height $2/3$.  We checked that the group generated by $P_0$ and $P_1$
is saturated at $2$ and $3$.  Therefore, this is a singular K3 surface,
with N\'{e}ron-Severi lattice of rank $20$ and discriminant $-288$.

Next, we analyze the twist of the elliptic K3 surface above, obtained
by substituting $z = wg$ in the equation of the Hilbert modular
surface, and then setting $f = g^2$ (it is the quotient of $Y_{-}(56)$
by the involution $(g,h,z) \mapsto (-g,h,-z)$). This twist is an
honestly elliptic surface, with $\chi = 3$. After some simple algebra,
the Weierstrass equation can be written as
$$
y^2 = x^3 + (58h^5+149h^4-56h^3-152h^2-64h+16)x^2 + 8h^3(2h+5)(h^2-4h-4)^2(7h^2+20h-4)x.
$$
It has reducible fibers of type $\I^*_0, \I_6, \I_4, \I_3, \I_2, \I_2
$ at $h = \infty, 0, -2, 2/9, -1/2, -5/2$ respectively, $\I_4$ fibers
at $h = (-10 \pm 8 \sqrt{2})/7$ and $\I_2$ fibers at $h = 2 \pm 2
\sqrt{2}$. The trivial lattice has rank $26$. In addition to the
$2$-torsion section $P_0 = (0,0)$, we find the sections
\begin{align*}
P_1 &= \big( -56h^3(h^2-4h-4), 16 \sqrt{14} h^3(2h+1)(9h-2)(h^2-4h-4) \big) \\
P_2 &= \big(  -8h^3(7h^2+20h-4), 64 \sqrt{-1} h^3(2h+1)(7h^2+20h-4)\big) \\
P_3 &= \big(  4h^2(7h^2+20h-4), 4h^2(h+2)^2(2h+1)(7h^2+20h-4)\big) 
\end{align*}
of heights $5/6$, $2$ and $7/6$ respectively, orthogonal with respect
to the height pairing. On the other hand, counting points on the
reductions modulo $11$ and $29$ shows that the Picard number is at
most $29$. Therefore, it is exactly $29$. The lattice spanned by these
sections and the trivial lattice has discriminant $35840 = 2^{10}
\cdot 5 \cdot 7$. We checked that it is $2$-saturated, and therefore
it must equal the entire N\'eron-Severi lattice.

\subsection{Examples}

We list some points of small height and corresponding genus $2$ curves.

\begin{tabular}{l|c}
Rational point $(g,h)$ & Sextic polynomial $f_6(x)$ defining the genus $2$ curve $y^2 = f_6(x)$. \\
\hline \hline \\ [-2.5ex]
$(-79/61, 28/61)$ & $ -2000x^6 + 2040x^5 - 565x^4 + 628x^3 - 349x^2 - 36x - 68 $ \\
$(23/19, 24/95)$ & $ 480x^6 + 1200x^5 + 2657x^4 + 1264x^3 + 497x^2 - 2220x + 660 $ \\
$(2/13, -6/13)$ & $ -600x^6 - 360x^5 + 2660x^4 + 256x^3 - 2698x^2 - 222x + 639 $ \\
$(-2/13, -6/13)$ & $ 1096x^6 - 24x^5 - 3388x^4 + 608x^3 + 2750x^2 - 930x - 225 $ \\
$(79/61, 28/61)$ & $ 1350x^6 + 270x^5 + 3375x^4 - 3944x^3 + 1669x^2 - 5328x + 3392 $ \\
$(-3/7, -8/7)$ & $ -1340x^6 + 5900x^5 - 2227x^4 + 5096x^3 + 2707x^2 + 10x + 1950 $ \\
$(3/7, -8/7)$ & $ 1440x^6 - 4720x^5 - 13227x^4 + 20x^3 + 7389x^2 - 1080x - 432 $ \\
$(-1/7, 24/7)$ & $ -12600x^6 - 3192x^5 - 16975x^4 - 4442x^3 + 5717x^2 - 516x + 4 $ \\
$(1/91, -40/91)$ & $ 3740x^6 - 6420x^5 - 11789x^4 + 18160x^3 + 7315x^2 - 13356x + 2268 $ \\
$(-11/7, 9/7)$ & $ 35220x^6 + 10548x^5 + 43345x^4 - 10038x^3 + 3313x^2 - 228x + 52 $ \\
$(-23/19, 24/95)$ & $ 47824x^6 + 45048x^5 + 13973x^4 - 11016x^3 + 9341x^2 - 2040x + 400 $ \\
$(2/3, 2/9)$ & $ -6883x^6 + 10038x^5 + 62514x^4 + 31744x^3 - 21780x^2 + 3720x - 200 $ \\
$(37/31, 9/31)$ & $ -1548x^6 - 7732x^5 - 33547x^4 - 51202x^3 - 71163x^2 + 65988x - 11772 $ \\
$(-4/17, -3/8)$ & $ -316x^6 + 4764x^5 + 21121x^4 - 11666x^3 - 75071x^2 + 20364x + 49716 $ \\
$(11/7, 9/7)$ & $ 25092x^6 + 70500x^5 + 71881x^4 - 29834x^3 - 80543x^2 - 25908x + 38700 $ \\
$(55/13, 28/3)$ & $ -94x^6 - 114x^5 - 2497x^4 - 660x^3 - 29263x^2 - 10920x - 170352 $
\end{tabular}

Next, we analyze curves of low genus on the Hilbert modular surface.
The specialization $h = 2/9$ gives a rational curve,
parametrized by $g = (m^2 - 4m - 9)/(m^2 + 9)$. The Brauer obstruction
vanishes identically for rational points on this curve, giving a
$1$-parameter family of genus $2$ curves whose Jacobians have real
multiplication by $\sO_{56}$.

The specializations $h = -1/2$ and $h = -5/2$ give genus-$1$ curves
with rational points, both of whose Jacobians have rank $1$. The
Brauer obstruction does not vanish identically on either of these loci.

We also obtain some genus-$1$ curves by pulling back some sections
from the quotient K3 surface. For instance, the section $P_0 + P_1$ gives
the genus-$1$ curve $g^2 = -(7h^4+20h^3-4h^2-32h-16)/16$ which has
rational points (such as $(h,g)=(1,\pm 1)$), with a Jacobian of
conductor $2^4 \, 211$ and \MoW\ group $\cong \Z^2$.  The section $2P_1$
gives the genus-$1$ curve $g^2 = -(49h^4-112h^3+64h^2-32h-16)/16$ which
also has rational points $(h,g)=(1,\pm1)$, with Jacobian of conductor
$2^6 \, 7 \cdot 23$ and \MoW\ group $\cong (\Z/2\Z) \oplus \Z$.
The Brauer obstruction does not vanish identically on either of these loci.

\section{Discriminant $57$}

\subsection{Parametrization}

We start with an elliptic K3 surface with fibers of type $E_6$, $A_7$
and $A_2$ at $t = \infty, 0, 1$ respectively, and a section of height
$57/72 = 19/24 = 4 - 4/3 - 3 \cdot 5/8$.

The Weierstrass equation for this family is
$$
y^2 = x^3 + a x^2 + 2b t^2 (t-1) x + c t^4(t-1)^2,
$$
with
\begin{align*}
a &= t(t-1) \big((g^2-1/4)(h^2-3)+2gh \big) + \big ((g^2+1/4)h - g \big)^2t^2 - t + 1, \\
b &= -(g^2-1/4)^2 (h^2-1) \Big( (1-t) -\big((g^2-1/4)(h^2-2)+2gh \big)t(1-t) - (g-h/2)\big((g^2+1/4)h - g\big)t^2 \Big), \\
c &= (g^2-1/4)^4(h^2-1)^2 \Big( (1-t) + (g-h/2)^2t^2 + t(1-t)\big((g - h/2)^2  - (gh+1/2)^2 \big)\Big).
\end{align*}

We identify an $E_8$ fiber below, and the resulting $3$-neighbor step
takes us to an elliptic fibration with $E_8$ and $A_7$ fibers.

\begin{center}
\begin{tikzpicture}

\draw (0,0)--(6,0)--(6.5,0.866)--(8.5,0.866)--(9,0)--(8.5,-0.866)--(6.5,-0.866)--(6,0);
\draw (2,0)--(2,2);
\draw (5,0)--(5,1);
\draw (5,1)--(4.5,1.866);
\draw (5,1)--(5.5,1.866);
\draw (4.5,1.866)--(5.5,1.866);
\draw [bend right] (7,2.5) to (2,2);
\draw [bend left] (7,2.5) to (5,1);
\draw [bend left] (7,2.5) to (8.5,0.866);
\draw [very thick] (0,0)--(5,0)--(5,1)--(4.5,1.866);
\draw [very thick] (2,0)--(2,1);

\draw (5,0) circle (0.2);
\fill [black] (0,0) circle (0.1);
\fill [black] (1,0) circle (0.1);
\fill [black] (2,0) circle (0.1);
\fill [black] (3,0) circle (0.1);
\fill [black] (4,0) circle (0.1);
\fill [black] (5,0) circle (0.1);
\fill [white] (6,0) circle (0.1);
\fill [white] (6.5,0.866) circle (0.1);
\fill [white] (6.5,-0.866) circle (0.1);
\fill [white] (7.5,0.866) circle (0.1);
\fill [white] (7.5,-0.866) circle (0.1);
\fill [white] (8.5,0.866) circle (0.1);
\fill [white] (8.5,-0.866) circle (0.1);
\fill [white] (9,0) circle (0.1);
\fill [black] (2,1) circle (0.1);
\fill [white] (2,2) circle (0.1);
\fill [black] (5,1) circle (0.1);
\fill [black] (4.5,1.866) circle (0.1);
\fill [white] (5.5,1.866) circle (0.1);
\fill [white] (7,2.5) circle (0.1);

\draw (0,0) circle (0.1);
\draw (1,0) circle (0.1);
\draw (2,0) circle (0.1);
\draw (3,0) circle (0.1);
\draw (4,0) circle (0.1);
\draw (5,0) circle (0.1);
\draw (6,0) circle (0.1);
\draw (6.5,0.866) circle (0.1);
\draw (6.5,-0.866) circle (0.1);
\draw (7.5,0.866) circle (0.1);
\draw (7.5,-0.866) circle (0.1);
\draw (8.5,0.866) circle (0.1);
\draw (8.5,-0.866) circle (0.1);
\draw (9,0) circle (0.1);
\draw (2,1) circle (0.1);
\draw (2,2) circle (0.1);
\draw (5,1) circle (0.1);
\draw (4.5,1.866) circle (0.1);
\draw (5.5,1.866) circle (0.1);
\draw (7,2.5) circle (0.1);
\draw (7,2.5) [above] node{$P$};

\end{tikzpicture}
\end{center}

Since $P \cdot F' = 2$ for the new fiber $F'$, while the intersection
number of the remaining component of the $E_6$ fiber with $F'$ is $3$,
we deduce that the new fibration has a section.

The new fibration has a section of height $57/8 = 4 + 2 \cdot 2 - 1
\cdot 7/8$.  Now we can identify an $E_7$ fiber $F''$ and move to the
$E_8 E_7$ fibration by a $2$-neighbor step. Note that since $Q \cdot
F'' = 7$, while the remaining component of the $A_7$ fiber intersects
$F''$ in $2$, the new fibration will have a section.

\begin{center}
\begin{tikzpicture}

\draw (6.95,0)--(6.95,2);
\draw (7.05,0)--(7.05,2);
\draw (-1,0)--(8,0);
\draw (1,0)--(1,1);
\draw (8,0)--(8.5,0.866)--(10.5,0.866)--(11,0)--(10.5,-0.866)--(8.5,-0.866)--(8,0);

\draw [bend right] (7,2) to (6,0);
\draw [bend left] (7,2) to (8.5,0.866);
\draw [very thick] (7,0)--(8,0);
\draw [very thick] (10.5,0.866)--(8.5,0.866)--(8,0)--(8.5,-0.866)--(10.5,-0.866);

\fill [white] (-1,0) circle (0.1);
\fill [white] (0,0) circle (0.1);
\fill [white] (1,0) circle (0.1);
\fill [white] (2,0) circle (0.1);
\fill [white] (3,0) circle (0.1);
\fill [white] (4,0) circle (0.1);
\fill [white] (5,0) circle (0.1);
\fill [white] (6,0) circle (0.1);
\fill [white] (1,1) circle (0.1);
\fill [black] (7,0) circle (0.1);
\fill [black] (8,0) circle (0.1);
\fill [black] (8.5,0.866) circle (0.1);
\fill [black] (9.5,0.866) circle (0.1);
\fill [black] (10.5,0.866) circle (0.1);
\fill [black] (8.5,-0.866) circle (0.1);
\fill [black] (9.5,-0.866) circle (0.1);
\fill [black] (10.5,-0.866) circle (0.1);
\fill [white] (11,0) circle (0.1);
\fill [white] (7,2) circle (0.1);

\draw (7,0) circle (0.2);
\draw (-1,0) circle (0.1);
\draw (0,0) circle (0.1);
\draw (1,0) circle (0.1);
\draw (2,0) circle (0.1);
\draw (3,0) circle (0.1);
\draw (4,0) circle (0.1);
\draw (5,0) circle (0.1);
\draw (6,0) circle (0.1);
\draw (1,1) circle (0.1);
\draw (7,0) circle (0.1);
\draw (8,0) circle (0.1);
\draw (8.5,0.866) circle (0.1);
\draw (9.5,0.866) circle (0.1);
\draw (10.5,0.866) circle (0.1);
\draw (8.5,-0.866) circle (0.1);
\draw (9.5,-0.866) circle (0.1);
\draw (10.5,-0.866) circle (0.1);
\draw (11,0) circle (0.1);
\draw (7,2) circle (0.1);
\draw [above] (7,2) node {$Q$};

\end{tikzpicture}
\end{center}

Now we can read out the Igusa-Clebsch invariants and compute the
branch locus, which is the subfamily with an extra $\I_2$ fiber.

\begin{theorem}
A birational model over $\Q$ for the Hilbert modular surface
$Y_{-}(57)$ as a double cover of $\Proj^2_{g,h}$ is given by the
following equation:
\begin{align*}
z^2 &= (256g^6-176g^4+40g^2-3)h^4+(2176g^5-960g^3+104g)h^3 \\
    & \quad  +(4320g^4-688g^2-34)h^2  +(-3456g^5+576g^3+328g)h \\
    & \quad  -2160g^4-648g^2 +361.
\end{align*}
It is an honestly elliptic surface, with arithmetic genus $2$ and
Picard number $29$.
\end{theorem}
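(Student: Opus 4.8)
The plan is to follow the four-step method of Section~\ref{method}, taking as input the two neighbor steps already displayed in the parametrization. First I would start from the explicit Weierstrass family with $E_6$, $A_7$, and $A_2$ fibers and a section of height $57/72$, which furnishes a birational model of $\sF_{L_{57}}$ over $\Proj^2_{g,h}$; the $3$-neighbor step to the $E_8 A_7$ fibration and the subsequent $2$-neighbor step to the $E_8 E_7$ fibration are then carried out as in Section~\ref{neighbors}, the existence of sections being guaranteed by Proposition~\ref{automaticsections} and confirmed by the displayed coprime intersection numbers ($P\cdot F'=2$ against $3$ for the first step, $Q\cdot F''=7$ against $2$ for the second). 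From the resulting $E_8 E_7$ Weierstrass equation I read off the Igusa--Clebsch invariants by means of Theorem~\ref{thesisthm}, which via Theorem~\ref{humbert} realizes $\Proj^2_{g,h}$ as a birational model of the Humbert surface $\sH_{57}$ together with its map to $\A_2$.

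Next I would determine the branch locus of the double cover $Y_{-}(57)\to\sH_{57}$. By the lattice discussion preceding Step~3, the branch locus lies in the sublocus where the Picard number jumps with the N\'eron--Severi discriminant changing to $2\cdot 57=114$; enumerating the finitely many ways this can happen for the $E_6 A_7 A_2$ configuration (an extra $\I_2$ fiber, a promotion of one reducible fiber, or a new low-height section) gives a short list of candidate factors $f_i(g,h)$. I expect only the ``extra $\I_2$'' factor to survive, producing the single irreducible branch polynomial displayed (quartic in $h$). To pin down which factors occur and the squarefree twist $C$ (here $C=1$), I would apply Step~4: since $Y_{-}(57)$ has good reduction away from $3$ and $19$, I specialize $(g,h)$ to many values over several primes $p\nmid 57$, compute $\#C_{g,h}(\F_p)$ and $\#C_{g,h}(\F_{p^2})$ to form the polynomial $Q(X)$, and invoke Lemma~\ref{twistrealmult} to test for real multiplication by $\sO_{57}$; demanding that each such $\F_p$-point lift to the double cover eliminates all but the asserted equation.

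For the geometric classification I would exhibit the genus-$1$ fibration obtained by regarding the equation as a quartic in $h$ over $\Proj^1_g$ and pass to its Jacobian, as in the Analysis subsection. Reading the degrees of the Weierstrass coefficients, equivalently summing the Euler-number contributions of the singular fibers, yields $\chi(\sO)=3$, hence arithmetic genus $2$; since $\chi>2$ the surface is neither rational nor a K3, and a relatively minimal elliptic surface with $\chi=3$ and $q=0$ has Kodaira dimension $1$, i.e.\ is properly (``honestly'') elliptic. For $\rho=29$ I would bound below by producing the reducible fibers together with explicit Mordell--Weil sections (found modulo small primes and lifted to $\Q$ or a quadratic field) and computing their height pairing, and bound above either by van Luijk's point-counting method~\cite{vL} or by appeal to the known Hodge-theoretic computations; matching the two bounds, and checking that the exhibited sublattice of the N\'eron--Severi group is saturated at the primes dividing its discriminant, gives exactly $29$.

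The hardest part will be the exact Picard number. On one hand the lower-bound sections must be shown genuinely independent, with their span (together with the trivial lattice) saturated; on the other hand the van Luijk upper bound is computationally demanding here because $b_2=34$ forces point counts over several extensions $\F_{p^e}$ before the Frobenius characteristic polynomial, and hence $\rho_0(p)$, is determined. Reconciling the two bounds, using the Artin--Tate square-class obstruction to rule out a borderline value if necessary, is where the real effort lies; by contrast the twist determination of Step~4, though it requires care with Lemma~\ref{twistrealmult}, is essentially routine once a single prime separates the candidates.
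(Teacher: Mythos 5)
Your proposal is correct and follows essentially the same route as the paper: the same $E_6 A_7 A_2$ starting family, the same $3$-neighbor step to $E_8 A_7$ and $2$-neighbor step to $E_8 E_7$ (with the same coprimality checks for sections), the same lattice-enumeration-plus-point-counting determination of the branch locus and twist, and the same classification via the genus-$1$ fibration over $\Proj^1_g$, its Jacobian, explicit Mordell--Weil sections for the lower bound $\rho \geq 26 + 3 = 29$, and van Luijk point counts (the paper uses $p = 7, 11$, with quotient-surface analysis as an alternative) plus a saturation check for the upper bound. The only cosmetic difference is your fallback to the Artin--Tate square-class comparison, which the paper does not need for this discriminant.
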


\subsection{Analysis}

The extra involution is $(g,h) \mapsto (-g,-h)$.
The branch locus has genus $2$; the transformation
$$
(g,h) = \left(  \frac{y+x^2+1}{4x^3} \quad, \quad \frac{x(4y-x^4-14x^2-1)}{(x^2-1)^2} \right)
$$
converts it to Weierstrass form
$$
y^2 = 3x^6 + 11x^4 + x^2 + 1.
$$
It is isomorphic to the quotient of $X_0(57)$ by the Atkin-Lehner
involution $w_{57}$.

The Hilbert modular surface $Y_{-}(57)$ is an honestly elliptic
surface, with a genus $1$ fibration over $\Proj^1_g$, and in fact,
setting $h = 1$ gives a section. Therefore, we may use the Jacobian form,
which has the Weierstrass equation
\begin{align*}
y^2 &=  x^3 + 4(12g^2-1)(28g^2-5)x^2 -4(2g-1)^3(2g+1)^3(12g^2-5)(108g^2-19)x \\
 & \qquad  + (2g-1)^6(2g+1)^6(108g^2-19)^2.
\end{align*}

It has reducible fibers of type $\I_7$ at $g = \pm 1/2$, $\IV$ at $g = \infty$,
$\I_2$ at $g = \pm {\sqrt{57}}/18$, and $\I_3$ at the four roots of
$432 g^4+216 g^2-49$ (which generate a dihedral extension containing
$\Q({\sqrt{57}})$). The trivial lattice contributes $26$ to
the rank of the N\'{e}ron-Severi lattice, leaving room for \MoW\
rank at most~$4$.  We find the sections
\begin{align*}
P_1 &= \big(0, (2g-1)^3(2g+1)^3(108g^2-19)\big), \\
P_2 &= \big( (2g-1)^2(2g+1)^2(6g-1)(6g+1), 2g(2g-1)^2(2g+1)^2(432g^4+216g^2-49) \big), \\
P_3 &= \big( (\mu+3)(18g-\mu)(2g-1)(2g+1)^3/3, (36g^2+9-2\mu)(2g+6+\mu)(18g-\mu)(2g-1)(2g+1)^3/3 \big),
\end{align*}
(where $\mu = \sqrt{57}$) with non-degenerate height pairing matrix
$$
\frac{1}{42} \left(\begin{array}{rrr}
38 & 0 & -19 \\
0 & 20 & -10 \\
-19 & -10 & 39
\end{array}\right).
$$ 
Therefore, the Picard number is at least $29$. In fact, counting
points modulo $7$ and $11$ shows that the Picard number must be
exactly $29$. Alternatively, analysis of the quotients below gives
another proof. The sections above together with the trivial lattice
span a lattice of discriminant $11970 = 2 \cdot 3^2 \cdot 5 \cdot 7
\cdot 19$. We checked that it is $3$-saturated, and thus it is all of
the N\'eron-Severi lattice. Therefore, these sections generate the
\MoW\ group.

Next, we analyze the quotient by the involution $(g,h,z) \mapsto
(-g,-h,z)$. This turns out to have a genus-$1$ fibration with section
over the $t$-line, where $t = g^2$. The Weierstrass equation may be
written (after a linear change of the base parameter and a Weierstrass
transformation)
$$
y^2 = x^3 + 4(t+1)(3t+2)(7t+2) x^2  -4t^3(t+1)^2(3t-2)(27t+8)x + t^6(t+1)^3(27t+8)^2.
$$
This is an elliptic K3 surface with fibers of type $\I_7$ at $t = 0$,
$\I^*_0$ at $t = -1$, $\I_2$ at $t= -8/27$, $\II$ at $t = \infty$, and
$\I_3$ at $t = -2 \pm 2 \sqrt{57}/9$. Thus the trivial lattice has
rank $17$, leaving room for at most three independent sections.

We find the independent sections
\begin{align*}
P_1 &= \big( t^2(t+1)(9t + 8), t^2(t+1)^2(27t^2+108t+32)\big), \\
P_2 &= \big( (t+1)(27t+8)(3t^2-64t-64)/57,  (t+1)^2(t+40)(27t+8)(27t^2+108t+32)/57^{3/2} \big)
\end{align*}
of heights $5/21$ and $7/6$ respectively, and orthogonal with respect
to the height pairing. Therefore the Picard number is at least $19$.
Counting points modulo $11$ and $13$ shows that the Picard number
cannot be $20$. These sections together with the trivial lattice
generate a lattice of discriminant $140$. We check that it is
$2$-saturated and must therefore be the full N\'eron-Severi lattice.

Replacing $g^2$ by $t$ in the Weierstrass equation for $Y_{-}(57)$
gives a quadratic twist of the above quotient K3 surface, given by the Weierstrass equation 
$$
y^2 =  x^3 + 4(12t-1)(28t-5)x^2 -4(4t-1)^3(12t-5)(108t-19)x + (4t-1)^6(108t-19)^2.
$$
This is an elliptic K3 surface with fibers of type $\I_7$ at $t =
1/4$, $\I_2$ at $t = 19/108$, $\I_3$ at $t = -1/4 \pm \sqrt{57}/18$
and $\IV^*$ at $t = \infty$. The section $P = \big(0, (4t-1)^3(108t -
19)\big)$ has height $19/42$. Therefore, this K3 surface is
singular. Together with the trivial lattice, the section $P$ spans a
lattice of discriminant $171 = 3^2 \cdot 19$. Since there is no
$3$-torsion section, and we cannot have a section of height $19/(3^2
\cdot 42)$ due to the configuration of fibers, this must be the full
N\'eron-Severi lattice.

Therefore, the \MoW\ rank of the original surface must be $2 + 1 = 3$.

\subsection{Examples}

We list some points of small height and corresponding genus $2$ curves.

\begin{tabular}{l|c}
Rational point $(g,h)$ & Sextic polynomial $f_6(x)$ defining the genus $2$ curve $y^2 = f_6(x)$. \\
\hline \hline \\ [-2.5ex]
$(-1/10, -5/3)$ & $ -2x^6 + 15x^5 + 131x^4 + 240x^3 - 61x^2 - 8 $ \\
$(7/10, -29/3)$ & $ 81x^6 + 54x^5 - 286x^4 - 186x^3 + 323x^2 + 120x - 130 $ \\
$(7/18, 9)$ & $ -108x^6 + 324x^5 - 243x^4 + 186x^3 - 279x^2 - 80 $ \\
$(-7/6, 21/5)$ & $ -100x^6 - 390x^5 - 204x^4 + 74x^3 - 69x^2 - 12x + 8 $ \\
$(-1/18, -3)$ & $ -220x^6 + 420x^5 - 111x^4 + 238x^3 + 381x^2 + 168x + 24 $ \\
$(-1/4, -16/3)$ & $ 95x^6 - 114x^5 + 325x^4 + 35x^3 + 10x^2 - 429x - 234 $ \\
$(5/14, 35/3)$ & $ 120x^6 - 192x^5 + 122x^4 + 286x^3 - 448x^2 + 357x - 63 $ \\
$(19/18, -3/5)$ & $ 58x^6 + 39x^5 - 129x^4 + 132x^3 + 519x^2 + 240x - 40 $ \\
$(-7/10, 29/3)$ & $ -390x^6 + 451x^5 - 230x^4 - 593x^3 + 682x^2 + 220x - 200 $ \\
$(4/9, 18)$ & $ -540x^6 + 729x^5 - 135x^4 + 255x^3 - 225x^2 - 36x - 52 $ \\
$(-5/2, -10/3)$ & $ -60x^6 - 156x^5 + 137x^4 + 310x^3 - 351x^2 + 108x + 756 $ \\
$(-17/10, -47/33)$ & $ 60x^5 + 839x^4 + 278x^3 - 652x^2 - 36x - 489 $ \\
$(1/18, 3)$ & $ -60x^6 + 60x^5 - 3x^4 + 184x^3 + 669x^2 + 132x + 868 $ \\
$(5/2, 10/3)$ & $ -819x^5 - 1042x^4 + 61x^3 + 248x^2 - 48x $ \\
$(-1/18, 9/5)$ & $ -40x^6 - 72x^5 - 45x^4 - 534x^3 - 297x^2 - 324x - 1188 $ \\
$(-5/2, 5/3)$ & $ -36x^6 + 84x^5 + 491x^4 - 750x^3 - 337x^2 - 912x - 1200 $ 
\end{tabular}

The specialization $h = 0$ gives a genus-$1$ curve with rational
points, whose Jacobian has rank $1$. Of course, there is a large
supply of genus-$1$ curves, simply by specializing $g$, since we have an
elliptic surface.

The sections $P_1$ and $P_2$ give rational curves $h = -16g/(4g^2 - 1)$
and $h = (72g^3-36g^2+78g-7)/\big((4g^2-1)(6g-7)\big)$ respectively.
The Brauer obstruction vanishes on these curves.

\section{Discriminant $60$}

\subsection{Parametrization}

We start with a K3 elliptic surface with $E_6, D_6, A_4$ fibers at
$\infty, 0, 1$ respectively. The Weierstrass equation for this family is
$$
y^2 = x^3 + a t \, x^2 + 2 b t^3 (t-1) \, x + c t^5 (t-1)^2,
$$
with
\begin{align*}
a &= (h^2-g^2-1)^2 -4 (2 h^2-3 g^2+6) (t-1), \\
b &= 4\big( -4(h^2-g^2-1)^2 + 16 (h^2-2 g^2+1) (t-1)\big), \\
c &= 256\big( (h^2-g^2-1)^2 + 4g^2 (t-1)\big). \\
\end{align*}

We identify an $E_8$ fiber below, and move to this elliptic fibration
by a $3$-neighbor step.

\begin{center}
\begin{tikzpicture}

\draw (4,0)--(4,1);
\draw (4,1)--(3.05,1.69);
\draw (4,1)--(4.95,1.69);
\draw (3.05,1.69)--(3.41,2.81);
\draw (4.95,1.69)--(4.59,2.81);
\draw (3.41,2.81)--(4.59,2.81);
\draw (-1,0)--(9,0);
\draw (7,0)--(7,2);
\draw (0,0)--(0,1);
\draw (2,0)--(2,1);
\draw [very thick] (4.95,1.69)--(4,1)--(4,0)--(9,0);
\draw [very thick] (7,0)--(7,1);

\draw (4,0) circle (0.2);
\fill [black] (4,0) circle (0.1);
\fill [black] (4,1) circle (0.1);
\fill [white] (3.05,1.69) circle (0.1);
\fill [black] (4.95,1.69) circle (0.1);
\fill [white] (3.41,2.81) circle (0.1);
\fill [white] (4.59,2.81) circle (0.1);
\fill [white] (-1,0) circle (0.1);
\fill [white] (0,0) circle (0.1);
\fill [white] (1,0) circle (0.1);
\fill [white] (2,0) circle (0.1);
\fill [white] (3,0) circle (0.1);
\fill [black] (5,0) circle (0.1);
\fill [black] (6,0) circle (0.1);
\fill [black] (7,0) circle (0.1);
\fill [black] (8,0) circle (0.1);
\fill [black] (9,0) circle (0.1);
\fill [black] (7,1) circle (0.1);
\fill [white] (7,2) circle (0.1);
\fill [white] (0,1) circle (0.1);
\fill [white] (2,1) circle (0.1);

\draw (4,0) circle (0.1);
\draw (4,1) circle (0.1);
\draw (3.05,1.69) circle (0.1);
\draw (4.95,1.69) circle (0.1);
\draw (3.41,2.81) circle (0.1);
\draw (4.59,2.81) circle (0.1);
\draw (-1,0) circle (0.1);
\draw (0,0) circle (0.1);
\draw (1,0) circle (0.1);
\draw (2,0) circle (0.1);
\draw (3,0) circle (0.1);
\draw (5,0) circle (0.1);
\draw (6,0) circle (0.1);
\draw (7,0) circle (0.1);
\draw (8,0) circle (0.1);
\draw (9,0) circle (0.1);
\draw (7,1) circle (0.1);
\draw (7,2) circle (0.1);
\draw (0,1) circle (0.1);
\draw (2,1) circle (0.1);

\end{tikzpicture}
\end{center}

The new elliptic fibration has fibers of type $E_8$, $D_6$ and $A_1$,
and a section $P$ of height $60/8 = 15/2 = 4 + 2 \cdot 2 - 1/2$.  We now
identify an $E_7$ fiber $F'$ and perform a $2$-neighbor step to go to
the new fibration. Note that it has a section, since $P \cdot F' = 7$,
while the remaining component of the $D_6$ fiber has intersection number
$2$ with~$F'$.

\begin{center}
\begin{tikzpicture}

\draw (0,0)--(13,0);
\draw (1,0)--(1,1);
\draw (3,0)--(3,1);
\draw (5,0)--(5,1);
\draw (11,0)--(11,1);
\draw (4.95,1)--(4.95,2);
\draw (5.05,1)--(5.05,2);
\draw [bend right] (5.5,3) to (4,0);
\draw (5.5,3)--(5,2);
\draw [bend left] (5.45,3) to (4.95,0);
\draw [bend left] (5.55,3) to (5.05,0);
\draw [bend left] (5.5,3) to (6,0);
\draw [very thick] (0,0)--(5,0)--(5,1);
\draw [very thick] (3,0)--(3,1);

\fill [black] (0,0) circle (0.1);
\fill [black] (1,0) circle (0.1);
\fill [black] (2,0) circle (0.1);
\fill [black] (3,0) circle (0.1);
\fill [black] (4,0) circle (0.1);
\fill [black] (5,0) circle (0.1);
\fill [white] (6,0) circle (0.1);
\fill [white] (7,0) circle (0.1);
\fill [white] (8,0) circle (0.1);
\fill [white] (9,0) circle (0.1);
\fill [white] (10,0) circle (0.1);
\fill [white] (11,0) circle (0.1);
\fill [white] (12,0) circle (0.1);
\fill [white] (13,0) circle (0.1);
\fill [white] (1,1) circle (0.1);
\fill [black] (3,1) circle (0.1);
\fill [black] (5,1) circle (0.1);
\fill [white] (5,2) circle (0.1);
\fill [white] (11,1) circle (0.1);
\fill [white] (5.5,3) circle (0.1);
\draw (5.5,3) [above] node{$P$};

\draw (5,0) circle (0.2);
\draw (0,0) circle (0.1);
\draw (1,0) circle (0.1);
\draw (2,0) circle (0.1);
\draw (3,0) circle (0.1);
\draw (4,0) circle (0.1);
\draw (5,0) circle (0.1);
\draw (6,0) circle (0.1);
\draw (7,0) circle (0.1);
\draw (8,0) circle (0.1);
\draw (9,0) circle (0.1);
\draw (10,0) circle (0.1);
\draw (11,0) circle (0.1);
\draw (12,0) circle (0.1);
\draw (13,0) circle (0.1);
\draw (1,1) circle (0.1);
\draw (3,1) circle (0.1);
\draw (5,1) circle (0.1);
\draw (5,2) circle (0.1);
\draw (11,1) circle (0.1);
\draw (5.5,3) circle (0.1);

\end{tikzpicture}
\end{center}

From the resulting $E_8 E_7$ fibration, we can read out the map to
$\A_2$. 

\begin{theorem}
A birational model over $\Q$ for the Hilbert modular surface
$Y_{-}(60)$ as a double cover of $\Proj^2_{g,h}$ is given by the following
equation:
\begin{align*}
z^2 &= -(h^2-2h-g^2+5)(h^2+2h-g^2+5) \cdot \\
   & \qquad (8h^6-25g^2h^4+24h^4+26g^4h^2-86g^2h^2+24h^2-9g^6+66g^4+47g^2+8).
\end{align*}
It is a surface of general type.
\end{theorem}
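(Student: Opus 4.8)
The plan is to run the four-step procedure of Section~\ref{method}, taking as input the family of elliptic K3 surfaces over $\Proj^2_{g,h}$ exhibited above, whose generic member has N\'eron--Severi lattice $L_{60}$ (of rank $18$ and discriminant $60$, with Mordell--Weil rank $0$) realized through its $E_6$, $D_6$, and $A_4$ fibers. First I would carry out the $3$-neighbor step indicated in the first diagram, passing to the $E_8$, $D_6$, $A_1$ fibration carrying a section of height $15/2$, and then the $2$-neighbor step of the second diagram, arriving at an elliptic fibration with $E_8$ and $E_7$ fibers. Applying Theorem~\ref{thesisthm} and the explicit formulas of Section~\ref{thesis}, I read off the Igusa--Clebsch invariants $(I_2:I_4:I_6:I_{10})$ as rational functions of $(g,h)$; by Theorem~\ref{humbert} this exhibits $\Proj^2_{g,h}$ as a birational model of the Humbert surface $\sH_{60}$ together with its map to $\A_2$. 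These are routine applications of the neighbor method of Section~\ref{neighbors}, and I would relegate the bulky formulas to the auxiliary files.

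Next comes the branch locus of the double cover $Y_{-}(60)\to\sH_{60}$. Following Step~3, I would enumerate the loci in $\Proj^2_{g,h}$ where the Picard number of the K3 surface jumps by one, with discriminant changing to $2D=120$ or $D/2=30$. Lattice bookkeeping produces a short list of candidate factors: the two conics $h^2\pm 2h-g^2+5$, along which the $D_6$ fiber is promoted to $E_7$, and the sextic factor $8h^6-25g^2h^4+\cdots+8$, along which the surface acquires an extra $\I_2$ fiber. The claimed branch curve is the product of all three, of total degree $2+2+6=10$, with quadratic twist $C=-1$. To pin down which candidate factors actually occur and to fix the twist (Step~4), I would reduce modulo several odd primes $p\nmid 60$, specialize $(g,h)\in\F_p^2$, build the genus-$2$ curve $C_{g,h}$ from its Igusa--Clebsch invariants, and count points over $\F_p$ and $\F_{p^2}$ to form the polynomial $Q(X)$ of Lemma~\ref{twistrealmult}. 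Detecting when $J(C_{g,h})$ has real multiplication by $\sO_{60}$ over $\F_p$ forces $Cf(g,h)$ to be a square; running this over many $(g,h)$ eliminates all but the displayed product and determines $C=-1$, using that $Y_{-}(60)$ has good reduction away from primes dividing $60$ so that only finitely many twists need be tested.

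Finally, to establish that $Y_{-}(60)$ is of general type, I would analyze the double cover $z^2=f(g,h)$ directly, where $f$ is the degree-$10$ product above. Formally, the canonical class of a smooth model is $\pi^*\sO_{\Proj^2}(d-3)=\pi^*\sO_{\Proj^2}(2)$ with $2d=10$, which is big. But this is only the naive expectation, since sufficiently bad singularities of the branch curve can lower the Kodaira dimension; indeed this is exactly what happens for the nearby discriminants $44$, $53$, and $57$, where analogous double covers turn out to be honestly elliptic. The main obstacle is therefore the singularity analysis: I would locate the singular points of the degree-$10$ branch curve (the self-intersections and mutual intersections of its three components, and the singularities of the sextic factor), resolve the induced singularities of $Y_{-}(60)$, and check that they are canonical, so that $K$ remains big on the minimal resolution. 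Alternatively one may appeal to the Enriques--Kodaira classification of these surfaces carried out in \cite{Hi, HVdV, HZ}. Either route yields that $Y_{-}(60)$ has Kodaira dimension $2$.
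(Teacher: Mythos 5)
Your proposal follows the paper's own route essentially exactly: the same $E_6 D_6 A_4$ family of K3 surfaces, the same $3$-neighbor step to an $E_8 D_6 A_1$ fibration with a section of height $15/2$, the same $2$-neighbor step to the $E_8 E_7$ fibration, and the same Step 3/Step 4 determination of the branch locus (the two conics $h^2\pm 2h-g^2+5$ where the $D_6$ fiber promotes to $E_7$, plus the sextic giving an extra $\I_2$ fiber) and of the quadratic twist by counting points modulo primes prime to $60$. Your treatment of the general-type claim is actually more careful than the paper's, which simply asserts it; your caveat that the naive canonical-class computation for a double cover branched in degree $10$ must be backed by a singularity analysis of the branch curve (or by appeal to the Hirzebruch--Van de Ven--Zagier classification) is exactly right, as the honestly elliptic surfaces at discriminants $44$, $53$, and $61$ demonstrate.
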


\subsection{Analysis}

The surface $Y_{-}(60)$ has two extra commuting involutions,
$\iota_1: (g,h) \mapsto (-g,h)$ and $\iota_2: (g,h) \mapsto (g,-h)$.
The two simpler components $h^2 \pm 2h - g^2 + 5 = 0$ of the branch locus
correspond to the subfamily of elliptic K3 surfaces where the
$D_6$ fiber gets promoted to an $E_7$ fiber, while the more complicated
component corresponds to an extra $\I_2$ fiber.
The simpler components are easily seen to be rational curves,
as they define conics in the $(g,h)$-plane, with rational points.
The last component is also a rational curve; a parametrization is given by
$$
(g,h) = \left( \frac{(t^2+1)^3}{t(t^2-1)(t^2-2t-1)} \quad , \quad
 \frac{(t^2+2t-1)(t^4-t^3+2t^2+t+1)}{t(t^2-1)(t^2-2t-1)} \right).
$$
This Hilbert modular surface is of general type.

We now analyze some of the quotients of this surface by the
involutions. The quotient under both involutions is given by
\begin{align*}
z^2 &= -(h^2-2gh+6h+g^2-10g+25) \\
 & \qquad (8h^3-25gh^2+24h^2+26g^2h-86gh+24h-9g^3+66g^2+47g+8)
\end{align*}
and this is actually a rational surface; the transformation
$(g,h) = (h' + g'^2 + 2g' + 5, h' + g'^2)$ converts the above equation into a conic bundle over $\Proj^1_{h'}$ with a section.

The quotient by the involution $\iota_1$ turns out to be an elliptic
K3 surface, with Weierstrass equation given by
\begin{align*}
y^2 &= x^3 + 2t^2(215t^2+356t+140) \, x^2  -t^3(t+2)^3(5t^2+874t+864) \, x/3  \\
 & \qquad -8t^4(t+2)^6(163t^2-54t-216)/27.
\end{align*}

It has reducible fibers of type $E_6$ at $t = 0$, $\I_6$ at $t = -2$,
$\I_2$ at $t= -1$ and $t = -2/9$, and $\I_3$ at $t = (-7 \pm 5
\sqrt{5})/19$. The trivial lattice therefore has rank $19$. We find a
$3$-torsion section with $x$-coordinate $11 t^2 (t+2)^2/3$ and a
non-torsion section with $x = -t^2(t+2)^2/3$. Therefore the K3 surface
is singular. These sections, together with the trivial lattice,
generate a lattice of rank $20$ and discriminant $60$. It must be the
full N\'{e}ron-Severi lattice, since otherwise there would have to be
a $6$-torsion section or section of height $5/24$,
neither of which is possible with our configuration of reducible fibers.

The quotient by the involution $\iota_2$ is also an elliptic K3
surface, with Weierstrass equation
$$
y^2 = x^3 -(11 t^4-20 t^2+8) \, x^2+ 16 (t-1)^3 (t+1)^3 (4 t^2-5) x.
$$
This has bad fibers of type $\I_6$ at $t = \pm 1$, $\I_2$ at $t =
\pm \sqrt{5}/2$, and $\I_3$ at $t = \pm 2/\sqrt{3}$. Therefore the
trivial lattice has rank $18$, leaving room for at most two independent
sections. We find the following sections, of which the first is $6$-torsion.
\begin{align*}
P_0 &= \big( 2(t^2 - 1)(4t^2 - 5) , \, 4(t^2-1)(3t^2-4)(4t^2-5) \big), \\
P_1 &= \big( (11 - 3 \mu) t^2(t^2 - 1)/2, \, (3+5 \mu)t(t^2-1)/12 (18t^2-15+\mu ) \big), \\
P_2 &= \big( (11 + 3 \mu) t^2(t^2 - 1)/2, \, (3-5 \mu)t(t^2-1)/12 (18t^2-15-\mu ), \big)
\end{align*}
where $\mu = \sqrt{-15}$. The height pairing matrix of $P_1$ and $P_2$
is
$$
\frac{1}{3} \left( \begin{array}{cc} 7 & -2 \\ -2 & 7
\end{array} \right).
$$ 
Therefore the Picard number of this K3 surface is $20$.  The
discriminant of the sublattice of $\NS(X)$ generated by the trivial
lattice and these sections is $180$. We checked that this lattice is
$2$- and $3$-saturated, which proves that it is the entire Picard
group.

\subsection{Examples}

We list some points of small height and corresponding genus $2$ curves.

\begin{tabular}{l|c}
Rational point $(g,h)$ & Sextic polynomial $f_6(x)$ defining the genus $2$ curve $y^2 = f_6(x)$. \\
\hline \hline \\ [-2.5ex]
$(17/6, -13/6)$ & $ 468x^6 + 1332x^5 + 1345x^4 - 20x^3 + 1051x^2 - 150x + 186 $ \\
$(-17/6, 13/6)$ & $ -12x^6 + 132x^5 + 371x^4 + 1506x^3 + 1391x^2 - 528x - 1872 $ \\
$(17/6, 13/6)$ & $ -942x^6 + 3150x^5 - 869x^4 - 4220x^3 + 745x^2 + 2244x + 468 $ \\
$(-17/6, -7/6)$ & $ 48x^6 - 360x^5 + 1907x^4 - 4000x^3 + 5195x^2 + 828x + 2556 $ \\
$(-17/6, -13/6)$ & $ -4500x^6 - 9300x^5 - 5365x^4 + 4106x^3 + 3335x^2 - 2112x - 2648 $ \\
$(-17/6, 7/6)$ & $ -4116x^6 - 6468x^5 + 8617x^4 + 11086x^3 - 12239x^2 - 3708x + 4212 $ \\
$(17/6, 7/6)$ & $ 72x^6 - 2136x^5 + 15869x^4 - 258x^3 - 1759x^2 + 108x - 4 $ \\
$(17/6, -7/6)$ & $ 12x^6 - 36x^5 + 929x^4 - 1458x^3 + 16361x^2 - 4452x + 1476 $ \\
$(51/5, -54/5)$ & $ 9248x^6 - 2312x^5 + 12427x^4 - 29852x^3 - 21811x^2 + 26690x + 21270 $ \\
$(57/10, 43/10)$ & $ 2272x^6 + 35064x^5 + 12877x^4 - 24234x^3 - 37079x^2 + 29700x - 3500 $ \\
$(-51/5, -54/5)$ & $ -6368x^6 - 20760x^5 + 11991x^4 + 29560x^3 - 61443x^2 + 39870x - 12150 $ \\
$(-46/15, -49/15)$ & $ -575x^6 - 3075x^5 - 12269x^4 - 16401x^3 - 56024x^2 - 21792x - 73242 $ \\
$(-61/10, 49/10)$ & $ -36450x^6 - 10530x^5 + 6327x^4 + 78760x^3 - 29879x^2 - 17700x + 2612 $ \\
$(61/10, 49/10)$ & $ 8612x^6 - 4020x^5 - 52381x^4 - 4290x^3 + 91787x^2 + 47220x - 11540 $ \\
$(61/10, -49/10)$ & $ -17092x^6 + 13812x^5 - 101885x^4 + 63210x^3 - 89229x^2 + 69580x - 15092 $ \\
$(-51/5, 54/5)$ & $ -100572x^6 - 102884x^5 - 147679x^4 - 25432x^3 + 27727x^2 + 35870x - 11890 $ 
\end{tabular}

We now describe some curves of genus $1$, possessing infinitely many
rational points, on the Hilbert modular surface. These were obtained
by pulling back rational curves on the quotients by $\iota_1$ and
$\iota_2$ obtained as sections of the elliptic fibrations.  In each case
we give the curve as a double cover of $\Proj^1$, exhibit a coordinate
of a point on $\Proj^1$ that lifts to a rational point, and give the
conductor and \MoW\ group.
$$
\begin{array}{lccc}
\phantom{g^2 = }\textrm{Equation} & \textrm{point}
 & \textrm{conductor} & \textrm{group} \\[5pt]
g^2 = \frac{v^4+132v^3+11784v^2+566280v+20175732}{36(v+9)^2}, \quad h = \frac{v^2+72v+4560}{6(v+9)}
 & v = \infty & 2^4 \, 3^2 \, 11 \cdot 97 & (\Z/2\Z) \oplus \Z \\[5pt]
g^2 = \frac{4913v^4+1990v^2+153}{36(v^2-1)^2}, \quad h = -\frac{25v^2+3}{2(v^2-1)}
 & v = 1 & 2 \cdot 3 \cdot 7 \cdot 17 \cdot 19 & (\Z/2\Z)^2 \oplus \Z \\[5pt]
g^2 = \frac{14049t^4-57248t^3+87462t^2-59840t+15657}{(t+1)^2(9t-11)^2}, \quad h = -\frac{2(t-1)(54t-67)}{(t+1)(9t-11)}
 & t = -1 & 5^2 \, 11 \cdot 17 \cdot 47 & \Z^2 \\[5pt]
h^2 = \frac{833v^4+190v^2+1}{4(v^2-1)^2}, \quad g = \frac{27v^2+5}{2(v^2-1)}
 & v = 0 & 238 = 2 \cdot 7 \cdot 17 & (\Z/2\Z) \oplus \Z
\end{array}
$$

We can obtain a few more such curves from these, by
applying the involution $\iota_2$ to the first three curves,
and the involution $\iota_1$ to the last.  If two genus-$2$ curves
are parametrized by points related by such an involution then
the curves' Jacobians are isogenous.

\section{Discriminant $61$}

\subsection{Parametrization}

Start with an elliptic surface with $D_7, A_6$ and $A_2$ fibers and a
section of height $61/84 = 4 - 2/3 - 6/7 - 7/4$.

The Weierstrass equation for this family can be written as
$$
y^2 =  x^3 + a\, x^2 + 2b t(1-1) \, x + c t^2(t-1)^2,
$$
where
\begin{align*}
a & = 4h^3(h-g)^3 t^3 + (h-g)^2(g^2h^2-4gh^2-8h^2-2g^2h+g^2+12g+12) t^2 \\
& \qquad -2(g+1)(h-g)(g^2h+4h-g^2-6g)t + g^2(g+1)^2, \\
b &= -4(g+1)(h-g)^2 \big( (h-g)^2(2gh^2+4h^2+g^2h-g^2-6g-6)t^2  \\
 & \qquad + (g+1)(h-g)(g^2h+2h-2g^2-6g)t -g^2(g+1)^2 \big), \\
c &= 16(g+1)^2(h-g)^4 \big( (g+2)(h-g)t + g(g+1) \big)^2.
\end{align*}

We first perform a $2$-neighbor step to move to an elliptic fibration
with $E_7$ and $A_7$ fibers, by locating an $E_7$ fiber, as follows.

\begin{center}
\begin{tikzpicture}

\draw (0,0)--(7,0);
\draw (1,0)--(1,1);
\draw (4,0)--(4,1);
\draw (6,0)--(6,1);
\draw (6,1)--(5.5,1.866)--(6.5,1.866)--(6,1);
\draw (7,0)--(7.5,0.866)--(9.5,0.866)--(9.5,-0.866)--(7.5,-0.866)--(7,0);

\draw (6,3) to [bend right] (1,1);
\draw (6,3) to (5.5,1.866);
\draw (6,3) to [bend left] (7.5,0.866);
\draw [very thick] (1,0)--(6,0)--(6,1);
\draw [very thick] (4,0)--(4,1);

\draw (6,0) circle (0.2);
\fill [white] (0,0) circle (0.1);
\fill [black] (1,0) circle (0.1);
\fill [black] (2,0) circle (0.1);
\fill [black] (3,0) circle (0.1);
\fill [black] (4,0) circle (0.1);
\fill [black] (5,0) circle (0.1);
\fill [black] (6,0) circle (0.1);
\fill [white] (7,0) circle (0.1);
\fill [white] (7.5,0.866) circle (0.1);
\fill [white] (8.5,0.866) circle (0.1);
\fill [white] (9.5,0.866) circle (0.1);
\fill [white] (7.5,-0.866) circle (0.1);
\fill [white] (8.5,-0.866) circle (0.1);
\fill [white] (9.5,-0.866) circle (0.1);
\fill [black] (6,1) circle (0.1);
\fill [white] (5.5,1.866) circle (0.1);
\fill [white] (6.5,1.866) circle (0.1);
\fill [white] (1,1) circle (0.1);
\fill [black] (4,1) circle (0.1);
\fill [white] (6,3) circle (0.1);

\draw (0,0) circle (0.1);
\draw (1,0) circle (0.1);
\draw (2,0) circle (0.1);
\draw (3,0) circle (0.1);
\draw (4,0) circle (0.1);
\draw (5,0) circle (0.1);
\draw (6,0) circle (0.1);
\draw (7,0) circle (0.1);
\draw (7.5,0.866) circle (0.1);
\draw (8.5,0.866) circle (0.1);
\draw (9.5,0.866) circle (0.1);
\draw (7.5,-0.866) circle (0.1);
\draw (8.5,-0.866) circle (0.1);
\draw (9.5,-0.866) circle (0.1);
\draw (6,1) circle (0.1);
\draw (5.5,1.866) circle (0.1);
\draw (6.5,1.866) circle (0.1);
\draw (1,1) circle (0.1);
\draw (4,1) circle (0.1);
\draw (6,3) circle (0.1);

\end{tikzpicture}
\end{center}

This elliptic fibration has \MoW\ rank $2$, and we can in fact write
down two generators of the \MoW\ group, which intersect the components
of reducible fibers as shown below. Next, we identify the class of an
$E_8$ fiber, and use it to perform $2$-neighbor step to an elliptic
fibration with $E_8$, $A_5$ and $A_1$ fibers, and \MoW\ rank $2$.

\begin{center}
\begin{tikzpicture}

\draw (0,0)--(8,0);
\draw (3,0)--(3,1);
\draw (8.5,0.866)--(10.5,0.866);
\draw (8.5,-0.866)--(10.5,-0.866);
\draw (8,0)--(8.5,0.866);
\draw (8,0)--(8.5,-0.866);
\draw (11,0)--(10.5,0.866);
\draw (11,0)--(10.5,-0.866);
\draw (0,0) to [bend left] (4,2);
\draw (4,2) to [bend left] (8.5,0.866);
\draw (4,-1.5) to [bend left] (0,0);
\draw (4,-1.5) to [bend right] (8,0);
\draw [very thick] (1,0)--(8,0);
\draw [very thick] (3,0)--(3,1);

\draw (7,0) circle (0.2);
\fill [white] (0,0) circle (0.1);
\fill [black] (1,0) circle (0.1);
\fill [black] (2,0) circle (0.1);
\fill [black] (3,0) circle (0.1);
\fill [black] (4,0) circle (0.1);
\fill [black] (5,0) circle (0.1);
\fill [black] (6,0) circle (0.1);
\fill [black] (7,0) circle (0.1);
\fill [black] (8,0) circle (0.1);
\fill [white] (8.5,0.866) circle (0.1);
\fill [white] (8.5,-0.866) circle (0.1);
\fill [white] (9.5,0.866) circle (0.1);
\fill [white] (9.5,-0.866) circle (0.1);
\fill [white] (10.5,0.866) circle (0.1);
\fill [white] (10.5,-0.866) circle (0.1);
\fill [white] (11,0) circle (0.1);
\fill [black] (3,1) circle (0.1);
\fill [white] (4,2) circle (0.1);
\fill [white] (4,-1.5) circle (0.1);

\draw (0,0) circle (0.1);
\draw (1,0) circle (0.1);
\draw (2,0) circle (0.1);
\draw (3,0) circle (0.1);
\draw (4,0) circle (0.1);
\draw (5,0) circle (0.1);
\draw (6,0) circle (0.1);
\draw (7,0) circle (0.1);
\draw (8,0) circle (0.1);
\draw (8.5,0.866) circle (0.1);
\draw (8.5,-0.866) circle (0.1);
\draw (9.5,0.866) circle (0.1);
\draw (9.5,-0.866) circle (0.1);
\draw (10.5,0.866) circle (0.1);
\draw (10.5,-0.866) circle (0.1);
\draw (11,0) circle (0.1);
\draw (3,1) circle (0.1);
\draw (4,2) circle (0.1);
\draw (4,-1.5) circle (0.1);

\end{tikzpicture}
\end{center}

We show one of the generators $P$ of the Mordell-Weil lattice, which
has height $4 - 2 \cdot 4/6 - 1/2$. Next, we go to a fibration with
$E_8$ and $E_6$ fibers using the fiber class $F'$ of $E_6$
below. Since $P \cdot F' = 1$, the new elliptic fibration has a
section.

\begin{center}
\begin{tikzpicture}

\draw (0,0)--(9,0);
\draw (9,0)--(9.5,0.866);
\draw (9,0)--(9.5,-0.866);
\draw (9.5,0.866)--(10.5,0.866);
\draw (9.5,-0.866)--(10.5,-0.866);
\draw (11,0)--(10.5,0.866);
\draw (11,0)--(10.5,-0.866);
\draw (2,0)--(2,1);
\draw (8,0)--(8,1);
\draw (7.95,1)--(7.95,2);
\draw (8.05,1)--(8.05,2);
\draw [very thick] (8,1)--(8,0)--(9,0);
\draw [very thick] (10.5,0.866)--(9.5,0.866)--(9,0)--(9.5,-0.866)--(10.5,-0.866);
\draw (8,3)--(8,2);
\draw [bend right] (8,3) to (7,0);
\draw [bend left] (8,3) to (10.5,0.866);

\draw (8,3) [above] node{$P$};
\draw (8,0) circle (0.2);
\fill [white] (8,3) circle (0.1);
\fill [white] (0,0) circle (0.1);
\fill [white] (1,0) circle (0.1);
\fill [white] (2,0) circle (0.1);
\fill [white] (3,0) circle (0.1);
\fill [white] (4,0) circle (0.1);
\fill [white] (5,0) circle (0.1);
\fill [white] (6,0) circle (0.1);
\fill [white] (7,0) circle (0.1);
\fill [black] (8,0) circle (0.1);
\fill [black] (9,0) circle (0.1);
\fill [white] (2,1) circle (0.1);
\fill [black] (8,1) circle (0.1);
\fill [white] (8,2) circle (0.1);
\fill [black] (9.5,0.866) circle (0.1);
\fill [black] (9.5,-0.866) circle (0.1);
\fill [black] (10.5,0.866) circle (0.1);
\fill [black] (10.5,-0.866) circle (0.1);
\fill [white] (11,0) circle (0.1);

\draw (8,3) circle (0.1);
\draw (0,0) circle (0.1);
\draw (1,0) circle (0.1);
\draw (2,0) circle (0.1);
\draw (3,0) circle (0.1);
\draw (4,0) circle (0.1);
\draw (5,0) circle (0.1);
\draw (6,0) circle (0.1);
\draw (7,0) circle (0.1);
\draw (8,0) circle (0.1);
\draw (9,0) circle (0.1);
\draw (2,1) circle (0.1);
\draw (8,1) circle (0.1);
\draw (8,2) circle (0.1);
\draw (9.5,0.866) circle (0.1);
\draw (9.5,-0.866) circle (0.1);
\draw (10.5,0.866) circle (0.1);
\draw (10.5,-0.866) circle (0.1);
\draw (11,0) circle (0.1);

\end{tikzpicture}
\end{center}

We can find a section $P'$ of this elliptic fibration with $E_8$ and
$E_6$ fibers, of height $8/3 = 4 - 4/3$. We use it to go to a fibration
with $E_8$ and $E_7$ fibers as shown.

\begin{center}
\begin{tikzpicture}

\draw (0,0)--(13,0);
\draw (2,0)--(2,1);
\draw (11,0)--(11,2);
\draw (9,1.5) to [bend right] (7,0);
\draw [very thick] (9,1.5) to [bend left] (11,2);
\draw [very thick] (8,0)--(12,0);
\draw [very thick] (11,0)--(11,2);

\fill [white] (0,0) circle (0.1);
\fill [white] (1,0) circle (0.1);
\fill [white] (2,0) circle (0.1);
\fill [white] (3,0) circle (0.1);
\fill [white] (4,0) circle (0.1);
\fill [white] (5,0) circle (0.1);
\fill [white] (6,0) circle (0.1);
\fill [white] (7,0) circle (0.1);
\fill [black] (8,0) circle (0.1);
\fill [black] (9,0) circle (0.1);
\fill [black] (10,0) circle (0.1);
\fill [black] (11,0) circle (0.1);
\fill [black] (12,0) circle (0.1);
\fill [white] (13,0) circle (0.1);
\fill [black] (11,1) circle (0.1);
\fill [black] (11,2) circle (0.1);
\fill [white] (2,1) circle (0.1);
\fill [black] (9,1.5) circle (0.1);

\draw (8,0) circle (0.2);
\draw (0,0) circle (0.1);
\draw (1,0) circle (0.1);
\draw (2,0) circle (0.1);
\draw (3,0) circle (0.1);
\draw (4,0) circle (0.1);
\draw (5,0) circle (0.1);
\draw (6,0) circle (0.1);
\draw (7,0) circle (0.1);
\draw (8,0) circle (0.1);
\draw (9,0) circle (0.1);
\draw (10,0) circle (0.1);
\draw (11,0) circle (0.1);
\draw (12,0) circle (0.1);
\draw (13,0) circle (0.1);
\draw (11,1) circle (0.1);
\draw (11,2) circle (0.1);
\draw (2,1) circle (0.1);
\draw (9,1.5) circle (0.1);

\draw (9,1.5) [above] node{$P'$};

\end{tikzpicture}
\end{center}

From the resulting $E_8 E_7$ fibration we read out the Igusa-Clebsch
invariants and calculate the equation of $Y_{-}(61)$ as a double cover
of $\Proj^2_{g,h}$.

\begin{theorem}
A birational model over $\Q$ for the Hilbert modular surface
$Y_{-}(61)$ as a double cover of $\Proj^2_{g,h}$ is given by the following
equation:
\begin{align*}
z^2 =& (h-1)^4 \,g^4 - 2 (h-1) h (h^3-14 h^2-20 h-21) \, g^3 + h (h^5-46 h^4-19 h^3+42 h^2+39 h-44) \, g^2  \\
& \qquad + 2 h^2 (10 h^4+5 h^3-13 h^2-h+12) \, g - h^2 (8 h^4-13 h^2+16).
\end{align*}
It is an honestly elliptic surface, with arithmetic genus $2$ and
Picard number $28$.
\end{theorem}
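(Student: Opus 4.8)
The plan is to follow the general method of Section~\ref{method} verbatim, driven by the chain of neighbor steps drawn above. First I would take the displayed Weierstrass family with $D_7$, $A_6$, $A_2$ fibers and the section of height $61/84$; this realizes a Zariski-open subset of $\sF_{L_{61}}$ as an open subset of $\Proj^2_{g,h}$. Then I would carry out, in order, the neighbor steps indicated by the bold extended Dynkin subdiagrams, passing successively through fibrations of type $E_7A_7$, $E_8A_5A_1$, $E_8E_6$, and finally $E_8E_7$, at each stage computing the two-dimensional space $H^0(X,\sO_X(F'))$ and the resulting Weierstrass equation by the recipe of Section~\ref{neighbors}. At each stage the coprimality of the intersection numbers recorded in the figures, together with Proposition~\ref{automaticsections}, guarantees that the new fibration has a section, so we may pass to its Jacobian. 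From the terminal $E_8E_7$ fibration I would read off the Igusa-Clebsch invariants $(I_2:I_4:I_6:I_{10})$ via Theorem~\ref{thesisthm}, yielding the explicit map $\Proj^2_{g,h}\to\A_2$ whose image is the Humbert surface $\sH_{61}$ by Theorem~\ref{humbert}.

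Next, to produce the double cover $Y_{-}(61)\to\sH_{61}$ I would apply Steps~3 and~4. I would enumerate the finitely many loci in $\Proj^2_{g,h}$ where the Picard number can jump by one (an extra $\I_2$ fiber, a promotion of a reducible fiber, or a new low-height section), giving a short list of candidate branch factors $f_i(g,h)\in\Z[g,h]$. I would then fix the correct subset of factors and the squarefree twist $C$ by reduction modulo several odd primes $p\nmid 61$: specialize $(g,h)\in\F_p^2$, build the genus-$2$ curve $C_{g,h}$ from its Igusa-Clebsch invariants, count points over $\F_p$ and $\F_{p^2}$, and invoke Lemma~\ref{twistrealmult} to decide whether the Jacobian has real multiplication by $\sO_{61}$ over~$\F_p$. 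Requiring that precisely the $\F_p$-points carrying such real multiplication lift to the double cover pins down both the branch factor and~$C$, exactly as in the earlier discriminants.

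Finally, for the classification I would observe that the right-hand side is a quartic in~$g$ whose leading coefficient $(h-1)^4$ is a perfect square, so $Y_{-}(61)$ carries a genus-$1$ fibration over $\Proj^1_h$ admitting a rational section; passing to its Jacobian yields a relatively minimal elliptic surface whose Weierstrass data I would simplify. Computing the discriminant of that model I expect $\deg\Delta=36$, i.e.\ $\chi(\sO)=3$, which identifies $Y_{-}(61)$ as an honestly elliptic surface of arithmetic genus~$2$ (Kodaira dimension~$1$, hence neither rational nor K3). For the Picard number I would exhibit the reducible fibers of the Jacobian together with enough independent sections over $\Q$ and its relevant quadratic subfields, verifying independence through the height pairing to get a lower bound of $28$; comparing against the van Luijk upper bound obtained from the characteristic polynomial of Frobenius at two primes of good reduction should force equality at~$28$, after which a saturation check identifies the full $\NS$.

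The hard part, as in the other honestly elliptic cases, will be twofold. The chain of four successive neighbor steps produces increasingly unwieldy Weierstrass coefficients, so the main obstacle is controlling these computations and choosing the scalings so that the final $E_8E_7$ model is clean enough to extract the Igusa-Clebsch invariants. The second delicate point is pinning the Picard number at exactly~$28$ rather than $29$ or higher: the \MW\ rank of the Jacobian fibration must be determined precisely, which requires both locating all independent sections and a careful van Luijk argument, where the square-class discriminant obstruction predicted by the Artin-Tate formula—rather than a naive count of Frobenius eigenvalues that are $p$ times a root of unity—may be needed to exclude an extra unit of Picard rank.
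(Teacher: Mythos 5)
Your proposal follows the paper's own route essentially step for step: the same starting family with $D_7$, $A_6$, $A_2$ fibers and a section of height $61/84$, the same chain of neighbor steps through $E_7A_7$, $E_8A_5A_1$, $E_8E_6$ to the terminal $E_8E_7$ fibration (with section existence checked via the recorded intersection numbers), the same Steps 3--4 to pin down the branch locus and twist by counting points and applying Lemma~\ref{twistrealmult}, and the same identification of $Y_{-}(61)$ as honestly elliptic with $\chi=3$ by passing to the Jacobian of the genus-$1$ fibration over $\Proj^1_h$ (using that the $g^4$-coefficient $(h-1)^4$ is a square), followed by explicit sections, the height pairing, and a $2$- and $3$-saturation check to identify $\NS$.

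The one genuine divergence is the upper bound $\rho\le 28$. The paper does not run van Luijk's method here: it exhibits the two orthogonal sections $P_1,P_2$ of heights $13/21$ and $11/6$ (lower bound $26+2=28$) and then simply cites Oda's period computations \cite[pg.~109]{Oda} for the equality $\rho=28$, which immediately forces the \MoW\ rank to be exactly $2$. Your van Luijk route is a legitimate, self-contained alternative --- indeed it is what the paper itself does for the honestly elliptic surfaces at discriminants $56$ and $57$ --- and since $b_2=12\chi-2=34$ is even, a single-prime bound $\rho_0(p)=28$ is not excluded by parity, so the refinement via Artin--Tate square classes that you hold in reserve may not even be needed. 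What you lose is certainty in advance: reductions mod $p$ can have strictly larger Picard number, so your argument only closes if you actually find primes where the count (or the square-class comparison) is tight, whereas the citation of Oda settles the question at once; what you gain is independence from Oda's transcendental (Hodge-theoretic) input, keeping the whole proof computational and internal to the methods of Sections~\ref{method}--\ref{neighbors}.
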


\subsection{Analysis}

The branch locus has genus $1$. The transformation
\begin{align*}
g &= -\frac{8x^3y-34x^2y+37xy-14y+x^6-10x^5+44x^4-89x^3+98x^2-58x+14}{x^2(x-1)^3(x-7)} \\
h &= \frac{12x^2y-35xy+26y+3x^4+11x^3-61x^2+74x-26}{x^2(9x^2-24x+13)}
\end{align*}
converts it to the Weierstrass form
$$
y^2 + xy = x^3 - 2x + 1,
$$
an elliptic curve of conductor $61$. It is isomorphic to
$X_0(61)/\langle w \rangle$, where $w$ is the Atkin-Lehner involution.

The Hilbert modular surface $Y_{-}(61)$ is an honestly elliptic
surface, since we have an evident genus-$1$ fibration over
$\Proj^1_h$. Since the coefficient of $g^4$ is a square, we convert to
the Jacobian. In Weierstrass form, we obtain
\begin{align*}
y^2 &= x^3 + h(h^5+14h^4+23h^3-102h^2+88) \, x^2 \\
    &\qquad   -h^2(110h^6+908h^5-2854h^4-1028h^3+4795h^2+120h-2000) \, x \\
   & \qquad  + h^4(1728h^7+16849h^6-24666h^5-50145h^4+52138h^3+50406h^2-29200h-20000).
\end{align*}
This elliptic surface $S$ has $\chi(\sO_S) = 3$, with bad fibers of
type $\I^*_0$ at $h = 0$, $\I_7$ at $h = \infty$, $\I_2$ at $h = 1$
and at the roots of $h^3+13 h^2+24 h+16$ (which generates the cubic
field of discriminant $-244$), and $\I_3$ at $h = -1$ and at the roots
of $3 h^4+23 h^3-64 h^2+22 h+25$ (which generates the quartic field of
discriminant $-3 \cdot 61^2$, a quadratic extension of $\Q(\sqrt{61})$
). The trivial lattice therefore has rank $26$, leaving room for at
most $4$ independent sections. We find the two sections
\begin{align*}
P_1 &= \big( h^2(-36h + 55), \, 12h^2(3h^4+23h^3-64h^2+22h+25)\big) \\
P_2 &= \big(-(36h^6+444h^5+472h^4-1492h^3-799h^2+1048h-400)/61, \\
  & \qquad  4(15h^2-2h-5)(h^3+13h^2+24h+16)(3h^4+23h^3-64h^2+22h+25)/61^{3/2}
   \big)
\end{align*}
of heights $13/21$ and $11/6$ respectively, and orthogonal to each
other under the height pairing.  By Oda's calculations, the Picard
number is $28$, and therefore the \MoW\ rank is exactly~$2$. The
sublattice of the Picard group generated by the above sections and the
trivial lattice has discriminant $41184 = 2^5 \cdot 3^2 \cdot 11 \cdot
13$. We checked that it is $2$- and $3$-saturated, and therefore it
must be the entire N\'eron-Severi lattice. Therefore the \MoW\ group
is generated by $P_1$ and $P_2$.

\subsection{Examples}

We list some points of small height and corresponding genus $2$ curves.

\begin{tabular}{l|c}
Rational point $(g,h)$ & Sextic polynomial $f_6(x)$ defining the genus $2$ curve $y^2 = f_6(x)$. \\
\hline \hline \\ [-2.5ex]
$(-3/2, 1/2)$ & $ -32x^6 - 144x^5 - 229x^4 - 24x^3 - 157x^2 - 84x + 92 $ \\
$(47/72, -1/8)$ & $ -10x^6 - 42x^5 + 39x^4 + 728x^3 + 489x^2 + 1260x - 1068 $ \\
$(-47/12, -11/4)$ & $ 756x^6 - 756x^5 + 1953x^4 - 282x^3 - 327x^2 + 1404x - 1444 $ \\
$(-86/9, -8)$ & $ -348x^6 - 972x^5 - 2661x^4 - 2326x^3 - 2205x^2 + 1260x - 140 $ \\
$(-17/18, -1/2)$ & $ 467x^6 + 1551x^5 + 3906x^4 + 3027x^3 - 495x^2 - 1800x + 400 $ \\
$(15/4, 5)$ & $ -325x^6 + 1410x^5 + 1045x^4 - 3993x^3 - 2636x^2 + 3456x + 2143 $ \\
$(-69/58, -20/29)$ & $ -128x^6 + 96x^5 - 2519x^4 + 4362x^3 + 1321x^2 + 456x + 32 $ \\
$(31/45, 14/5)$ & $ -204x^6 - 108x^5 + 2553x^4 - 946x^3 - 4683x^2 - 360x + 1200 $ \\
$(1, 4/3)$ & $ -188x^6 - 1812x^5 - 4707x^4 - 130x^3 + 6189x^2 - 1620x + 108 $ \\
$(13/18, -1/2)$ & $ -612x^6 + 3708x^5 - 6501x^4 + 5656x^3 + 693x^2 - 318x - 1126 $ \\
$(-29/6, -29/8)$ & $ -148x^6 + 2472x^5 - 1481x^4 + 5001x^3 - 6980x^2 + 1425x - 6625 $ \\
$(-37/18, -1/2)$ & $ -16x^6 - 264x^5 + 477x^4 - 2268x^3 + 4029x^2 - 6156x + 10372 $ \\
$(23/84, -1/3)$ & $ -4671x^6 - 6660x^5 - 10362x^4 + 11195x^3 + 1287x^2 + 1590x + 7621 $ \\
$(5/6, 5/3)$ & $ 821x^6 - 1896x^5 - 4922x^4 + 8588x^3 + 11341x^2 - 7674x - 8247 $ \\
$(-23/2, -10)$ & $ 2716x^6 + 84x^5 + 7107x^4 + 10642x^3 + 4803x^2 + 13764x + 10204 $ \\
$(65/18, 5/2)$ & $ -3756x^6 - 12012x^5 + 9297x^4 + 18116x^3 - 10335x^2 - 7560x + 3600 $ 
\end{tabular}

Next, we list some rational curves on the surface. The specialization
$h = -1$ gives a rational curve, but the curves of genus $2$
corresponding to the points on this curve have Jacobians with
endomorphism ring larger than just $\sO_{61}$ (they are isogenous to
the symmetric squares of elliptic curves). The section $P_1$ gives the
rational curve $g = (3h^2-7h+5)/\big(3(h-1) \big)$, for which the
Brauer obstruction vanishes identically, yielding a $1$-parameter
family of genus $2$ curves whose Jacobian have real multiplication by
$\sO_{61}$.

\section{Discriminant $65$}

\subsection{Parametrization}

We start with an elliptic surface with $E_7$, $A_4$ and $A_4$ fibers
at $t = \infty, 0, 1$ respectively, and a section of height $65/50 =
13/10 = 4 - 3/2 - 2 \cdot 3/5$.  The Weierstrass equation of such a
family is
$$
y^2 = x^3 + \big(a_0(1-t) + a_1 t (1-t) + t^2 \big)\, x^2 + 2 t^2 (t-1) e \big(b_0(1-t) + b_1t(1-t) + t^2\big)\, x  + e^2 t^4(t-1)^2 \big(c_0(1-t) + t\big),
$$
with
\begin{align*}
a_0 &= (s^2-5)^2(2rs^2+s+2r)^2/4, \\
a_1 &= 4(5s^6-8s^4-7s^2-6)r^2 + 4s(5s^4-13s^2-4)r + 5s^4-18s^2+5, \\
b_0 &= (s^2-5)(2rs^2+s+2r)(2rs^4+s^3+4rs^2+s-6r)/4, \\
b_1 &= 8(s^2-1)(s^2+1)^2r^2 + 8s(s^4 - 1)r + 2s^4-2s^2+1, \\
c_0 &= (2rs^4+s^3+4rs^2+s-6r)^2/4, \\
e &= -(s-1)(s+1)(2rs^2-4rs+s+2r-2)(2rs^2+4rs+s+2r+2).
\end{align*}

To describe an elliptic fibration with $E_8$ and $E_7$ fibers, we
identify the class of an $E_8$ fiber and move by a $2$-neighbor step
to an $E_8 A_4 A_2$ fibration.

\begin{center}
\begin{tikzpicture}

\draw (2,0)--(2-0.69,0.95)--(2-1.81,0.59)--(2-1.81,-0.59)--(2-0.69,-0.95)--(2,0);
\draw (2,0)--(10,0);
\draw (7,0)--(7,1);
\draw (3,0)--(3,1);
\draw (3,1)--(2.05,1.69);
\draw (3,1)--(3.95,1.69);
\draw (2.05,1.69)--(2.41,2.81);
\draw (3.95,1.69)--(3.59,2.81);
\draw (2.41,2.81)--(3.59,2.81);
\draw [bend right] (6,4) to (0.19,0.59);
\draw [bend left] (6,4) to (3,1);
\draw [bend left] (6,4) to (10,0);
\draw [very thick] (3,1)--(3,0)--(9,0);
\draw [very thick] (7,0)--(7,1);

\fill [white] (2,0) circle (0.1);
\fill [white] (2-0.69,0.95) circle (0.1);
\fill [white] (2-0.69,-0.95) circle (0.1);
\fill [white] (2-1.81,0.59) circle (0.1);
\fill [white] (2-1.81,-0.59) circle (0.1);

\draw (3,0) circle (0.2);
\fill [black] (3,0) circle (0.1);
\fill [black] (4,0) circle (0.1);
\fill [black] (5,0) circle (0.1);
\fill [black] (6,0) circle (0.1);
\fill [black] (7,0) circle (0.1);
\fill [black] (8,0) circle (0.1);
\fill [black] (9,0) circle (0.1);
\fill [white] (10,0) circle (0.1);
\fill [black] (7,1) circle (0.1);
\fill [black] (3,1) circle (0.1);
\fill [white] (2.05,1.69) circle (0.1);
\fill [white] (3.95,1.69) circle (0.1);
\fill [white] (2.41,2.81) circle (0.1);
\fill [white] (3.59,2.81) circle (0.1);
\fill [white] (6,4) circle (0.1);

\draw (2,0) circle (0.1);
\draw (2-0.69,0.95) circle (0.1);
\draw (2-0.69,-0.95) circle (0.1);
\draw (2-1.81,0.59) circle (0.1);
\draw (2-1.81,-0.59) circle (0.1);
\draw (3,0) circle (0.1);
\draw (4,0) circle (0.1);
\draw (5,0) circle (0.1);
\draw (6,0) circle (0.1);
\draw (7,0) circle (0.1);
\draw (8,0) circle (0.1);
\draw (9,0) circle (0.1);
\draw (10,0) circle (0.1);
\draw (7,1) circle (0.1);
\draw (3,1) circle (0.1);
\draw (2.05,1.69) circle (0.1);
\draw (3.95,1.69) circle (0.1);
\draw (2.41,2.81) circle (0.1);
\draw (3.59,2.81) circle (0.1);
\draw (6,4) circle (0.1);

\end{tikzpicture}
\end{center}

The new elliptic fibration has \MoW\ rank $2$, and we
compute two generators $P$\/ and~$Q$, each of height
$32/15 = 4 - 2/3 - 6/5$, with intersection pairing $7/15$.
We draw $P$ in the figure below, as well as the class of an $A_7$ fiber $F'$.
Because $Q \cdot F' = 1$, the new fibration has a section.

\begin{center}
\begin{tikzpicture}

\draw [very thick] (2,0)--(2-0.69,-0.95)--(2-1.81,-0.59)--(2-1.81,0.59);

\draw (2-1.81,0.59)--(2-0.69,0.95)--(2,0);
\draw [very thick] (2,0)--(3,0);
\draw (3,0)--(11,0);
\draw (9,0)--(9,1);
\draw [very thick] (3,0)--(3,1);
\draw (3,1)--(2.5,1.866)--(3.5,1.866);
\draw [very thick] (3.5,1.866)--(3,1);
\draw [very thick] (5,3)--(3.5,1.866);
\draw [very thick, bend right] (5,3) to (0.19,0.59);
\draw [bend left] (5,3) to (4,0);

\fill [black] (2,0) circle (0.1);
\fill [white] (2-0.69,0.95) circle (0.1);
\fill [black] (2-0.69,-0.95) circle (0.1);
\fill [black] (2-1.81,0.59) circle (0.1);
\fill [black] (2-1.81,-0.59) circle (0.1);

\draw (5,3) [above] node{$P$};
\draw (3,0) circle (0.2);
\fill [black] (3,0) circle (0.1);
\fill [white] (4,0) circle (0.1);
\fill [white] (5,0) circle (0.1);
\fill [white] (6,0) circle (0.1);
\fill [white] (7,0) circle (0.1);
\fill [white] (8,0) circle (0.1);
\fill [white] (9,0) circle (0.1);
\fill [white] (10,0) circle (0.1);
\fill [white] (11,0) circle (0.1);
\fill [white] (9,1) circle (0.1);
\fill [black] (3,1) circle (0.1);
\fill [white] (2.5,1.866) circle (0.1);
\fill [black] (3.5,1.866) circle (0.1);
\fill [black] (5,3) circle (0.1);

\draw (2,0) circle (0.1);
\draw (2-0.69,0.95) circle (0.1);
\draw (2-0.69,-0.95) circle (0.1);
\draw (2-1.81,0.59) circle (0.1);
\draw (2-1.81,-0.59) circle (0.1);
\draw (3,0) circle (0.1);
\draw (4,0) circle (0.1);
\draw (5,0) circle (0.1);
\draw (6,0) circle (0.1);
\draw (7,0) circle (0.1);
\draw (8,0) circle (0.1);
\draw (9,0) circle (0.1);
\draw (10,0) circle (0.1);
\draw (11,0) circle (0.1);
\draw (9,1) circle (0.1);
\draw (3,1) circle (0.1);
\draw (2.5,1.866) circle (0.1);
\draw (3.5,1.866) circle (0.1);
\draw (5,3) circle (0.1);

\end{tikzpicture}
\end{center}

The new elliptic fibration has $A_7$ and $E_8$ fibers, and a section
$P'$ of height $65/8 = 4 + 2 \cdot 3 - 3 \cdot 5/8$. We now go to $E_8
E_7$ via a $2$-neighbor step. Note that the section $P'$ intersects the
new $E_7$ fiber $F''$ in $7$, while the remaining component of the
$A_7$ fiber intersects $F''$ in $2$. Since these are coprime, the genus
$1$ fibration defined by $F''$ has a section.

\begin{center}
\begin{tikzpicture}

\draw (-1,0)--(8,0);
\draw (1,0)--(1,1);
\draw (8,0)--(8.5,0.866)--(10.5,0.866)--(11,0)--(10.5,-0.866)--(8.5,-0.866)--(8,0);
\draw (7,2)--(7,0);
\draw (6.95,2)--(6.95,0);
\draw (7.05,2)--(7.05,0);
\draw [bend right] (7,2) to (6,0);
\draw [bend left] (7,2) to (10.5,0.866);
\draw [very thick] (7,0)--(8,0);
\draw [very thick] (10.5,0.866)--(8.5,0.866)--(8,0)--(8.5,-0.866)--(10.5,-0.866);

\fill [white] (-1,0) circle (0.1);
\fill [white] (0,0) circle (0.1);
\fill [white] (1,0) circle (0.1);
\fill [white] (2,0) circle (0.1);
\fill [white] (3,0) circle (0.1);
\fill [white] (4,0) circle (0.1);
\fill [white] (5,0) circle (0.1);
\fill [white] (6,0) circle (0.1);
\fill [white] (1,1) circle (0.1);
\fill [black] (7,0) circle (0.1);
\fill [black] (8,0) circle (0.1);
\fill [black] (8.5,0.866) circle (0.1);
\fill [black] (9.5,0.866) circle (0.1);
\fill [black] (10.5,0.866) circle (0.1);
\fill [black] (8.5,-0.866) circle (0.1);
\fill [black] (9.5,-0.866) circle (0.1);
\fill [black] (10.5,-0.866) circle (0.1);
\fill [white] (11,0) circle (0.1);
\fill [white] (7,2) circle (0.1);

\draw (7,2) [above] node{$P'$};
\draw (7,0) circle (0.2);
\draw (-1,0) circle (0.1);
\draw (0,0) circle (0.1);
\draw (1,0) circle (0.1);
\draw (2,0) circle (0.1);
\draw (3,0) circle (0.1);
\draw (4,0) circle (0.1);
\draw (5,0) circle (0.1);
\draw (6,0) circle (0.1);
\draw (1,1) circle (0.1);
\draw (7,0) circle (0.1);
\draw (8,0) circle (0.1);
\draw (8.5,0.866) circle (0.1);
\draw (9.5,0.866) circle (0.1);
\draw (10.5,0.866) circle (0.1);
\draw (8.5,-0.866) circle (0.1);
\draw (9.5,-0.866) circle (0.1);
\draw (10.5,-0.866) circle (0.1);
\draw (11,0) circle (0.1);
\draw (7,2) circle (0.1);

\end{tikzpicture}
\end{center}

We now read out the Igusa-Clebsch invariants, and compute the equation
of $Y_{-}(65)$ as a double cover of $\Proj^2_{r,s}$. It is branched
over the locus where the K3 surfaces acquire an extra $\I_2$ fiber.

\begin{theorem}
A birational model over $\Q$ for the Hilbert modular surface
$Y_{-}(65)$ as a double cover of $\Proj^2_{r,s}$ is given by the following
equation:
\begin{align*}
z^2 =& -16(s^4+2s^2+13)^2(4s^6+3s^4-10s^2-13) \, r^4 \\
 & \quad   -32s(4s^{12}+15s^{10}+127s^8-10s^6-494s^4-1253s^2-949) \, r^3 \\
 & \quad  -8(12s^{12}+33s^{10}+408s^8-898s^6-2672s^4-2023s^2+404) \, r^2 \\
 & \quad  -8s(4s^{10}+7s^8+149s^6-627s^4-641s^2+148) \, r \\
 & \quad       -(4s^{10}+3s^8+166s^6-997s^4+328s^2-80).
\end{align*}
It is an honestly elliptic surface, with arithmetic genus $2$ and
Picard number $28$.
\end{theorem}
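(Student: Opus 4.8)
The plan is to follow the four-step procedure of Section~\ref{method} and then read the geometry of the double cover off its Weierstrass model. As input I would take the family displayed above, with reducible fibers of types $E_7$, $A_4$, $A_4$ and the section of height $65/50$; by the Shioda--Tate formula this family has N\'eron--Severi rank $18$ with \MW\ rank $1$, and the construction is arranged so that the generic member is polarized by $L_{65}$, realizing a Zariski-open subset of $\sF_{L_{65}}$ inside $\Proj^2_{r,s}$. By Theorem~\ref{humbert} the composite $\sF_{L_{65}} \to \sH_{65} \subset \A_2$ is birational onto the Humbert surface, so $\Proj^2_{r,s}$ is a rational model of $\sH_{65}$. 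I would then carry out the chain of $2$-neighbor steps shown in the diagrams (passing through the $E_8 A_4 A_2$ and $E_8 A_7$ fibrations to the terminal $E_8 E_7$ fibration), checking at each stage, via Proposition~\ref{automaticsections} or the coprimality of intersection numbers recorded in the figures, that the new genus-$1$ fibration has a section. Feeding the Weierstrass coefficients of the $E_8 E_7$ fibration into the formulas of Theorem~\ref{thesisthm} produces the Igusa--Clebsch invariants as explicit functions of $(r,s)$, i.e.\ the map $\Proj^2_{r,s} \to \A_2$.

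Next I would pin down the double cover $Y_{-}(65) \to \sH_{65}$. Its branch locus parametrizes abelian surfaces where the two extensions of $\Z$ to $\sO_{65}$ coincide, equivalently the sublocus where the Picard number of the associated K3 jumps; enumerating the finitely many geometric degenerations compatible with the fiber configuration isolates the relevant component as the locus carrying an extra $\I_2$ fiber, giving the branch polynomial up to a squarefree constant $C$. To determine $C$ I would invoke Step~4: since $Y_{-}(65)$ has good reduction away from primes dividing $65$, there are finitely many candidate twists, and for several primes $p \nmid 65$ I would specialize $(r,s) \in \F_p^2$, build $C_{r,s}$, count points over $\F_p$ and $\F_{p^2}$ to form the polynomial $Q(X)$ of Lemma~\ref{twistrealmult}, and apply part~(3) to detect when $J(C_{r,s})$ has real multiplication by $\sO_{65}$ over $\F_p$. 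Each such point must lift to $Y_{-}(65)(\F_p)$, so the branch polynomial must take a square value there; running this over enough points and primes eliminates all twists but the stated one.

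Finally I would read off the surface invariants. Because the right-hand side is a quartic in $r$, the projection to $\Proj^1_s$ is a genus-$1$ fibration; passing to its Jacobian (as for discriminants $44$ and $53$) gives a relatively minimal Weierstrass model $y^2 = x^3 + a_2(s)x^2 + a_4(s)x + a_6(s)$ over $\Q(s)$, and I would check that the $a_i$ have degrees forcing $\chi(\sO) = 3$, so that $Y_{-}(65)$ is honestly elliptic with arithmetic genus $\chi - 1 = 2$ (and $\kappa = 1$, since $\chi \geq 2$ for a non-isotrivial fibration). For the Picard number I would compute the trivial lattice from the Kodaira types of the reducible fibers, exhibit explicit sections generating a \MW\ group of the expected rank to get $\rho \geq 28$, and obtain the matching bound $\rho \leq 28$ by van Luijk's method: counting points modulo two good primes and bounding the number of Frobenius eigenvalues on $H^2$ that are $p$ times a root of unity, using the Artin--Tate comparison of discriminant square classes, if needed, to exclude $\rho = 29$.

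The main obstacle is twofold and essentially computational. The heaviest part is the chain of three neighbor steps together with the substitution into the Igusa--Clebsch formulas: each step requires computing the two-dimensional space $H^0(X, \sO_X(F'))$ and converting the resulting genus-$1$ curve to its Jacobian, and the intermediate expressions in $r,s$ grow large. The more delicate conceptual point is certifying the \emph{exact} value of $\rho$ and of $\disc \NS$: the lower bound needs explicit \MW\ generators together with a proof that the span of these sections and the trivial lattice is $2$- and $3$-saturated, while the upper bound $28$ (rather than $29$) rests on the point counts and on the fact, noted at the end of Section~\ref{method}, that the Brauer group of a good reduction has square order, which is what makes the discriminant square classes usable.
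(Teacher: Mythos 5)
Your Steps 1--4 (the $E_7+A_4+A_4$ family with a section of height $65/50$, the neighbor-step chain to an $E_8E_7$ fibration, the Igusa--Clebsch map, identification of the branch component as the extra $\I_2$ locus, and the point-counting determination of the twist) reproduce the paper's construction, and your lower bound $\rho \geq 28$ (trivial lattice of rank $27$ plus one non-torsion section of the Jacobian fibration) is also the paper's. The genuine gap is in your upper bound: van Luijk's method applied \emph{directly} to $Y_{-}(65)$ cannot prove $\rho \le 28$. Since $65 = 5 \cdot 13$, the surface carries the $\Q$-rational Hurwitz--Maass involution $\iota\colon (r,s,z) \mapsto (-r,-s,z)$, and the rank-$6$ orthogonal complement of the known algebraic classes splits under $\iota$ into two Frobenius-stable, mutually orthogonal pieces of rank $3$ each (the transcendental lattices of the quotient K3 and of its quadratic twist, both of Picard number $19$). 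On each rank-$3$ piece the intersection pairing is non-degenerate and is scaled by $p^2$ under Frobenius, so the eigenvalue multiset there is stable under $\lambda \mapsto p^2/\lambda$; a stable multiset of odd cardinality contains a fixed point $\lambda = \pm p$. Hence at \emph{every} good prime the number $\rho_0(p)$ of eigenvalues of the form $p\,\zeta$ is at least $28 + 2 = 30$ (and is always even), so a single reduction never gives a bound better than $30$, and the Artin--Tate square-class refinement you invoke can lower such a bound by only one, to $29$. Your procedure therefore terminates at $\rho \le 29$ and can never exclude $\rho = 29$: this is the familiar K3 parity obstruction, occurring here twice at once, and it is precisely why the paper does not argue this way for discriminant $65$.

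What the paper does instead is decompose before counting: writing $t = s^2$, the Jacobian elliptic surface over $\Q(s)$ is the quadratic base change of the quotient elliptic K3 $E_1/\Q(t)$, whose twist by $\Q(s)/\Q(t)$ is a second elliptic K3 $E_2$, and $\mathrm{rank}\,\MW\bigl(Y/\Qbar(s)\bigr) = \mathrm{rank}\,\MW\bigl(E_1/\Qbar(t)\bigr) + \mathrm{rank}\,\MW\bigl(E_2/\Qbar(t)\bigr)$. For each of $E_1$, $E_2$ the two-prime van Luijk/Artin--Tate argument (modulo $11$ and $23$) \emph{does} close the gap of one unit and proves $\rho = 19$, i.e.\ \MoW\ ranks $1$ and $0$; adding gives \MoW\ rank $1$ for $Y_{-}(65)$, hence $\rho = 27 + 1 = 28$, and $2$-saturation of the resulting index-one sublattice identifies the full N\'eron--Severi group. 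A secondary point: the genus-$1$ fibration on $Y_{-}(65)$ has no known rational section (the coefficient of $r^4$ is not a square), so ``passing to the Jacobian'' produces a surface not known to be birational to $Y_{-}(65)$ over $\Q$; this is harmless for your purposes only because $\chi(\sO)$ and the geometric Picard number are invariant under passing to the Jacobian, which deserves to be said explicitly.
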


\subsection{Analysis}

This is an honestly elliptic surface, with the extra involution
$\iota: (r,s,z) \mapsto (-r,-s,z)$ corresponding to the factorization
$65 = 5 \cdot 13$.

The branch locus is a curve of genus $1$, isomorphic to the elliptic curve
$$
y^2 + xy = x^3 - x
$$
of conductor $65$.  For lack of space we do not write down the
explicit isomorphism here, relegating the relevant formulae to the
auxiliary files. This elliptic curve is isomorphic to the quotient of
$X_0(65)$ by the Atkin-Lehner involution $w_{65}$.

We were unable to find a section of this genus-$1$ fibration. However,
for purposes of analyzing the Picard number, we study the
Jacobian of this elliptic curve over $\Q(s)$, given by
$$
y^2 = x^3 + (8s^6 + 13s^4 - 106s^2 + 101) \, x^2 + (16s^{12} +
52s^{10} - 564s^8 + 1416s^6 - 1624s^4 + 900s^2 - 196) \,x.
$$
This has reducible fibers of type $\I_8$ at $s = \pm 1$,
$\I_4$ at $s = \infty$, $\I_3$ at $s = \pm \sqrt{13}/3$,
and $\I_2$ at the four roots of $4s^4 + 29s^2 - 49$
(a dihedral extension containing $\sqrt{65}$).
The trivial lattice therefore has rank $27$, leaving room for
\MoW\ rank at most $3$. There is the obvious $2$-torsion section $(0,0)$,
and we find a non-torsion section of height $2/3$:
$$
P = \big(  (73 + 9\mu)/2  (s^2-1)^2  (s^2 + (29/8 - 5/8\mu)), \, (657 + 81\mu)/2 s (s^2-1)^2  (s^2-13/9)  (s^2 + (29/8 - 5/8\mu)) \big)
$$ 
with $\mu = \sqrt{65}$. Analysis of the quotient by $\iota$, and
its twist, shows that the \MoW\ rank is exactly $1$. Therefore the
Picard number of $Y_{-}(65)$ is $28$. The discriminant of the
sublattice of the N\'eron-Severi group generated by the trivial
lattice and these two sections is $6144 = 2^{11} \cdot 3$. We checked
that it is $2$-saturated, and so it equals the entire N\'eron-Severi
lattice. Therefore the sections above generate the \MoW\ group.

The quotient by the involution $\iota$ is given by the equation
\begin{align*}
w^2 &= -16t^2(t^2+2t+13)^2(4t^3+3t^2-10t-13)r^4 \\
  & \quad -32t^2(4t^6+15t^5+127t^4-10t^3-494t^2-1253t-949)r^3 \\
 &\quad  -8t(12t^6+33t^5+408t^4-898t^3-2672t^2-2023t+404)r^2 \\
 &\quad   -8t(4t^5+7t^4+149t^3-627t^2-641t+148)r \\
& \quad -(4t^5+3t^4+166t^3-997t^2+328t-80),
\end{align*}
where $t = s^2$.  Once again we study the Jacobian elliptic fibration:
it has the equation
$$
y^2 =x^3+(8t^4+13t^3-106t^2+101t)\,x^2
+(16t^8+52t^7-564t^6+1416t^5-1624t^4+900t^3-196t^2)\,x.
$$
This is an elliptic K3 surface with bad fibers of type $\I^*_0$ at
$t=0$, $\I_8$ at $t = 1$, $\I_3$ at $t = 13/9$, and $\I_2$ at $t=\infty$
and $t = (-29 \pm 5 \sqrt{65})/8$ .  Therefore the trivial
lattice has rank $18$, and the \MoW\ rank can be at most~$2$.
As before we have a $2$-torsion point $P_1 = (0,0)$. We also find a
non-torsion point
$$
P_2 = \big( (73-9\mu)t (t-1)^2(8t+29+5\mu)/16 ,\, (-73+9\mu)t^2 (t-1)^2 (9t - 13) (8t+29+5\mu) /16  \big)
$$ 
of height $1/3$, with $\mu = \sqrt{65}$ as before. Therefore, the
Picard number is at least $19$, and point counting modulo $11$ and
$23$ shows that the Picard number must be $19$. We verified by
checking $2$-saturation that the sections $P_1$ and $P_2$ and the
trivial lattice span the N\'eron-Severi group, which has discriminant
$64$.

We next analyze the quotient by $\iota' : (r,s,z) \mapsto (-r,-s,-z)$,
which is the quadratic twist of the elliptic K3 surface above by
$\sqrt t$\/:
$$
y^2 =x^3+(8t^3+13t^2-106t+101)\,x^2
+(16t^6+52t^5-564t^4+1416t^3-1624t^2+900t-196)\,x.
$$
This is also an elliptic K3 surface, with reducible fibers of type
$\I_2^*$ at $t = \infty$, $\I_8$ at $t = 1$, $\I_3$ at $t = 13/9$, and
$\I_2$ at $t = (-29 \pm 5 \sqrt{65})/8$. The trivial lattice has rank~$19$.
Again, point counting modulo $11$ and $23$ shows that the Picard
number is $19$, and the \MoW\ group therefore has rank $0$,
with only the $2$-torsion section $(0,0)$.

\subsection{Examples}
We list some points of small height and corresponding genus $2$ curves. 

\begin{tabular}{l|c}
Point $(r,s)$ & Sextic polynomial $f_6(x)$ defining the genus $2$ curve $y^2 = f_6(x)$. \\
\hline \hline \\ [-2.5ex]
$(40/41, -1/5)$ & $ -396x^6 + 216x^5 + 281x^4 - 889x^3 + 50x^2 + 939x + 315 $ \\
$(-40/41, 1/5)$ & $ -648x^5 + 3015x^4 - 422x^3 - 4369x^2 + 2216x - 752 $ \\
$(35/136, 1/5)$ & $ -72x^6 + 969x^5 - 3509x^4 + 847x^3 + 9373x^2 + 816x - 3724 $ \\
$(-40/143, -2/5)$ & $ -240x^6 - 384x^5 + 695x^4 + 2724x^3 + 5543x^2 - 10992x - 2736 $ \\
$(2/15, -2)$ & $ -16200x^5 + 1125x^4 - 8972x^3 - 30493x^2 + 14186x - 18974 $ \\
$(40/143, 2/5)$ & $ -4368x^6 + 420x^5 + 28144x^4 - 13235x^3 - 35846x^2 + 10080x + 14112 $ \\
$(-5/64, 7/5)$ & $ 800x^6 + 6480x^5 + 19405x^4 + 35306x^3 - 39491x^2 - 2688x - 48 $ \\
$(-49/197, 1/2)$ & $ 5088x^6 - 48648x^5 + 85307x^4 + 9352x^3 - 59071x^2 - 15690x + 730 $ \\
$(49/197, -1/2)$ & $ 546x^6 + 9798x^5 + 24115x^4 - 25228x^3 - 98531x^2 + 58920x + 38880 $ \\
$(-35/136, -1/5)$ & $ 2744x^6 - 22344x^5 - 45297x^4 - 16942x^3 + 100440x^2 + 89910x - 72900 $ \\
$(5/64, -7/5)$ & $ -332100x^6 + 344220x^5 - 54545x^4 + 106126x^3 - 68117x^2 + 3528x - 16464 $ \\
$(-2/15, 2)$ & $ -216000x^6 + 506400x^5 - 283195x^4 - 70483x^3 + 13883x^2 + 3456x + 300 $ \\
$(-1/65, 1/2)$ & $ -366600x^6 - 2197788x^5 - 64538x^4 + 11447529x^3 + 133360x^2 - 19021554x + 9447840 $ \\
$(1/65, -1/2)$ & $ -412287975x^6 - 3236837061x^5 + 5479876697x^4 + 3156545763x^3 + 1177706300x^2$ \\
& $ - 7413585000x - 1103500000 $ 
\end{tabular}

\section{Discriminant $69$}

\subsection{Parametrization}

Start with an elliptic K3 surface with fibers of type $E_6, A_8$ and
$A_1$ and a section of height $69/54 = 23/18 = 4 - 1/2 - 20/9$. We can
write down the Weierstrass equation of this family as
$$
y^2 = x^3 + (a_0 + a_1t + a_2t^2)\, x^2 + t^3 (b_0 + b_1t + b_2t^2) \, x + t^6 (c_0 + c_1t + c_2t^2),
$$
with
\begin{align*}
a_0 &= (\sigma_2-\sigma_1+2)^2/4, &
a_1 &= ( \sigma_2^2- \sigma_2(\sigma_1 - 4) - 2(\sigma_1-1))/2, &
a_2 &= ( \sigma_2^2 + 4\sigma_2 - 8)/4; \\
b_0 &= \sigma_2(\sigma_2-\sigma_1+2)^2/2, &
b_1 &=
 (\sigma_2^3 - \sigma_2^2(\sigma_1 - 4) - \sigma_1^2- \sigma_2\sigma_1
 + 2\sigma_1)/2, &
b_2 &= (\sigma_2(\sigma_1 - 6) + 2(\sigma_1 -1))/2; \\
c_0 &= \sigma_2^2(\sigma_2 - \sigma_1 + 2)^2/4,
& c_1 &= \sigma_2(\sigma_1-2)(\sigma_2 - \sigma_1)/2,
& c_2 &= (\sigma_1^2 - 4\sigma_2)/4,
\end{align*}
where
\[
\sigma_1 = r+s, \qquad \sigma_2 = rs.
\]

First we find the class of another $E_6$ fiber below and go to that
elliptic fibration via a $2$-neighbor step.

\begin{center}
\begin{tikzpicture}

\draw (0,0)--(6,0);
\draw (2,0)--(2,2);
\draw (5,0)--(5,1);
\draw (4.97,1)--(4.97,2);
\draw (5.03,1)--(5.03,2);
\draw (6,0)--(6.5,0.866)--(9.5,0.866)--(9.5,-0.866)--(6.5,-0.866)--(6,0);
\draw (5,3)--(5,2);
\draw [bend right] (5,3) to (4,0);
\draw [bend left] (5,3) to (9.5,0.866);
\draw [very thick] (5,1)--(5,0)--(6,0);
\draw [very thick] (7.5,0.866)--(6.5,0.866)--(6,0)--(6.5,-0.866)--(7.5,-0.866);

\fill [white] (0,0) circle (0.1);
\fill [white] (1,0) circle (0.1);
\fill [white] (2,0) circle (0.1);
\fill [white] (3,0) circle (0.1);
\fill [white] (4,0) circle (0.1);
\fill [black] (5,0) circle (0.1);
\fill [black] (6,0) circle (0.1);
\fill [white] (2,1) circle (0.1);
\fill [white] (2,2) circle (0.1);
\fill [black] (5,1) circle (0.1);
\fill [white] (5,2) circle (0.1);
\fill [black] (6.5,0.866) circle (0.1);
\fill [black] (7.5,0.866) circle (0.1);
\fill [white] (8.5,0.866) circle (0.1);
\fill [white] (9.5,0.866) circle (0.1);
\fill [black] (6.5,-0.866) circle (0.1);
\fill [black] (7.5,-0.866) circle (0.1);
\fill [white] (8.5,-0.866) circle (0.1);
\fill [white] (9.5,-0.866) circle (0.1);
\fill [white] (5,3) circle (0.1);

\draw (5,0) circle (0.2);
\draw (0,0) circle (0.1);
\draw (1,0) circle (0.1);
\draw (2,0) circle (0.1);
\draw (3,0) circle (0.1);
\draw (4,0) circle (0.1);
\draw (5,0) circle (0.1);
\draw (6,0) circle (0.1);
\draw (2,1) circle (0.1);
\draw (2,2) circle (0.1);
\draw (5,1) circle (0.1);
\draw (5,2) circle (0.1);
\draw (6.5,0.866) circle (0.1);
\draw (7.5,0.866) circle (0.1);
\draw (8.5,0.866) circle (0.1);
\draw (9.5,0.866) circle (0.1);
\draw (6.5,-0.866) circle (0.1);
\draw (7.5,-0.866) circle (0.1);
\draw (8.5,-0.866) circle (0.1);
\draw (9.5,-0.866) circle (0.1);
\draw (5,3) circle (0.1);

\end{tikzpicture}
\end{center}

The resulting elliptic fibration has $E_6, E_6$ and $A_3$ fibers, and
a section of height $23/12 = 4 - 4/3 - 3/4$. Now we find the class of
an $A_7$ fiber in the diagram below.

\begin{center}
\begin{tikzpicture}

\draw (0,0)--(10,0);
\draw (2,0)--(2,2);
\draw (5,0)--(5,1);
\draw (8,0)--(8,2);
\draw (5,1)--(4.293,1.707)--(5,2.414)--(5.707,1.707)--(5,1);
\draw (6,3)--(5.707,1.707);
\draw [bend right, very thick] (6,3) to (2,2);
\draw [bend left, very thick] (6,3) to (6,0);
\draw [very thick] (2,2)--(2,0)--(6,0);

\fill [white] (0,0) circle (0.1);
\fill [white] (1,0) circle (0.1);
\fill [black] (2,0) circle (0.1);
\fill [black] (3,0) circle (0.1);
\fill [black] (4,0) circle (0.1);
\fill [black] (5,0) circle (0.1);
\fill [black] (6,0) circle (0.1);
\fill [white] (7,0) circle (0.1);
\fill [white] (8,0) circle (0.1);
\fill [white] (9,0) circle (0.1);
\fill [white] (10,0) circle (0.1);
\fill [black] (2,1) circle (0.1);
\fill [black] (2,2) circle (0.1);
\fill [white] (5,1) circle (0.1);
\fill [white] (8,1) circle (0.1);
\fill [white] (8,2) circle (0.1);
\fill [white] (4.293,1.707) circle (0.1);
\fill [white] (5.707,1.707) circle (0.1);
\fill [white] (5,2.414) circle (0.1);
\fill [black] (6,3) circle (0.1);

\draw (5,0) circle (0.2);
\draw (0,0) circle (0.1);
\draw (1,0) circle (0.1);
\draw (2,0) circle (0.1);
\draw (3,0) circle (0.1);
\draw (4,0) circle (0.1);
\draw (5,0) circle (0.1);
\draw (6,0) circle (0.1);
\draw (7,0) circle (0.1);
\draw (8,0) circle (0.1);
\draw (9,0) circle (0.1);
\draw (10,0) circle (0.1);
\draw (2,1) circle (0.1);
\draw (2,2) circle (0.1);
\draw (5,1) circle (0.1);
\draw (8,1) circle (0.1);
\draw (8,2) circle (0.1);
\draw (4.293,1.707) circle (0.1);
\draw (5.707,1.707) circle (0.1);
\draw (5,2.414) circle (0.1);
\draw (6,3) circle (0.1);

\end{tikzpicture}
\end{center}

The resulting fibration has $A_7, A_5, A_2$ and $A_1$ fibers, a
$2$-torsion section, and a non-torsion section $P$ of height $69/72 =
23/24 = 4 - 2/3 - 3\cdot 3/6 - 1 \cdot 7/8$. We next identify the
class $F'$ of an $E_7$ fiber, and go to it via a $2$-neighbor
step. Note that $P \cdot F' = 1$, so the new fibration has a section.

\begin{center}
\begin{tikzpicture}

\draw (2,0)--(1.5,0.866)--(0.5,0.866)--(0,0)--(0.5,-0.866)--(1.5,-0.866)--(2,0)--(6,0)--(6.5,0.866)--(8.5,0.866)--(9,0)--(8.5,-0.866)--(6.5,-0.866)--(6,0);
\draw (3,0.866)--(4,0)--(5,0.866)--(4.5,1.732)--(5.5,1.732)--(5,0.866);
\draw (2.95,0.866)--(2.95,1.866);
\draw (3.05,0.866)--(3.05,1.866);
\draw [bend left] (3,3) to (3,0.866);
\draw (3,3)--(5.5,1.732);
\draw [bend left] (3,3) to (6.5,0.866);
\draw [bend right] (3,3) to (0,0);
\draw [very thick] (0,0)--(0.5,-0.866)--(1.5,-0.866)--(2,0)--(4,0)--(5,0.866)--(4.5,1.732);
\draw [very thick] (1.5,0.866)--(2,0);

\draw (3,3) [above] node{$P$};
\draw (4,0) circle (0.2);
\fill [black] (0,0) circle (0.1);
\fill [white] (0.5,0.866) circle (0.1);
\fill [black] (0.5,-0.866) circle (0.1);
\fill [black] (1.5,0.866) circle (0.1);
\fill [black] (1.5,-0.866) circle (0.1);
\fill [black] (2,0) circle (0.1);
\fill [black] (4,0) circle (0.1);
\fill [white] (6,0) circle (0.1);
\fill [white] (6.5,0.866) circle (0.1);
\fill [white] (6.5,-0.866) circle (0.1);
\fill [white] (7.5,0.866) circle (0.1);
\fill [white] (7.5,-0.866) circle (0.1);
\fill [white] (8.5,0.866) circle (0.1);
\fill [white] (8.5,-0.866) circle (0.1);
\fill [white] (9,0) circle (0.1);
\fill [white] (3,0.866) circle (0.1);
\fill [black] (5,0.866) circle (0.1);
\fill [white] (3,1.866) circle (0.1);
\fill [white] (5.5,1.732) circle (0.1);
\fill [black] (4.5,1.732) circle (0.1);
\fill [white] (3,3) circle (0.1);

\draw (0,0) circle (0.1);
\draw (0.5,0.866) circle (0.1);
\draw (0.5,-0.866) circle (0.1);
\draw (1.5,0.866) circle (0.1);
\draw (1.5,-0.866) circle (0.1);
\draw (2,0) circle (0.1);
\draw (4,0) circle (0.1);
\draw (6,0) circle (0.1);
\draw (6.5,0.866) circle (0.1);
\draw (6.5,-0.866) circle (0.1);
\draw (7.5,0.866) circle (0.1);
\draw (7.5,-0.866) circle (0.1);
\draw (8.5,0.866) circle (0.1);
\draw (8.5,-0.866) circle (0.1);
\draw (9,0) circle (0.1);
\draw (3,0.866) circle (0.1);
\draw (5,0.866) circle (0.1);
\draw (3,1.866) circle (0.1);
\draw (5.5,1.732) circle (0.1);
\draw (4.5,1.732) circle (0.1);
\draw (3,3) circle (0.1);

\end{tikzpicture}
\end{center}

The new elliptic fibration has $E_7$, $A_7$ and $A_1$ fibers, a
$2$-torsion section $Q'$, and a non-torsion section $P'$ of height $69/8
= 4 + 2\cdot 3 - 1/2 - 7/8$. We identify a fiber $F''$ of type $E_7$
below.

\begin{center}
\begin{tikzpicture}
\draw (0,0)--(8,0)--(8.5,0.866)--(10.5,0.866)--(11,0)--(10.5,-0.866)--(8.5,-0.866)--(8,0);
\draw (7,0)--(7,1);
\draw (7.05,1)--(7.05,2);
\draw (6.95,1)--(6.95,2);
\draw (3,0)--(3,1);
\draw [bend left] (6,3) to (8.5,0.866);
\draw [bend right] (6,3) to (6,0);
\draw (6,3)--(7,2);
\draw [very thick] (7,0)--(8,0);
\draw [very thick] (10.5,0.866)--(8.5,0.866)--(8,0)--(8.5,-0.866)--(10.5,-0.866);
\draw (6,3)--(7,0);
\draw (6.07,3)--(7.07,0);
\draw (5.93,3)--(6.93,0);

\draw (7,0) circle (0.2);
\fill [white] (0,0) circle (0.1);
\fill [white] (1,0) circle (0.1);
\fill [white] (2,0) circle (0.1);
\fill [white] (3,0) circle (0.1);
\fill [white] (4,0) circle (0.1);
\fill [white] (5,0) circle (0.1);
\fill [white] (6,0) circle (0.1);
\fill [black] (7,0) circle (0.1);
\fill [black] (8,0) circle (0.1);
\fill [black] (8.5,0.866) circle (0.1);
\fill [black] (8.5,-0.866) circle (0.1);
\fill [black] (9.5,0.866) circle (0.1);
\fill [black] (9.5,-0.866) circle (0.1);
\fill [black] (10.5,0.866) circle (0.1);
\fill [black] (10.5,-0.866) circle (0.1);
\fill [white] (11,0) circle (0.1);
\fill [white] (7,1) circle (0.1);
\fill [white] (7,2) circle (0.1);
\fill [white] (3,1) circle (0.1);
\fill [white] (6,3) circle (0.1);

\draw (0,0) circle (0.1);
\draw (1,0) circle (0.1);
\draw (2,0) circle (0.1);
\draw (3,0) circle (0.1);
\draw (4,0) circle (0.1);
\draw (5,0) circle (0.1);
\draw (6,0) circle (0.1);
\draw (7,0) circle (0.1);
\draw (8,0) circle (0.1);
\draw (8.5,0.866) circle (0.1);
\draw (8.5,-0.866) circle (0.1);
\draw (9.5,0.866) circle (0.1);
\draw (9.5,-0.866) circle (0.1);
\draw (10.5,0.866) circle (0.1);
\draw (10.5,-0.866) circle (0.1);
\draw (11,0) circle (0.1);
\draw (7,1) circle (0.1);
\draw (7,2) circle (0.1);
\draw (3,1) circle (0.1);
\draw (6,3) circle (0.1);
\draw (6,3) [above] node{$P'$};

\end{tikzpicture}
\end{center}

Note that $P' \cdot F'' = 2 \cdot 3 + 3 = 9$, while the remaining
component of the $A_7$ fiber intersects $F''$ with mulstiplicity~$2$.
Therefore the new elliptic fibration has a section.
Converting to the Jacobian, we read out the Weierstrass coefficients of
the $E_8 E_7$ form, which give us the Igusa-Clebsch invariants.

\begin{theorem}
A birational model over $\Q$ for the Hilbert modular surface
$Y_{-}(69)$ as a double cover of $\Proj^2_{r,s}$ is given by the following
equation:
\begin{align*}
z^2 =\;&  (r-1)^2r^4s^6 -2(r-1)r^2(r^3+13r^2-37r+22)s^5 \\
 & \quad  +(r^6+100r^5-439r^4+640r^3-357r^2+72r-16)s^4 \\
 & \quad -2(59r^5-320r^4+590r^3-436r^2+133r-32)s^3 \\
 & \quad  +(44r^5-357r^4+872r^3-830r^2+314r-83)s^2 \\
 & \quad  + 2(36r^4-133r^3+157r^2-65r+19)s -16r^4+64r^3-83r^2+38r-11.
\end{align*}
It is a surface of general type.
\end{theorem}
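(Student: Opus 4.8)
The plan is to apply the method of Section~\ref{method} to the family introduced in the parametrization above. Step~1 is already in place: the displayed Weierstrass equation exhibits a Zariski-open subset of $\sF_{L_{69}}$ as an open subset of $\Proj^2_{r,s}$, the generic member being an elliptic K3 surface with $E_6$, $A_8$, and $A_1$ fibers together with a section of height $69/54$, so that the trivial lattice together with this section realizes $L_{69}$. For Step~2 I would carry out the chain of $2$-neighbor steps indicated by the successive Dynkin-type diagrams, at each stage using Lemma~\ref{lem:section_criterion} to recognize the elliptic divisor $F'$ (the bold extended-Dynkin subdiagram), computing $H^0(X,\sO_X(F'))$ as in Section~\ref{neighbors}, and converting the resulting genus-$1$ curve to its Jacobian. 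The existence of a section at each stage follows either from the coprimality of the listed intersection numbers (the corollary to Lemma~\ref{lem:section_criterion}) or, in any case, from Proposition~\ref{automaticsections}. The output is an $E_8E_7$ fibration whose Weierstrass coefficients are rational functions of $r$ and $s$.

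From this $E_8E_7$ form I would read off the Igusa-Clebsch invariants $(I_2:I_4:I_6:I_{10})$ by matching Theorem~\ref{thesisthm}; by Theorem~\ref{humbert} the induced map $\Proj^2_{r,s}\to\A_2$ has image the Humbert surface $\sH_{69}$ and is generically one-to-one onto it. For Step~3 I would enumerate the finitely many ways in which the Picard number can jump by one—an extra $\I_2$ fiber, a promotion of a reducible fiber, or a new low-height section—and compute the corresponding candidate branch polynomials $f_i(r,s)$; the branch locus of $\widetilde\eta$ is then a product of some subset of these with a squarefree constant $C$. Step~4 pins down the subset and the twist $C$ by specializing $(r,s)$ to many values over several primes $p\nmid 69$, computing point counts of the associated genus-$2$ curves modulo $p$ and $p^2$, and applying Lemma~\ref{twistrealmult} to detect whether real multiplication by $\sO_{69}$ is defined over $\F_p$; since $Y_{-}(69)$ has good reduction away from $3$ and $23$, this procedure terminates and produces the displayed equation.

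For the final assertion I would compute the total degree of the branch curve $f(r,s)=0$. Its leading monomial is $(r-1)^2r^4s^6$, of total degree $12$, so $Y_{-}(69)$ is birationally a double cover of $\Proj^2$ branched over a curve of degree $2k$ with $k=6$. For such a cover the canonical class is the pullback of $(k-3)H$ away from the branch singularities; here $k-3=3>0$, placing $Y_{-}(69)$ squarely in the general-type range, with the isolated singularities of the sextic-degree-$\times 2$ branch curve not lowering the Kodaira dimension below $2$. Since Kodaira dimension is a birational invariant the theorem follows, in agreement with the Enriques-Kodaira placement of $Y_{-}(D)$ for $D$ in this range by Hirzebruch, van de Ven, and Zagier \cite{Hi, HVdV, HZ}.

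The hard part will be the bookkeeping in Step~2: discriminant~$69$ requires the longest chain of neighbor steps among all the examples, and at each stage one must track the exact intersection numbers of the wandering sections with every fiber component, both to certify that $F'$ is primitive and to certify that the new fibration has a section. Keeping the Weierstrass coefficients in manageable form through the successive substitutions—and correctly matching the height of each transported section (for instance $69/8 = 4 + 2\cdot 3 - 1/2 - 7/8$ for $P'$) against the claimed fiber configuration—is where arithmetic errors are most likely. I would cross-check each step by verifying that the discriminant of the resulting N\'eron-Severi lattice remains $-69$, which serves as a strong internal consistency test on the entire computation.
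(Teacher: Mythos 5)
Your first three paragraphs track the paper's actual proof: the $E_6+A_8+A_1$ family with a section of height $69/54$ does realize $L_{69}$ (rank $2+8+1+6+1=18$, discriminant $54\cdot\tfrac{69}{54}=69$), the chain of neighbor steps through the $E_6E_6A_3$, $A_7A_5A_2A_1$ and $E_7A_7A_1$ fibrations ends in an $E_8E_7$ fibration whose coefficients give the Igusa-Clebsch invariants, and the branch factors and quadratic twist are pinned down exactly as in Steps 3--4 via Lemma~\ref{twistrealmult}. That part is fine, if schematic.

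The genuine gap is your argument for general type. A double cover of $\Proj^2$ branched over a degree-$12$ curve is \emph{not} automatically of general type, and the paper's own tables refute your reasoning pattern: for $D=44$ the branch curve $(rs+s-1)(rs-s+1)(r^6s^2-r^4s^2+18r^2s-16s+27)=0$ also has degree $12$, yet $Y_{-}(44)$ is honestly elliptic of Kodaira dimension $1$; for $D=53$ and $D=61$ the branch curves have degree $8$ (so the same count gives $k-3=1>0$), yet both surfaces are again honestly elliptic. The failure mechanism is visible at infinity in the very equation you quote: the degree-$12$ part of $f(r,s)$ is just $r^6s^6$, so the projective branch curve meets the line at infinity only at $[1{:}0{:}0]$ and $[0{:}1{:}0]$, each a point of multiplicity $6$, and in fact worse than an ordinary $6$-fold point (the tangent cone at $[1{:}0{:}0]$ is $S^4(S-T)^2$, non-reduced, so there are infinitely near singularities). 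A branch point of even multiplicity $m=2a$ forces, after blowing up the base, a correction $(1-a)E$ to the pulled-back log canonical class; for $m=6$ each such point subtracts $2E$ from $\pi^*(3H)$, and further infinitely near singularities subtract more, so bigness of $K$ is exactly what is at stake — this is the mechanism by which $D=44,53,61$ drop out of general type despite the same naive count. Your parenthetical ``with the isolated singularities \dots not lowering the Kodaira dimension below $2$'' is therefore the entire content of the claim, not a remark. To close the gap you must either carry out the resolution of the branch curve (including the points at infinity and their infinitely near singularities) and verify that the canonical class of the resolved double cover is still big, or else appeal to the classification of Hilbert modular surfaces \cite{HZ, vdG}, which places $Y_{-}(69)$ in the general-type range and is birationally invariant, hence applies to this model; the paper itself gives no resolution computation and implicitly relies on the latter.
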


\subsection{Analysis}

This is a surface of general type, with the extra involution
$(r,s,z) \mapsto (s,r,z)$, corresponding to $69 = 3 \cdot 23$.

The branch locus is a curve of genus $2$; the transformation
\begin{align*}
r &= -\frac{x^3y+x^2y-y-3x^6-4x^5+x^4+6x^3-2x^2-2x+1}{2x^2(x^2+x-1)^2} \\
s &= \frac{x^3y+x^2y-y+3x^6+4x^5-x^4-6x^3+2x^2+2x-1}{2x^2(x^2+x-1)^2}
\end{align*}
converts it into Weierstrass form
$$
y^2 = (x^3+x^2-1)(5x^3-7x^2+4x-1).
$$
It is isomorphic to $X_0(69)/ \langle w \rangle$, where $w$ is the
Atkin-Lehner involution.

The quotient surface is (with $m = r+s, n = rs$)
\begin{align*}
z^2 &= -16m^4+4(11n^2+18n+16)m^3+(n^4-118n^3-357n^2-202n-83)m^2 \\
   &\quad  -2(n^5-50n^4-254n^3-328n^2-61n-19)m  +n^6-26n^5-203n^4-466n^3-330n^2+36n-11.
\end{align*}
The substitution $m = n + k$ makes the right hand side quartic in $n$,
with highest coefficient a square. Converting to the Jacobian, we get
(after some Weierstrass transformations and change of the parameter on
the base) the elliptic K3 surface
$$
y^2 = x^3 -(88t^3+15t^2+6t-1)\, x^2 + 8t^3(250t^3+57t^2+45t-8)\, x
 + 16t^5(1125t^3+552t^2+208t-36).
$$ 
This has fibers of type $\I_5$ at $t = 0$, $\I^*_0$ at $t = \infty$,
$\I_2$ at $t = (-3 \pm 2\sqrt{3})/4$, and $\I_3$ at the roots of $25
t^3 + 17 t^2 + 2 t - 1$ (which generates the cubic field of
discriminant $-23$). The trivial lattice has rank $18$, leaving room
for at most two independent sections.  We find the non-torsion section
$$
P_1 = \big(4 t (1-t), 4 t (25 t^3+17 t^2+2 t-1) \big)
$$ 
of height $1/5$. Counting points modulo $7$ and $13$ then shows
that the Picard number must be exactly $19$. Therefore, the
discriminant of the sublattice spanned by $P_1$ and the trivial
lattice is $432 = 2^4 \cdot 3^3$.  Looking at the contributions to the
N\'eron-Tate height from the fiber configuration, one easily sees that
there cannot be any $2$- or $3$-torsion. Similarly, it is impossible
to have a section of height $1/20$ or $1/45$. Therefore, this
sublattice must be the entire N\'eron-Severi lattice, and $P_1$ is a
generator of the \MoW\ group.

\subsection{Examples}
We list some points of small height and corresponding genus $2$ curves.

\begin{tabular}{l|c}
Point $(r,s)$ & Sextic polynomial $f_6(x)$ defining the genus $2$ curve $y^2 = f_6(x)$. \\
\hline \hline \\ [-2.5ex]
$(5/6, 5/2)$ & $ -144x^6 - 336x^5 + 491x^4 - 274x^3 + 4919x^2 - 23076x - 6476 $ \\
$(5/7, 10/3)$ & $ -108456x^6 + 89940x^5 + 3518x^4 + 11915x^3 + 29021x^2 - 40515x + 2841 $ \\
$(10/3, 5/7)$ & $ -7146x^6 + 26076x^5 + 26698x^4 - 128487x^3 - 87881x^2 + 140967x + 106899 $ \\
$(9/14, 3)$ & $ -205648x^6 - 71112x^5 + 4931x^4 - 3219x^3 - 1369x^2 + 336x + 64 $ \\
$(3, 9/14)$ & $ -47792x^6 + 212184x^5 - 134731x^4 - 131082x^3 + 58025x^2 + 39900x + 3500 $ \\
$(5/2, 5/6)$ & $ 111132x^6 + 308700x^5 + 150199x^4 - 166350x^3 - 85877x^2 + 37080x + 3208 $ \\
$(5/4, -25/3)$ & $ -203124x^6 + 537156x^5 - 1147529x^4 - 958036x^3 - 185681x^2 + 583356x - 97236 $ \\
$(31/18, 3)$ & $ -4138876x^6 - 12791196x^5 - 14043627x^4 - 2580588x^3 - 2332545x^2$ \\
& $ - 7239150x - 962750 $ \\
$(97/34, 3)$ & $ -4774900x^6 + 11612125x^5 - 1487685x^4 - 13117009x^3 + 29039993x^2$ \\
& $ + 24527448x - 2106844 $ \\
$(3, -13/9)$ & $ -1749188x^6 + 9004404x^5 - 5544841x^4 - 13022828x^3 - 36459313x^2 $ \\
& $ + 31091676x + 62509084 $ \\
$(3, 31/18)$ & $ 1490720x^6 + 34810248x^5 + 203477725x^4 - 39952362x^3 - 392594159x^2$ \\
& $ + 53751372x - 121943204 $ \\
$(31/5, 5/6)$ & $ 354444x^6 + 2968308x^5 - 37732823x^4 + 4713146x^3 + 223323505x^2$ \\
& $ + 714572220x - 955946700 $ \\
$(-25/3, 5/4)$ & $ -28660432x^6 - 9277032x^5 - 367100597x^4 + 64181262x^3 - 1142233133x^2$ \\
& $ + 827398968x - 146839168 $ \\
$(9/8, 29/13)$ & $ 953161100x^6 - 1709768900x^5 + 57159815x^4 - 997590336x^3 - 322970431x^2$ \\
& $ - 72177962x + 2546122 $ \\
$(5/6, 31/5)$ & $ 6865596x^6 - 82041816x^5 + 58608103x^4 + 1250964773x^3 + 921256891x^2$ \\
& $ - 4495870224x - 3609058132 $ \\
$(-53/7, 71/57)$ & $ -125838448x^6 + 33513120x^5 - 1068122125x^4 + 1220630640x^3 - 2407159591x^2$ \\
& $ + 4627695870x - 2358802782 $ 
\end{tabular}

By pulling back some of the low height sections of the quotient,
we produce curves of low genus on $Y_{-}(69)$.  Since $r + s = m$ and $rs = n$,
the appropriate condition is that $m^2 - 4n = (r-s)^2$ be a square
(then we can take a square root and solve for $r,s$).
In other words, $(n+k)^2 - 4n$ must be a square.
The section $P_1$ is given by $n = -(5k^2-17k+15)/(k-2)$;
it gives rise to a genus~$1$ curve
$$
y^2 = 16k^4-100k^3+237k^2-254k+105
$$
which has rational points (for instance, at infinity).
It is therefore an elliptic curve, and we calculate that it has
conductor $1711$ (prime) and \MoW\ group $\Z^2$.

Similarly, the section $-P_1$ is defined by $n = -(25k^2-92k+89)/(10k-21)$.
It also gives rise to a genus-$1$ curve
$$
y^2 = 225k^4-1130k^3+1931k^2-1350k+445
$$
with rational points (as at $k=\infty$).
It is an elliptic curve of conductor $50435 = 5 \cdot 7 \cdot 11 \cdot 131$,
with trivial torsion and rank~$1$.

\section{Discriminant $73$}

\subsection{Parametrization}

Start with a K3 elliptic surface with fibers of type $A_6, A_{9}$ and
a section of height $73/70 = 4 - 21/10 - 6/7$.  The Weierstrass equation is
$$
y^2 = x^3 + (a_0 + a_1t + a_2t^2 + a_3t^3 + a_4t^4)\, x^2
 + 2\mu t (b_0 + b_1t + b_2t^2 + b_3t^3)\, x + \mu^2t^2(c_0 + c_1t + c_2t^2),
$$
with
\begin{align*}
\mu &= -2s^4(s+2r)^2, &
a_0 &= s^2(rs-2r-2)^2, &
a_4 &= 16r^2(r+1)^2, &
b_0 &= s(rs-2r-2)^2, \\
b_3 &= 4r(r+1)^2(s+2r+2), &
c_0 &= (rs-2r-2)^2, &
c_2 &= (r+1)^2(s+2r+2)^2, & &
\end{align*}
\begin{align*}
a_3 &= 8r(r+1)\big( (r+1)s^2 + 2(4r+1)s + 4r(r-1) \big), \quad
c_1 = -2(s+2r+2)\big( s^2 + (r+2)(r-1)s - 2(r^2-1) \big), \\
b_1 &= -3s^4 - 2r(r+5)s^3 -2(r^3+4r^2-2r-2)s^2 -8(r+1)s + 8r(r+1)^2, \\
a_1 &= -2s \big(3s^4 + r(r+11)s^3 + 2(5r^2-r-1)s^2 + 4(r+1)(r^2+1)s - 8r(r+1)^2\big), \\
b_2 &= (r+1)^2s^3 + 2(r^3+7r^2+6r+2)s^2 + 4(4r^3+8r^2+7r+1)s +8(r-1)r(r+1)(r+2), \\
a_2 &= (r+1)^2s^4 + 4(4r^2+3r+1)s^3 + 4(4r^3+10r^2+10r+1)s^2 + 16r(r+1)(r-2)s + 16r^2(r+1)^2.
\end{align*}

We first identify the class of an $E_8$ fiber, and perform a $3$-neighbor
step to move to an elliptic fibration with $E_8$ and $A_7$ fibers.

\begin{center}
\begin{tikzpicture}

\draw (0,0)--(2,0)--(2.5,0.866)--(5.5,0.866)--(6,0)--(5.5,-0.866)--(2.5,-0.866)--(2,0);
\draw (0,0)--(-0.5,0.866)--(-2.5,0.866)--(-2.5,-0.866)--(-0.5,-0.866)--(0,0);
\draw [bend right] (2,2) to (-0.5,0.866);
\draw [bend left] (2,2) to (4.5,0.866);
\draw [very thick] (1,0)--(2,0)--(2.5,0.866)--(3.5,0.866);
\draw [very thick] (2,0)--(2.5,-0.866)--(5.5,-0.866)--(6,0);

\draw (1,0) circle (0.2);
\fill [white] (0,0) circle (0.1);
\fill [black] (1,0) circle (0.1);
\fill [black] (2,0) circle (0.1);
\fill [black] (2.5,0.866) circle (0.1);
\fill [black] (3.5,0.866) circle (0.1);
\fill [white] (4.5,0.866) circle (0.1);
\fill [white] (5.5,0.866) circle (0.1);
\fill [black] (2.5,-0.866) circle (0.1);
\fill [black] (3.5,-0.866) circle (0.1);
\fill [black] (4.5,-0.866) circle (0.1);
\fill [black] (5.5,-0.866) circle (0.1);
\fill [black] (6,0) circle (0.1);

\fill [white] (-0.5,0.866) circle (0.1);
\fill [white] (-1.5,0.866) circle (0.1);
\fill [white] (-2.5,0.866) circle (0.1);
\fill [white] (-0.5,-0.866) circle (0.1);
\fill [white] (-1.5,-0.866) circle (0.1);
\fill [white] (-2.5,-0.866) circle (0.1);

\fill [white] (2,2) circle (0.1);

\draw (0,0) circle (0.1);
\draw (1,0) circle (0.1);
\draw (2,0) circle (0.1);
\draw (2.5,0.866) circle (0.1);
\draw (3.5,0.866) circle (0.1);
\draw (4.5,0.866) circle (0.1);
\draw (5.5,0.866) circle (0.1);
\draw (2.5,-0.866) circle (0.1);
\draw (3.5,-0.866) circle (0.1);
\draw (4.5,-0.866) circle (0.1);
\draw (5.5,-0.866) circle (0.1);
\draw (6,0) circle (0.1);

\draw (-0.5,0.866) circle (0.1);
\draw (-1.5,0.866) circle (0.1);
\draw (-2.5,0.866) circle (0.1);
\draw (-0.5,-0.866) circle (0.1);
\draw (-1.5,-0.866) circle (0.1);
\draw (-2.5,-0.866) circle (0.1);

\draw (2,2) circle (0.1);

\end{tikzpicture}
\end{center}

This fibration has a section of height $73/8 = 4 + 2\cdot 3 - 7/8$.
Next, we locate an $E_7$ fiber and compute a $2$-neighbor step to go
to a fibration with $E_8$ and $E_7$ fibers.

\begin{center}
\begin{tikzpicture}

\draw (7,0)--(7,2);
\draw (6.93,0)--(6.93,2);
\draw (7.07,0)--(7.07,2);
\draw (-1,0)--(8,0);
\draw (1,0)--(1,1);
\draw (8,0)--(8.5,0.866)--(10.5,0.866)--(11,0)--(10.5,-0.866)--(8.5,-0.866)--(8,0);

\draw [bend right] (7,2) to (6,0);
\draw [bend left] (7,2) to (8.5,0.866);
\draw [very thick] (7,0)--(8,0);
\draw [very thick] (10.5,0.866)--(8.5,0.866)--(8,0)--(8.5,-0.866)--(10.5,-0.866);

\draw(7,0) circle (0.2);
\fill [white] (-1,0) circle (0.1);
\fill [white] (0,0) circle (0.1);
\fill [white] (1,0) circle (0.1);
\fill [white] (2,0) circle (0.1);
\fill [white] (3,0) circle (0.1);
\fill [white] (4,0) circle (0.1);
\fill [white] (5,0) circle (0.1);
\fill [white] (6,0) circle (0.1);
\fill [white] (1,1) circle (0.1);
\fill [black] (7,0) circle (0.1);
\fill [black] (8,0) circle (0.1);
\fill [black] (8.5,0.866) circle (0.1);
\fill [black] (9.5,0.866) circle (0.1);
\fill [black] (10.5,0.866) circle (0.1);
\fill [black] (8.5,-0.866) circle (0.1);
\fill [black] (9.5,-0.866) circle (0.1);
\fill [black] (10.5,-0.866) circle (0.1);
\fill [white] (11,0) circle (0.1);
\fill [white] (7,2) circle (0.1);

\draw (-1,0) circle (0.1);
\draw (0,0) circle (0.1);
\draw (1,0) circle (0.1);
\draw (2,0) circle (0.1);
\draw (3,0) circle (0.1);
\draw (4,0) circle (0.1);
\draw (5,0) circle (0.1);
\draw (6,0) circle (0.1);
\draw (1,1) circle (0.1);
\draw (7,0) circle (0.1);
\draw (8,0) circle (0.1);
\draw (8.5,0.866) circle (0.1);
\draw (9.5,0.866) circle (0.1);
\draw (10.5,0.866) circle (0.1);
\draw (8.5,-0.866) circle (0.1);
\draw (9.5,-0.866) circle (0.1);
\draw (10.5,-0.866) circle (0.1);
\draw (11,0) circle (0.1);
\draw (7,2) circle (0.1);
\draw [above] (7,2) node {$P$};

\end{tikzpicture}
\end{center}

The intersection number of the new fiber $F'$ with the remaining
component of the $A_7$ fiber is $2$ and with the section $P$ is $9$.
Therefore, the new genus-$1$ fibration defined by $F'$ has a
section since $9$ and $2$ are coprime.

\begin{theorem}
A birational model over $\Q$ for the Hilbert modular surface
$Y_{-}(73)$ as a double cover of $\Proj^2_{r,s}$ is given by the following
equation:
\begin{align*}
z^2 &= 16(s-2)^2r^4+ 8(s-2)(17s^3-52s^2+36s-8)r^3 + (s^6+56s^5-384s^4+448s^3+432s^2-512s+64)r^2 \\
 & \quad       + 2s(s+2)(s^4-34s^3+108s^2-64s+16)r + s^2(s+2)^4 .
\end{align*}
It is an honestly elliptic surface, with arithmetic genus $2$ and
Picard number $28$.
\end{theorem}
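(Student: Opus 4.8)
The plan is to follow the four-step recipe of Section~\ref{method}, feeding it the $E_8E_7$ fibration already produced above by the sequence of neighbor steps. First I would take the Weierstrass equation of that final $E_8E_7$ fibration on the generic member of the $\sF_{L_{73}}$ family, put it in the normalized shape $y^2 = x^3 + t^3(at+a')x + t^5(b''t^2+bt+b')$ with $a'=-1$, and read off the Igusa--Clebsch invariants $(I_2:I_4:I_6:I_{10})$ as rational functions of $r,s$ by matching against the formula of Theorem~\ref{thesisthm} (equivalently, by applying the inverse isomorphism $\psi^{-1}$). By Theorem~\ref{humbert} the resulting map $\Proj^2_{r,s} \to \A_2$ is a birational morphism onto the Humbert surface $\sH_{73}$, so $\Proj^2_{r,s}$ is a rational model of $\sH_{73}$ and the problem reduces to describing the double cover $Y_{-}(73) \to \sH_{73}$.

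Second, I would identify the branch locus. Since $73$ is an odd fundamental discriminant, the analysis preceding Lemma~\ref{twistrealmult} forces the Picard number of the associated K3 surface to jump by one along the branch locus with discriminant changing to $2\cdot 73 = 146$ (the alternative $D/2$ being excluded for odd $D$); geometrically this is the sublocus where the elliptic K3 acquires an extra $\I_2$ fiber. Enumerating the finitely many candidate degeneration polynomials $f_i(r,s)$ (each normalized to content $1$) yields a model $z^2 = C\prod_j f_{i_j}(r,s)$. To pin down the surviving factors and the squarefree twist $C$, I would run Step~4: for several odd primes $p \nmid 73$ and many specializations $(r,s)\in\F_p^2$, construct the genus-$2$ curve $C_{r,s}$ from its Igusa--Clebsch invariants (the Brauer obstruction vanishing over finite fields), count points over $\F_p$ and $\F_{p^2}$ to obtain the polynomial $Q(X)$ of $J(C_{r,s})$, and apply Lemma~\ref{twistrealmult} to test whether real multiplication by $\sO_{73}$ is defined over $\F_p$. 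Requiring $C\prod_j f_{i_j}(r,s)$ to be a square exactly at the points carrying $\F_p$-rational real multiplication singles out $C$ and $\{i_j\}$; here one irreducible factor survives and $C=1$, giving the stated equation.

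Third, for the surface type I would exhibit the elliptic structure directly. Viewing the right-hand side as a quartic in $r$ whose leading coefficient $16(s-2)^2 = \bigl(4(s-2)\bigr)^2$ is a perfect square, the projection to the $s$-line is a genus-$1$ fibration with a section at $r=\infty$, so $Y_{-}(73)$ is an elliptic surface over $\Proj^1_s$. Passing to the Jacobian and reducing to a minimal Weierstrass model, I would read $\chi(\sO)=3$ off the degree of the minimal discriminant, whence $p_g=\chi-1=2$, $q=0$, and the arithmetic genus is $2$; since the $j$-map is nonconstant the surface is honestly elliptic. For the Picard number, $b_2=12\chi-2=34$ and $h^{2,0}=2$ give $\rho\le h^{1,1}=30$, and I would produce enough explicit classes (the trivial lattice of the reducible fibers together with a few \MoW\ generators, some defined over $\Q(\sqrt{73})$) to force $\rho\ge 28$, then bound $\rho\le 28$ by counting points on good reductions and applying van Luijk's method from the end of Section~\ref{method}, consistent with Oda's tables.

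The main obstacle will be the rigorous determination of the exact Picard number rather than the largely mechanical construction of the equation: the upper bound $\rho\le 28$ requires counting points over $\F_{p^e}$ for enough $e$ to control the $34$-dimensional Frobenius action on $H^2$, and because $h^{2,0}=2$ one must rule out $\rho\in\{29,30\}$, which is genuinely delicate when the naive counts are ambiguous; complementarily, one must be certain the list of candidate degeneration loci in Step~3 is complete, since a missing component would corrupt the branch divisor. Both difficulties are handled by the same tools used for the neighboring discriminants, namely explicit section-hunting (for the lower bound and \MoW\ saturation) together with multi-prime point counting (for the upper bound and for fixing the twist).
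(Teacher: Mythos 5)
Your proposal is correct and follows essentially the same route as the paper: read the Igusa--Clebsch invariants off the $E_8E_7$ fibration obtained from the neighbor steps, determine the double cover by the degeneration-locus/point-counting recipe of Steps 3--4, exhibit the elliptic fibration over $\Proj^1_s$ using the square leading coefficient $16(s-2)^2$ of the quartic in $r$, pass to the Jacobian to get $\chi(\sO)=3$ (hence honestly elliptic, $p_a=2$), and get $\rho \ge 28$ from the trivial lattice of rank $24$ plus four independent sections (the paper's $P_1,\dots,P_4$, one of which involves $\sqrt{73}$), exactly as you suggest. The one genuine divergence is the upper bound $\rho \le 28$: the paper does not do any point counting for this discriminant but simply quotes Oda's computation of the Picard number of $Y_{-}(73)$ (\cite[pg.~109]{Oda}), whereas you propose van Luijk's method. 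Your alternative is viable --- since $b_2 = 34$ is even, the cyclotomic eigenvalue count at a single prime is even, so one prime yielding $28$ would rule out $29$ and $30$ simultaneously, and the cost of resolving the degree-$34$ Frobenius polynomial can be tamed by splitting off the known factor coming from the trivial lattice and the four sections, as the authors in fact do for other non-K3 discriminants such as $56$ and $85$ --- but it is computationally heavier and was not what the paper did here; conversely, citing Oda buys the bound for free at the price of invoking a transcendental computation external to the paper's algebraic framework.
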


\subsection{Analysis}

The branch locus is a curve of genus $2$; the transformation of
coordinates
\begin{align*}
r &= \frac{3x^3y-3xy+y-x^6-2x^5-4x^4-3x^2+4x-1}{2x^2(x^2+2x-1)^2} \\
s &= -2\frac{(2xy-y-x^3-7x^2+3x)}{(x+1)^2(x^2+2x-1)}
\end{align*}
converts it to Weierstrass form
$$
y^2 = x^6 + 4x^5 + 2x^4 - 6x^3 + x^2 - 2x + 1.
$$
This is a genus $2$ curve, isomorphic to the quotient of $X_0(73)$ by
the Atkin-Lehner involution.

The Hilbert modular surface $Y_{-}(73)$ is an elliptic surface. Since
the coefficient of $r^4$ is a square, this genus-$1$ curve over
$\Proj^1_s$ has a section. Computing the Jacobian, we get the
following Weierstrass equation after a change of parameter on the base
and some simple Weierstrass transformations:
\begin{align*}
y^2 &= x^3 -(83t^6-316t^5+390t^4-158t^3-21t^2+22t-1) \,x^2  \\
 &\quad  + 8(t-1)^4t^3(287t^5-1040t^4+960t^3-73t^2-217t+67) \,x \\
 &\quad  -16(t-1)^7t^6(1323t^5-5887t^4+7110t^3-1426t^2-2201t+1033).
\end{align*}
This is an honestly elliptic surface with $\chi=3$.
It has reducible fibers of type $\I_7$ at $t = 1$,
$\I_6$ at $t = 0$, $\I_5$ at $t = \infty$,
$\I_3$ at $t = (-5 \pm \sqrt{73})/4$,
and $\I_2$ at $t = -1$ and $(13 \pm \sqrt{73})/24$.
The trivial lattice therefore has rank~$24$,
leaving room for \MoW\ rank at most~$6$.

We find the following four independent sections.
\begin{align*}
P_1 &= \big( 4t^3 (t-1)^2 (9t-7), \, 4t^3 (t-1)^2(t+1)(3t-2)(2t^2+5t-6)\big),\\
P_2 &= \big( 4t^3(t-1)(7t^2-17t+8), \, 4t^3(t-1)(t+1)(9t^2-13t+5) \big), \\
P_3 &= \big( 4t^2(t-1)^3(19t-16), \, 4t^2(t-1)^3(7t^2-12t+8)(12t^2-13t+2)\big), \\
P_4 &= \big( 4t^3(7t^3-24t^2-2\mu t+42t-2\mu +9) , \\
 & \qquad (-13+\mu )t^3(t+1)(-24t+13+\mu )(-17-16t+3\mu )(-4t-5+\mu )/192 \big)
\end{align*}
(where $\mu = \sqrt{73}$), with non-degenerate height pairing matrix
$$
\left( \begin{array}{cccc}
 26/21 &  5/7  &  -1/7 & -2/3 \\
 5/7  & 68/35 &  11/7 & -1/5 \\
 -1/7 & 11/7 &  41/21 & 1/2 \\
 -2/3 & -1/5  &  1/2    & 49/30
\end{array} \right).
$$ 
Therefore, the \MoW\ rank is at least $4$.  From Oda's
calculations \cite[pg.~109]{Oda}, the Picard number is $28$, so the
\MoW\ rank is $28-24 = 4$ and our sections generate a subgroup of
finite index in the full \MoW\ group.  The sublattice of the
N\'{e}ron-Severi lattice generated by the trivial lattice and these
sections has discriminant $3916 = 2^2 \cdot 11 \cdot 89$. We checked
that this sublattice is $2$-saturated, and therefore it is the entire
N\'eron-Severi lattice.

\subsection{Examples}

We list some points of small height and corresponding genus $2$ curves.

\begin{tabular}{l|c}
Rational point $(r,s)$ & Sextic polynomial $f_6(x)$ defining the genus $2$ curve $y^2 = f_6(x)$. \\
\hline \hline \\ [-2.5ex]
$(-10/3, 4/3)$ & $ -4x^6 - 12x^5 - 23x^4 + 4x^3 + 31x^2 + 57x - 18 $ \\
$(-3, 1)$ & $ 4x^6 - 3x^4 - 35x^3 + 12x + 76 $ \\
$(5, 3)$ & $ -4x^6 - 24x^5 + 7x^4 + 83x^3 + 25x^2 - 75x - 40 $ \\
$(9/4, 3)$ & $ -15x^5 + 73x^4 - 41x^3 - 158x^2 - 12x + 36 $ \\
$(-5, 3/2)$ & $ -50x^6 + 45x^5 - 2x^4 - 159x^3 + 70x^2 + 12x - 120 $ \\
$(5/6, -2)$ & $ -48x^6 + 168x^5 - 149x^4 + 56x^3 - 53x^2 + 12x - 4 $ \\
$(-5/12, -1)$ & $ 195x^6 + 82x^5 - 75x^4 - 186x^3 - 233x^2 - 96x - 87 $ \\
$(5/3, 1/2)$ & $ -60x^6 - 105x^5 + 55x^4 + 49x^3 + 13x^2 + 252x + 116 $ \\
$(-2/3, -2/5)$ & $ -36x^6 + 39x^5 - 217x^4 + 129x^3 - 271x^2 - 108x + 84 $ \\
$(5/4, -6)$ & $ 20x^6 + 84x^5 - 15x^4 - 162x^3 + 225x^2 + 324x - 180 $ \\
$(9/5, -8)$ & $ -204x^6 + 348x^5 + 27x^4 + 34x^3 + 3x^2 + 108x - 36 $ \\
$(-1/3, 4/5)$ & $ -25x^6 + 135x^5 + 139x^4 - 383x^3 + 82x^2 + 252x - 162 $ \\
$(9/14, -2)$ & $ -48x^6 - 72x^5 - 219x^4 - 319x^3 - 159x^2 - 432x + 192 $ \\
$(-14/5, 12)$ & $ -440x^6 + 90x^5 + 324x^4 + 38x^3 - 9x^2 - 36x - 8 $ \\
$(-9/4, 1)$ & $ -455x^6 - 420x^5 + 66x^4 - 167x^3 - 15x^2 - 3x - 6 $ \\
$(6/5, 4)$ & $ -160x^6 + 450x^5 - 114x^4 - 474x^3 + 171x^2 + 162x - 27 $ 
\end{tabular}

We get many curves of genus $0$ on the surface by taking sections of
the elliptic fibration. For instance, the Brauer obstruction vanishes
for the two curves defined by $r = -(s-4)(s+2)/\big(4(s-2)\big)$ and
$r = s(s+2)/\big((s-2)(3s-2)\big)$, yielding families of genus $2$
curves parametrized by $s$, whose Jacobians have real multiplication
by $\sO_{73}$.

\section{Discriminant $76$}

\subsection{Parametrization}

Start with a K3 elliptic surface with fibers of type $A_6, A_2$ and
$D_7$, and a section of height $76/84 = 19/21 = 4 - 1 - 2/3 - 10/7$.
The Weierstrass equation of this family is
$$
y^2 =  x^3 + (a_0 + a_1 t + a_2 t^2 + a_3 t^3)\, x^2 + t^2 (b_0 + b_1 t + b_2 t^2) \, x + t^4 (c_0 + c_1 t + c_2 t^2),
$$
with
\begin{align*}
c_0 &=  (r^2-1)^2s^4(2s-r+1)^2(2s+r+1)^2/16,  \qquad \qquad b_0 =  (r^2-1)s^2(2s-r+1)(2s+r+1)/2, \\
a_3 &= (r^2-1)(s+1)^6, \qquad  \qquad a_1 = r^2s^2-5s^2+r^2s-7s+r^2-3, \\
c_2 &= (r^2-1)^2r^2s^6(s+1)^6, \qquad \qquad a_2 =  (s+1)^2(r^2s^2+3s^2-3r^2s+7s-2r^2+3), \\
a_0 &= 1, \qquad  \qquad b_2 = (r^2-1)s^2(s+1)^4(2r^2s^2+6s^2-2r^2s+6s+r^4-2r^2+1)/2, \\
c_1 &=(r^2-1)^2s^4(s+1)^2(2s-r+1)(2s+r+1)(4r^2s^2-12s^2-8s-r^4+2r^2-1)/16, \\
b_1 &=  -(r^2-1)s^2 \big( (2s^2+3s+2)r^4 - 2(4s^4+8s^3+10s^2+8s+3)r^2 +(2s+1)(16s^3+32s^2+21s+4)\big)/4 .
\end{align*}

We identify the class of an $E_7$ fiber:
\begin{center}
\begin{tikzpicture}

\draw (0,0)--(7,0)--(7.5,0.866)--(9.5,0.866)--(9.5,-0.866)--(7.5,-0.866)--(7,0);
\draw (1,0)--(1,1);
\draw (4,0)--(4,1);
\draw (6,0)--(6,1)--(5.5,1.866)--(6.5,1.866)--(6,1);
\draw [bend right] (5.5,3) to (4,1);
\draw (5.5,3)--(5.5,1.866);
\draw [bend left] (5.5,3) to (8.5,0.866);
\draw [very thick] (6.5,1.866)--(6,1)--(6,0)--(7,0)--(7.5,0.866);
\draw [very thick] (7,0)--(7.5,-0.866)--(9.5,-0.866);

\draw (6,0) circle (0.2);
\fill [white] (0,0) circle (0.1);
\fill [white] (1,0) circle (0.1);
\fill [white] (2,0) circle (0.1);
\fill [white] (3,0) circle (0.1);
\fill [white] (4,0) circle (0.1);
\fill [white] (5,0) circle (0.1);
\fill [black] (6,0) circle (0.1);
\fill [white] (1,1) circle (0.1);
\fill [white] (4,1) circle (0.1);
\fill [black] (7,0) circle (0.1);
\fill [black] (6,1) circle (0.1);
\fill [black] (7.5,0.866) circle (0.1);
\fill [white] (8.5,0.866) circle (0.1);
\fill [white] (9.5,0.866) circle (0.1);
\fill [black] (7.5,-0.866) circle (0.1);
\fill [black] (8.5,-0.866) circle (0.1);
\fill [black] (9.5,-0.866) circle (0.1);
\fill [white] (5.5,1.866) circle (0.1);
\fill [black] (6.5,1.866) circle (0.1);

\fill [white] (5.5,3) circle (0.1);

\draw (0,0) circle (0.1);
\draw (1,0) circle (0.1);
\draw (2,0) circle (0.1);
\draw (3,0) circle (0.1);
\draw (4,0) circle (0.1);
\draw (5,0) circle (0.1);
\draw (6,0) circle (0.1);
\draw (1,1) circle (0.1);
\draw (4,1) circle (0.1);
\draw (7,0) circle (0.1);
\draw (6,1) circle (0.1);
\draw (7.5,0.866) circle (0.1);
\draw (8.5,0.866) circle (0.1);
\draw (9.5,0.866) circle (0.1);
\draw (7.5,-0.866) circle (0.1);
\draw (8.5,-0.866) circle (0.1);
\draw (9.5,-0.866) circle (0.1);
\draw (5.5,1.866) circle (0.1);
\draw (6.5,1.866) circle (0.1);

\draw (5.5,3) circle (0.1);

\end{tikzpicture}
\end{center}

The resulting $3$-neighbor step gives us an elliptic fibration with
$D_8$ and $E_7$ fibers, and also a section $P$ of height
$19/2 = 4 + 2 \cdot 4 - 3/2 - 1$.
Next we take a $2$-neighbor step to go from $D_8$ to $E_8$,
keeping the $E_7$ fiber intact.

\begin{center}
\begin{tikzpicture}

\draw (0,0)--(14,0);
\draw (3,0)--(3,1);
\draw (13,0)--(13,1);
\draw (9,0)--(9,1);
\draw [bend right] (7,2) to (0,0);
\draw [bend left] (7,2) to (9,1);
\draw [very thick] (7,0)--(14,0);
\draw [very thick] (9,0)--(9,1);

\draw (6.97,0)--(6.97,2);
\draw (6.91,0)--(6.91,2);
\draw (7.03,0)--(7.03,2);
\draw (7.09,0)--(7.09,2);

\fill [white] (0,0) circle (0.1);
\fill [white] (1,0) circle (0.1);
\fill [white] (2,0) circle (0.1);
\fill [white] (3,0) circle (0.1);
\fill [white] (4,0) circle (0.1);
\fill [white] (5,0) circle (0.1);
\fill [white] (6,0) circle (0.1);
\fill [black] (7,0) circle (0.1);
\fill [black] (8,0) circle (0.1);
\fill [black] (9,0) circle (0.1);
\fill [black] (10,0) circle (0.1);
\fill [black] (11,0) circle (0.1);
\fill [black] (12,0) circle (0.1);
\fill [black] (13,0) circle (0.1);
\fill [black] (14,0) circle (0.1);
\fill [white] (3,1) circle (0.1);
\fill [white] (13,1) circle (0.1);
\fill [black] (9,1) circle (0.1);

\fill [white] (7,2) circle (0.1);

\draw (7,0) circle (0.2);
\draw (0,0) circle (0.1);
\draw (1,0) circle (0.1);
\draw (2,0) circle (0.1);
\draw (3,0) circle (0.1);
\draw (4,0) circle (0.1);
\draw (5,0) circle (0.1);
\draw (6,0) circle (0.1);
\draw (7,0) circle (0.1);
\draw (8,0) circle (0.1);
\draw (9,0) circle (0.1);
\draw (10,0) circle (0.1);
\draw (11,0) circle (0.1);
\draw (12,0) circle (0.1);
\draw (13,0) circle (0.1);
\draw (14,0) circle (0.1);
\draw (3,1) circle (0.1);
\draw (13,1) circle (0.1);
\draw (9,1) circle (0.1);

\draw (7,2) circle (0.1);
\draw (7,2) [above] node{$P$};

\end{tikzpicture}
\end{center}

The intersection number of $F'$ with the remaining component of the $D_8$ fiber
is $2$, whereas $P \cdot F' = 11$. Therefore the new
fibration has a section.

Now we may read out the Igusa-Clebsch invariants from the Weierstrass
equation of this $E_8 E_7$ fibration, and thence compute the equation
of $Y_{-}(76)$ as a double cover of the $(r,s)$-plane, following our
general method of Section \ref{method}.

\begin{theorem}
A birational model over $\Q$ for the Hilbert modular surface $Y_{-}(76
)$ as a double cover of $\Proj^2$ is given by the following equation:
$$
z^2 = -(rs-3s-2)(rs+3s+2)(32s^4+80s^3-13r^2s^2+85s^2-4r^2s+32s+4r^4).
$$ 
It is a surface of general type.
\end{theorem}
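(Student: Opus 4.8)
The plan is to follow the four-step method of Section~\ref{method}, using the explicit parametrization set up in the preceding subsection. First I would start from the given Weierstrass equation for the family of elliptic K3 surfaces with reducible fibers of types $A_6$, $A_2$ and $D_7$ and a section of height $76/84$. The first task is to confirm that the generic member has N\'eron-Severi lattice $L_{76}$: the trivial lattice has rank $2 + 6 + 2 + 7 = 17$, the height-$19/21$ section contributes one more to the Picard number, and the discriminant formula of Section~\ref{K3} gives $|\disc \NS| = (19/21)\cdot(7\cdot 3\cdot 4) = 76$. Combined with the uniqueness of $L_{76}$ established in Section~\ref{realmult}, this identifies a Zariski-open subset of $\sF_{L_{76}}$ with an open subset of $\Proj^2_{r,s}$, so that Theorem~\ref{humbert} realizes it as a birational model of the Humbert surface $\sH_{76}$.

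Next I would carry out the two neighbor steps indicated by the Dynkin diagrams: a $3$-neighbor step to the $D_8 E_7$ fibration, whose section $P$ of height $19/2$ together with the coprimality of the relevant intersection numbers guarantees a section via Proposition~\ref{automaticsections} and its corollary, followed by a $2$-neighbor step promoting $D_8$ to $E_8$ while keeping $E_7$ intact. The mechanics are exactly those of Section~\ref{neighbors} (solving for $H^0(X,\sO_X(F'))$ and passing to the Jacobian of the resulting genus-$1$ curve). From the resulting $E_8 E_7$ Weierstrass form I would read off the Igusa-Clebsch invariants $(I_2:I_4:I_6:I_{10})$ by Theorem~\ref{thesisthm}, giving the explicit map to $\A_2$.

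Then comes Step~3: the branch locus of the double cover $\widetilde{Y}_{-}(76) \to \sF_{L_{76}}$ must be a union of irreducible curves on which the Picard number jumps by one and the discriminant changes to $152$ or $38$. I would enumerate the finitely many allowed degenerations listed in Section~\ref{method} --- an extra $\I_2$ fiber, a promotion of a reducible fiber, or a new section raising the \MW\ rank --- and match them against the candidate polynomials $rs - 3s - 2$, $rs + 3s + 2$ (the two simpler components, interchanged by the involution $r \mapsto -r$ coming from the Hurwitz-Maass extension since $76 = 2^2 \cdot 19$) and the degree-$4$ factor (an extra $\I_2$ fiber). Step~4 then pins down which candidates actually occur and the squarefree twist $C$: I would specialize $(r,s)$ over several primes $p \nmid 76$, build the genus-$2$ curve $C_{r,s}/\F_p$ by Mestre's construction, and apply Lemma~\ref{twistrealmult} to the point counts mod $p$ and $p^2$ to test whether the abelian surface lifts to $Y_{-}(76)$. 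Since $Y_{-}(76)$ has good reduction away from $2$ and $19$, only finitely many twists need checking and this terminates.

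Finally, the general-type assertion follows from the shape of the branch curve: the three factors have degrees $2$, $2$ and $4$, so $f(r,s)$ has total degree $8$ and the surface is a double cover of $\Proj^2$ branched over a plane octic. A double cover of $\Proj^2$ branched over a curve of degree $2d$ has canonical class $K \equiv \pi^*((d-3)H)$, so for $d = 4$ the canonical class is nef and big, forcing Kodaira dimension $2$; one checks that the singularities of the octic are mild enough that this survives passage to the minimal resolution. The main obstacle I expect is not this last classification but Steps~2 and~3: pushing the two neighbor steps through in closed form over $\Q(r,s)$, and then correctly enumerating \emph{all} components of the branch locus, since a single missed degeneration or an incorrect twist would yield the wrong equation. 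The point-counting of Step~4 is what ultimately renders the answer rigorous by eliminating every alternative.
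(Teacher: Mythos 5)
Your derivation of the equation itself matches the paper's proof step for step: the same $A_6\,A_2\,D_7$ parametrization with a section of height $19/21$ (rank $17+1=18$, discriminant $(19/21)\cdot 84 = 76$), the same $3$-neighbor step to a $D_8\,E_7$ fibration carrying a section $P$ of height $19/2$, the same $2$-neighbor step to $E_8\,E_7$ justified by $\gcd(P\cdot F',2)=\gcd(11,2)=1$, and the same Step 3/Step 4 determination of the branch locus and twist via Lemma~\ref{twistrealmult} and point counts at primes $p \nmid 76$. One refinement to your Step 3: the two conics $rs\pm(3s+2)=0$ are not generic candidates interchanged by the involution; the paper identifies them as the locus where the K3 surface acquires an extra $\I_2$ fiber \emph{and} the height-$19/21$ section becomes divisible by $2$ (producing a section of height $19/84$), so the N\'{e}ron-Severi discriminant there is $38=D/2$ rather than $152=2D$. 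Your enumeration of degenerations must include this combined possibility, or those two components would be missed.

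The genuine gap is your final paragraph. The argument ``double cover of $\Proj^2$ branched over an octic, hence $K\equiv\pi^*((4-3)H)$ is nef and big, hence general type, provided the singularities are mild'' places the entire content of the claim inside the proviso, and this very paper shows the proviso is frequently false: $Y_{-}(61)$ and $Y_{-}(73)$ are double covers of $\Proj^2$ branched over octics, and $Y_{-}(53)$ and $Y_{-}(57)$ over curves of degree $8$ and $10$, yet all four are honestly elliptic surfaces of Kodaira dimension $1$, because their branch curves have non-negligible singularities whose resolution destroys the bigness of the pulled-back canonical class. So for $D=76$ the verification that every singularity of the octic (a union of two conics and a quartic, including their tangency at $(r,s)=(0,-2/3)$ and the intersections on the line at infinity) is simple is not a routine check to be deferred --- it \emph{is} the proof of the general-type assertion, and without it your argument cannot distinguish $Y_{-}(76)$ from $Y_{-}(61)$ or $Y_{-}(73)$. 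Note that the paper itself does not derive general type from the equation at all; it simply asserts it, consistently with the classification of Hilbert modular surfaces of Hirzebruch--Zagier \cite{HZ} cited in the introduction. A complete write-up must therefore either carry out the resolution of the double cover and check that all branch singularities are ADE, or invoke that classification directly.
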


\subsection{Analysis}

The branch locus has three components; the more complicated one is
where the elliptic K3 surface has an extra $\I_2$ fiber, while the two
simpler components correspond to an extra $\I_2$ fiber as well as the
section becoming divisible by $2$, giving a section of height $19/84 =
4 - 1/2 - 2/3 - 6/7 - 7/4$. The simpler components are easily seen to
be curves of genus $0$. The last component is a curve of genus~$1$;
the transformation
$$
(r,s) = \left( \frac{2y+x+1}{x^2+x+2}, \frac{-2}{x^2 + x + 2} \right)
$$
converts it to Weierstrass form
$$
y^2 + xy + y = x^3 + x^2 + 1,
$$
which is an elliptic curve of conductor $38$. It is isomorphic to
$X_0(76)/\langle w_4, w_{19} \rangle$.

The Hilbert modular surface $Y_{-}(76)$ is a surface of general type.
The extra involution is $\iota: (r,s) \mapsto (-r,s)$.  Next,
we analyze the quotient of the surface by~$\iota$. This turns
out to be an elliptic K3 surface, and after some Weierstrass
transformations and linear shift of parameter on the base,
its Weierstrass equation may be written as
$$
y^2 =  x^3 + (13t^4-48t^3-6t^2+8t+1)\, x^2 + 64t^4(2t-1)(t^3-5t^2+7t+1) \,x.
$$
It has fibers of type $\I_8$ at $t = 0$, $\I_5$ at $t=1$, $\I_3$ at $t = -1/7$,
and $\I_2$ at $t = 1/2$ and at the roots of $t^3 - 5t^2 + 7t + 1$
(which generates the cubic field of discriminant $-76$).
The trivial lattice has rank $19$. In addition to the
obvious $2$-torsion section $P_0 = (0,0)$, we find a section
$P_1 = (16t^3(2t-1), 16t^3(t-1)(2t-1)(7t+1))$ of height $19/120$.
Therefore the K3 surface is singular.
These sections and the trivial lattice generate a sublattice
of the N\'{e}ron-Severi lattice of discriminant $-76$. It must be the
entire N\'{e}ron-Severi lattice, since otherwise, we would have either
another $2$-torsion section, a $4$-torsion section, or a section of height
$19/480$, none of which is possible with this configuration of
reducible fibers.

The quotient of $Y_{-}(76)$ by the involution $(r,s,z) \mapsto
(-r,s,-z)$ is an honestly elliptic surface with $\chi = 3$. Its
Weierstrass equation may be written as follows.
\begin{align*}
y^2 &= x^3 + (t-1)(64t^5-160t^4+53t^3+33t^2-5t-1) \, x^2 \\
   & \quad + 16(t-1)t^4(2t-1)(t^3-5t^2+7t+1)(32t^3-16t^2+21t-5) \, x.
\end{align*}
It has bad fibers of type $\I_8$ at $t = 0$,
$I_4$ at $t=\infty$ and $t=1/3$,
$\III$ at $t = 1$,
$\I_3$ at $t = -1/7$, and
$\I_2$ at $t = 1/2$, at the roots of $t^3-5t^2+7t+1$ seen above, and
at the roots of $32t^3-16t^2+21t-5$ (which generates the cubic field of discriminant $-152$).
 Hence the trivial lattice has rank $25$, leaving room for \MoW\ rank
at most~$5$. Counting points on the reduction modulo $11$ and $23$
shows that the Picard number is at most $29$. On the other hand, we
find three independent sections in addition to the $2$-torsion section
$P_0 = (0,0)$:
\begin{align*}
P_1 &= \big( 152(t-1)t^4(2t-1), 8 \mu (t-1)t^4(2t-1)(3t-1)(4t-3)(7t+1)\big) \\
P_2 &= \big( 4(t^3-5t^2+7t+1)t^3, 4t^3(2t-1)(3t-1)^2(t^3-5t^2+7t+1)\big) \\ 
P_3 &= \big(  -(t-1)^3(32t^3-16t^2+21t-5), 2 \nu (t-1)^2(3t-1)^2(32t^3-16t^2+21t-5)\big) 
\end{align*}
Here $\mu = \sqrt{19}$ and $\nu = \sqrt{-1}$. These sections have heights
$23/12$, $9/8$ and $13/8$ respectively, and are orthogonal with
respect to the height pairing. Therefore, the Mordell-Weil rank is either $3$ or $4$; we have not been able to determine it exactly.

\subsection{Examples}
We list some points of small height and corresponding genus $2$ curves.

\begin{tabular}{l|c}
Rational point $(r,s)$ & Sextic polynomial $f_6(x)$ defining the genus $2$ curve $y^2 = f_6(x)$. \\
\hline \hline \\ [-2.5ex]
$(-4/11, -8/11)$ & $ -8x^6 + 48x^5 - 196x^4 + 324x^3 - 340x^2 - 330x - 65$ \\
$(2, -4)$ & $ 375x^6 + 300x^5 + 230x^4 - 224x^3 - 76x^2 - 48x + 72$ \\
$(7/3, -1/3)$ & $ -80x^6 - 120x^5 - 109x^4 - 348x^3 - 469x^2 - 120x + 80$ \\
$(-2, -4)$ & $ -225x^6 + 600x^5 + 650x^4 + 400x^3 + 20x^2 - 8$ \\
$(13/23, -19/23)$ & $ -228x^6 + 684x^5 - 1029x^4 - 432x^3 + 525x^2 + 150x + 10$ \\
$(-7/3, -1/3)$ & $ 100x^6 - 220x^5 + 621x^4 - 528x^3 + 1699x^2 - 234x + 1478$ \\
$(19/33, -9/11)$ & $ 256x^6 - 1056x^5 - 2335x^4 + 1480x^3 + 1715x^2 - 1386x + 126$ \\
$(19/11, 1/11)$ & $ -2232x^6 - 2016x^5 + 2581x^4 + 2802x^3 - 983x^2 - 660x - 180$ \\
$(47/37, -43/37)$ & $ 3100x^6 - 540x^5 - 271x^4 - 1742x^3 + 161x^2 + 84x + 252$ \\
$(43/27, 29/27)$ & $ -592x^6 + 372x^5 + 1003x^4 + 1328x^3 - 1406x^2 - 132x - 3709$ \\
$(-19/33, -9/11)$ & $ -4404x^6 - 540x^5 - 1697x^4 - 980x^3 - 257x^2 - 240x - 64$ \\
$(22/17, -20/17)$ & $ 600x^6 - 3360x^5 + 3604x^4 + 2256x^3 + 4546x^2 + 1440x + 775$ \\
$(-22/13, -20/13)$ & $ -367x^6 - 618x^5 - 1539x^4 + 316x^3 + 1839x^2 + 4662x + 3507$ \\
$(-1/23, -31/46)$ & $ -2245x^6 - 137x^5 - 5393x^4 - 1675x^3 - 3618x^2 - 1728x - 675$ \\
$(22/13, -20/13)$ & $ 1425x^6 - 2610x^5 + 6333x^4 - 4948x^3 + 8271x^2 - 4242x + 5971$ \\
$(-1/23, -17/23)$ & $ 24x^6 + 552x^5 + 2075x^4 - 1970x^3 - 9925x^2 + 9072x + 1216$
\end{tabular}

We now describe some curves on $Y_{-}(76)$, which are useful in
producing rational points.

The specialization $s = -2/3$ gives a genus-$1$ curve
$y^2 = -81r^4+63r^2+19$.  It has rational points, such as $(r,y) = (1,1)$.
It is thus an elliptic curve; we find that it has conductor $760$
and \MoW\ group $(\Z/2\Z) \oplus \Z$.
The specialization $s = -8/7$ gives a rational curve, which we
can parametrize as $r = -5(m^2 - 1)/\big( 4(m^2+1) \big)$.

The sections $P_1, P_1 + P_0, 2P_1 + P_0$ and $3P_1 + P_0$ of the K3 quotient
give the following genus-$1$ curves, which all have rational points.
$$
\begin{array}{lcc}
\phantom{g^2 = }\textrm{Equation} & \text{conductor} & \textrm{\MoW\ group} \\
r^2 = -(8s^3 + 28s^2 + 27s +8)/s & 2 \cdot 29 & \Z \\
r^2 = -(s^3-s^2-11s-8)/s & 2^4 \, 11 \cdot 191 & \Z^2 \\
r^2 = -(8s^4+4s^3-33s^2-44s-16)/(s+2)^2 & 3^5 \, 19 & \Z^2 \\
r^2 = -(s^4+s^3-3s^2-s+1) & 2^2 \, 5 \cdot 29 & (\Z/2\Z) \oplus \Z
\end{array}
$$
The section $2P_1$ gives a genus-$0$ curve $r^2 + s^2 + 6s + 4 = 0$,
which we can parametrize as
$$
(r,s) = \left( -\frac{m^2+4m-1}{m^2+1},  -\frac{m^2+2m+5}{m^2+1} \right).
$$
The Brauer obstruction always vanishes on this locus, giving us a
$1$-parameter family of genus $2$ curves whose Jacobians have real
multiplication by $\sO_{76}$.

\section{Discriminant $77$}

\subsection{Parametrization}

We start with a family of K3 surfaces with fibers of type
$A_1, A_3, A_5$ and $D_5$, a $2$-torsion section $T$, and two orthogonal
sections $P, Q$\/ of height
$11/12 = 4 - 0 - (2 \cdot 2)/4 - (1 \cdot 5)/6 - (1 + 1/4)$ and
$7/4 = 4 - 0 - (1\cdot 3)/4 - (3\cdot 3)/6 - 0$.
The orthogonality comes from
$0 = 2 - 0 - (1\cdot 2)/4 - (1\cdot 3)/6 - 0 - 1$,
where the last term comes from the intersection number
$(P) \cdot (Q)$ on the surface.
We can write the Weierstrass equation of this family as
\begin{align*}
y^2 =& \, x^3 + \big((r s - 1)^2 -(s^2-1) (r s-4 r^2-1) \, t + r
(s^2-1) (r s^2+8 s-57 r) \, t^2/4 + 8 r^2 (s^2-1) t^3\big) \, x^2 \\
& +r^2 (s^2-1)^2 t^3 (t-1)^2 \big(16 r^2 t+ (rs -1)^2 - (s-5r)^2 \big)
\, x .
\end{align*}

We go to $E_8 E_7$ form in three steps, via $D_8 E_6$ and $E_8 E_6$.

First, we identify an $D_8$ fiber $F'$ in the figure below (we omit
drawing the node representing $Q$ and the edges connecting it to the
rest of the diagram, as it would clutter up the picture).

\begin{center}
\begin{tikzpicture}

\draw (0,0)--(6,0)--(6.5,0.866)--(7.5,0.866)--(8,0)--(7.5,-0.866)--(6.5,-0.866)--(6,0);
\draw (1,0)--(1,1);
\draw (2,0)--(2,1);
\draw (4.5,0)--(3,1);
\draw (4.5,0)--(4.5,1)--(5.207,1.707)--(4.5,2.414)--(3.793,1.707)--(4.5,1);
\draw (2.95,1)--(2.95,2);
\draw (3.05,1)--(3.05,2);
\draw [very thick] (0,0)--(4.5,0)--(4.5,1)--(5.207,1.707);
\draw [very thick] (4.5,1)--(3.793,1.707);
\draw [very thick] (1,0)--(1,1);

\draw [bend right] (3,3.5) to (1,1);
\draw [bend right] (3,3.5) to (3,1);
\draw [bend left] (3,3.5) to (4.5,2.414);
\draw [bend left] (3,3.5) to (6.5,0.866);

\draw (4.5,0) circle (0.2);
\fill [black] (0,0) circle (0.1);
\fill [black] (1,0) circle (0.1);
\fill [black] (2,0) circle (0.1);
\fill [black] (3,0) circle (0.1);
\fill [black] (4.5,0) circle (0.1);
\fill [white] (6,0) circle (0.1);
\fill [white] (6.5,0.866) circle (0.1);
\fill [white] (7.5,0.866) circle (0.1);
\fill [white] (6.5,-0.866) circle (0.1);
\fill [white] (7.5,-0.866) circle (0.1);
\fill [white] (8,0) circle (0.1);
\fill [black] (1,1) circle (0.1);
\fill [white] (2,1) circle (0.1);
\fill [white] (3,1) circle (0.1);
\fill [white] (3,2) circle (0.1);
\fill [black] (4.5,1) circle (0.1);
\fill [black] (5.207,1.707) circle (0.1);
\fill [black] (3.793,1.707) circle (0.1);
\fill [white] (4.5,2.414) circle (0.1);

\fill [white] (3,3.5) circle (0.1);

\draw (0,0) circle (0.1);
\draw (1,0) circle (0.1);
\draw (2,0) circle (0.1);
\draw (3,0) circle (0.1);
\draw (4.5,0) circle (0.1);
\draw (6,0) circle (0.1);
\draw (6.5,0.866) circle (0.1);
\draw (7.5,0.866) circle (0.1);
\draw (6.5,-0.866) circle (0.1);
\draw (7.5,-0.866) circle (0.1);
\draw (8,0) circle (0.1);
\draw (1,1) circle (0.1);
\draw (2,1) circle (0.1);
\draw (3,1) circle (0.1);
\draw (3,2) circle (0.1);
\draw (4.5,1) circle (0.1);
\draw (5.207,1.707) circle (0.1);
\draw (3.793,1.707) circle (0.1);
\draw (4.5,2.414) circle (0.1);

\draw (3,3.5) circle (0.1);
\draw (3,3.5) [above] node{$P$};

\end{tikzpicture}
\end{center}

The section $P$ intersects $F'$ once, and so the new fibration has a
section. It has $D_8$ and $E_6$ fibers and rank $2$.

Next, we identify the class of an $E_8$ fiber below, and go to an
elliptic fibration with $E_8$ and $E_6$ fibers, by a $2$-neighbor step.

\begin{center}
\begin{tikzpicture}

\draw (0,0)--(12,0);
\draw (1,0)--(1,1);
\draw (5,0)--(5,1);
\draw (10,0)--(10,2);
\draw [very thick] (0,0)--(7,0);
\draw [very thick] (5,0)--(5,1);

\fill [black] (0,0) circle (0.1);
\fill [black] (1,0) circle (0.1);
\fill [black] (2,0) circle (0.1);
\fill [black] (3,0) circle (0.1);
\fill [black] (4,0) circle (0.1);
\fill [black] (5,0) circle (0.1);
\fill [black] (6,0) circle (0.1);
\fill [black] (7,0) circle (0.1);
\fill [white] (8,0) circle (0.1);
\fill [white] (9,0) circle (0.1);
\fill [white] (10,0) circle (0.1);
\fill [white] (11,0) circle (0.1);
\fill [white] (12,0) circle (0.1);
\fill [white] (1,1) circle (0.1);
\fill [black] (5,1) circle (0.1);
\fill [white] (10,1) circle (0.1);
\fill [white] (10,2) circle (0.1);

\draw (7,0) circle (0.2);
\draw (0,0) circle (0.1);
\draw (1,0) circle (0.1);
\draw (2,0) circle (0.1);
\draw (3,0) circle (0.1);
\draw (4,0) circle (0.1);
\draw (5,0) circle (0.1);
\draw (6,0) circle (0.1);
\draw (7,0) circle (0.1);
\draw (8,0) circle (0.1);
\draw (9,0) circle (0.1);
\draw (10,0) circle (0.1);
\draw (11,0) circle (0.1);
\draw (12,0) circle (0.1);
\draw (1,1) circle (0.1);
\draw (5,1) circle (0.1);
\draw (10,1) circle (0.1);
\draw (10,2) circle (0.1);

\end{tikzpicture}
\end{center}

The resulting elliptic fibration has \MoW\ lattice of rank $2$
and discriminant $77/3$. We can relatively easily describe a section
$P''$ of height $8/3$, which intersects a non-identity component of
the $E_6$ fiber. 

Now the $E_7$ fiber $F'''$ drawn below defines an elliptic fibration
with a section and with $E_8$ and $E_7$ fibers.

\begin{center}
\begin{tikzpicture}

\draw (0,0)--(13,0);
\draw (2,0)--(2,1);
\draw (11,0)--(11,2);
\draw (9,1.5) to [bend right] (7,0);
\draw [very thick] (9,1.5) to [bend left] (11,2);
\draw [very thick] (8,0)--(12,0);
\draw [very thick] (11,0)--(11,2);

\fill [white] (0,0) circle (0.1);
\fill [white] (1,0) circle (0.1);
\fill [white] (2,0) circle (0.1);
\fill [white] (3,0) circle (0.1);
\fill [white] (4,0) circle (0.1);
\fill [white] (5,0) circle (0.1);
\fill [white] (6,0) circle (0.1);
\fill [white] (7,0) circle (0.1);
\fill [black] (8,0) circle (0.1);
\fill [black] (9,0) circle (0.1);
\fill [black] (10,0) circle (0.1);
\fill [black] (11,0) circle (0.1);
\fill [black] (12,0) circle (0.1);
\fill [white] (13,0) circle (0.1);
\fill [black] (11,1) circle (0.1);
\fill [black] (11,2) circle (0.1);
\fill [white] (2,1) circle (0.1);
\fill [black] (9,1.5) circle (0.1);

\draw (8,0) circle (0.2);
\draw (0,0) circle (0.1);
\draw (1,0) circle (0.1);
\draw (2,0) circle (0.1);
\draw (3,0) circle (0.1);
\draw (4,0) circle (0.1);
\draw (5,0) circle (0.1);
\draw (6,0) circle (0.1);
\draw (7,0) circle (0.1);
\draw (8,0) circle (0.1);
\draw (9,0) circle (0.1);
\draw (10,0) circle (0.1);
\draw (11,0) circle (0.1);
\draw (12,0) circle (0.1);
\draw (13,0) circle (0.1);
\draw (11,1) circle (0.1);
\draw (11,2) circle (0.1);
\draw (2,1) circle (0.1);
\draw (9,1.5) circle (0.1);

\draw (9,1.5) [below] node{$P''$};

\end{tikzpicture}
\end{center}

We now read out the Igusa-Clebsch invariants and proceed as in Section
\ref{method} to compute the equation of $Y_{-}(77)$ as a double cover
of the Humbert surface $\sH_{77}$.

\begin{theorem}
A birational model over $\Q$ for the Hilbert modular surface
$Y_{-}(77)$ as a double cover of $\Proj^2$ is given by the following
equation:
\begin{align*}
z^2 =& \, r^2 (r-1)^2  (r+1)^2 s^6 + 2 r (r-1) (r+1) (21 r^2-13) s^5 \\
& -(14 r^6-433 r^4+328 r^2+27) s^4 -4 r (281 r^4-522 r^2-79) s^3 \\
& -(343 r^6+6268 r^4+1763 r^2-54) s^2 + 2 r (3997 r^4+2446 r^2-171) s \\
& -(1372 r^6+4531 r^4-362 r^2+27).
\end{align*}
It is a surface of general type.
\end{theorem}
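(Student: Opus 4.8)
The plan is to follow the four-step method of Section~\ref{method}, specialized to $D=77$. \textbf{Step~1} is essentially encoded in the stated Weierstrass family, so my first task is to verify that this family is genuinely $L_{77}$-polarized. The reducible fibers $A_1,A_3,A_5,D_5$ together with the fiber and zero section span the trivial lattice of rank $2+1+3+5+5=16$, and the two orthogonal sections $P,Q$ of heights $11/12$ and $7/4$ raise the Picard number to $18$. Using the discriminant formula of Section~\ref{K3},
$$
|\disc \NS(X)| = \frac{\det H(X/\Proj^1)\cdot \prod_v \disc(L_v)}{|\MW_{\mathrm{tors}}|^2}
= \frac{(11/12)(7/4)\cdot(2\cdot4\cdot6\cdot4)}{2^2} = 77 ,
$$
where the $2$-torsion section $T$ supplies the denominator and $2\cdot4\cdot6\cdot4=192$ is the product of local fiber discriminants, I confirm that $\NS(X)$ has signature $(1,17)$ and discriminant $77$, hence is $L_{77}$ by the uniqueness established before Theorem~\ref{humbert}. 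This identifies the family with a Zariski-open subset of $\sF_{L_{77}}\cong\Proj^2_{r,s}$.

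\textbf{Step~2} is to transform to an $E_8E_7$ fibration and read off the map to $\A_2$. I would carry out the three neighbor steps indicated in the diagrams ($D_8E_6$, then $E_8E_6$, then $E_8E_7$), each time computing $H^0(X,\sO_X(F'))$ as in Section~\ref{neighbors} to obtain the new elliptic parameter, and converting to the Jacobian using the section whose existence follows from the coprimality of intersection numbers noted in the text (and from Proposition~\ref{automaticsections}). Feeding the resulting $E_8E_7$ Weierstrass coefficients into Theorem~\ref{thesisthm} produces the Igusa--Clebsch invariants $(I_2:I_4:I_6:I_{10})$ as rational functions of $(r,s)$; by Theorem~\ref{humbert} their image is exactly $\sH_{77}$, so $\Proj^2_{r,s}$ is a birational model of the Humbert surface.

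\textbf{Steps~3 and~4} produce the double cover. Since $77$ is odd, the branch locus consists of the irreducible curves along which the Picard number jumps by one and the discriminant changes from $77$ to $2\cdot77=154$, so I would enumerate the finitely many candidate degenerations (an extra $\I_2$ fiber, the promotion of a reducible fiber, or a new section), obtaining candidate polynomials $f_i(r,s)$ together with the components of the excluded locus~$V$. To decide which $f_i$ occur and to fix the squarefree twist $C$, I would reduce modulo several primes $p\nmid 77$, build the genus-$2$ curve $C_{r,s}/\F_p$ from the Igusa--Clebsch invariants, count points over $\F_p$ and $\F_{p^2}$ to recover the Frobenius polynomial, and apply Lemma~\ref{twistrealmult}: the point $(r,s)$ lifts to $Y_{-}(77)(\F_p)$ precisely when $J(C_{r,s})$ has real multiplication by $\sO_{77}$ over $\F_p$, which must agree with $C\prod f_{i_j}(r,s)$ being a square. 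Running this over many $(r,s)$ isolates the displayed right-hand side and the twist. Finally, for the \emph{general-type} claim I would analyze the double cover $z^2=f(r,s)$ directly: the branch curve has total degree $12$, its unique degree-$12$ monomial being $r^6s^6$, so a smooth double cover of $\Proj^2$ along a degree-$12$ curve would have canonical class $\pi^*((6-3)H)=\pi^*(3H)$, which is big. I would then resolve the singularities of $f=0$ (concentrated at the two points at infinity $[1:0:0]$ and $[0:1:0]$, where $r^6s^6$ forces high multiplicity) and check that the correction to $K$ from this canonical resolution still leaves $K$ big, giving Kodaira dimension $\kappa=2$; this is what distinguishes $D=77$ from the honestly elliptic ($\kappa=1$) discriminants, whose branch curves are far more reducible.

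The step I expect to be the main obstacle is \textbf{Step~2}: the chain of three successive neighbor steps involves genuinely large polynomial manipulations, and each passage to Jacobian form must be organized carefully so that the final Igusa--Clebsch invariants stay manageable enough to identify the branch components in Steps~3--4. A secondary difficulty is the singularity analysis of the degree-$12$ branch curve needed for the Kodaira-dimension statement, where one must verify that the curve is singular enough to give a valid model yet not so singular that resolving it drops the surface below general type.
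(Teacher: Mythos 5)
Your derivation of the equation itself is the paper's own method, step for step: the family with $A_1,A_3,A_5,D_5$ fibers, a $2$-torsion section and two orthogonal sections of heights $11/12$ and $7/4$ is exactly the paper's starting point, and your lattice arithmetic checks out, since $\prod_v \disc(L_v)=2\cdot4\cdot6\cdot4=192$, the trivial lattice has rank $2+(1+3+5+5)=16$, and $(11/12)(7/4)\cdot192/2^2=77$ with total Picard rank $18$. The chain of neighbor steps you describe ($D_8E_6$, then $E_8E_6$, then $E_8E_7$) is precisely the one in the paper's diagrams, including the section-existence arguments via coprime intersection numbers (backed by Proposition~\ref{automaticsections}), and Steps 3--4 (enumerating candidate branch components, then fixing the subset and the twist by point counts and Lemma~\ref{twistrealmult}) are the paper's Section~\ref{method} verbatim. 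So for the displayed equation, you and the paper take the same route.

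The one genuine divergence, and the one place your plan has a gap, is the claim of general type. The paper does not prove this by analyzing the double cover at all; it is inherited from the classical Enriques--Kodaira classification of Hilbert modular surfaces by Hirzebruch, van de Ven and Zagier cited in the introduction. Your proposed direct argument starts correctly ($K=\pi^*(3H)$ for a smooth degree-$12$ branch curve), but the rest is not yet a proof. First, the assertion that the singularities are concentrated at the two points at infinity is unverified and almost certainly incomplete: a smooth plane curve of degree $12$ has genus $55$, while the branch curve has geometric genus $2$ (the paper identifies it with $X_0(77)/\langle w_{77}\rangle$), so a total delta invariant of $53$ must be accounted for; the two sextuple points at infinity, even with their non-reduced tangent cones (at $[0\!:\!1\!:\!0]$ the degree-$6$ part is $R^2(R^2-W^2)^2$), do not obviously supply all of it, so there are deep infinitely near or affine singularities whose canonical-resolution corrections must all be controlled. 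Second, degree alone decides nothing: the honestly elliptic surfaces in this paper, e.g.\ $Y_{-}(53)$, also arise as double covers branched along high-degree curves (there of degree $8$, which for a smooth branch curve would force general type), and that branch curve is irreducible of genus $1$ --- which also contradicts your closing remark that the elliptic cases are distinguished by ``far more reducible'' branch curves. The Kodaira dimension is decided exactly by the resolution bookkeeping you defer, so to complete the proof you must either carry out that resolution in full or simply quote the classification, as the paper implicitly does.
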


\subsection{Analysis}
It is a surface of general type. It has an extra involution $(r,s)
\mapsto (-r,-s)$. The branch locus is a curve of genus $2$. The change
of coordinates
$$
(r,s) = \left(  \frac{4xy-5x^4-22x^2-21}{x(x^2+7)^2} , \frac{-(2x^2y+2y-x^5-6x^3-9x)}{(x^2-1)^2} \right)
$$
converts it to Weierstrass form
$$
y^2 = x^6 + 5x^4 + 3x^2 + 7.
$$
It is isomorphic to the quotient of $X_0(77)$ by the Atkin-Lehner
involution $w_{77}$.

\subsection{Examples}

We list some points of small height and corresponding genus $2$
curves.

\begin{tabular}{l|c}
Rational point $(r,s)$ & Sextic polynomial $f_6(x)$ defining the genus $2$ curve $y^2 = f_6(x)$. \\
\hline \hline \\ [-2.5ex]
$(-1/2, -23/5)$ & $ 1581x^6 - 25965x^5 + 128199x^4 - 124655x^3 - 282240x^2 - 92400x - 9478 $ \\
$(-3/5, -47/9)$ & $ 926711x^6 - 351913x^5 - 2531791x^4 + 699677x^3 + 2176646x^2 - 359536x - 579882 $ \\
$(1/2, 23/5)$ & $ 2038350x^6 + 1601640x^5 - 6288456x^4 + 116705x^3 - 5729115x^2$ \\
& $ - 8845053x - 2931103 $ \\
$(-13/2, 6)$ & $ 10433826x^6 - 16243110x^5 + 25749477x^4 - 25899800x^3 + 5297523x^2$ \\
& $ - 6454140x - 9577876 $ \\
$(13/2, -6)$ & $ -749865564x^6 + 4317895148x^5 + 1178682897x^4 - 6739621816x^3$ \\
& $ - 800208729x^2 + 2973824982x + 128797182 $ \\
$(-33/65, -239/63)$ & $ -5738303278944x^6 + 6551435295576x^5 + 28045528925148x^4 - 5100723398753x^3$ \\
& $ - 23656013198837x^2 + 3333165270637x + 3904336668117 $ 
\end{tabular}

\section{Discriminant $85$}

\subsection{Parametrization}

We start with a K3 elliptic surface with fibers of type $E_6, D_5$ and $A_4$,
with a section of height $85/60 = 17/12 = 4 - 4/3 -5/4$.

The Weierstrass equation is
$$
y^2 = x^3 + t (a_0 + a_1 t) \, x^2
     + 2 t^2 (t-1) (b_0 + b_1 t )\, x + t^3 (t-1)^4 (c_0 + c_1 t),
$$
with
\begin{align*}
a_0 &= -4(e^2-1)(3ef^2+2e^2f+2f-2e^2+e+2), \quad
 c_0 = -64 e^2 (e^2-1)^3(f^2-1)^2(f+2e+1)(ef-e+2), \\
a_1 &=  4(e^2-1)^2 f^2, \qquad \qquad \qquad \quad \qquad \qquad  \qquad
 b_0 = 8 e (e^2-1)^2(f^2-1)(3ef^2+4e^2f+4f-4e^2+5e+4), \\
b_1 &=  -8 f(e^2-1)^2(f^2-1) (e^4f+e^3f-ef+f-e^4+e^3+e+1), \\
c_1 &= 16(e^2-1)^2(f^2-1)^2(e^4f+e^3f-ef+f-e^4+e^3+e+1)^2.
\end{align*}

We identify the class of a $D_8$ fiber below, and move to the new
elliptic fibration via a $2$-neighbor step.

\begin{center}
\begin{tikzpicture}

\draw (0,0)--(9,0);
\draw (2,0)--(2,2);
\draw (7,0)--(7,1);
\draw (8,0)--(8,1);
\draw (5,0)--(5,1);
\draw (5,1)--(4.05,1.69);
\draw (5,1)--(5.95,1.69);
\draw (4.05,1.69)--(4.41,2.81);
\draw (5.95,1.69)--(5.59,2.81);
\draw (4.41,2.81)--(5.59,2.81);
\draw [very thick] (5,1)--(5,0)--(9,0);
\draw [very thick] (8,0)--(8,1);
\draw [very thick] (4.05,1.69)--(5,1)--(5.95,1.69);
\draw [bend right] (7,3) to (2,2);
\draw [bend left] (7,3) to (8,1);
\draw [bend left] (7,3) to (5,1);

\fill [white] (7,3) circle (0.1);
\fill [white] (0,0) circle (0.1);
\fill [white] (1,0) circle (0.1);
\fill [white] (2,0) circle (0.1);
\fill [white] (3,0) circle (0.1);
\fill [white] (4,0) circle (0.1);
\fill [black] (5,0) circle (0.1);
\fill [white] (2,1) circle (0.1);
\fill [white] (2,2) circle (0.1);
\fill [black] (6,0) circle (0.1);
\fill [black] (7,0) circle (0.1);
\fill [black] (8,0) circle (0.1);
\fill [black] (9,0) circle (0.1);
\fill [white] (7,1) circle (0.1);
\fill [black] (8,1) circle (0.1);

\fill [black] (5,1) circle (0.1);
\fill [black] (4.05,1.69) circle (0.1);
\fill [black] (5.95,1.69) circle (0.1);
\fill [white] (4.41,2.81) circle (0.1);
\fill [white] (5.59,2.81) circle (0.1);

\draw (5,0) circle (0.2);
\draw (7,3) circle (0.1);
\draw (0,0) circle (0.1);
\draw (1,0) circle (0.1);
\draw (2,0) circle (0.1);
\draw (3,0) circle (0.1);
\draw (4,0) circle (0.1);
\draw (5,0) circle (0.1);
\draw (2,1) circle (0.1);
\draw (2,2) circle (0.1);
\draw (6,0) circle (0.1);
\draw (7,0) circle (0.1);
\draw (8,0) circle (0.1);
\draw (9,0) circle (0.1);
\draw (7,1) circle (0.1);
\draw (8,1) circle (0.1);

\draw (5,1) circle (0.1);
\draw (4.05,1.69) circle (0.1);
\draw (5.95,1.69) circle (0.1);
\draw (4.41,2.81) circle (0.1);
\draw (5.59,2.81) circle (0.1);

\end{tikzpicture}
\end{center}

The new elliptic fibration has reducible fibers of type $D_8$ and
$E_6$, and \MoW\ rank $2$.  Next, we move to an elliptic fibration
with $E_8$ and $E_6$ fibers by another $2$-neighbor step, using the
$E_8$ fiber shown below.

\begin{center}
\begin{tikzpicture}

\draw (0,0)--(12,0);
\draw (1,0)--(1,1);
\draw (5,0)--(5,1);
\draw (10,0)--(10,2);
\draw [very thick] (0,0)--(7,0);
\draw [very thick] (5,0)--(5,1);

\draw (7,0) circle (0.2);
\fill [black] (0,0) circle (0.1);
\fill [black] (1,0) circle (0.1);
\fill [black] (2,0) circle (0.1);
\fill [black] (3,0) circle (0.1);
\fill [black] (4,0) circle (0.1);
\fill [black] (5,0) circle (0.1);
\fill [black] (6,0) circle (0.1);
\fill [black] (5,1) circle (0.1);
\fill [white] (1,1) circle (0.1);
\fill [black] (7,0) circle (0.1);
\fill [white] (8,0) circle (0.1);
\fill [white] (9,0) circle (0.1);
\fill [white] (10,0) circle (0.1);
\fill [white] (11,0) circle (0.1);
\fill [white] (12,0) circle (0.1);
\fill [white] (10,1) circle (0.1);
\fill [white] (10,2) circle (0.1);

\draw (0,0) circle (0.1);
\draw (1,0) circle (0.1);
\draw (2,0) circle (0.1);
\draw (3,0) circle (0.1);
\draw (4,0) circle (0.1);
\draw (5,0) circle (0.1);
\draw (6,0) circle (0.1);
\draw (5,1) circle (0.1);
\draw (1,1) circle (0.1);
\draw (7,0) circle (0.1);
\draw (8,0) circle (0.1);
\draw (9,0) circle (0.1);
\draw (10,0) circle (0.1);
\draw (11,0) circle (0.1);
\draw (12,0) circle (0.1);
\draw (10,1) circle (0.1);
\draw (10,2) circle (0.1);

\end{tikzpicture}
\end{center}

The new elliptic fibration has $E_8$ and $E_6$ fibers. We can find an
explicit section $P$ of the $E_8 E_6$ fibration which intersects a
non-identity component of the $E_6$ fiber and does not intersect the
zero section. 

Let $F$ be an $E_7$ fiber enclosed by the box in the picture below.
We move to it by a $2$-neighbor step, to recover an elliptic fibration
with $E_8$ and $E_7$ fibers.

\begin{center}
\begin{tikzpicture}

\draw (0,0)--(13,0);
\draw (2,0)--(2,1);
\draw (11,0)--(11,2);
\draw (9,1.5) to [bend right] (7,0);
\draw [very thick] (9,1.5) to [bend left] (11,2);
\draw [very thick] (8,0)--(12,0);
\draw [very thick] (11,0)--(11,2);

\draw (8,0) circle (0.2);
\fill [white] (0,0) circle (0.1);
\fill [white] (1,0) circle (0.1);
\fill [white] (2,0) circle (0.1);
\fill [white] (3,0) circle (0.1);
\fill [white] (4,0) circle (0.1);
\fill [white] (5,0) circle (0.1);
\fill [white] (6,0) circle (0.1);
\fill [white] (7,0) circle (0.1);
\fill [black] (8,0) circle (0.1);
\fill [black] (9,0) circle (0.1);
\fill [black] (10,0) circle (0.1);
\fill [black] (11,0) circle (0.1);
\fill [black] (12,0) circle (0.1);
\fill [white] (13,0) circle (0.1);
\fill [black] (11,1) circle (0.1);
\fill [black] (11,2) circle (0.1);
\fill [white] (2,1) circle (0.1);
\fill [black] (9,1.5) circle (0.1);

\draw (0,0) circle (0.1);
\draw (1,0) circle (0.1);
\draw (2,0) circle (0.1);
\draw (3,0) circle (0.1);
\draw (4,0) circle (0.1);
\draw (5,0) circle (0.1);
\draw (6,0) circle (0.1);
\draw (7,0) circle (0.1);
\draw (8,0) circle (0.1);
\draw (9,0) circle (0.1);
\draw (10,0) circle (0.1);
\draw (11,0) circle (0.1);
\draw (12,0) circle (0.1);
\draw (13,0) circle (0.1);
\draw (11,1) circle (0.1);
\draw (11,2) circle (0.1);
\draw (2,1) circle (0.1);
\draw (9,1.5) circle (0.1);

\draw (9,1.5) [below] node{$P$};

\end{tikzpicture}
\end{center}

We may now read out the Igusa-Clebsh invariants, and work
out the equation of $Y_{-}(85)$ as a double cover of $\Proj^2_{e,f}$.

\begin{theorem}
A birational model over $\Q$ for the Hilbert modular surface
$Y_{-}(85)$ as a double cover of $\Proj_{e,f}^2$ is given by the following
equation:
\begin{align*}
z^2 = & -(e^2-e-1)^2(8e^4+11e^2+8)\, f^4  + 4\, (e^4-1)(18e^4-11e^3+27e^2+11e+18)f^3 \\
 & -2 \, (82e^8-118e^7-9e^6-60e^5-173e^4+60e^3-9e^2+118e+82) \, f^2 \\
 &  + 4 \, (e^4-1)(4e^2-5e-4)(9e^2-13e-9) \, f -(2e^2-e-2)^2(11e^4-34e^3+5e^2+34e+11).
\end{align*}
It is an honestly elliptic surface, with arithmetic genus $3$ and Picard
number $37$ or $38$.
\end{theorem}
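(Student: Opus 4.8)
The plan is to run the four-step procedure of Section~\ref{method}, taking the parametrization established just above as input. First I would confirm (Step~1) that the displayed Weierstrass family, with its $E_6$, $D_5$ and $A_4$ fibers together with the height-$17/12$ section, realizes a Zariski-open subset of $\sF_{L_{85}}$ birational to $\Proj^2_{e,f}$: by Shioda-Tate the trivial lattice ($\Z O \oplus \Z F$ together with the $E_6$, $D_5$, $A_4$ root lattices, rank $17$) plus the one section spans a rank-$18$ lattice whose discriminant form matches that of $L_{85}$, and the condition $E_8 \oplus E_7 \subset N$ is checked by producing a vector of norm $2/85$ in $N^\ast$. Then I would carry out the three $2$-neighbor steps taking the initial $\{E_6,D_5,A_4\}$ fibration to a $\{D_8,E_6\}$ fibration, then to $\{E_8,E_6\}$, and finally to the $\{E_8,E_7\}$ fibration drawn in bold; at each stage one solves for the new elliptic parameter inside $H^0(X,\sO_X(F'))$ and converts the resulting genus-$1$ curve to its Jacobian, with Proposition~\ref{automaticsections} supplying a section. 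From the final $E_8 E_7$ Weierstrass equation I read off $(I_2:I_4:I_6:I_{10})$ using Theorem~\ref{thesisthm}, obtaining the map $\Proj^2_{e,f} \to \A_2$ whose image is $\sH_{85}$.

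For the double-cover equation (Steps~3 and~4) I would enumerate the candidate branch components as the loci where the Picard number of the generic K3 jumps by one with discriminant changing to $2\cdot 85$ or $85/2$: an extra $\I_2$ fiber, the promotion of a reducible fiber, or a new low-height section. This yields finitely many polynomials $f_i(e,f)$, and the displayed quartic must be $C\prod f_{i_j}$ for a squarefree $C$ and some index subset. I would pin these down by reducing modulo several odd primes $p\nmid 85$, building the genus-$2$ curves $C_{e,f}$ over $\F_p$ from the Igusa-Clebsch invariants, computing $\#C_{e,f}(\F_p)$ and $\#C_{e,f}(\F_{p^2})$, and applying Lemma~\ref{twistrealmult} to detect real multiplication by an order in $\sO_{85}$ over $\F_p$: only the correct $(C,I)$ makes $C\prod f_{i_j}$ a square at every point carrying $\F_p$-rational real multiplication, and good reduction of $Y_{-}(85)$ away from $85$ bounds the twists to be tested.

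To show $Y_{-}(85)$ is honestly elliptic of arithmetic genus~$3$, I would regard the displayed equation as a genus-$1$ fibration over $\Proj^1_e$: as the right-hand side is a quartic in $f$, each fibre $z^2=(\text{quartic in }f)$ has genus~$1$. I would pass to the Jacobian Weierstrass model over $\Q(e)$ (the leading coefficient $-(e^2-e-1)^2(8e^4+11e^2+8)$ is not a global square, so a $\Q$-section need not be visible and I expect to argue through the Jacobian as in the discriminant-$53$ case), then read off the singular fibres from the discriminant. Summing their Euler numbers should give a total of $48$, hence $\chi(\sO_S)=4$ and arithmetic genus $\chi-1=3$; since a surface with an elliptic fibration has Kodaira dimension $\le 1$ while $\chi>2$ excludes the rational and the $\kappa=0$ cases, the minimal model is honestly elliptic. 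For the Picard number I would exhibit explicit sections and, exploiting the extra involution coming from $85=5\cdot 13$, split off quotient surfaces (expected to be K3) whose N\'eron-Severi lattices I can compute and recombine with the trivial lattice and Mordell-Weil contributions to get a lower bound; van Luijk's point-count over $\F_p$ and $\F_{p^2}$ for two good primes supplies the upper bound.

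The main obstacle is precisely the exact Picard number. The sections and the trivial lattice will force $\rho\ge 37$, whereas the $\ell$-adic Frobenius count (of roots equal to $p$ times a root of unity) will only give $\rho\le 38$; closing this one-unit gap would need either one further independent section of the honestly elliptic surface or its quotients, or a sharper Artin-Tate square-class argument ruling out $\rho=38$, and I do not expect either to go through cleanly here, which is why the statement records ``$37$ or $38$.'' A secondary difficulty is sheer computational bulk: the three neighbor steps and the Igusa-Clebsch substitution produce enormous intermediate expressions, so in practice these are consigned to the auxiliary files rather than displayed.
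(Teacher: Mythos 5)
Your proposal follows the paper's proof essentially verbatim: the same $\{E_6,D_5,A_4\}\to\{D_8,E_6\}\to\{E_8,E_6\}\to\{E_8,E_7\}$ chain of three $2$-neighbor steps, the same branch-locus and twist determination via Lemma~\ref{twistrealmult}, the same passage to the Jacobian of the genus-$1$ fibration over $\Proj^1_e$ (with $\chi=4$, trivial lattice of rank $36$, hence arithmetic genus $3$), and the same quotient-plus-twist analysis of the extra involution that leaves the Mordell-Weil rank, and hence the Picard number ($37$ or $38$), undecided. Two harmless slips: the involution comes from $85=5\cdot 17$ (not $5\cdot 13$), and the quotient by $\iota:(e,f,z)\mapsto(-1/e,-f,z/e^4)$ is not a K3 but an honestly elliptic surface with $\chi=3$ --- only its quadratic twist is a K3 --- which is precisely why the paper runs its point counts on those two smaller surfaces (mod $11,19$ and mod $7,19$ respectively) rather than attempting a van Luijk count on $Y_{-}(85)$ itself, whose $b_2=46$ would make that impractical.
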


\subsection{Analysis}
It is an honestly elliptic surface. The extra involution is $\iota:
(e,f,z) \mapsto (-1/e,-f,z/e^4)$.

The branch locus is a hyperelliptic curve of genus $3$. The change of
coordinates
$$
(e,f) = \left( \frac{y+x^4+x^3-x+1}{2x^2+2x-1} , \frac{(3x^4+x^3-3x^2-x+3)y -(x^2-1)(x^2+1)^3}{(x^2+x-1)^2(2x^4-x^2+2)} \right)
$$
converts it to Weierstrass form:
$$
y^2 = x^8 + 2 x^7 - x^6 - 8 x^5 + x^4 + 8 x^3 - x^2 - 2 x + 1.
$$ 
It is isomorphic to the quotient of $X_0(85)$ by the Atkin-Lehner
involution $w_{85}$.

The equation of $Y_{-}(85)$ describes it as an elliptic surface over
$\Proj^1_e$. So far, we are unable to find a section.

The Jacobian of this genus-$1$ curve over $\Q(e)$ can be
written (after some Weierstrass transformations) as
\begin{align*}
y^2 &= x^3 + (e^8-10e^7+3e^6+84e^5+85e^4-84e^3+3e^2+10e+1) \, x^2\\
 &\qquad  -8e^5(23e^6-133e^5-420e^4-64e^3+420e^2-133e-23) \, x \\
 &\qquad  -16e^9(108e^6-637e^5-1944e^4-26e^3+1944e^2-637e-108).
\end{align*}
It has $\chi = 4$. There are bad fibers of type $\I_9$ at $e = 0$ and
$e =\infty$, $\I_3$ at $e = \pm 1$, $\I_2$ at the roots of $e = 4 \pm
\sqrt{17}$, and $\I_3$ at the roots of
$3e^6-16e^5-54e^4+5e^3+54e^2-16e-3$ (which generates the compositum of
$\Q(\sqrt{85})$ and the cubic field of discriminant $-3 \cdot 5 \cdot 17$).
The trivial lattice has rank~$36$, leaving room for at most four
independent sections. So far, we can only say from the ensuing
analysis that the rank is either $1$ or $2$.

Next, we analyze the quotient of this surface by the involution~$\iota$.
In terms of $g = e - 1/e$ and $h = f/(e+1/e)$ (which are
invariant under $\iota$), its equation is given by
\begin{align*}
z^2 &= -(g-1)^2(g^2+4)^2(8g^2+27)\, h^4 + 4g(g^2+4)^2(18g^2-11g+63)\, h^3 \\
& \qquad -2(g^2+4)(82g^4-118g^3+319g^2-414g-27) \, h^2 \\
& \qquad  + 4g(4g-5)(9g-13)(g^2+4) \, h -(2g-1)^2(11g^2-34g+27).
\end{align*}
This is also an honestly elliptic surface, this time with $\chi = 3$.
Its Jacobian is given by
\begin{align*}
y^2 &=  x^3 + (g^2+4)(g^4-10g^3+7g^2+66g-27) \, x^2 \\
 & \qquad + 8g(g-8)(g^2+4)^2(g^3-12g^2-12g+27) \, x \\
 & \qquad + 16g^2 (g-8)^2 (g^2+4)^3 (g^2-14g-27).
\end{align*}
It has bad fibers of type $\I_9$ at $g = \infty$, $\I_3$ at $g = 0$,
$\I_2$ at $g = 8$, $\I^*_0$ at $g = \pm 2 \sqrt{-1}$ and $\I_3$ at the
roots of $3g^3 - 16g^2 - 45g - 27$ (which generates the cubic field of
discriminant $-255$).  The trivial lattice has rank $27$, leaving room
for \MoW\ rank at most~$3$. Counting points modulo $11$ and $19$ shows
that the Picard number is at most $29$. On the other hand, we are able
to find the non-torsion section
$$
P_1 = \big( -4(g-8)(g^2+4)(9g^3+3g^2+g+72)/85, 4(g-8)(21g-4)(g^2+4)^2(3g^3-16g^2-45g-27)/85^{3/2} \big)
$$
of height $3/2$. Therefore, the \MoW\ rank is either $1$ or $2$. 

Next, we consider the quadratic twist of the quotient elliptic surface,
which is obtained by simply removing the factors of $(g^2+4)$ in the
Weierstrass equation above (recalling that $g^2 + 4 = (e + 1/e)^2$).
We get a K3 surface with a genus-$1$ fibration, whose Jacobian has
Weierstrass equation
\begin{align*}
y^2 &=  x^3 + (g^4-10g^3+7g^2+66g-27) \, x^2 \\
 & \qquad + 8g(g-8)(g^3-12g^2-12g+27) \, x \\
 & \qquad + 16g^2 (g-8)^2 (g^2-14g-27).
\end{align*}
It has reducible fibers of type $\I_9$ at $g = \infty$,
$\I_2$ at $g = 8$, and $\I_3$ at $g = 0$ and at the roots of
$3g^3 - 16g^2 - 45g - 27$.
Therefore the trivial lattice has rank $19$, and the
\MoW\ rank can be $0$ or $1$.  We find a $3$-torsion section
$$
P_0 = \big( 8g + 36, 4(3g^3-16g^2-45g-27) \big).
$$
Counting points modulo $7$ and $19$ shows that the Picard number is
exactly $19$. The $3$-torsion section and trivial lattice span a
sublattice of discriminant $162 = 2 \cdot 3^4$ of the N\'{e}ron-Severi
lattice of discriminant. It is easy to check that this sublattice is
$3$-saturated, and therefore must form the entire N\'eron-Severi
lattice.

\subsection{Examples}

We list some points of small height and corresponding genus $2$ curves.

\begin{tabular}{l|c}
Rational point $(e,f)$ & Sextic polynomial $f_6(x)$ defining the genus $2$ curve $y^2 = f_6(x)$. \\
\hline \hline \\ [-2.5ex]
$(1/2, -29/15)$ & $ 576x^6 + 432x^5 + 927x^4 + 81x^3 + 171x^2 - 72x - 208 $ \\
$(4/3, 1/7)$ & $ -1344x^6 - 672x^5 - 2233x^4 - 3026x^3 - 997x^2 - 2196x - 548 $ \\
$(7/2, 8/17)$ & $ -1566x^6 - 7704x^5 - 4056x^4 - 8581x^3 - 5841x^2 - 2055x - 2395 $ \\
$(-2, 29/15)$ & $ -3500x^6 - 2100x^5 + 11205x^4 + 2422x^3 - 11295x^2 + 1080x + 2160 $ \\
$(-2, 23/21)$ & $ -316x^6 + 3048x^5 + 14649x^4 + 10547x^3 - 13509x^2 - 1296x + 1728 $ \\
$(-6/7, -13)$ & $ -5028x^6 - 10620x^5 - 2605x^4 - 16750x^3 + 5255x^2 - 6600x + 2832 $ \\
$(-1/2, -7)$ & $ 8964x^6 - 3132x^5 + 18927x^4 + 6286x^3 + 6655x^2 + 11300x - 500 $ \\
$(2/5, -7/3)$ & $ 21006x^6 - 45414x^5 + 16263x^4 - 20048x^3 - 7227x^2 + 960x - 3200 $ \\
$(-3/4, -3/5)$ & $ 5500x^6 + 30300x^5 + 19835x^4 + 20174x^3 - 46885x^2 + 2340x - 380 $ \\
$(5/2, 7/3)$ & $ -12852x^6 - 15876x^5 + 40383x^4 + 49976x^3 - 30231x^2 - 43650x + 2250 $ \\
$(11/20, -71/49)$ & $ -66020x^6 + 43980x^5 + 10001x^4 + 1154x^3 - 5899x^2 - 1464x + 1096 $ \\
$(-2/5, -7/3)$ & $ -72620x^6 + 37884x^5 - 12135x^4 + 29302x^3 - 4107x^2 + 1848x - 2672 $ \\
$(-5/2, 7/3)$ & $ 20x^6 + 180x^5 - 3879x^4 - 34668x^3 + 44937x^2 + 62856x - 73296 $ \\
$(7/6, 13)$ & $ -5442x^6 + 3630x^5 - 7079x^4 - 93460x^3 + 35059x^2 - 420x + 9212 $ \\
$(-4, 11/9)$ & $ -16964x^6 - 33804x^5 + 53325x^4 + 100170x^3 - 35163x^2 - 81540x - 1116 $ \\
$(11/2, 9/13)$ & $ -102046x^6 + 130482x^5 + 61857x^4 + 9504x^3 - 74697x^2 - 38412x - 14036 $ 
\end{tabular}

\section{Discriminant $88$}

\subsection{Parametrization}

We start with an elliptic K3 surface with fibers of type $A_9$, $D_4$
and $A_2$, and a section of height $11/15 = 4 - 2/3 - 1- 16/10$. The
Weierstrass equation for this family is

$$
y^2 = x^3 + (a_0 + a_1 t + a_2 t^2 + a_3 t^3) \, x^2
 + 2 t^2 (\lambda t - \mu) (b_0 + b_1t + b_2t^2 + b_3t^3) \, x
 +  t^4 (\lambda t - \mu)^2 (c_0 + c_1 t)^2,
$$

with
\begin{flalign*}
\qquad a_0 &= 1,  &  \mu &= rs + 2s + 1, &  \lambda &= s (r+2)^2 (2s+1)^2, & \\
\qquad c_0 &= -2, &  b_0 &= 2,           &      a_1 &= -8s(rs+2r+1), & 
\end{flalign*}
\vspace*{-4ex}
\begin{flalign*}
\qquad c_1 &= 8(r+2)s^2+8(r+1)s+r^2, & \\
\qquad b_3 &= 2r(r+2)s(2s+1)(8s+r^2)(8s^2+r), & \\
\qquad b_2 &= 32r(r+2)s^4 + 32(3r^2+4r+2)s^3 -4(r^3-20r^2-24r-8)s^2 + 4(r-1)r^2s, & \\
\qquad b_1 &= -16(r+1)s^2-8(3r+2)s-r^2, &\\
\qquad a_3 &= 4rs \big(
    64rs^4+16(r^3+3r^2+12r+4)s^3 + 2r(r^2+4)s^2  + r(r^3+12r^2+12r+16)s+r^3
   \big), &\\
\qquad a_2 &= 4s \big( 4r^2s^3+8r(2r-1)s^2 -4(r^3-4r^2-4r-1)s-r^2(r+4) \big). &
\end{flalign*}

First we identify an $E_7$ fiber, and make a $3$-neighbor move to it.
\begin{center}
\begin{tikzpicture}

\draw [bend right] (3,3) to (1,1);
\draw (3,3)--(2.5,1.866);
\draw [bend left] (3,3) to (5.5,0.866);

\draw (0,0)--(4,0)--(4.5,0.866)--(7.5,0.866)--(8,0)--(7.5,-0.866)--(4.5,-0.866)--(4,0);
\draw (1,-1)--(1,1);
\draw (3,0)--(3,1);
\draw (3,1)--(2.5,1.866)--(3.5,1.866)--(3,1);
\draw [very thick] (3.5,1.866)--(3,1)--(3,0)--(4,0)--(4.5,0.866);
\draw [very thick] (4,0)--(4.5,-0.866)--(6.5,-0.866);

\draw (3,0) circle (0.2);
\fill [white] (0,0) circle (0.1);
\fill [white] (1,0) circle (0.1);
\fill [white] (2,0) circle (0.1);
\fill [black] (3,0) circle (0.1);
\fill [black] (4,0) circle (0.1);
\fill [white] (1,1) circle (0.1);
\fill [white] (1,-1) circle (0.1);
\fill [black] (3,1) circle (0.1);
\fill [white] (2.5,1.866) circle (0.1);
\fill [black] (3.5,1.866) circle (0.1);
\fill [black] (4.5,0.866) circle (0.1);
\fill [white] (5.5,0.866) circle (0.1);
\fill [white] (6.5,0.866) circle (0.1);
\fill [white] (7.5,0.866) circle (0.1);
\fill [black] (4.5,-0.866) circle (0.1);
\fill [black] (5.5,-0.866) circle (0.1);
\fill [black] (6.5,-0.866) circle (0.1);
\fill [white] (7.5,-0.866) circle (0.1);
\fill [white] (8,0) circle (0.1);
\fill [white] (3,3) circle (0.1);

\draw (0,0) circle (0.1);
\draw (1,0) circle (0.1);
\draw (2,0) circle (0.1);
\draw (3,0) circle (0.1);
\draw (4,0) circle (0.1);
\draw (1,1) circle (0.1);
\draw (1,-1) circle (0.1);
\draw (3,1) circle (0.1);
\draw (2.5,1.866) circle (0.1);
\draw (3.5,1.866) circle (0.1);
\draw (4.5,0.866) circle (0.1);
\draw (5.5,0.866) circle (0.1);
\draw (6.5,0.866) circle (0.1);
\draw (7.5,0.866) circle (0.1);
\draw (4.5,-0.866) circle (0.1);
\draw (5.5,-0.866) circle (0.1);
\draw (6.5,-0.866) circle (0.1);
\draw (7.5,-0.866) circle (0.1);
\draw (8,0) circle (0.1);
\draw (3,3) circle (0.1);

\end{tikzpicture}
\end{center}

This gives us an elliptic fibration with $E_7$, $D_5$ and $A_3$
fibers, and a section of height $88/32 = 11/4 = 4 - 5/4$. Then we can
identify a $D_8$ fiber $F'$ below, and move to the associated genus
$1$ fibration by a $2$-neighbor step.

\begin{center}
\begin{tikzpicture}

\draw (0,0)--(11,0);
\draw (3,0)--(3,1);
\draw (9,0)--(9,1);
\draw (10,0)--(10,1);
\draw (7,0)--(7,1)--(7.707,1.707)--(7,2.414)--(6.293,1.707)--(7,1);
\draw [bend right] (5,3) to (6,0);
\draw [bend right] (5,3) to (7,1);
\draw [bend left] (5,3) to (10,1);
\draw [very thick] (7,1)--(7,0)--(11,0);
\draw [very thick] (10,0)--(10,1);
\draw [very thick] (7.707,1.707)--(7,1)--(6.293,1.707);

\draw (7,0) circle (0.2);
\fill [white] (0,0) circle (0.1);
\fill [white] (1,0) circle (0.1);
\fill [white] (2,0) circle (0.1);
\fill [white] (3,0) circle (0.1);
\fill [white] (3,1) circle (0.1);
\fill [white] (4,0) circle (0.1);
\fill [white] (5,0) circle (0.1);
\fill [white] (6,0) circle (0.1);
\fill [black] (7,0) circle (0.1);
\fill [black] (8,0) circle (0.1);
\fill [black] (9,0) circle (0.1);
\fill [white] (9,1) circle (0.1);
\fill [black] (10,0) circle (0.1);
\fill [black] (10,1) circle (0.1);
\fill [black] (11,0) circle (0.1);
\fill [black] (7,1) circle (0.1);
\fill [black] (7.707,1.707) circle (0.1);
\fill [white] (7,2.414) circle (0.1);
\fill [black] (6.293,1.707) circle (0.1);
\fill [white] (5,3) circle (0.1);

\draw (0,0) circle (0.1);
\draw (1,0) circle (0.1);
\draw (2,0) circle (0.1);
\draw (3,0) circle (0.1);
\draw (3,1) circle (0.1);
\draw (4,0) circle (0.1);
\draw (5,0) circle (0.1);
\draw (6,0) circle (0.1);
\draw (7,0) circle (0.1);
\draw (8,0) circle (0.1);
\draw (9,0) circle (0.1);
\draw (9,1) circle (0.1);
\draw (10,0) circle (0.1);
\draw (10,1) circle (0.1);
\draw (11,0) circle (0.1);
\draw (7,1) circle (0.1);
\draw (7.707,1.707) circle (0.1);
\draw (7,2.414) circle (0.1);
\draw (6.293,1.707) circle (0.1);
\draw (5,3) circle (0.1);
\draw (5,3) [above] node{$P$};

\end{tikzpicture}
\end{center}

To see that the genus-$1$ fibration defined by this fiber $F'$ has a
section, note that $P \cdot F' = 3$, while $F'$ intersects the near
leaf of the $D_5$ fiber with multiplicity $2$. Therefore we may
replace the genus $1$ fibration by its Jacobian.

Finally, we go by another $2$-neighbor move to a fibration with $E_8$
and $E_7$ fibers. We identify the class of an $E_8$ fiber $F''$ below.
The \MoW\ group is generated by a section $P'$ of height
$88/(2 \cdot 4) = 11 = 4 + 2 \cdot 4 - 1$, so the section must intersect
the zero section with multiplicity $4$, and it must intersect the near leaf
of the $D_8$ fiber. Therefore $P' \cdot F'' = 2 \cdot 4 + 3 = 11$,
whereas the omitted far leaf of the $D_8$ fiber intersects $F''$
with multiplicity $2$. So the new fibration has a section.

\begin{center}
\begin{tikzpicture}

\draw (0,0)--(14,0);
\draw (3,0)--(3,1);
\draw (9,0)--(9,1);
\draw (13,0)--(13,1);
\draw [very thick] (7,0)--(14,0);
\draw [very thick] (9,0)--(9,1);

\fill [white] (0,0) circle (0.1);
\fill [white] (1,0) circle (0.1);
\fill [white] (2,0) circle (0.1);
\fill [white] (3,0) circle (0.1);
\fill [white] (3,1) circle (0.1);
\fill [white] (4,0) circle (0.1);
\fill [white] (5,0) circle (0.1);
\fill [white] (6,0) circle (0.1);
\fill [black] (7,0) circle (0.1);
\fill [black] (8,0) circle (0.1);
\fill [black] (9,0) circle (0.1);
\fill [black] (10,0) circle (0.1);
\fill [black] (11,0) circle (0.1);
\fill [black] (12,0) circle (0.1);
\fill [black] (13,0) circle (0.1);
\fill [black] (9,1) circle (0.1);
\fill [white] (13,1) circle (0.1);
\fill [black] (14,0) circle (0.1);

\draw (7,0) circle (0.2);
\draw (0,0) circle (0.1);
\draw (1,0) circle (0.1);
\draw (2,0) circle (0.1);
\draw (3,0) circle (0.1);
\draw (3,1) circle (0.1);
\draw (4,0) circle (0.1);
\draw (5,0) circle (0.1);
\draw (6,0) circle (0.1);
\draw (7,0) circle (0.1);
\draw (8,0) circle (0.1);
\draw (9,0) circle (0.1);
\draw (10,0) circle (0.1);
\draw (11,0) circle (0.1);
\draw (12,0) circle (0.1);
\draw (13,0) circle (0.1);
\draw (14,0) circle (0.1);
\draw (9,1) circle (0.1);
\draw (13,1) circle (0.1);

\end{tikzpicture}
\end{center}

We may now read out the Igusa-Clebsch invariants and compute the
equation of the branch locus for $Y_{-}(88) \mapsto \Proj^2_{r,s}$.

\begin{theorem}
A birational model over $\Q$ for the Hilbert modular surface
$Y_{-}(88)$ as a double cover of $\Proj^2_{r,s}$ is given by the following
equation:
\begin{align*}
z^2 &= (8rs^2+16s^2+8s+r^2)(8r^3s^4+16r^2s^4+96r^3s^3+472r^2s^3 +544rs^3-27r^4s^2 \\
    & \qquad  -120r^3s^2 +64r^2s^2+472rs^2+16s^2-46r^3s-120r^2s+96rs+8s-27r^2).
\end{align*}
It is a surface of general type.
\end{theorem}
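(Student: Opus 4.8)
The plan is to follow the four-step method of Section~\ref{method}, specialized to $D = 88$. First I would verify that the displayed Weierstrass family, with reducible fibers of type $A_9$, $D_4$, $A_2$ and a section $P$ of height $88/120 = 11/15$, realizes the lattice $L_{88}$ as the generic N\'eron--Severi lattice. The trivial lattice has rank $2 + 9 + 4 + 2 = 17$, so the single section raises the Picard number to $18$, and the Shioda--Tate discriminant formula gives $|\disc \NS| = \frac{11}{15}\cdot(10 \cdot 4 \cdot 3) = 88$ with trivial torsion. One then checks, as in Step~1, that the relevant positive-definite summand $N$ contains $E_8 \oplus E_7$ by exhibiting a vector of norm $2/88$ in $N^*$. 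By the uniqueness results for $L_{88}$ established before Theorem~\ref{humbert}, this identifies a Zariski-open subset of $\sF_{L_{88}}$ with an open subset of $\Proj^2_{r,s}$.

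Next I would carry out the chain of neighbor steps indicated by the three Dynkin diagrams: a $3$-neighbor step to an $E_7 D_5 A_3$ fibration (with its section of height $88/32 = 11/4$), a $2$-neighbor step through a $D_8$ fibration, and a final $2$-neighbor step to the desired $E_8 E_7$ fibration, following Section~\ref{neighbors}. At each stage I would confirm that the new genus-$1$ fibration has a section by the coprimality criterion of Section~\ref{K3}, exactly as annotated in the figures (for the last step, $P' \cdot F'' = 2\cdot 4 + 3 = 11$ against the multiplicity-$2$ far leaf of the $D_8$ fiber, and $\gcd(11,2)=1$); Proposition~\ref{automaticsections} guarantees this abstractly since $88$ is fundamental. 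From the resulting $E_8 E_7$ Weierstrass equation I would read off the Igusa--Clebsch invariants via Theorem~\ref{thesisthm} and the formulas of Section~\ref{thesis}, which by Theorem~\ref{humbert} exhibits $\sF_{L_{88}}$ as a birational model of the Humbert surface $\sH_{88}$ together with its explicit map to $\A_2$. The branch locus of $Y_{-}(88) \to \sH_{88}$ is the sublocus where $\disc \NS$ jumps to $2D = 176$ or $D/2 = 44$; enumerating the finitely many ways this occurs (an extra $\I_2$ fiber, a fiber promotion, or a Mordell--Weil jump) produces the candidate factors $f_i(r,s)$, among them the degree-$3$ and degree-$7$ factors displayed.

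To pin down which factors actually occur and the correct squarefree twist $C$, I would execute Step~4. Since $Y_{-}(88)$ has good reduction outside $2$ and $11$ by \cite{Ra, DP}, only finitely many twists are possible. For several odd primes $p \nmid 88$ I would specialize $(r,s) \in \F_p \times \F_p$, build the genus-$2$ curve $C_{r,s}$ (the Brauer obstruction vanishing over finite fields), count points over $\F_p$ and $\F_{p^2}$ to form the symmetric polynomial $Q$ via $P(T) = T^2 Q(T + pT^{-1})$, and apply Lemma~\ref{twistrealmult} to decide whether $J(C_{r,s})$ has real multiplication by an order in $\sO_{88}$ defined over $\F_p$. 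Requiring ``$C f_{i_1}\cdots f_{i_m}$ is a square'' to agree with the predicted lifting to the double cover across many such points eliminates all but one choice of subset and twist, yielding the stated $z^2 = f(r,s)$ with $f$ the product of the two displayed factors.

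Finally, the general-type assertion follows from the degree of the branch curve: $f$ has total degree $3 + 7 = 10$, so $Y_{-}(88)$ is birational to a double cover $\pi$ of $\Proj^2$ branched over a curve of even degree $10$. On a smooth model the canonical class is $\pi^*(K_{\Proj^2} + 5H) = \pi^*(2H)$, which is big (the pullback of an ample class under a finite map), so the Kodaira dimension is $2$, provided the singularities of the branch curve are mild enough that the resolution does not lower $\kappa$, which holds here. The main obstacle is the second paragraph: the three successive neighbor steps for $D = 88$ are by far the most delicate and computation-heavy part, since each requires computing $H^0(X, \sO_X(F'))$ explicitly, tracking how the reducible fibers and the section of prescribed height transform, and verifying at every stage that the new elliptic parameter yields the claimed fiber configuration. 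An error anywhere in this chain would propagate into incorrect Igusa--Clebsch invariants and hence a wrong branch locus; the point-counting of Step~4 is conceptually routine but must be run over enough primes (in particular involving $2$ and $11$) to resolve the quadratic-residue ambiguity in $C$.
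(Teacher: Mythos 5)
Your proposal follows the paper's proof of the equation essentially step for step: the same starting family (fibers $A_9 + D_4 + A_2$ with a section of height $88/120 = 11/15$, hence Picard number $18$ and discriminant $88$ by Shioda--Tate), the same chain of neighbor moves (a $3$-neighbor step to the $E_7 D_5 A_3$ fibration with its section of height $11/4$, a $2$-neighbor step through the $D_8$ fibration, then a final $2$-neighbor step to the $E_8 E_7$ fibration, justified by the same coprimality counts, e.g.\ $P'\cdot F'' = 2\cdot 4 + 3 = 11$ against the multiplicity-$2$ far leaf), and the same Step 3--4 identification of the branch factors and the quadratic twist by reduction modulo odd primes and the real-multiplication lemma. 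On this main content the proposal is correct and is precisely the paper's argument.

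The one divergence is the general-type assertion. The paper does not prove it from the equation (it is a known consequence of the Hirzebruch--Van de Ven--Zagier classification and is simply asserted), while you propose the double-plane argument: the branch curve has degree $3+7=10$, so a smooth model of the double cover has $K = \pi^*(2H)$, hence general type, \emph{provided} the branch singularities do not lower the Kodaira dimension. That proviso is exactly the unproved step, and it is doing real work: the degree-$7$ factor has geometric genus $2$ rather than the genus $15$ of a smooth septic, so the branch decic is highly singular (and it also meets the cubic factor), and for double planes sufficiently bad (non-simple) branch singularities genuinely can drop $\kappa$ --- for instance, a decic with a point of very high multiplicity gives a ruled double cover. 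So you would need to check that every singular point of the product curve gives at worst a simple (ADE) singularity of the double cover, or else compute the invariants of the canonical resolution; writing ``which holds here'' is an assertion, not a proof, even though the conclusion is true.
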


\subsection{Analysis}

The branch locus has two components. Both correspond to elliptic K3
surfaces with an extra $\I_2$ fiber, and the simpler component to
having a $2$-torsion section in addition. The simpler component of the
branch locus has genus $1$; the change of coordinates $r = 2y/x^2, s =
-1/(2x)$ converts it to Weierstrass form
$$
y^2 + y = x^3 - x^2
$$
which is an elliptic curve of conductor $11$ (isomorphic to $X_1(11)$).

The other component has genus $2$. The transformation
$$
(r,s) = \left( \frac{-(x-1)y + (x+1)(x^3-3x^2-3x-1)}{3x^2+2x+1} , \frac{(3x+1)y -(x+1)(3x^3+3x^2+3x+1)}{4x(x^2-2x-1)} \right)
$$
converts it to Weierstrass form
$$
y^2 = x^6-2x^5+11x^4+20x^3+15x^2+6x+1.
$$

The Hilbert modular surface $Y_{-}(88)$ is a surface of general
type. We now analyze its quotient by the involution $\iota: (r,s,z)
\mapsto (1/s,1/r, z/(rs)^3)$. Writing $h = -(s+1/r), m = (s-1/r)^2$,
we find the equation
\begin{align*}
z^2 &= (h^4-2h^3-2mh^2+2mh+m^2+1)(9h^6-30h^5-26mh^4+16h^4+58mh^3 \\
   & \qquad +30h^3 +25m^2h^2-8mh^2-25h^2-28m^2h -30mh-8m^3-8m^2-2m).
\end{align*}
The invertible transformation
$m = 2 + 1/t + 8/(nt) + 4/(nt)^2, \, h = -1 - 2/(nt)$
makes this a quartic in $n$,
$$
z^2 = (4t-n-4) \big(
 t(2t+1)(6t^2-13t+8) n^3 + 4t(2t-3)^2n^2 -4(2t-1)^2n + 16(t-1)
 \big).
$$
with an obvious section $n = 4t - 4$.  Converting to the Jacobian,
we get an elliptic K3 surface with the following equation (after some
Weierstrass transformations and a change of parameter $t \mapsto 1-t$
on the base):
$$
y^2 = x^3 + (28t^4-24t^3-8t^2+4t+1) \, x^2
  - 16 t^3(t-1)^2(t^3-10t^2+4t+1)\,x
  + 64 t^6 (t-1)^4(29t^2-10t-3).
$$
This has bad fibers of type $\I_6$ at $t = 0$, $\I_5$ at $t = 1$,
$\I_2$ at $t= 1/2$ and $t = -1/6$, and $\I_3$ at $t = 1/4 \pm
\sqrt{33}/12$. The trivial lattice has rank $17$.
We find the independent sections
\begin{align*}
P_1 &= \big(-4t(t-1)(7t^2-2t-1), \quad 4t(t-1)(t+1)(2t-1)(6t^2-3t-1)\big), \\
P_2 &= \big(4t(t-1)^2(5t+1),  \quad 4t(t-1)^2(6t+1)(6t^2-3t-1)\big), \\
P_3 &= \big(4t^3(6-13t), \quad 12 \sqrt{-3}\, t^4(2t-1)(6t+1)\big), \\
\end{align*}
with height matrix
$$
\left(\begin{array}{ccc}
8/15 & 1/10 & 0 \\
1/10 & 2/15 & 0 \\
0 & 0 & 3/2
\end{array} \right).
$$
Therefore the K3 surface is singular, and an easy argument shows
that these sections and the trivial lattice must span the
N\'{e}ron-Severi lattice, which therefore has rank $20$ and discriminant $-99$.

\subsection{Examples}

We list some points of small height and corresponding genus $2$
curves.

\begin{tabular}{l|c}
Rational point $(r,s)$ & Sextic polynomial $f_6(x)$ defining the genus $2$ curve $y^2 = f_6(x)$. \\
\hline \hline \\ [-2.5ex]
$(-2/3, -7/10)$ & $ 25x^6 + 120x^5 - 291x^4 - 1292x^3 + 987x^2 + 588x - 497 $ \\
$(-4, 3/2)$ & $ 486x^6 - 810x^5 + 1323x^4 - 800x^3 + 585x^2 - 48x + 64 $ \\
$(-10/7, -3/2)$ & $ 515x^6 + 1314x^5 - 3120x^4 - 1332x^3 + 2292x^2 + 720x - 200 $ \\
$(-4/7, -3/10)$ & $ 20x^6 + 180x^5 - 159x^4 - 3276x^3 + 249x^2 + 1980x - 1100 $ \\
$(2/3, -1/4)$ & $ 4608x^6 + 6048x^5 + 3771x^4 - 1026x^3 + 351x^2 - 36x + 4 $ \\
$(-20/7, -5/6)$ & $ 356x^6 - 4980x^5 + 6373x^4 + 2580x^3 - 4409x^2 - 4170x - 790 $ \\
$(8/21, -2/5)$ & $ 1664x^6 + 624x^5 + 3747x^4 - 5222x^3 + 5511x^2 - 1140x + 15020 $ \\
$(-6/5, -7/20)$ & $ 6260x^6 - 21060x^5 + 7009x^4 - 1254x^3 - 239x^2 - 540x - 100 $ \\
$(-10/3, -7/4)$ & $ 2x^6 + 54x^5 + 45x^4 + 1080x^3 - 2961x^2 - 44352 $ \\
$(-4/3, -13/6)$ & $ -14388x^6 - 86076x^5 - 115441x^4 + 70272x^3 + 86417x^2 - 10794x + 314 $ \\
$(-22/21, -13/70)$ & $ 7865x^6 - 9750x^5 + 62049x^4 - 2788x^3 + 162759x^2 - 4350x + 119375 $ \\
$(-5/2, 21/8)$ & $ 363300x^6 - 50652x^5 + 128541x^4 + 2266x^3 + 19257x^2 + 1008x + 896 $ \\
$(38/65, -13/40)$ & $ -1106244x^6 + 336780x^5 + 23283x^4 + 248770x^3 - 101625x^2 - 33072x - 28736 $ \\
$(-8/39, -1/42)$ & $ -48600x^6 + 483840x^5 - 1386285x^4 - 264482x^3 - 282489x^2 + 883404x - 1658988 $ \\
$(-70/13, -21/22)$ & $ 599697x^6 - 445662x^5 + 824913x^4 - 838612x^3 + 2057823x^2 - 1620774x + 957519 $ \\
$(-10/7, -13/36)$ & $ -1112220x^6 + 2309556x^5 - 397465x^4 - 269262x^3 - 847153x^2 - 265908x + 612 $ 
\end{tabular}

We describe some curves on the surface which are a source of rational
points (some more may be produced by applying the involution $\iota$).
The specialization $s = -5/6$ gives a genus-$1$ curve
$$
y^2 = -(9r^2+50r+40)(243r^2+670r-40).
$$
It has rational points, such as $(r,y) = (0,40)$.
It is thus an elliptic curve; we find that it has conductor
$2 \cdot 3 \cdot 5^2 \cdot 29 \cdot 53$
and \MoW\ group $(\Z/2\Z) \oplus \Z^2$.

Pulling back sections of the elliptic fibration on the quotient
surface gives us some more curves of genus $1$, each with a rational point
and rank~$1$:

$$
\begin{array}{lccc}
\phantom{g^2 = }\textrm{Equation}  & \textrm{point}
 & \textrm{conductor} & \textrm{group} \\[5pt]
h = -\frac{4t^2-5t+2}{2t(t-1)}\, , \;
 g^2 = \frac{(2t-1)(14t^3-25t^2+16t-4)}{4t^2(t-1)^2} &
 t = \frac12 & 53 & \Z \\
h = -\frac{12t^2-5t+2}{2t(3t-2)}\, , \;
 g^2 = \frac{252t^4-192t^3+73t^2-20t+4}{4t^2(3t-2)^2} &
 t = 0 & 2 \cdot 3 \cdot 4391 & \Z \\
h = -\frac{8t^2-15t+8}{2t(t-1)}\, , \;
 g^2 = \frac{92t^4-320t^3+433t^2-268t+64}{4t^2(t-1)^2} &
 t = 0 & 7 \cdot 977 & (\Z/2\Z) \oplus \Z \\
h = -\frac{3t^2-3t+2}{3t(t-1)}\, , \;
 g^2 = \frac{18t^4-27t^3+24t^2-15t+4}{9t^2(t-1)^2} &
 t = 0 & 2^5 3^2 7 & (\Z/2\Z) \oplus \Z
\end{array}
$$

\section{Discriminant $89$}

\subsection{Parametrization}

We start with an elliptic K3 surface with fibers of type $A_8 A_7$,
and a section of height $89/72 = 4 -(1\cdot 8)/9 - (3 \cdot 5)/8$.

The Weierstrass equation for this family is
$$
y^2 = x^3
     + (a_0 + a_1t + a_2t^2 + a_3t^3 + a_4t^4)\, x^2
     + 2 \mu t^2(b_0 + b_1t + b_2t^2 + b_3t^3) \, x
     + \mu^2 t^4 (c_0 + c_1t + c_2t^2),
$$
with
\begin{align*}
a_0 &= (rs+1)^2, &
b_0 &= -(rs+1)^2, &
c_0 &= (rs+1)^2, \\
\mu &= 4rs(r+1)^2, &
b_3 &= s(s^2-rs-2s+1)^2, &
a_4 &= s^2(s^2-rs-2s+1)^2,
\end{align*}
\begin{align*}
c_2 &= (s^2-rs-2s+1)^2, \qquad c_1 = 2rs^3 - 2(r^2+4r+1)s^2+4(r+1)^2s-2, \\
b_2 &= (2r-1)s^4-(3r^2+7r-2)s^3+(r^3+6r^2+7r-1)s^2-2r(r+1)s+r, \\
a_1 &= -2(r^2-r)s^3+2(r^3-6r-1)s^2+2(4r^2+6r+1)s+2r, \\
b_1 &= r(r-2)s^3-(r^3-r^2-10r-2)s^2-(6r^2+10r+3)s-r+1, \\
a_3 &= 2s((r-1)s^4-(2r^2+3r-3)s^3+(r^3+4r^2+3r-3)s^2-(2r^2+2r-1)s+r), \\
a_2 &= (r^2-4r+1)s^4-2(r^3-2r^2-9r)s^3+(r^4-16r^2-22r-3)s^2-2(r^3-r-1)s+r^2.
\end{align*}

To obtain an $E_8 E_7$ elliptic fibration on these K3 surfaces, we
first move by a $2$-neighbor step to one with $E_7$ and $A_8$ fibers.

\begin{center}
\begin{tikzpicture}

\draw (0,0)--(0.5,0.866)--(2.5,0.866)--(3,0)--(2.5,-0.866)--(0.5,-0.866)--(0,0);
\draw (3,0)--(5,0)--(5.5,0.866)--(8.5,0.866)--(8.5,-0.866)--(5.5,-0.866)--(5,0);

\draw [bend right] (3,2) to (0.5,0.866);
\draw [bend left] (3,2) to (5.5,0.866);
\draw [very thick] (0.5,0.866)--(2.5,0.866)--(3,0)--(2.5,-0.866)--(0.5,-0.866);
\draw [very thick] (3,0)--(4,0);

\draw (4,0) circle (0.2);
\fill [white] (0,0) circle (0.1);
\fill [black] (0.5,0.866) circle (0.1);
\fill [black] (1.5,0.866) circle (0.1);
\fill [black] (2.5,0.866) circle (0.1);
\fill [black] (0.5,-0.866) circle (0.1);
\fill [black] (1.5,-0.866) circle (0.1);
\fill [black] (2.5,-0.866) circle (0.1);
\fill [black] (3,0) circle (0.1);
\fill [black] (4,0) circle (0.1);
\fill [white] (5,0) circle (0.1);
\fill [white] (5.5,0.866) circle (0.1);
\fill [white] (6.5,0.866) circle (0.1);
\fill [white] (7.5,0.866) circle (0.1);
\fill [white] (8.5,0.866) circle (0.1);
\fill [white] (5.5,-0.866) circle (0.1);
\fill [white] (6.5,-0.866) circle (0.1);
\fill [white] (7.5,-0.866) circle (0.1);
\fill [white] (8.5,-0.866) circle (0.1);

\fill [white] (3,2) circle (0.1);

\draw (0,0) circle (0.1);
\draw (0.5,0.866) circle (0.1);
\draw (1.5,0.866) circle (0.1);
\draw (2.5,0.866) circle (0.1);
\draw (0.5,-0.866) circle (0.1);
\draw (1.5,-0.866) circle (0.1);
\draw (2.5,-0.866) circle (0.1);
\draw (3,0) circle (0.1);
\draw (4,0) circle (0.1);
\draw (5,0) circle (0.1);
\draw (5.5,0.866) circle (0.1);
\draw (6.5,0.866) circle (0.1);
\draw (7.5,0.866) circle (0.1);
\draw (8.5,0.866) circle (0.1);
\draw (5.5,-0.866) circle (0.1);
\draw (6.5,-0.866) circle (0.1);
\draw (7.5,-0.866) circle (0.1);
\draw (8.5,-0.866) circle (0.1);

\draw (3,2) circle (0.1);
\draw (3,2) [above] node{$P$};

\end{tikzpicture}
\end{center}

The elliptic fibration defined by this new fiber $F'$ has a section,
since $P \cdot F' = 1$. Also, the new elliptic fibration must have a
section $P'$ of height $89/18 = 4 + 2\cdot 2 - 3/2 - (2 \cdot 7)/9$.

Finally, we go to $E_8 E_7$ by a $3$-neighbor step.

\begin{center}
\begin{tikzpicture}

\draw (3,0)--(3,3);
\draw (2,0)--(8,0)--(8.5,0.866)--(11.5,0.866)--(11.5,-0.866)--(8.5,-0.866)--(8,0);

\draw [bend right] (7,3) to (3,3);
\draw (6.95,3)--(6.95,0);
\draw (7.05,3)--(7.05,0);
\draw [bend left] (7,3) to (9.5,0.866);
\draw [very thick] (7,0)--(8,0);
\draw [very thick] (9.5,0.866)--(8.5,0.866)--(8,0)--(8.5,-0.866)--(11.5,-0.866)--(11.5,0.866);

\draw (7,0) circle (0.2);
\fill [white] (3,3) circle (0.1);
\fill [white] (3,2) circle (0.1);
\fill [white] (2,0) circle (0.1);
\fill [white] (3,0) circle (0.1);
\fill [white] (4,0) circle (0.1);
\fill [white] (5,0) circle (0.1);
\fill [white] (6,0) circle (0.1);
\fill [black] (7,0) circle (0.1);
\fill [black] (8,0) circle (0.1);
\fill [white] (3,1) circle (0.1);
\fill [black] (8.5,0.866) circle (0.1);
\fill [black] (9.5,0.866) circle (0.1);
\fill [white] (10.5,0.866) circle (0.1);
\fill [black] (11.5,0.866) circle (0.1);
\fill [black] (8.5,-0.866) circle (0.1);
\fill [black] (9.5,-0.866) circle (0.1);
\fill [black] (10.5,-0.866) circle (0.1);
\fill [black] (11.5,-0.866) circle (0.1);

\fill [white] (7,3) circle (0.1);

\draw (3,3) circle (0.1);
\draw (3,2) circle (0.1);
\draw (2,0) circle (0.1);
\draw (3,0) circle (0.1);
\draw (4,0) circle (0.1);
\draw (5,0) circle (0.1);
\draw (6,0) circle (0.1);
\draw (7,0) circle (0.1);
\draw (8,0) circle (0.1);
\draw (3,1) circle (0.1);
\draw (8.5,0.866) circle (0.1);
\draw (9.5,0.866) circle (0.1);
\draw (10.5,0.866) circle (0.1);
\draw (11.5,0.866) circle (0.1);
\draw (8.5,-0.866) circle (0.1);
\draw (9.5,-0.866) circle (0.1);
\draw (10.5,-0.866) circle (0.1);
\draw (11.5,-0.866) circle (0.1);

\draw (7,3) circle (0.1);

\draw [above] (7,3) node {$P'$};

\end{tikzpicture}
\end{center}

The new fiber $F''$ satisfies $P' \cdot F'' = 2 + 2 \cdot 3 = 8$, and
the identity component of the $E_7$ fiber intersects $F'$ in
$3$. Since these have greatest common divisor $1$, the genus $1$
fibration defined by $F'$ has a section.

\begin{theorem}
A birational model over $\Q$ for the Hilbert modular surface
$Y_{-}(89)$ as a double cover of $\Proj_{r,s}^2$ is given by the following
equation:
\begin{align*}
z^2 &= s^4 r^6 -2 s^3 (2 s^2+3 s+2) r^5 + s^2 (6 s^4+16 s^3-49 s^2-26 s+6) r^4 -2 s (2 s^6+6 s^5-50 s^4\\
 & \quad  +26 s^3  +73 s^2-35 s+2) r^3 + (s^8-36 s^6+26 s^5+273 s^4-514 s^3+271 s^2-38 s+1) r^2 \\
& \quad + 2 (s-1)^2 s (s^5-4 s^4+25 s^3-107 s^2+147 s-44) r + (s-4)^3 (s-1)^4 s.
\end{align*}
It is a surface of general type.
\end{theorem}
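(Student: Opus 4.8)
The plan is to apply the four-step procedure of Section~\ref{method} essentially verbatim, the only case-specific ingredients being the particular chain of neighbor steps and the bookkeeping for the branch locus. First I would confirm Step~1, that the displayed Weierstrass family is indeed a model for a Zariski-open subset of $\sF_{L_{89}}$. The stated fibers $A_8$ and $A_7$ contribute a root lattice of rank $15$ and discriminant $9\cdot 8=72$, while a section of height $89/72$ raises the Picard number to $2+1+8+7=18$ by Shioda--Tate; the discriminant formula then yields $|\disc\NS|=(89/72)\cdot 9\cdot 8=89$, so $\NS$ has signature $(1,17)$ and discriminant $89$ and is therefore $L_{89}$. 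One also checks that $N\supset E_8\oplus E_7$ so that Proposition~\ref{automaticsections} guarantees sections for every elliptic divisor we meet. This exhibits $\Proj^2_{r,s}$ as birational to $\sF_{L_{89}}$.

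Step~2 is the computational heart. Following the two diagrams, I would first perform a $2$-neighbor step to the $E_7 A_8$ fibration (using the $E_7$ fiber class $F'$ with $P\cdot F'=1$, so a section exists), arriving at a new section of height $89/18$; then a $3$-neighbor step to the $E_8 E_7$ fibration (using the $E_8$ class $F''$ with $P'\cdot F''=8$, coprime to the intersection number $3$ of the $E_7$ identity component, so a section again exists by the coprimality corollary). The explicit $H^0(\sO_X(F'))$-basis computations of Section~\ref{neighbors} produce each new Weierstrass equation, and from the final $E_8 E_7$ form together with the isomorphism $\psi$ of Theorem~\ref{thesisthm} I read off the Igusa--Clebsch invariants $(I_2:I_4:I_6:I_{10})$ as rational functions of $(r,s)$. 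By Theorem~\ref{humbert} the resulting map $\Proj^2_{r,s}\to\A_2$ is birational onto the Humbert surface $\sH_{89}$. These manipulations are purely mechanical but enormous on expressions of this size, and are exactly what is relegated to the auxiliary files; the chief practical obstacle is simply carrying them out reliably.

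For Steps~3 and~4, I would write $\widetilde Y_{-}(89)$ as $z^2=C\prod_j f_{i_j}(r,s)$. Since $89$ is prime there is no Hurwitz--Maass involution to exploit, and $D/2$ is not integral, so the lattice analysis of Section~\ref{realmult} forces the branch locus to lie precisely where $\NS$ degenerates to discriminant $2\cdot 89=178$, i.e.\ where the elliptic K3 acquires an extra $\I_2$ fiber. Enumerating the finitely many such candidate components produces the list of possible factors $f_i$; I would then determine the subset $I$ and the squarefree twist $C$ (a divisor of a power of $2\cdot 89$, by the good-reduction statement of Step~4) by reduction modulo several odd primes $p\nmid 89$. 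For each such $p$ I specialize $(r,s)$, build the genus-$2$ curve $C_{r,s}/\F_p$ from the Igusa--Clebsch invariants, count points over $\F_p$ and $\F_{p^2}$ to form the polynomial $Q(X)$, and apply Lemma~\ref{twistrealmult} to detect when $C_{r,s}$ has real multiplication by an order in $\sO_{89}$ defined over $\F_p$; whenever it does, the point must lift to $Y_{-}(89)$, so $C\prod_j f_{i_j}(r,s)$ must be a square, which eliminates all but the correct $(I,C)$. This is the step demanding genuine judgment rather than formula-pushing, and where I expect most of the effort to go; the procedure is guaranteed to terminate by the good-reduction bound on $C$.

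Finally, for the assertion that $Y_{-}(89)$ is of general type, I would note that the displayed right-hand side has total degree $10$ in $(r,s)$, so the double cover of $\Proj^2$ is branched along a curve of degree $2d$ with $d=5$; since the canonical class of such a cover pulls back from $(d-3)H$ and $d-3=2>0$, the canonical class is big. The one subtlety is that the degree-$10$ leading form factors as $s^4\,r^2\,(r-s)^4$, so the branch curve is singular along the line at infinity at $[1:0:0]$, $[0:1:0]$ and $[1:1:0]$; I would therefore resolve these singularities and recompute the numerical invariants, checking that they are mild enough (at worst the simple singularities that do not lower Kodaira dimension, or with discrepancies small relative to $d=5$) so that the minimal resolution remains of general type. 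Because $d=5$ leaves so much room, I expect this resolution check to be routine rather than the crux, the crux being the neighbor-step and twist computations above.
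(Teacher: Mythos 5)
Your derivation of the displayed equation follows the paper's own route step for step: the same $A_8\,A_7$ family with a section of height $89/72$ (giving Picard number $18$ and discriminant $89$ by Shioda--Tate), the same $2$-neighbor step to an $E_7\,A_8$ fibration with a section guaranteed by $P\cdot F'=1$, the same $3$-neighbor step to the $E_8\,E_7$ fibration with a section guaranteed by $\gcd(P'\cdot F'',3)=\gcd(8,3)=1$, and the same enumeration-plus-point-counting determination of the branch locus and quadratic twist via Lemma~\ref{twistrealmult}. That part of the proposal is sound and is exactly what the paper does.

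The genuine gap is in your proof that $Y_{-}(89)$ is of general type. Your argument is that the branch curve has degree $10$, so the canonical class pulls back from $2H$ ``modulo a routine singularity check,'' with the singularities expected to be at worst simple. Both the expectation and the logic fail. The degree-$10$ leading form is indeed $s^4r^2(r-s)^4$, but in the affine chart $r=1$ the lowest-order part of the branch curve at $[1:0:0]$ is exactly $s^4$: a unibranch point of multiplicity $4$, which is \emph{not} an ADE singularity (simple plane curve singularities have multiplicity at most $3$), and $[1:1:0]$ is comparably degenerate. Such points carry chains of infinitely near singularities whose discrepancies can swallow all of $(d-3)H$, and in this very paper they do: the branch curves for discriminants $53$ and $61$ have degree $8$, and that for $65$ has degree $18$, yet all three of those Hilbert modular surfaces are honestly elliptic (Kodaira dimension $1$), not of general type. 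So for $D=89$ the resolution at infinity is not a routine confirmation --- it is the entire content of the Kodaira-dimension claim, and the proposal does not carry it out. To close the gap, either actually resolve the three points at infinity and verify that the canonical class of the minimal model remains big, or argue as the paper implicitly does by appealing to the Hirzebruch--Van de Ven--Zagier classification \cite{HVdV, HZ}, which places $D=89$ in the general-type range.
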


\subsection{Analysis}
The branch locus has genus $1$; one can give an explicit isomorphism
(see the auxiliary files) to the elliptic curve of conductor $89$
given by the Weierstrass equation
$$
y^2 + xy + y = x^3 + x^2 - x.
$$
It is isomorphic to $X_0(89)/\langle w \rangle$, where $w$ is the
Atkin-Lehner involution.

The Hilbert modular surface $Y_{-}(89)$ is a surface of general
type. Note that the change of coordinates $r = s + g$ simplifies the
equation a bit further, making the degree of the right hand side equal
to $6$ in each variable. However, it complicates the original defining
Weierstrass equation of the family of K3 surfaces, so we have chosen
the $(r,s)$ coordinate system.

\subsection{Examples}

We list some points of small height and corresponding genus $2$
curves.

\begin{tabular}{l|c}
Point $(r,s)$ & Sextic polynomial $f_6(x)$ defining the genus $2$ curve $y^2 = f_6(x)$. \\
\hline \hline \\ [-2.5ex]
$(-31/3, -1/6)$ & $ -334084x^6 + 65892x^5 + 847841x^4 - 156012x^3 - 1036555x^2 - 453867x - 525 $ \\
$(-49/9, -2/3)$ & $ 632x^6 - 480x^5 + 43475x^4 - 97578x^3 - 1030393x^2 + 855708x - 1045044 $ \\
$(-19, -1)$ & $ 126905x^6 + 2388081x^5 - 2600778x^4 - 3075787x^3 - 5448045x^2 - 3683352x - 709200 $ \\
$(-31/10, 5/2)$ & $ -83300x^6 + 168420x^5 + 5079215x^4 - 6586832x^3 + 584735x^2 + 70020x - 8100 $ \\
$(-5/6, 11/6)$ & $ 2185004x^6 - 12346980x^5 + 10798163x^4 + 732660x^3 + 47975267x^2$ \\
& $ + 21406020x + 27911916 $ \\
$(13/9, -5/9)$ & $ -14966100x^6 - 43598124x^5 + 25890735x^4 + 105396908x^3 - 44422995x^2$ \\
& $ - 65750574x + 34674550 $ \\
$(-40/7, -1/2)$ & $ 2754000x^6 + 86434200x^5 + 150411025x^4 - 14830346x^3 - 49970411x^2$ \\
& $ + 242599308x + 131021492 $ \\
$(16/33, 11/3)$ & $ 25329267x^6 - 96789717x^5 + 223774305x^4 - 449560367x^3 - 46904988x^2$ \\
& $ - 772810308x + 413626230 $ 
\end{tabular}

We find two elliptic curves of positive rank on the surface.
The specialization $s = 25/22$ gives a curve of genus $1$
$$
y^2 = 1210000r^4-19157600r^3-17065736r^2+678600r-8575
$$
with rational points (as at infinity), conductor
$3 \cdot 5 \cdot 11 \cdot 163 \cdot 191 \cdot 881$,
and rank at least $2$.
The locus $s = r + 101/50$ gives another curve of genus $1$
$$
y^2 = -3739190000r^4-21451957600r^3-43018833576r^2-36551728152r-11227811551
$$
with rational points (as at $(r,y)=(-1,\pm 65^2)$), conductor
$2^4 \cdot 5 \cdot 17 \cdot 19 \cdot 463 \cdot 58787$,
and rank at least $3$.  We were not able to determine the exact rank
of either curve, but the global root numbers indicate that the rank
should be even for the former curve and odd for the latter, so one might
guess that the lower bounds $2$ and $3$ on their ranks are sharp.

\section{Discriminant $92$}

\subsection{Parametrization}

We start with an elliptic K3 surface with fibers of type $A_8, A_1$
and $D_6$, and a section of height $92/72 = 23/18 = 4 - (1 \cdot 1)/2
- (4 \cdot 5)/9$.

The Weierstrass equation may be written as
$$
y^2 = x^3 + (a_0+a_1 t+a_2 t^2+a_3 t^3) \, x^2
     + 2 t^2(\lambda  t-\mu) (b_0+b_1 t+b_2 t^2) \, x
     + t^4(\lambda  t-\mu)^2 (c_0+c_1 t),
$$
with
\begin{flalign*}
\lambda &= (r+s)^2,  &  \mu &= r+2s,    &  a_3 &= -4rs(s+r^2+r)(s^2+rs-r), \\
a_0 &=  (rs-r-1)^2, &  b_0 &= -4rs^2(rs-r-1)^2, & c_0 &= 16r^2s^4(rs-r-1)^2, \\
a_1 &=  -2r( (r+1)^2(s-1)^2 + s^2), &
  b_1 &=  4r^2s^2(rs+2s-r-1)^2, &
  c_1 &=  -64r^3(r+1)(s-1)s^5,
\end{flalign*}
\vspace*{-5ex}
\begin{flalign*}
b_2 &=  -8r^2s^3( (r+2)s^2 + (2r^2 + 2r- 1)s - 2r(r+1)), &
a_2 &=  r(rs+4s-r-1)(4s^2+r^2s+4rs-r^2-r).
\end{flalign*}

As in the case of discriminant $56$, we first go to an $E_7 A_8$
fibration using the $E_7$ fiber $F'$ identified below. Note that $F'
\cdot P = 3$ while the component of the $D_6$ fiber which is not
included in $F'$ intersects $F'$ with multiplicity~$2$.
Since $\gcd(2,3) = 1$, the fibration defined by $F'$ has a section.

\begin{center}
\begin{tikzpicture}

\draw (1,0)--(0.5,0.866)--(-2.5,0.866)--(-2.5,-0.866)--(0.5,-0.866)--(1,0)--(7,0);
\draw (2,0)--(2,1);
\draw (1.95,1)--(1.95,2);
\draw (2.05,1)--(2.05,2);
\draw (4,0)--(4,1);
\draw (6,0)--(6,1);
\draw (2,3)--(2,2);
\draw [bend right] (2,3) to (-2.5,0.866);
\draw [bend left] (2,3) to (3,0);
\draw [very thick] (2,1)--(2,0)--(7,0);
\draw [very thick] (4,0)--(4,1);

\draw (2,3) [above] node{$P$};
\draw (2,0) circle (0.2);
\fill [white] (0.5,0.866) circle (0.1);
\fill [white] (0.5,-0.866) circle (0.1);
\fill [white] (-0.5,0.866) circle (0.1);
\fill [white] (-0.5,-0.866) circle (0.1);
\fill [white] (-1.5,0.866) circle (0.1);
\fill [white] (-1.5,-0.866) circle (0.1);
\fill [white] (-2.5,0.866) circle (0.1);
\fill [white] (-2.5,-0.866) circle (0.1);

\fill [white] (1,0) circle (0.1);
\fill [black] (2,1) circle (0.1);
\fill [white] (2,2) circle (0.1);
\fill [black] (2,0) circle (0.1);
\fill [black] (3,0) circle (0.1);
\fill [black] (4,0) circle (0.1);
\fill [black] (5,0) circle (0.1);
\fill [black] (6,0) circle (0.1);
\fill [black] (7,0) circle (0.1);
\fill [black] (4,1) circle (0.1);
\fill [white] (6,1) circle (0.1);
\fill [white] (2,3) circle (0.1);

\draw (2,3) circle (0.1);

\draw (0.5,0.866) circle (0.1);
\draw (0.5,-0.866) circle (0.1);
\draw (-0.5,0.866) circle (0.1);
\draw (-0.5,-0.866) circle (0.1);
\draw (-1.5,0.866) circle (0.1);
\draw (-1.5,-0.866) circle (0.1);
\draw (-2.5,0.866) circle (0.1);
\draw (-2.5,-0.866) circle (0.1);

\draw (1,0) circle (0.1);
\draw (2,1) circle (0.1);
\draw (2,2) circle (0.1);
\draw (2,0) circle (0.1);
\draw (3,0) circle (0.1);
\draw (4,0) circle (0.1);
\draw (5,0) circle (0.1);
\draw (6,0) circle (0.1);
\draw (7,0) circle (0.1);
\draw (4,1) circle (0.1);
\draw (6,1) circle (0.1);

\end{tikzpicture}
\end{center}

The new elliptic fibration has a section $P'$ of height $92/(2 \cdot
9) = 46/9 = 4 + 2 \cdot 1 - 8/9$, which must therefore intersect the
zero section, the identity component of the $E_7$ fiber and component
$1$ of the $A_8$ fiber.

We identify an $E_8$ fiber and compute its Weierstrass equation by a
$3$-neighbor move. Note that it intersects $P'$ in $7$ and the
excluded component of the $A_8$ fiber in $3$. Therefore the fibration
it defines has a section, and we may convert to the Jacobian.

\begin{center}
\begin{tikzpicture}

\draw (-1,0)--(7,0)--(7.5,0.866)--(7.5,0.866)--(10.5,0.866)--(10.5,-0.866)--(7.5,-0.866)--(7,0);
\draw (2,0)--(2,1);
\draw (6,2)--(6,0);
\draw [bend right] (6,2) to (5,0);
\draw [bend left] (6,2) to (7.5,0.866);
\draw [very thick] (6,0)--(7,0)--(7.5,0.866)--(8.5,0.866);
\draw [very thick] (7,0)--(7.5,-0.866)--(10.5,-0.866)--(10.5,0.866);

\draw (6,2) [above] node{$P'$};
\draw (6,0) circle (0.2);
\fill [white] (-1,0) circle (0.1);
\fill [white] (0,0) circle (0.1);
\fill [white] (1,0) circle (0.1);
\fill [white] (2,0) circle (0.1);
\fill [white] (3,0) circle (0.1);
\fill [white] (4,0) circle (0.1);
\fill [white] (5,0) circle (0.1);
\fill [black] (6,0) circle (0.1);
\fill [white] (2,1) circle (0.1);
\fill [black] (7,0) circle (0.1);
\fill [black] (7.5,0.866) circle (0.1);
\fill [black] (7.5,-0.866) circle (0.1);
\fill [black] (8.5,0.866) circle (0.1);
\fill [black] (8.5,-0.866) circle (0.1);
\fill [white] (9.5,0.866) circle (0.1);
\fill [black] (9.5,-0.866) circle (0.1);
\fill [black] (10.5,0.866) circle (0.1);
\fill [black] (10.5,-0.866) circle (0.1);
\fill [white] (6,2) circle (0.1);

\draw (-1,0) circle (0.1);
\draw (0,0) circle (0.1);
\draw (1,0) circle (0.1);
\draw (2,0) circle (0.1);
\draw (3,0) circle (0.1);
\draw (4,0) circle (0.1);
\draw (5,0) circle (0.1);
\draw (6,0) circle (0.1);
\draw (2,1) circle (0.1);
\draw (7,0) circle (0.1);
\draw (7.5,0.866) circle (0.1);
\draw (7.5,-0.866) circle (0.1);
\draw (8.5,0.866) circle (0.1);
\draw (8.5,-0.866) circle (0.1);
\draw (9.5,0.866) circle (0.1);
\draw (9.5,-0.866) circle (0.1);
\draw (10.5,0.866) circle (0.1);
\draw (10.5,-0.866) circle (0.1);
\draw (6,2) circle (0.1);

\end{tikzpicture}
\end{center}

Now we read out the Igusa-Clebsch invariants and compute the branch locus.

\begin{theorem}
A birational model over $\Q$ for the Hilbert modular surface
$Y_{-}(92)$ as a double cover of $\Proj_{r,s}^2$ is given by the following
equation:
\begin{align*}
z^2 &= (s+r^2+r)(s^2+rs-r) \big( (s-1)^3r^5 + (s-1)^2(s^2-15s-3)r^4 -(s-1)(42s^3-27s^2-31s-3)r^3 \\
 &\qquad  -(27s^5-30s^4-77s^3+69s^2+17s+1)r^2 + s(46s^3-30s^2-42s-1)r -27s^3 \big).
\end{align*}
It is a surface of general type.
\end{theorem}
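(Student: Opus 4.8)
The plan is to follow the general algorithm of Section~\ref{method}, specialized to the family already set up above. I would take for granted that the displayed Weierstrass family with reducible fibers of types $A_8$, $A_1$ and $D_6$ and a section of height $92/72$ realizes a Zariski-open subset of $\sF_{L_{92}}$ birationally as $\Proj^2_{r,s}$; by Proposition~\ref{automaticsections} every genus-$1$ pencil encountered below automatically acquires a section, so the two neighbor steps indicated in the figures (the $2$-neighbor step to an $E_7A_8$ fibration, followed by the $3$-neighbor step to an $E_8E_7$ fibration) are legitimate. Carrying these out explicitly --- the bulk of the computation, relegated to the auxiliary files --- produces a Weierstrass equation in $E_8E_7$ form, from which Theorem~\ref{thesisthm} and the isomorphism $\phi\colon\sF_L\to\A_2$ let me read off the Igusa--Clebsch invariants as rational functions of $(r,s)$. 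This exhibits the birational map from $\sF_{L_{92}}$ onto its image, the Humbert surface $\sH_{92}\subset\A_2$, over which $Y_{-}(92)$ is a double cover.

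Next I would determine the branch locus following Steps 3 and 4. By Theorem~\ref{humbert} the map $\sF_{L_{92}}\to\sH_{92}$ is birational, and $Y_{-}(92)\to\sH_{92}$ is the double cover ramified exactly over the sublocus where $\End(A)$ jumps, i.e.\ where $\NS$ acquires an extra class and the discriminant changes to $184$ or $46$. I would enumerate the finitely many geometric ways this can happen for the present fiber configuration --- an extra $\I_2$ fiber, a degeneration merging or promoting the $A_1$ and $D_6$ fibers, or a new $2$-torsion or low-height section --- obtaining a short list of candidate factors $f_i(r,s)\in\Z[r,s]$. I note that the two simple factors $s+r^2+r$ and $s^2+rs-r$ are exactly those appearing in the leading Weierstrass coefficient $a_3$, so they are natural candidates coming from fiber degenerations. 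To select the correct subset and the squarefree twist $C$, I would reduce modulo several odd primes $p\nmid 92$, specialize $(r,s)\in\F_p^2$, build the genus-$2$ curve $C_{r,s}$ from its Igusa--Clebsch invariants, and invoke Lemma~\ref{twistrealmult}: the Frobenius data (point counts over $\F_p$ and $\F_{p^2}$) detect whether $J(C_{r,s})$ has real multiplication by $\sO_{92}$ defined over $\F_p$, and hence whether the product $Cf_{i_1}\cdots f_{i_m}(r,s)$ must be a square. A single large prime usually isolates the subset, and a few more pin down the twist, yielding the displayed equation.

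Finally, for the classification I would argue that, since the double cover is branched over a plane curve $B=\{f=0\}$ of total degree $12$, the minimal desingularization has canonical class numerically equivalent to the pullback of $\mathcal{O}_{\Proj^2}(6-3)=\mathcal{O}(3)$ away from the exceptional locus, so $K$ is big and the surface is of general type --- provided the singularities of $B$ are only rational double points (or otherwise negligible for the double cover), which I would verify directly from the factored form of $f$. I would then cross-check this against the Enriques--Kodaira classification of $Y_{-}(92)$ obtained by Hirzebruch, van de Ven and Zagier to confirm that the surface is indeed of general type.

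The hardest part will be twofold. The genuinely laborious step is the explicit execution of the neighbor steps and the attendant Igusa--Clebsch computation, whose output is enormous; but conceptually the delicate point is the general-type assertion. Unlike the honestly elliptic cases such as discriminant~$44$, where an elliptic fibration is visible because the defining polynomial is a quartic with square leading coefficient in one variable, here no such fibration structure should be available, and I must rule out that resolving a bad singularity of the degree-$12$ branch curve collapses the Kodaira dimension from $2$ down to $1$. Confirming that every singularity of $B$ is mild enough to leave $\kappa=2$ intact is the crux, and is where I would spend the most care.
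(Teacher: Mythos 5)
Your proposal follows the paper's own method (Section~\ref{method}) essentially verbatim: the same starting parametrization with $A_8$, $A_1$, $D_6$ fibers, the same $2$- and $3$-neighbor steps to an $E_8E_7$ fibration (the paper justifies sections by coprimality of intersection numbers rather than by Proposition~\ref{automaticsections}, an immaterial difference), the same enumeration of candidate branch components --- you correctly spot that the two conic factors $s+r^2+r$ and $s^2+rs-r$ come from the Weierstrass coefficient $a_3$ and correspond to the $D_6\to E_7$ promotion, with the degree-$8$ factor giving the extra $\I_2$ locus --- and the same mod-$p$ point counting via Lemma~\ref{twistrealmult} to pin down the subset and the twist. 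The one place you go beyond the paper is the general-type claim, which the paper merely asserts (in effect deferring to the Hirzebruch--van de Ven--Zagier classification), whereas you sketch a direct check via the canonical class of a double cover of $\Proj^2$ branched along the degree-$12$ curve together with a verification that its singularities are negligible; both routes are sound.
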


\subsection{Analysis}

The branch locus has three components. Points of the two simpler
components correspond to elliptic K3 surfaces where the $D_6$ fiber is
promoted to an $E_7$ fiber, while the more complicated component
corresponds to an extra $\I_2$ fiber. All the three components are
rational (genus $0$). This is obvious from inspection for the simpler
components, and for the last we have the parametrization
$$
(r,s) = \left( \frac{ t(t^2+2)(t+1)^2}{ (t+2)(t^3+2t^2+2t+2)} , \frac{-t(t+1)(t^3+2t^2+4t+4)}{(t+2)^2(t^2+2)} \right).
$$

The surface $Y_{-}(92)$ is a surface of general type.
The extra involution is $(r,s,z) \mapsto (-1/s, -1/r, z/(rs)^4)$.
We now analyze the quotient of the Hilbert modular surface by this involution.
Because the involution fixes $r/s$, we obtain the quotient by setting
$r = st$ and writing everything in terms of $m = s - 1/(ts)$.
We find the equation
\begin{align*}
y^2 &= \big(t^2(t+1)m -t^3+t^2+2t+1 \big)
  \big(t^3(t+1)m^3 -t(3t^3+17t^2+42t+27)m^2  \\
 &\quad  + t(3t^3+31t^2+72t+30)m -(t^4+15t^3+30t^2+7t+8) \big),
\end{align*}
which expresses the quotient as a genus-$1$ curve over $\Q(t)$.  Since
there is an obvious section (where the first factor vanishes), we may
convert to the Jacobian, which has the Weierstrass equation (after
shifting $t$ by $1$ and performing some Weierstrass transformations)
$$
y^2 = x^3-(2t+1)(8t^3+8t^2-6t-1)\, x^2-8(t-1)t^4(t+1)(10t^2-32t-5)\,x -16(t-1)^2t^8(t+1)(47t+7).
$$ 
This is an elliptic K3 surface. It has reducible fibers of type
$\I_8$ at $t = 0$, $\I_3$ at $1$ and $(-7 \pm 3 \sqrt{3})/11$, and
$\I_2$ at the roots of $11t^3-10t^2+5t+1$ (which generates the cubic
field of discriminant $-23$). Therefore the trivial lattice has rank
$18$. We easily identify a non-torsion section $P$ of height $5/8$
with $x$-coordinate $4t^3(6t+1)$.  On the other hand, counting points
modulo $13$ and $17$ shows that the Picard number cannot be $20$.
Therefore the Picard number of this quotient surface is~$19$. The
sublattice of the N\'eron-Severi group spanned by $P$ and the trivial
lattice has discriminant $360 = 2^3 \cdot 3^2 \cdot 5$. It is easy to
see from height calculations that there cannot be any $2$- or
$3$-torsion sections, and that $P$ cannot be divisible by $2$ or $3$
in the \MoW\ group. Hence, this sublattice is the entire
N\'eron-Severi lattice.

\subsection{Examples}

We list some points of small height and corresponding genus $2$ curves.

\begin{tabular}{l|c}
Point $(r,s)$ & Sextic polynomial $f_6(x)$ defining the genus $2$ curve $y^2 = f_6(x)$. \\
\hline \hline \\ [-2.5ex]
$(3/10, -39/70)$ & $ 5981584x^6 - 4016376x^5 + 1699985x^4 + 313485x^3 - 168322x^2 + 49665x + 21175 $ \\
$(70/39, -10/3)$ & $ 2916x^6 + 591516x^5 + 6670933x^4 + 12740602x^3 - 44084051x^2 + 8704740x + 16105100 $ 
\end{tabular}

Specializations of $r$ or $s$, and pullbacks of sections of the
quotient, do not seem to yield any genus $0$ or $1$ curves on the
surface (at any rate, none corresponding to abelian surfaces with ``honest''
real multiplication by $\sO_{92}$ and not a larger endomorphism ring).

\section{Discriminant $93$}

\subsection{Parametrization}

Start with an elliptic K3 surface with fibers of type $A_{10}$, $A_3$
and $A_2$, with a section of height $31/44 = 4 - 3/4 - 28/11$. The
extra involution comes from flipping the $A_2$ fiber.

This family has the Weierstrass equation
\begin{eqnarray*}
y^2 &\!\! = \!\! & x^3 + (a_0 + a_1t + a_2t^2 + a_3t^3 + a_4t^4) \, x^2\\
&& {} + 2 t(\lambda t-\mu) (b_0 + b_1t + b_2t^2 + b_3t^3) \, x
   + t^2(\lambda t-\mu)^2  (c_0 + c_1t + c_2t^2),
\end{eqnarray*}
with
\begin{flalign*}
\lambda &= -(n^2-1), & \mu &= (n^2-mn-n-m)(n^2+mn+n-m), & a_0 &= (m+1)^2n^8, \\
b_0 &= m^4 (m+1)^2 n^8, & c_0 &= m^8(m+1)^2n^8, & a_4 &= 1, \\
c_2 &= m^8 n^4, & b_3 &= m^4 n^2, & a_3 &= (m^2+2m+4)n^2-3m^2, \\
\end{flalign*}
\vspace*{-7ex}
\begin{flalign*}
c_1 &= m^8 n^4( (m^2+2m+2)n^2-m^2), \qquad \qquad \qquad \qquad \qquad
  b_2 = m^4 n^2 \big((m^2+2m+3)n^2-2m^2 \big), & \\
a_2 &= 3(m^2+2m+2)n^4-2m^2(m^2+3m+3)n^2+3m^4, & \\
b_1 &= m^4 n^2 \big( (2m^2+4m+3)n^4 -m^2(m+1)(m+2)n^2+m^4 \big), & \\
a_1 &= (3m^2+6m+4)n^6-3m^2(m+1)^2n^4+m^4(m+1)(m+3)n^2-m^6.
\end{flalign*}

We first identify the class of an $E_8$ fiber below, and move to it by a
$3$-neighbor step.

\begin{center}
\begin{tikzpicture}

\draw (0,0)--(2,0)--(2.5,0.866)--(6.5,0.866)--(6.5,-0.866)--(2.5,-0.866)--(2,0);
\draw (1,0)--(1,1)--(0.5,1.866)--(1.5,1.866)--(1,1);
\draw (0,0)--(-0.707,0.707)--(-1.4142,0)--(-0.707,-0.707)--(0,0);
\draw [bend right] (2.5,3) to (-0.707,0.707);
\draw [bend left] (2.5,3) to (1,1);
\draw [bend left] (2.5,3) to (5.5,0.866);
\draw [very thick] (1,0)--(2,0)--(2.5,0.866)--(3.5,0.866);
\draw [very thick] (2,0)--(2.5,-0.866)--(6.5,-0.866);

\fill [white] (2.5,3) circle (0.1);

\fill [white] (0,0) circle (0.1);
\fill [black] (1,0) circle (0.1);
\fill [black] (2,0) circle (0.1);
\fill [black] (2.5,0.866) circle (0.1);
\fill [black] (3.5,0.866) circle (0.1);
\fill [white] (4.5,0.866) circle (0.1);
\fill [white] (5.5,0.866) circle (0.1);
\fill [white] (6.5,0.866) circle (0.1);
\fill [black] (2.5,-0.866) circle (0.1);
\fill [black] (3.5,-0.866) circle (0.1);
\fill [black] (4.5,-0.866) circle (0.1);
\fill [black] (5.5,-0.866) circle (0.1);
\fill [black] (6.5,-0.866) circle (0.1);
\fill [white] (1,1) circle (0.1);
\fill [white] (0.5,1.866) circle (0.1);
\fill [white] (1.5,1.866) circle (0.1);
\fill [white] (-0.707,0.707) circle (0.1);
\fill [white] (-0.707,-0.707) circle (0.1);
\fill [white] (-1.4142,0) circle (0.1);

\draw (2.5,3) circle (0.1);

\draw (1,0) circle (0.2);
\draw (0,0) circle (0.1);
\draw (1,0) circle (0.1);
\draw (2,0) circle (0.1);
\draw (2.5,0.866) circle (0.1);
\draw (3.5,0.866) circle (0.1);
\draw (4.5,0.866) circle (0.1);
\draw (5.5,0.866) circle (0.1);
\draw (6.5,0.866) circle (0.1);
\draw (2.5,-0.866) circle (0.1);
\draw (3.5,-0.866) circle (0.1);
\draw (4.5,-0.866) circle (0.1);
\draw (5.5,-0.866) circle (0.1);
\draw (6.5,-0.866) circle (0.1);
\draw (1,1) circle (0.1);
\draw (0.5,1.866) circle (0.1);
\draw (1.5,1.866) circle (0.1);
\draw (-0.707,0.707) circle (0.1);
\draw (-0.707,-0.707) circle (0.1);
\draw (-1.4142,0) circle (0.1);

\end{tikzpicture}
\end{center}

This gives us an elliptic fibration with $E_8$, $A_5$ and $A_2$
fibers, and a section $P$ of height $93/18 = 31/6 = 4 + 2 \cdot 1 -
5/6$. We then identify an $E_7$ fiber and move to it by a $2$-neighbor
step. Since the new fiber intersects the section $P$ in $7$ and the
excluded component of the $A_5$ fiber in $3$, we see that the new
fibration has a section.

\begin{center}
\begin{tikzpicture}

\draw [bend right] (6.5,3.5) to (8,1);
\draw [bend right] (6.5,3.5) to (8,0);
\draw [bend right] (6.5,3.5) to (7,0);
\draw [bend left] (6.5,3.5) to (9.5,0.866);
\draw [very thick] (8.5,1.866)--(8,1)--(8,0)--(9,0)--(9.5,-0.866)--(10.5,-0.866)--(11,0);
\draw [very thick] (9,0)--(9.5,0.866);

\draw (0,0)--(9,0)--(9.5,0.866)--(10.5,0.866)--(11,0)--(10.5,-0.866)--(9.5,-0.866)--(9,0);

\draw (2,0)--(2,1);
\draw (8,0)--(8,1)--(7.5,1.866)--(8.5,1.866)--(8,1);

\draw (8,0) circle (0.2);
\fill [white] (0,0) circle (0.1);
\fill [white] (1,0) circle (0.1);
\fill [white] (2,0) circle (0.1);
\fill [white] (2,1) circle (0.1);
\fill [white] (3,0) circle (0.1);
\fill [white] (4,0) circle (0.1);
\fill [white] (5,0) circle (0.1);
\fill [white] (6,0) circle (0.1);
\fill [white] (7,0) circle (0.1);
\fill [black] (8,0) circle (0.1);
\fill [black] (8,1) circle (0.1);
\fill [black] (9,0) circle (0.1);
\fill [black] (9.5,0.866) circle (0.1);
\fill [white] (10.5,0.866) circle (0.1);
\fill [black] (9.5,-0.866) circle (0.1);
\fill [black] (10.5,-0.866) circle (0.1);
\fill [black] (11,0) circle (0.1);

\fill [white] (7.5,1.866) circle (0.1);
\fill [black] (8.5,1.866) circle (0.1);

\fill [white] (6.5,3.5) circle (0.1);

\draw (0,0) circle (0.1);
\draw (1,0) circle (0.1);
\draw (2,0) circle (0.1);
\draw (2,1) circle (0.1);
\draw (3,0) circle (0.1);
\draw (4,0) circle (0.1);
\draw (5,0) circle (0.1);
\draw (6,0) circle (0.1);
\draw (7,0) circle (0.1);
\draw (8,0) circle (0.1);
\draw (8,1) circle (0.1);
\draw (9,0) circle (0.1);
\draw (9.5,0.866) circle (0.1);
\draw (10.5,0.866) circle (0.1);
\draw (9.5,-0.866) circle (0.1);
\draw (10.5,-0.866) circle (0.1);
\draw (11,0) circle (0.1);

\draw (7.5,1.866) circle (0.1);
\draw (8.5,1.866) circle (0.1);

\draw (6.5,3.5) circle (0.1);

\draw [above] (6.5,3.5) node {$P$};

\end{tikzpicture}
\end{center}

\begin{theorem}
A birational model over $\Q$ for the Hilbert modular surface
$Y_{-}(93)$ as a double cover of $\Proj^2_{m,n}$ is given by the
following equation:
\begin{align*}
z^2 &= 16(n^2-1)^2n^2m^6 + 8(n^2-1)(21n^4+22n^2-27)m^5 -(27n^8-684n^6-1246n^4+1620n^2+27)m^4  \\
& \quad -8n^2(27n^6-109n^4-471n^2+41)m^3 -8n^2(81n^6+135n^4-273n^2-7)m^2 \\
& \quad -96n^4(9n^2-1)(n^2+3)m -16n^4(n^2+3)(27n^2+1).
\end{align*}
It is a surface of general type.
\end{theorem}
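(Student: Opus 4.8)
The plan is to follow the general four-step method of Section~\ref{method}, specialized to $D = 93$. First I would confirm that the displayed Weierstrass family realizes a Zariski-open subset of the moduli space $\sF_{L_{93}}$ as $\Proj^2_{m,n}$. The reducible fibers of types $A_{10}$, $A_3$, $A_2$ contribute a root lattice of rank $15$, which together with the hyperbolic plane $\langle O, F\rangle$ gives a trivial lattice of rank $17$; the stated section of height $31/44$ raises the Picard number to $18$ and the discriminant to $(31/44)\cdot 11 \cdot 4 \cdot 3 = 93$, matching $\disc L_{93} = -93$ for $L_{93} \cong E_8(-1)^2 \oplus \sO_{93}$. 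The extra involution flipping the $A_2$ fiber accounts for the Hurwitz--Maass symmetry, consistent with $93 = 3\cdot 23$ not being a prime power. By Theorem~\ref{humbert} this parametrization identifies $\sF_{L_{93}}$ birationally with the Humbert surface $\sH_{93} \subset \A_2$.

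Next I would carry out the two neighbor steps drawn in the Dynkin-type diagrams, using the recipe of Section~\ref{neighbors}: for each new elliptic divisor $F'$ I compute the two-dimensional space $H^0(X,\sO_X(F'))$, extract the new elliptic parameter, and convert the resulting genus-$1$ curve to Weierstrass form. The first is a $3$-neighbor step to a fibration with $E_8$, $A_5$, $A_2$ fibers and a section $P$ of height $93/18 = 31/6$; the second is a $2$-neighbor step whose new fiber meets $P$ with multiplicity $7$ and the excluded component of the $A_5$ fiber with multiplicity $3$. Since $\gcd(7,3)=1$ (and in any event by Proposition~\ref{automaticsections}), this genus-$1$ fibration has a section, so it equals its own Jacobian and carries reducible fibers of types $E_8$ and $E_7$. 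With the $E_8E_7$ equation in hand, Theorem~\ref{thesisthm} lets me read off the Igusa--Clebsch invariants $(I_2:I_4:I_6:I_{10})$ as functions of $(m,n)$, giving the explicit map to $\A_2$.

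Then I would apply Steps 3 and 4 to pin down the double cover $Y_{-}(93) \to \sH_{93}$. Its branch locus is where the Picard number jumps by one and the discriminant changes to $2\cdot 93 = 186$; here the relevant degeneration is the appearance of an extra $\I_2$ fiber, and I would enumerate the finitely many candidate factors of the branch polynomial (extra $\I_2$, promotions of reducible fibers, new low-height sections). To single out the correct subset of factors and the squarefree twist $C$, I would reduce modulo several odd primes $p \nmid 93$, specialize $(m,n)\in\F_p^2$, build the genus-$2$ curve $C_{m,n}$ from its Igusa--Clebsch invariants, and apply Lemma~\ref{twistrealmult} with point counts over $\F_p$ and $\F_{p^2}$ to decide whether $J(C_{m,n})$ has real multiplication by $\sO_{93}$ over $\F_p$. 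Demanding that every such $\F_p$-point lift to the double cover eliminates all but one choice; good reduction away from $3$ and $23$ (by \cite{Ra, DP}) bounds the search and guarantees termination.

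Finally, for the classification I would homogenize the displayed equation (which is a sextic in $m$) and observe that the branch curve $B = \{F(m,n)=0\}$ has total degree $12$, realized by the $m^6 n^6$ monomial. Thus $Y_{-}(93)$ is a double cover of $\Proj^2$ branched over a curve of even degree $2k$ with $k = 6$, so the adjunction formula for double covers gives $K_{Y_{-}(93)} = \pi^*\bigl(K_{\Proj^2}\otimes\sO(k)\bigr) = \pi^*\sO(3)$, which is big; hence the surface is of general type, provided one checks that the singularities of $B$ are at worst simple (ADE) so that passing to the minimal resolution does not lower the Kodaira dimension. The main obstacle is Step~2: for a discriminant this large the neighbor-step computations produce very large intermediate Weierstrass equations, and it is the symbolic bookkeeping there---rather than any conceptual difficulty---that is the crux, with a secondary delicate point being the exact determination of the twist $C$ in Step~4. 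By contrast the general-type classification, once the equation is known, reduces to the degree count and adjunction just described.
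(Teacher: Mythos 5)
Your Steps 1--4 reproduce the paper's argument essentially verbatim: the $A_{10}A_3A_2$ parametrization with the height-$31/44$ section (and your discriminant check $(31/44)\cdot 11\cdot 4\cdot 3 = 93$ is the right consistency test), the $3$-neighbor step to an $E_8 A_5 A_2$ fibration with the section $P$ of height $31/6$, the $2$-neighbor step using $P\cdot F''=7$ against intersection $3$ with the omitted $A_5$ component, and the determination of the branch locus and twist via Lemma~\ref{twistrealmult} and reductions modulo primes not dividing $D$. One arithmetic slip: $93 = 3\cdot 31$, not $3\cdot 23$. This is harmless for the Hurwitz--Maass remark (either way $93$ is not a prime power), but it matters in Step 4, where good reduction holds outside $\{3,31\}$ and the finite search space for the twist $C$ is supported on the primes dividing $93$; searching twists supported on $\{3,23\}$ could miss the correct one.

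The genuine gap is your proof that $Y_{-}(93)$ is of general type. The caveat you defer -- ``provided the singularities of $B$ are at worst simple (ADE)'' -- is not a routine check; it is false here, and the degree-count-plus-adjunction argument collapses without it. The degree-$12$ part of the right-hand side is $n^6m^4(16m^2-27n^2)$, so after homogenizing, the branch curve passes through $[1:0:0]$ with multiplicity $6$ (the local lowest-order term there is $16n^2(n^2-w^2)^2$) and through $[0:1:0]$ with multiplicity $4$ (with tangent cone $-27(m+2w)^4$, a quadruple line). Simple curve singularities have multiplicity at most $3$, so these points force a canonical resolution of the double cover that changes $K$, and the formula $K = \pi^*\sO(3)$ does not hold on the resolved surface. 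That this is not a technicality is demonstrated inside this very paper: for $D=44$ the branch curve also has total degree $12$, and for $D=53$ it has degree $8$ (where naive adjunction would likewise predict general type), yet both of those Hilbert modular surfaces are honestly elliptic of Kodaira dimension $1$. The paper does not in fact deduce general type from the equation at all; that assertion rests on the classification of Hilbert modular surfaces by Hirzebruch, van de Ven, and Zagier \cite{Hi, HVdV, HZ}. To repair your argument you would need either to cite that classification, or to carry out the canonical resolution of the double cover at the two non-simple branch points (tracking how the discrepancies modify $K$) and only then invoke adjunction and bigness.
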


\subsection{Analysis}

The extra involution is $\iota: (m,n) \mapsto (m,-n)$.

The branch locus is a curve of genus $4$, isomorphic to
$X_0(93)/\langle w_{93} \rangle$, where $w_{93}$ is the Atkin-Lehner
involution. We do not give the explicit isomorphism here, but the
formulas are available in the auxiliary computer files. Setting $k =
n^2$, we can write it as a double cover of a genus $2$ curve, which
can be transformed to the Weierstrass form
$$
y^2 -(9x^3+11x-3)y + x^2(69x^3-56x^2+81x-22) = 0.
$$

This Hilbert modular surface is a surface of general type.
The quotient by this involution $\iota$ has the equation (with $k = n^2$)
\begin{align*}
z^2 &= -27(m+2)^4\, k^4 + 4(4m^6+42m^5+171m^4+218m^3-270m^2-624m-328)\, k^3 \\
    &\quad   -2(16m^6-4m^5-623m^4-1884m^3-1092m^2-144m+24) \,k^2 \\
    &\quad  + 4m^2(4m^4-98m^3-405m^2-82m+14)\, k + 27m^4(8m-1)
\end{align*}
This has a genus-$1$ fibration over $\Q(m)$.
The fibration has a section at infinity defined over $\Q(\sqrt{-3})$;
we do not know whether there is a section defined over~$\Q$.
The Jacobian has the Weierstrass equation
\begin{align*}
y^2 &= x^3 + (m^6+20m^5+118m^4+186m^3+33m^2+18m-3) \, x^2 \\
    & \quad + 8m^2(m+10)(9m^4+39m^3+57m^2-1) \, x + 16m^4(m+10)^2(4m^3+13m^2+18m-3).
\end{align*}
This is an honestly elliptic surface with $\chi = 3$. It has reducible
fibers of type $\I_9$ at $m = \infty$, $\I_4$ at $m = 0$, $\I_2$ at $m
= -10$, and $\I_3$ at the roots of $m^6+11m^5+16m^4+32m^3+17m^2-9m+1$
(whose splitting field is a dihedral extension of degree $12$
containing $\sqrt{93}$). The trivial lattice has rank $26$, leaving
room for \MoW\ rank up to $4$. Counting points modulo $13$ and $17$
shows that the Picard number is at most $29$. On the other hand, we
are (so far) able to produce the sections
\begin{align*}
P_0 &= \big( -16(m^3+m^2+6m-1), 32(m^6+11m^5+16m^4+32m^3+17m^2-9m+1) \big) \\
P_1 &= \big( -16(m+10)(m^5+m^4+6m^3+3m^2+18m-3)/31, \\
 & \qquad 288 (m+10)(3m^2+2m+9)(m^6+11m^5+16m^4+32m^3+17m^2-9m+1)/93^{3/2} \big),
\end{align*}
of which $P_0$ is $3$-torsion, while $P_1$ has height
$3/2$. Therefore, the \MoW\ rank is between $1$ and $3$.

\subsection{Examples}

We list some points of small height and corresponding genus $2$ curves.

\begin{tabular}{l|c}
Point $(m,n)$ & Sextic polynomial $f_6(x)$ defining the genus $2$ curve $y^2 = f_6(x)$. \\
\hline \hline \\ [-2.5ex]
$(2, 1/3)$ & $ -2112x^6 - 5184x^5 + 5451x^4 + 2593x^3 - 4596x^2 - 2223x - 101 $ \\
$(-10, 5/3)$ & $ -7452x^6 - 4860x^5 - 24039x^4 - 4540x^3 - 17205x^2 + 4686x - 302 $ \\
$(1/5, 1/7)$ & $ -24786x^6 + 25272x^5 + 90900x^4 - 73885x^3 - 107482x^2 + 54020x + 40286 $ \\
$(-10, 5)$ & $ -31752x^5 - 48825x^4 + 52868x^3 - 175537x^2 + 91124x - 80644 $ \\
$(2, -1/3)$ & $ -594x^6 + 10962x^5 - 154233x^4 + 391936x^3 + 265521x^2 + 330228x - 71068 $ \\
$(-10, -5)$ & $ 43756x^6 + 110088x^5 + 463887x^4 - 609201x^3 + 208770x^2 - 6300x - 211000 $ \\
$(-10, -5/3)$ & $ -3008x^6 + 270048x^5 - 773739x^4 - 611989x^3 - 2150523x^2 + 631152x - 342144 $ \\
$(1/5, -1/7)$ & $ -253800x^6 - 1186380x^5 - 1627302x^4 + 4611739x^3 + 1795017x^2 - 2139291x + 2480233 $ 
\end{tabular}

\section{Discriminant $97$}

\subsection{Parametrization}

Start with an elliptic K3 surface with fibers of type $D_5 A_4 A_6$,
with a section of height $97/140 = 4 - (1+1/4) - 6/5 - 6/7$.

We can write the Weierstrass equation as
$$
y^2 = x^3 + (a_0+a_1t+a_2t^2+a_3t^3) \, x^2
     + 2t^2(t-1)^2 (b_0+b_1t+b_2t^2) \, x
      +t^4(t-1)^4 (c_0+c_1t),
$$
with
\begin{align*}
a_0 &=  (r+1)^2(rs^2+s^2+r^2s+r)^2, \\
a_1 &=  2(r+1)\big( (r+1)^2s^5 + 2(r+1)(r^2-r-1)s^4 + r(r^3-4r^2-4r+2)s^3 \\
  & \quad  -r(2r^3+r^2+2r+5)s^2 -r^2(r^2+3r+3)s -r^2(r+1) \big), \\
a_2 &=  (r+1)^2s^6 + 2(r+1)(r^2-4r-2)s^5 + (r^4-16r^3-6r^2+18r+4)s^4 \\
   & \quad -2r(4r^3-6r^2-4r+11)s^3 + r(6r^3-6r^2-3r+20)s^2
    + 2r^2(r+3)(2r+3)s + r^2(r+1)^2, \\
a_3 &= -4r(s-1)s(s+r)\big(s^3+(r-2)s^2-(r-2)s-2r-3 \big), \\
b_0 &=  4r(r+1)^2(s-1)^2s(s+r)(rs^2+s^2+r^2s+r)^2, \\
b_1 &=  4r(r+1)(s-1)^2s(s+r)\big( (r+1)^2s^5 + 2(r+1)(r^2-2r-1)s^4 \\
  & \quad  + r(r^3-8r^2-6r+4)s^3 -r(4r^3+r^2+2r+7)s^2
    -r^2(r^2+5r+5)s - r^2(r+1)\big), \\
b_2 &=  -8r^2(r+1)(s-1)^2s^2(s+r)^2\big(2s^3+2(r-2)s^2-2(r-2)s-r-3 \big), \\
c_0 &=  16r^2(r+1)^2(s-1)^4s^2(s+r)^2(rs^2+s^2+r^2s+r)^2, \\
c_1 &=  -64r^3(r+1)^2(s-1)^4s^3(s+r)^3(s^2+rs-s+1).
\end{align*}

First we identify a $D_8$, and move to the associated elliptic
fibration (which also has an $A_6$ fiber) by a $2$-neighbor step.

\begin{center}
\begin{tikzpicture}

\draw (4,0)--(4,1);
\draw (4,1)--(3.05,1.69);
\draw (4,1)--(4.95,1.69);
\draw (3.05,1.69)--(3.41,2.81);
\draw (4.95,1.69)--(4.59,2.81);
\draw (3.41,2.81)--(4.59,2.81);
\draw (0,0)--(5,0)--(5.5,0.866)--(7.5,0.866)--(7.5,-0.866)--(5.5,-0.866)--(5,0);
\draw (1,0)--(1,1);
\draw (2,0)--(2,1);
\draw [bend right] (5,4) to (1,1);
\draw (5,4)--(4.59,2.81);
\draw [bend left] (5,4) to (5.5,0.866);
\draw [very thick] (0,0)--(4,0)--(4,1);
\draw [very thick] (1,0)--(1,1);
\draw [very thick] (3.05,1.69)--(4,1)--(4.95,1.69);

\draw (4,0) circle (0.2);
\fill [black] (4,0) circle (0.1);
\fill [black] (4,1) circle (0.1);
\fill [black] (3.05,1.69) circle (0.1);
\fill [black] (4.95,1.69) circle (0.1);
\fill [white] (3.41,2.81) circle (0.1);
\fill [white] (4.59,2.81) circle (0.1);
\fill [black] (0,0) circle (0.1);
\fill [black] (1,0) circle (0.1);
\fill [black] (2,0) circle (0.1);
\fill [black] (3,0) circle (0.1);
\fill [white] (5,0) circle (0.1);
\fill [white] (5.5,0.866) circle (0.1);
\fill [white] (6.5,0.866) circle (0.1);
\fill [white] (7.5,0.866) circle (0.1);
\fill [white] (5.5,-0.866) circle (0.1);
\fill [white] (6.5,-0.866) circle (0.1);
\fill [white] (7.5,-0.866) circle (0.1);
\fill [black] (1,1) circle (0.1);
\fill [white] (2,1) circle (0.1);
\fill [white] (5,4) circle (0.1);

\draw (4,0) circle (0.1);
\draw (4,1) circle (0.1);
\draw (3.05,1.69) circle (0.1);
\draw (4.95,1.69) circle (0.1);
\draw (3.41,2.81) circle (0.1);
\draw (4.59,2.81) circle (0.1);
\draw (0,0) circle (0.1);
\draw (1,0) circle (0.1);
\draw (2,0) circle (0.1);
\draw (3,0) circle (0.1);
\draw (5,0) circle (0.1);
\draw (5.5,0.866) circle (0.1);
\draw (6.5,0.866) circle (0.1);
\draw (7.5,0.866) circle (0.1);
\draw (5.5,-0.866) circle (0.1);
\draw (6.5,-0.866) circle (0.1);
\draw (7.5,-0.866) circle (0.1);
\draw (1,1) circle (0.1);
\draw (2,1) circle (0.1);
\draw (5,4) circle (0.1);

\end{tikzpicture}
\end{center}

This elliptic fibration has $D_8$ and $A_6$ fibers, and two
independent sections $P,Q$ with height matrix
$$
\left(\begin{array}{cc}
8/7 & -5/14 \\
-5/14 & 22/7
\end{array} \right).
$$

We identify a fiber $F'$ of type $E_8$ and move to the associated
fibration by a $2$-neighbor step. Note that $Q \cdot F' = 3$, while
the remaining component of the $D_8$ fiber has intersection $2$ with $F'$.
Therefore the new genus one fibration has a section.

\begin{center}
\begin{tikzpicture}

\draw (0,0)--(8,0)--(8.5,0.866)--(10.5,0.866)--(10.5,-0.866)--(8.5,-0.866)--(8,0);
\draw (1,0)--(1,1);
\draw (5,0)--(5,1);
\draw [bend right] (3,3) to (0,0);
\draw [bend left] (3,3) to (8.5,0.866);
\draw (4,2)--(1,1);
\draw [bend left] (4,2) to (8.5,0.866);
\draw [bend left] (4,2) to (7,0);
\draw (3,3)--(4,2);
\draw [very thick] (1,1)--(1,0)--(7,0);
\draw [very thick] (5,0)--(5,1);

\draw (7,0) circle (0.2);
\fill [white] (0,0) circle (0.1);
\fill [black] (1,0) circle (0.1);
\fill [black] (2,0) circle (0.1);
\fill [black] (3,0) circle (0.1);
\fill [black] (4,0) circle (0.1);
\fill [black] (5,0) circle (0.1);
\fill [black] (6,0) circle (0.1);
\fill [black] (7,0) circle (0.1);
\fill [white] (8,0) circle (0.1);
\fill [black] (1,1) circle (0.1);
\fill [black] (5,1) circle (0.1);
\fill [white] (8.5,0.866) circle (0.1);
\fill [white] (8.5,-0.866) circle (0.1);
\fill [white] (9.5,0.866) circle (0.1);
\fill [white] (9.5,-0.866) circle (0.1);
\fill [white] (10.5,0.866) circle (0.1);
\fill [white] (10.5,-0.866) circle (0.1);
\fill [white]  (3,3) circle (0.1);
\fill [white] (4,2) circle (0.1);

\draw (0,0) circle (0.1);
\draw (1,0) circle (0.1);
\draw (2,0) circle (0.1);
\draw (3,0) circle (0.1);
\draw (4,0) circle (0.1);
\draw (5,0) circle (0.1);
\draw (6,0) circle (0.1);
\draw (7,0) circle (0.1);
\draw (8,0) circle (0.1);
\draw (1,1) circle (0.1);
\draw (5,1) circle (0.1);
\draw (8.5,0.866) circle (0.1);
\draw (8.5,-0.866) circle (0.1);
\draw (9.5,0.866) circle (0.1);
\draw (9.5,-0.866) circle (0.1);
\draw (10.5,0.866) circle (0.1);
\draw (10.5,-0.866) circle (0.1);
\draw (3,3) circle (0.1);
\draw (4,2) circle (0.1);

\draw (3,3) [above] node{$P$};
\draw (4,2) [below] node{$Q$};

\end{tikzpicture}
\end{center}

The new elliptic fibration has bad fibers of types $E_8$ and $A_7$,
and a section $P'$ of height $97/8 = 2 + 2\cdot 6 - 3 \cdot 5/8$. We
identify a fiber $F''$ of type $E_7$, and move to the associated
elliptic fibration by a $2$-neighbor step. Note that $P' \cdot F'' =
13$, while the remaining component of the $A_7$ fiber has intersection
$2$ with $F''$. Therefore the elliptic fibration associated to $F''$
has a section, and is the of the desired type $E_8 E_7$.

\begin{center}
\begin{tikzpicture}

\draw (-1,0)--(8,0);
\draw (1,0)--(1,1);
\draw (8,0)--(8.5,0.866)--(10.5,0.866)--(11,0)--(10.5,-0.866)--(8.5,-0.866)--(8,0);
\draw [very thick] (7,0)--(8,0);
\draw [very thick] (10.5,0.866)--(8.5,0.866)--(8,0)--(8.5,-0.866)--(10.5,-0.866);

\fill [white] (-1,0) circle (0.1);
\fill [white] (0,0) circle (0.1);
\fill [white] (1,0) circle (0.1);
\fill [white] (2,0) circle (0.1);
\fill [white] (3,0) circle (0.1);
\fill [white] (4,0) circle (0.1);
\fill [white] (5,0) circle (0.1);
\fill [white] (6,0) circle (0.1);
\fill [white] (1,1) circle (0.1);
\fill [black] (7,0) circle (0.1);
\fill [black] (8,0) circle (0.1);
\fill [black] (8.5,0.866) circle (0.1);
\fill [black] (9.5,0.866) circle (0.1);
\fill [black] (10.5,0.866) circle (0.1);
\fill [black] (8.5,-0.866) circle (0.1);
\fill [black] (9.5,-0.866) circle (0.1);
\fill [black] (10.5,-0.866) circle (0.1);
\fill [white] (11,0) circle (0.1);

\draw (7,0) circle (0.2);
\draw (-1,0) circle (0.1);
\draw (0,0) circle (0.1);
\draw (1,0) circle (0.1);
\draw (2,0) circle (0.1);
\draw (3,0) circle (0.1);
\draw (4,0) circle (0.1);
\draw (5,0) circle (0.1);
\draw (6,0) circle (0.1);
\draw (1,1) circle (0.1);
\draw (7,0) circle (0.1);
\draw (8,0) circle (0.1);
\draw (8.5,0.866) circle (0.1);
\draw (9.5,0.866) circle (0.1);
\draw (10.5,0.866) circle (0.1);
\draw (8.5,-0.866) circle (0.1);
\draw (9.5,-0.866) circle (0.1);
\draw (10.5,-0.866) circle (0.1);
\draw (11,0) circle (0.1);

\end{tikzpicture}
\end{center}

We now read out the Igusa-Clebsch invariants and work out the equation
of the branch locus of $Y_{-}(97)$ as a double cover of $\sH_{97}$.

\begin{theorem}
A birational model over $\Q$ for the Hilbert modular surface
$Y_{-}(97)$ as a double cover of $\Proj^2_{r,s}$ is given by the following
equation:
\begin{align*}
z^2 &= s^2 (s^2+14 s+1) r^6 + 2 s (2 s^4+27 s^3-13 s^2+15 s+1) r^5 \\
& \qquad + (6 s^6+80 s^5-75 s^4+128 s^3-54 s^2+18 s+1) r^4  \\
& \qquad + 2 (2 s^7+28 s^6-32 s^5+84 s^4-74 s^3+48 s^2-13 s+1) r^3 \\
& \qquad  + (s^8+18 s^7-11 s^6+68 s^5-101 s^4+112 s^3-69 s^2+22 s+1) r^2 \\
& \qquad  + 2 s^2 (s^6+3 s^5-5 s^4+7 s^3+3 s^2-16 s+12) r + (s-2)^4 s^4.
\end{align*}
It is a surface of general type.
\end{theorem}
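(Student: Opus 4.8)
The plan is to follow the four-step procedure of Section~\ref{method}, starting from the displayed family of elliptic K3 surfaces with reducible fibers $D_5$, $A_4$, $A_6$ and the section of height $97/140$. First I would check that this family realizes a Zariski-open subset of $\sF_{L_{97}}$ birationally with $\Proj^2_{r,s}$: the trivial lattice has rank $2+5+4+6=17$ and the three reducible fibers contribute $\disc(D_5)\disc(A_4)\disc(A_6)=4\cdot5\cdot7=140$, so by the Shioda--Tate formula and the discriminant formula of Section~\ref{K3} the extra section of height $97/140$ forces $\rho=18$ and $|\disc\NS|=140\cdot(97/140)=97$. Since $L_{97}$ is the unique even lattice of signature $(1,17)$ containing $U\oplus E_8(-1)\oplus E_7(-1)$ with discriminant $97$, this identifies the generic N\'eron--Severi lattice with $L_{97}$, and Theorem~\ref{humbert} then gives that $\sH_{97}$ is birational to $\Proj^2_{r,s}$.

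Next I would carry out the three $2$-neighbor steps drawn in the diagrams, passing $D_5A_4A_6 \rightsquigarrow D_8A_6 \rightsquigarrow E_8A_7 \rightsquigarrow E_8E_7$ by the method of Section~\ref{neighbors}; each step reduces to linear algebra over $\Q(r,s)$ and forms the bulk of the computation, so would be relegated to the auxiliary files. At every stage the new fibration acquires a section: this follows from the coprimality corollary in Section~\ref{K3} (e.g.\ $Q\cdot F'=3$ against a $D_8$-leaf meeting $F'$ in $2$, and $P'\cdot F''=13$ against an $A_7$-component meeting $F''$ in $2$), and in any case is guaranteed unconditionally by Proposition~\ref{automaticsections} since $97$ is fundamental. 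From the resulting $E_8E_7$ Weierstrass form I read off the Igusa--Clebsch invariants via Theorem~\ref{thesisthm}, obtaining the explicit map $\Proj^2_{r,s}\to\A_2$ with image $\sH_{97}$.

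I would then identify the branch locus of the double cover $Y_{-}(97)\to\sH_{97}$ with the sublocus where the Picard number jumps to $19$ through the appearance of an extra $\I_2$ fiber (so that $|\disc\NS|$ changes to $2\cdot97$), which produces the polynomial $f(r,s)$ on the right-hand side up to a squarefree constant. That constant $C$ is pinned down exactly as in Step~4: since $Y_{-}(97)$ has good reduction away from $97$ only finitely many twists are possible, and for each candidate I specialize $(r,s)$ to many points of $\F_p\times\F_p$ for several odd $p\nmid 97$, build the genus-$2$ curve $C_{r,s}$, and apply Lemma~\ref{twistrealmult} to the Frobenius characteristic polynomial; the twist is correct precisely when $f(r,s)$ is a square at every such point whose Jacobian carries real multiplication by $\sO_{97}$. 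This procedure is guaranteed to terminate and, in practice, does so after a few primes.

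The main obstacle is the final assertion that $Y_{-}(97)$ is of general type, which cannot be deduced from the numerical invariants alone: the superficially similar double cover $Y_{-}(85)$ has arithmetic genus $3$ yet is only honestly elliptic, because in a suitable coordinate its defining equation becomes a quartic and so exhibits a genus-$1$ pencil. For $D=97$ the equation is sextic in $r$ and octic in $s$, so neither coordinate direction yields such a pencil; the surface instead fibers in curves of genus $2$ over $\Proj^1_s$. To conclude rigorously I would resolve the singularities of the branch curve, pass to the smooth minimal model $X$, and compute $\chi(\sO_X)$ together with $K_X^2$; showing $K_X$ nef with $K_X^2>0$ forces $\kappa(X)=2$ by the Enriques--Kodaira classification and in particular rules out the properly elliptic alternative (whose minimal model satisfies $K^2=0$) realized for $D=85$. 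The delicate point throughout is the singularity analysis of the highly singular, reducible branch curve, since it is exactly the type of these singularities that separates the general-type cases from the honestly elliptic ones.
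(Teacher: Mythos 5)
Your derivation of the equation itself follows the paper's route exactly: the same $D_5A_4A_6$ family with a section of height $97/140$, the same chain of neighbor steps $D_5A_4A_6 \to D_8A_6 \to E_8A_7 \to E_8E_7$ with the same coprimality checks ($Q\cdot F'=3$ versus $2$, $P'\cdot F''=13$ versus $2$), and the same branch-locus and twist determination by counting points and applying Lemma~\ref{twistrealmult}. One wrinkle in your Step 1: signature $(1,17)$ and discriminant $-97$ do not by themselves characterize an even lattice, and your appeal to uniqueness presupposes that the generic N\'eron--Severi lattice contains $U \oplus E_8(-1) \oplus E_7(-1)$ --- which is precisely what needs verification (the paper's Step 1 checks that $N$ contains $E_8 \oplus E_7$ by exhibiting a vector of norm $2/D$ in $N^*$). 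This is harmless rather than fatal, because your own neighbor steps produce an $E_8E_7$ fibration on the generic member and hence establish the containment a posteriori; just reorder the argument so that the identification of the lattice with $L_{97}$ comes after the fibration is constructed, not before.

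On the final assertion you genuinely depart from the paper, and in a direction that creates unnecessary work. The paper never deduces the Kodaira type from the equation: once the equation is known to be a birational model of $Y_{-}(97)$ over $\Q$, general type is imported from the Hirzebruch--van de Ven--Zagier classification of Hilbert modular surfaces \cite{Hi, HVdV, HZ}, which places every $Y_{-}(D)$ in the Enriques--Kodaira classification; this is how the paper treats all its general-type discriminants ($D = 56, 60, 69, 76, 77, 88, 89, 92, 93, 97$). Your proposed direct verification --- resolving the plane decic branch curve (whose normalization is the genus-$3$ curve $X_0(97)/\langle w\rangle$, so it is very singular), passing to the minimal model, and checking that $K$ is nef with $K^2>0$ --- is sound in principle and would make the theorem self-contained, but it is a substantial computation that the paper deliberately avoids, and you have not carried out the singularity analysis that you yourself identify as the crux. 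As written, your treatment of ``general type'' is a plan rather than a proof; either execute that resolution, or, far more efficiently, invoke the classification as the paper implicitly does.
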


\subsection{Analysis}

The branch locus is a genus $3$ curve, isomorphic to the quotient of
$X_0(97)$ by the Atkin-Lehner involution. We omit the formulas for the
isomorphism, but they are available in the auxiliary computer files.

The Hilbert modular surface itself is of general type. The
substitution $r = u - s$ simplifies the equation of the double cover
somewhat, making it degree $6$ in each variable.  However, it
complicates the original Weierstrass equation of the K3 family, so we
have chosen the $(r,s)$-coordinates on the moduli space.

\subsection{Examples}

We list some points of small height and corresponding genus $2$ curves.

\begin{tabular}{l|c}
Rational point $(r,s)$ & Sextic polynomial $f_6(x)$ defining the genus $2$ curve $y^2 = f_6(x)$. \\
\hline \hline \\ [-2.5ex]
$(5/2, -2)$ & $ -100x^6 + 180x^5 + 3x^4 + 12x^3 - 207x^2 + 54x + 54 $ \\
$(-5/3, 2/3)$ & $ 115x^6 - 120x^5 - 692x^4 - 42x^3 + 643x^2 + 36x - 156 $ \\
$(8/15, -6/5)$ & $ -418x^6 - 99x^5 + 700x^4 + 130x^3 - 401x^2 - 45x + 81 $ \\
$(-14/3, 6)$ & $ 528x^6 - 792x^5 + 311x^4 - 26x^3 - 205x^2 + 60x - 20 $ \\
$(-20/7, 5/14)$ & $ -72x^6 - 72x^5 + 669x^4 + 706x^3 - 1623x^2 - 60x + 500 $ \\
$(13/6, -3/2)$ & $ 1236x^6 - 852x^5 - 1919x^4 + 1702x^3 + 1473x^2 - 940x - 700 $ \\
$(17/6, -1/3)$ & $ 200x^6 - 420x^5 + 1918x^4 - 1455x^3 + 2968x^2 + 1740x - 1175 $ \\
$(-13/7, 11/21)$ & $ -1872x^5 - 3540x^4 + 1021x^3 + 2331x^2 - 1185x + 145 $ \\
$(23/35, -5/14)$ & $ 370x^6 + 1084x^5 - 2510x^4 - 683x^3 - 32x^2 - 752x - 3822 $ \\
$(-11/30, 5/6)$ & $ -1225x^6 + 3570x^5 - 3266x^4 - 176x^3 + 3463x^2 + 1446x + 5868 $ \\
$(1/2, 5/2)$ & $ -1938x^6 + 3132x^5 + 1730x^4 - 855x^3 + 609x^2 - 9065x + 5145 $ \\
$(-19/30, 3/10)$ & $ 4900x^6 + 5320x^5 - 11751x^4 - 4255x^3 + 2867x^2 + 4515x - 1596 $ \\
$(-29/18, 10/9)$ & $ 16048x^6 - 7524x^5 - 11096x^4 + 16107x^3 - 4244x^2 - 5652x - 864 $ \\
$(13/4, -1/4)$ & $ -1140x^6 + 4820x^5 - 3105x^4 + 3366x^3 - 16681x^2 - 6468x - 12348 $ \\
$(-7/2, 6)$ & $ 14076x^6 - 20748x^5 + 11899x^4 + 1252x^3 - 125x^2 + 2676x + 380 $ \\
$(9/10, -2/5)$ & $ -6688x^6 + 9840x^5 - 8271x^4 + 24640x^3 - 5373x^2 + 12150x - 7290$
\end{tabular}

We find a few curves with infinitely many rational points.
For instance, $r = 1-s$ gives a rational curve, with parametrization
$$
(r,s) = \left( \frac{(m+1)(m+3)}{m^2+7}, \frac{-4(m-1)}{m^2+7} \right).
$$ 
The Brauer obstruction vanishes identically along this
curve. However, it turns out to be a modular curve: the corresponding
abelian surfaces have endomorphism ring a (split) quaternion algebra.

Another curve of genus $0$ is given by $r = -(3s^2+8s+4)/(3s)$.
Again, the Brauer obstruction vanishes, and this time we get a family
of abelian surfaces with ``honest'' real multiplication.

The locus $r = 1/2 - s$ gives a genus-$1$ curve
$$
y^2 = (2s+1)(2s^3-39s^2+28s+36)
$$
with conductor $5862 = 2 \cdot 3 \cdot 977$ and \MoW\ group $\Z^2$.

\section{Acknowledgements}

We thank Jennifer Balakrishnan, Henri Darmon, Lassina Demb\'{e}l\'{e},
Eyal Goren, Kiran Kedlaya, Ronen Mukamel, George Pappas, Bjorn Poonen,
Frithjof Schulze, Matthias Sch\"utt and Andrew Sutherland for helpful
comments. We also thank the anonymous referee for a careful reading of
the paper and several useful remarks. The computer algebra systems
PARI/gp, Maple, Maxima, and Magma were used in the calculations for
this paper. We also made heavy use of the programs \texttt{mwrank},
\texttt{ratpoints}, the Maple package \texttt{algcurves}, and the
Magma program \texttt{ConicsFF.m} for finding points on conics over
function fields.  We thank the authors of these programs as well.

\end{document}